\newtheoremstyle{theorem}{11pt}{11pt}{\slshape}{}{\mathbfseries}{.}{.5em}{}
\newtheoremstyle{note}{11pt}{11pt}{}{}{\mathbfseries}{.}{.5em}{}
\theoremstyle{plain}
  \newtheorem{theoreme}{Th\'{e}or\`{e}me}[section]
  \newtheorem{proposition}[theoreme]{Proposition}
  \newtheorem{lemme}[theoreme]{Lemme}
  \newtheorem{corollaire}[theoreme]{Corollaire}
  \newtheorem{conj}[theoreme]{Conjecture}
\theoremstyle{definition}
  \newtheorem{definition}[theoreme]{D\'{e}finition}
\theoremstyle{remark}
  \newtheorem{example}[theoreme]{Exemple}
  \newtheorem{remarque}[theoreme]{Remarque}
\let\mathcal\mathcal
\let\mathbf\mathbf
\def\Q{{\mathbf Q}} \def\Z{{\mathbf Z}}
\def\O{{\mathcal O}}
\def\dual{{\boldsymbol *}}
\def\zp{{\Z_p}}
\def\zpet{{\Z_p^\dual}}
\def\qp{{\Q_p}}
\def\qpet{{\Q_p^\dual}}
\def\p1{{\mathbf P}^1}
\def\qp{\mathbf{Q}_p}
\def\zp{\mathbf{Z}_p}
\def\z{\mathbf{Z}}
\def\qpp{\mathbf{Q}_{p^2}}
\def\cp{\mathbf{C}_p}
\def\q{\mathbf{Q}}
\def\ho{\mathrm{Hom}}
\def\con{\mathcal{C}^0}
\def\ob{\bar{B}}
\def\ext{\mathrm{Ext}}
\def\m{\mathcal{M}}
\def\oo{\O}
\def\epsilon{\varepsilon}
\begin{document}
\title[Rev\^etements de Drinfeld et correspondance de Langlands $p$-adique]
{Rev\^etements du demi-plan de Drinfeld et correspondance de Langlands $p$-adique}

\author{Gabriel Dospinescu}
\address{CNRS, UMPA, \'Ecole Normale Sup\'erieure de Lyon, 46 all\'ee d'Italie, 69007 Lyon, France}
\email{gabriel.dospinescu@ens-lyon.fr}

\author{Arthur-C\'esar Le Bras}
\address{Institut de Math\'ematiques de Jussieu, 4 place Jussieu, 75005 Paris, France}
\email{arthur-cesar.le-bras@imj-prg.fr \\ lebras@dma.ens.fr}

\begin{abstract} Nous d\'ecrivons le complexe de de Rham des rev\^etements du demi-plan de Drinfeld 
pour ${\rm GL}_2(\qp)$. Cette description, conjectur\'ee par Breuil et Strauch, fournit une r\'ealisation g\'eom\'etrique de la correspondance de Langlands locale $p$-adique pour 
certaines repr\'esentations de de Rham de dimension $2$ de 
${\rm Gal}(\overline{\qp}/\qp)$.
 \end{abstract}

\begin{altabstract} We describe the de Rham complex of the \'etale coverings of Drinfeld's $p$-adic upper half-plane for 
${\rm GL}_2(\qp)$. Conjectured by Breuil and Strauch, this description gives a geometric realization of the $p$-adic local Langlands correspondence for 
certain two-dimensional de Rham representations of ${\rm Gal}(\overline{\qp}/\qp)$. 
\end{altabstract}
\setcounter{tocdepth}{3}

\maketitle

\stepcounter{tocdepth}
{\Small
\tableofcontents
}

\section{Introduction}

La correspondance de Langlands locale \og classique\fg{} pour ${\rm GL}_n$ entretient un lien \'etroit avec la cohomologie du demi-espace de Drinfeld de dimension $n-1$ et de ses rev\^etements : la \textit{th\'eorie de Lubin-Tate non ab\'elienne} de Carayol \cite{Carayol} pr\'edit que la correspondance pour les repr\'esentations supercuspidales se r\'ealise dans la cohomologie \'etale $\ell$-adique de la tour de Drinfeld \cite{Drinfeld} (ou de Lubin-Tate, selon les go\^uts, cf. \cite{faltings} \cite{FGL}) et la mise en forme de ce principe joue un r\^ole crucial dans la preuve d'Harris et Taylor \cite{HT} de la correspondance.

Par contraste, l'existence de la correspondance de Langlands $p$-adique, qui n'est \`a l'heure actuelle formul\'ee et prouv\'ee que pour le groupe $G={\rm GL}_2(\qp)$, repose \cite{Cbigone} sur la th\'eorie de Fontaine \cite{FoGrot} des $(\varphi,\Gamma)$-modules. Elle n'a donc \`a premi\`ere vue aucune relation avec la g\'eom\'etrie des rev\^etements du demi-plan de Drinfeld\footnote{Pour certaines repr\'esentations galoisiennes, on sait cependant que la correspondance se r\'ealise dans la cohomologie compl\'et\'ee de la tour des courbes modulaires \cite{Emcomp} ; ceci joue d'ailleurs un r\^ole capital dans cet article.}. Pourtant, le cas classique laisse esp\'erer que celle-ci puisse expliquer la structure des repr\'esentations de $G$ associ\'ees aux repr\'esentations galoisiennes de de Rham non triangulines. Cet article se propose de montrer que c'est effectivement le cas. Les r\'esultats obtenus ont \'et\'e directement inspir\'es par une conjecture non publi\'ee de Breuil et Strauch \cite{BS}, qui donnait un sens pr\'ecis \`a cet espoir, en d\'ecrivant le complexe de de Rham de ces rev\^etements\footnote{Plus pr\'ecis\'ement, la conjecture \'etait faite pour le premier rev\^etement.} en termes de la correspondance de Langlands $p$-adique\footnote{La conjecture de Breuil-Strauch et les r\'esultats de ce travail ne disent rien sur les repr\'esentations de de Rham triangulines : pour une explication, voir la remarque \ref{cas triangulin}.}. Les r\'esultats obtenus, que nous d\'ecrivons plus en d\'etail dans la suite de cette introduction, indiquent que le probl\`eme analogue pour 
${\rm GL}_2(F)$ ($F$ \'etant une extension non triviale de $\qp$) ne sera pas une mince affaire, mais sugg\`erent des pistes de recherche int\'eressantes. En particulier, ils 
sont utilis\'es dans \cite{CDN} pour calculer la cohomologie \'etale $p$-adique des rev\^etements du demi-plan de Drinfeld, obtenant ainsi l'analogue 
$p$-adique (pour ${\rm GL}_2(\qp)$ pour l'instant...) de \cite{Carayol}.  

\subsection{Les r\'esultats principaux}
   Nous aurons besoin de quelques pr\'eliminaires pour \'enoncer notre premier r\'esultat principal.    
   Soit $D$ l'unique alg\`ebre de quaternions ramifi\'ee sur $\qp$
(\`a isomorphisme pr\`es), $\O_D$ son unique ordre maximal et soit $\varpi_D$ une uniformisante de $D$. 
Soit $\breve{\mathcal{M}}_n$ l'espace rigide analytique sur $\widehat{\qp^{\rm nr}}$, fibre g\'en\'erique du sch\'ema formel classifiant 
les d\'eformations par quasi-isog\'enie $\O_D$-\'equivariante d'un $\O_D$-module formel sp\'ecial de dimension $2$ et hauteur $4$ sur $\overline{\mathbf{F}_p}$, avec structure de niveau $1+p^n \O_D$. Les espaces $\breve{\mathcal{M}}_n$ 
forment une tour d'espaces analytiques, les morphismes de transition 
$\breve{\mathcal{M}}_{n+1}\to \breve{\mathcal{M}}_n$ \'etant finis \'etales. Chacun\footnote{L'action \og horizontale\fg{}, i.e. sur chaque \'etage de la tour, est celle de $G$, le groupe 
$D^*$ agissant \og verticalement\fg{} sur la tour. Puisque $1+p^n\O_D$ est distingu\'e dans $D^*$, l'action par correspondances de Hecke de
$D^*$ sur la tour pr\'eserve chaque $\breve{\mathcal{M}}_n$.} de ces espaces est muni 
 d'actions qui commutent des groupes 
$G={\rm GL}_2(\qp)$ et $D^*$, compatible avec 
 les morphismes de transition $\breve{\mathcal{M}}_{n+1}\to \breve{\mathcal{M}}_n$. De plus, les espaces 
 $\breve{\mathcal{M}}_n$ sont munis de donn\'ees de descente canoniques \`a la Weil, qui ne sont pas effectives, mais qui le deviennent 
 sur le quotient \footnote{On voit $p$ comme \'el\'ement du centre de $G$.} de 
     $\breve{\mathcal{M}}_n$ par $p^{\mathbf{Z}}$. On note $\Sigma_n$ le mod\`ele de $p^{\mathbf{Z}}\backslash\breve{\mathcal{M}}_n$ sur $\qp$ qui s'en d\'eduit. 
    L'espace rigide analytique $\Sigma_n$ est  
    un rev\^etement \'etale de $\Sigma_0$, de groupe de Galois le quotient
    $${\rm Gal}(\Sigma_n/\Sigma_0)=\O_D^*/(1+p^n \O_D).$$

    Soit $\Omega$ le demi-plan de Drinfeld, un espace rigide sur $\qp$ dont les $\cp$-points sont $$\Omega(\cp)=\mathbf{P}^1(\cp)- \mathbf{P}^1(\qp).$$ 
  Il admet une action de $G$, via l'action naturelle de $G$ sur $\mathbf{P}^1(\cp)$, donn\'ee par $g.z=\frac{az+b}{cz+d}$ si 
  $g=\left(\begin{smallmatrix} a & b \\ c & d\end{smallmatrix}\right)\in G$, o\`u $z$ est \og la \fg{} variable sur $\p1$. 
    L'espace $\Sigma_0$ n'est pas bien myst\'erieux : il s'agit simplement de deux copies de $\Omega$, avec action triviale
     de $\O_D^*$, l'\'el\'ement $\varpi_D$ permutant les deux copies. L'action de $g \in G$ est l'action naturelle sur $\Omega$ et \'echange ou non les deux copies de $\Omega$ selon que le d\'eterminant de $g$ est impair ou pair. La g\'eom\'etrie des rev\^etements $\Sigma_n$ et l'action de $G \times D^*$ sur $\Sigma_n$ sont par contre bien plus compliqu\'ees.
\\

   Fixons maintenant une repr\'esentation lisse supercuspidale $\pi$ de $G$, de caract\`ere central trivial (pour simplifier)
         et notons $$\rho={\rm JL}(\pi)$$ la repr\'esentation lisse irr\'eductible (de dimension finie) de $D^*$ qui lui est attach\'ee par la correspondance de Jacquet-Langlands locale. Il existe\footnote{L'existence est un fait standard de la th\'eorie, utiliser par exemple le fait que $\pi$ est l'induite d'une repr\'esentation de dimension finie 
   \`a partir d'un sous-groupe ouvert compact modulo le centre. Voir aussi le chapitre 4 de \cite{Breuil-Schneider} pour une discussion de la rationalit\'e dans un cadre beaucoup plus g\'en\'eral.}       
          une extension finie $L$ de $\qp$ telle que ces repr\'esentations soient d\'efinies sur $L$. Il est sous-entendu dans la suite que le corps des coefficients de toutes les repr\'esentations qui apparaissent est $L$ ; en particulier, si $X$ est un espace rigide sur $\qp$ et $\mathcal{F}$ est un faisceau coh\'erent sur $X$, on notera simplement $\mathcal{F}(X)$ pour $H^0(X,\mathcal{F})\otimes_{\qp} L$.
    
   Soit ${\rm Ban}^{\rm adm}(G)$ la cat\'egorie des repr\'esentations de $G$ sur des $L$-espaces de Banach $\Pi$, qui ont un r\'eseau ouvert, born\'e et $G$-invariant, dont la r\'eduction modulo $p$
      est lisse admissible au sens usuel (i.e. le sous-espace des vecteurs invariants par un sous-groupe ouvert compact arbitraire de 
      $G$ est fini).
        Si $\Pi\in {\rm Ban}^{\rm adm}(G)$, on note 
   $\Pi^{\rm an}$ (resp. $\Pi^{\rm lisse}$) le sous-espace de $\Pi$ form\'e des vecteurs localement analytiques 
 (resp. localement constants), i.e. des vecteurs dont l'application orbite\footnote{Si $v\in \Pi$ est un tel vecteur, son application orbite est 
 $G\to \Pi$, $g\mapsto g.v$.} est localement analytique (resp. localement constante). 
 Les espaces $\Pi^{\rm an}$ et $\Pi^{\rm lisse}$ sont stables sous l'action de $G$ et 
 $\Pi^{\rm an}$ est dense dans $\Pi$ \cite{STInv} (alors que
   $\Pi^{\rm lisse}$ est la plupart du temps nul). 
                  
   \begin{definition} On note $\mathcal{V}(\pi)$ l'ensemble des 
   repr\'esentations absolument irr\'eductibles $\Pi\in  {\rm Ban}^{\rm adm}(G)$ telles que
   $\Pi^{\rm lisse}\simeq \pi$.
   \end{definition} 
   
      La correspondance de Langlands locale $p$-adique pour $G$ fournit une description compl\`ete de 
      $\mathcal{V}(\pi)$ (voir la discussion suivant la remarque \ref{co ad}). 
           
     Le th\'eor\`eme suivant, qui est le premier r\'esultat principal de ce texte, fournit une description g\'eom\'etrique de la repr\'esentation localement analytique $\Pi^{\rm an}/\Pi^{\rm lisse}$
     quand $\Pi\in \mathcal{V}(\pi)$. Alternativement, on peut le voir comme une description de la $G\times D^*$ repr\'esentation 
     $\O(\Sigma_n)$ en termes de la correspondance de Jacquet-Langlands et surtout de la correspondance de Langlands $p$-adique. Si $V$ est un $L$-espace vectoriel localement convexe, on note $V^*$ son dual topologique. On pose aussi $$\sigma^{\rho}={\rm Hom}_{D^*}(\rho, \sigma)$$ pour toute $L$-repr\'esentation $\sigma$ de 
     $G\times D^*$.
  
   \begin{theoreme}\label{main1}
    Soit $\pi$ une repr\'esentation supercuspidale de $G={\rm GL}_2(\qp)$, \`a caract\`ere central trivial, et soit $\rho={\rm JL}(\pi)$ comme ci-dessus. 
    Pour tout $\Pi\in\mathcal{V}(\pi)$ et 
     pour tout $n$ assez grand (il suffit que 
    $\rho$ soit triviale sur $1+p^n\O_D$),
    il existe un isomorphisme de $G$-modules topologiques unique \`a scalaire pr\`es
    $$(\O(\Sigma_n)^{\rho})^*\simeq \Pi^{\rm an}/\Pi^{\rm lisse}.$$
   \end{theoreme}
   
\begin{remarque}\label{co ad} 

a) Au lieu de partir de $\pi$, on aurait pu plus g\'en\'eralement partir d'une repr\'esentation localement alg\'ebrique de la forme $\pi \otimes \mathrm{Sym}^{k}$, avec $\pi$ supercuspidale\footnote{C\^ot\'e Galois, cela revient \`a passer des poids de Hodge-Tate $0, 1$ aux poids $0, k+1$, comme on le verra plus bas.}. Tous les r\'esultats de ce texte s'\'etendent, \`a condition de consid\'erer des fibr\'es vectoriels diff\'erents sur $\Sigma_n$ : voir la remarque \ref{poids generaux}. De m\^eme, l'hypoth\`ese que le caract\`ere central de $\pi$ est trivial n'est pas essentielle, contrairement \`a l'hypoth\`ese que $\pi$ est supercuspidale (voir la remarque \ref{cas triangulin} pour plus de d\'etails concernant ce dernier point). 
      
b)  Une cons\'equence importante du th\'eor\`eme \ref{main1} est que 
       $(\O(\Sigma_n)^{\rho})^*$ est une $G$-repr\'esentation localement analytique admissible, au sens de 
Schneider et Teitelbaum \cite{STInv}. Le caract\`ere localement analytique s'\'etablit sans trop de mal, mais l'admissibilit\'e semble nettement plus d\'elicate. 
Qu'en est-il pour ${\rm GL}_2(F)$, ou m\^eme ${\rm GL}_n(F)$ ? Notre m\'ethode ne fournit aucune approche pour ce probl\`eme. 
        On peut esp\'erer que la th\'eorie des $D$-modules $p$-adiques permette de dire quelque chose de l'admissibilit\'e de ces repr\'esentations ind\'ependamment de la correspondance de Langlands $p$-adique (voir \cite{PSS} pour des r\'esultats dans cette direction). 
\end{remarque}

      Le th\'eor\`eme \ref{main1} affirme en particulier que le quotient $\Pi^{\rm an}/\Pi^{\rm lisse}$ ne d\'epend pas du choix de $\Pi \in \mathcal{V}(\pi)$. Cela n'est cependant que de la poudre aux yeux: la {\it{preuve}} du th\'eor\`eme \ref{main1} utilise de mani\`ere essentielle 
 cette ind\'ependance, qui a \'et\'e d\'emontr\'ee par voie tr\`es d\'etourn\'ee dans \cite[th. VI.6.43]{Cbigone}, et dont la preuve a \'et\'e consid\'erablement simplifi\'ee dans un travail r\'ecent de Colmez \cite{Colmezpoids}. Nous donnons aussi une preuve\footnote{En utilisant encore un certain nombre de r\'esultats de \cite{Cbigone}, ainsi qu'une astuce de \cite{Colmezpoids}.} 
  dans le chapitre $8$ de cet article, car nous avons besoin d'un certain nombre d'ingr\'edients de cette preuve pour montrer le th\'eor\`eme 
  \ref{main2} ci-dessous.
  
        Pour comprendre l'importance de l'ind\'ependance discut\'ee dans le paragraphe pr\'ec\'edent, il convient d'expliciter davantage l'ensemble $\mathcal{V}(\pi)$. 
      La correspondance de Langlands \og classique\fg{} pour $G$, normalis\'ee \`a la Tate, combin\'ee avec une recette de Fontaine \cite{per adiques} (compl\'et\'ee par la proposition $4.1$ de \cite{Breuil-Schneider}, qui permet \og d'inverser\fg{} cette recette pour attacher des 
      $(\varphi, N, \mathcal{G}_{\qp})$-modules \`a des repr\'esentations de Weil-Deligne, cf. le dernier paragraphe des notations et conventions) 
      permet d'associer \`a
$\pi$ un $(\varphi, \mathcal{G}_{\qp}:={\rm Gal}(\overline{\qp}/\qp))$-module $M(\pi)$, libre de rang $2$ sur 
$L\otimes_{\qp} \qp^{\rm nr}$, ainsi qu'un $L$-espace vectoriel de dimension $2$
$$M_{\rm dR}(\pi)=(\overline{\qp}\otimes_{\qp^{\rm nr}} M(\pi))^{\mathcal{G}_{\qp}}.$$
Un des r\'esultats principaux de \cite{PCD}, qui utilise la compatibilit\'e entre les correspondances 
de Langlands \og classique\fg{} et $p$-adique \cite{Emcomp}, montre que le foncteur de Colmez \cite{Cbigone} induit une bijection 
$$\Pi\to V(\Pi)$$
entre $\mathcal{V}(\pi)$ et l'ensemble des $L$-repr\'esentations 
absolument irr\'eductibles $V$ de dimension $2$ de $\mathcal{G}_{\qp}$, potentiellement cristallines \`a poids de Hodge-Tate 
$0,1$ et telles que 
$$D_{\rm pst}(V)\simeq M(\pi).$$
On a $\det V(\Pi)=\chi_{\rm cyc}$ pour tout 
$\Pi\in \mathcal{V}(\pi)$, car le caract\`ere central de $\pi$ est trivial 
(aussi innocente qu'elle puisse para\^itre, cette assertion est en fait 
la partie la plus technique de \cite{PCD}...). 
En combinant cela 
avec le th\'eor\`eme de Colmez-Fontaine \cite{CF}, et l'observation que, la repr\'esentation de Weil attach\'ee \`a $M(\pi)$ par la recette de Fontaine \'etant irr\'eductible, toutes les filtrations 
sur $M(\pi)$ sont automatiquement faiblement admissibles (voir la preuve du th\'eor\`eme 5.2 de \cite{Breuil-Schneider}), on en d\'eduit une bijection canonique
$$\mathcal{V}(\pi)\simeq {\rm Proj}(M_{\rm dR}(\pi)), \quad \Pi\mapsto {\rm Fil}^0(D_{\rm dR}(V(\Pi))),$$
 en consid\'erant $ {\rm Fil}^0(D_{\rm dR}(V(\Pi)))$ comme une $L$-droite 
de $M_{\rm dR}(\pi)$ via l'isomorphisme $D_{\rm dR}(V(\Pi))\simeq M_{\rm dR}(\pi)$ induit par\footnote{Ce dernier isomorphisme est unique \`a scalaire pr\`es, dont tout est \og canonique \`a scalaire pr\`es\fg{} dans ce qui pr\'ec\`ede, et l'identification
$\mathcal{V}(\pi)\simeq {\rm Proj}(M_{\rm dR}(\pi))$ est canonique tout court.... } $D_{\rm pst}(V(\Pi))\simeq M(\pi)$.  Notons $\mathcal{L}\to \Pi_{\mathcal{L}}$ l'inverse de cette bijection. Ainsi, $\mathcal{L}$ est la filtration de Hodge sur 
$D_{\rm dR}(V(\Pi_{\mathcal{L}}))$. L'ind\'ependance de $\Pi^{\rm an}/\Pi^{\rm lisse}$ pour $\Pi\in \mathcal{V}(\pi)$ \'equivaut alors 
\`a l'ind\'ependance       
   de $\Pi_{\mathcal{L}}^{\rm an}/\Pi_{\mathcal{L}}^{\rm lisse}$ par rapport \`a la filtration de Hodge $\mathcal{L}$ sur $M_{\rm dR}(\pi)$. Cela est surprenant et n'a rien de gratuit : la repr\'esentation $\Pi^{\rm an}$ permet de r\'ecup\'erer $\Pi$, et donc la filtration de Hodge sur $M_{\rm dR}(\pi)$, gr\^ace au r\'esultat principal de \cite{CD}, tandis que le quotient par les vecteurs lisses ne d\'epend que de $M(\pi)$. Autrement dit, les repr\'esentations $\Pi^{\rm an}$ pour $\Pi\in \mathcal{V}(\pi)$ sont des extensions
   $$0\to \pi\to \Pi^{\rm an}\to \Pi(\pi,0)\to 0$$
   d'une repr\'esentation $\Pi(\pi,0)$ qui ne d\'epend que de $M(\pi)$, par $\pi$. La filtration de Hodge encode cette extension. Nous verrons plus loin comment r\'ecup\'erer cette filtration. Une cons\'equence essentielle de l'ind\'ependance par rapport \`a la filtration de Hodge est qu'elle nous permet de choisir 
         un $\Pi\in \mathcal{V}(\pi)$ convenable, et ainsi d'utiliser des m\'ethodes globales pour montrer l'existence d'un morphisme non nul entre les deux objets du th\'eor\`eme \ref{main1}. Nous montrons ensuite par voie locale que tout tel morphisme est un isomorphisme, et qu'il est unique \`a scalaire pr\`es : c'est le coeur technique de l'article, voir la section suivante pour plus de d\'etails. 
\\

       La description g\'eom\'etrique de $\Pi^{\rm an}/\Pi^{\rm lisse}$ \'etant acquise gr\^ace au th\'eor\`eme \ref{main1}, nous voulons maintenant d\'ecrire
      $\Pi^{\rm an}$ g\'eom\'etriquement, et r\'ecup\'erer ainsi la filtration de Hodge. C'est ici qu'intervient le complexe de de Rham de $\Sigma_n$.

 L'espace $\Sigma_n$ est Stein, ce qui fournit une suite exacte d'espaces de Fr\'echet avec action de $G\times D^*$
          $$0\to H_{{\mathrm{dR}}}^0(\Sigma_n) \to \O(\Sigma_n)\to \Omega^1(\Sigma_n)\to H_{\rm dR}^1(\Sigma_n)\to 0.$$
 En passant aux composantes $\rho:={\rm JL}(\pi)$-isotypiques\footnote{Cela utilise de mani\`ere cruciale le fait que 
 $\rho$ est de dimension $>1$, ce qui fournit $H_{{\mathrm{dR}}}^0(\Sigma_n)^{\rho}=0$, gr\^ace \`a \cite{Strauch cc} et 
 \cite{FGL}.}, on obtient une suite exacte de 
 $G$-repr\'esentations sur des espaces de Fr\'echet
      $$0\to \O(\Sigma_n)^{\rho}\to \Omega^{1}(\Sigma_n)^{\rho} \to H^1_{\rm dR}(\Sigma_n)^{\rho} \to 0.$$

\begin{theoreme}\label{main2} Il existe un isomorphisme canonique (\`a scalaire pr\`es) de $G$-modules de Fr\'echet 
$$H^1_{\rm dR}(\Sigma_n)^{\rho}\simeq M_{\rm dR}^*\otimes_{L} \pi^*,$$
tel que pour toute $L$-droite $\mathcal{L}$ de $M_{\rm dR}$, l'image inverse de 
$\mathcal{L}^{\perp}\otimes_{L} \pi^*\subset H^1_{\rm dR}(\Sigma_n)^{\rho}$ dans $\Omega^1(\Sigma_n)^{\rho}$ est isomorphe \`a
$(\Pi_{\mathcal{L}}^{\rm an})^*$ et la suite exacte 
$$0\to \O(\Sigma_n)^{\rho}\to (\Pi_{\mathcal{L}}^{\rm an})^*\to \mathcal{L}^{\perp}\otimes_{L} \pi^*\simeq \pi^*\to 0$$
qui s'en d\'eduit est duale de celle fournie par le th\'eor\`eme \ref{main1}.
\end{theoreme}
Ce th\'eor\`eme, qui est le deuxi\`eme r\'esultat principal de l'article, donne donc une recette g\'eom\'etrique simple pour construire $\Pi^{\rm an}$ \`a partir de la donn\'ee de $M(\pi)$ (ou de fa\c con \'equivalente, de $\pi$) et de la filtration de Hodge, \`a partir du complexe de de Rham. C'\'etait l'objet de la conjecture originale de Breuil-Strauch \cite{BS} (qui \'etait toutefois formul\'ee de mani\`ere un peu diff\'erente, voir la remarque \ref{difference}). 

\begin{remarque}\label{cas triangulin}
Si $\pi$ est une repr\'esentation de la s\'erie principale, et $\Pi \in \mathrm{Ban}^{\rm adm}(G)$ contient $\pi$, on ne dispose pas d'une telle description g\'eom\'etrique de $\Pi^{\rm an}$ : la situation est bien s\^ur similaire \`a celle de la correspondance de Langlands locale classique, o\`u les repr\'esentations de la s\'erie principale n'apparaissent pas dans la cohomologie $\ell$-adique \`a supports de la tour de Drinfeld. Si $\pi$ est un twist de la Steinberg, les premiers travaux de Breuil \cite{Br} sur la correspondance de Langlands $p$-adique fournissent une description partielle de $\Pi^{\rm an}$ utilisant le demi-plan $\Omega$, mais la situation est plus compliqu\'ee : il ne suffit pas de copier les \'enonc\'es pr\'ec\'edents avec $\rho$ triviale. Cela s'explique en partie par le fait que dans ce cas la cohomologie de de Rham est non triviale aussi en degr\'e $0$, cf. le paragraphe \ref{cat\'egorie d\'eriv\'ee}. 

Toutefois, dans ces deux cas, la repr\'esentation galoisienne attach\'ee \`a une telle $\Pi$ est trianguline, et on dispose donc d'une description tr\`es pr\'ecise des vecteurs localement analytiques \cite{Cvectan} \cite{LXZ}.
\end{remarque}

\begin{remarque} Rempla\c cons $\Sigma_n$ par le premier rev\^etement $\Sigma_{1+\varpi_D \O_D}$ et consid\'erons comme dans l'\'enonc\'e du th\'eor\`eme le dual de l'image inverse de $\mathcal{L}^{\perp}\otimes_{L} \pi^*\subset H^1_{\rm dR}(\Sigma_{1+\varpi_D \O_D})^{\rho}$ dans $\Omega^1(\Sigma_{1+\varpi_D \O_D})^{\rho}$, pour $\rho$ repr\'esentation lisse irr\'eductible non triviale de $D^*$, triviale sur $1 + \varpi_D \O_D$. Dans un travail r\'ecent \cite{luepan} et dans une formulation un peu diff\'erente, Lue Pan montre que le compl\'et\'e unitaire universel de cette repr\'esentation est admissible et que sa r\'eduction modulo $\pi_L$ co\"incide avec la r\'eduction modulo $\pi_L$ de $V(\Pi_{\mathcal{L}})$ par la correspondance de Langlands semi-simple \og modulo $p$\fg{}. Sa preuve exploite la g\'eom\'etrie d'un mod\`ele formel explicite de $\Sigma_{1+\varpi_D \O_D}$.
\end{remarque}

\subsection{Survol de la preuve} 

    Comme nous l'avons d\'ej\`a pr\'ecis\'e, la preuve du th\'eor\`eme \ref{main1} combine des arguments globaux et locaux. La plupart des ingr\'edients apparaissant dans sa preuve sont
    aussi utilis\'es pour d\'emontrer le th\'eor\`eme \ref{main2}, donc nous allons nous concentrer uniquement sur la preuve du th\'eor\`eme \ref{main1} dans la suite. 
\\
    
      Commençons par la partie globale. Le but est de construire dans un premier temps un morphisme non nul, $G$-\'equivariant et continu, de $(\Omega^1(\Sigma_n)^{\rho})^*$ dans $\Pi^{\rm an}$, pour un \textit{certain}\footnote{Ce genre de strat\'egie avait \'et\'e employ\'ee par Emerton \cite{Emcomp} pour d\'emontrer la compatibilit\'e entre les correspondances de Langlands locale $p$-adique et classique pour $G$.} $\Pi \in \mathcal{V}(\pi)$. Consid\'erons une alg\`ebre de quaternions 
      $B$ sur $\mathbf{Q}$ ramifi\'ee en $p$ et d\'eploy\'ee \`a l'infini. Elle donne naissance \`a une tour de courbes de Shimura
      $({\rm Sh}_K)_{K}$ index\'ee par les sous-groupes ouverts compacts 
      $K$ de $B^*(\mathbf{A}_f)$ (on voit $B^*$ comme un groupe alg\'ebrique sur $\mathbf{Q}$ dans la suite). 
      Fixons un sous-groupe ouvert compact suffisamment petit $K^p$ de $B^*(\mathbf{A}_f^p)$ et consid\'erons 
      $K=(1+p^n\O_D)K^p\subset B^*(\mathbf{A}_f)$. Le th\'eor\`eme d'uniformisation 
      de Cerednik-Drinfeld (plus quelques contorsions topologiques) permet d'obtenir un isomorphisme
            \begin{eqnarray}\Omega^1({\rm Sh}_K\otimes_{\mathbf{Q}} \qp)^{\rho} \simeq {\rm Hom}_G^{\rm cont}( (\Omega^1(\Sigma_n)^{\rho})^*, {\rm LA}(X(K^p))), \label{unif} \end{eqnarray}
      o\`u 
      $$X(K^p)=\bar{B}^*(\mathbf{Q})\backslash \bar{B}^*(\mathbf{A}_f)/K^p,$$
     $\bar{B}$ \'etant l'alg\`ebre de quaternions sur $\mathbf{Q}$ ayant les m\^emes invariants que $B$ aux places diff\'erentes  
     de $p$ et $\infty$, et des invariants \'echang\'es en ces places ($\bar{B}$ est donc compacte modulo centre \`a l'infini). L'espace 
     $X(K^p)$ est une vari\'et\'e analytique, au sens na\"if du terme, compacte, avec une action 
     localement analytique de $G$. L'espace ${\rm LA}(X(K^p))$ des fonctions localement analytiques sur $X(K^p)$ \`a valeurs dans $L$ est muni d'une action de l'alg\`ebre de Hecke hors $p$, et cette action commute \`a l'action de 
     $G$. 
      
       Ce qui pr\'ec\`ede n'utilise pas le fait que l'on travaille avec ${\rm GL}_2(\qp)$, mais \`a partir de maintenant nous allons pleinement exploiter 
       ce qu'on conna\^it sur ce groupe.
              Le point cl\'e est de comprendre les espaces Hecke-propres 
       dans ${\rm LA}(X(K^p))$. Comme $\bar{B}$ est d\'eploy\'ee en $p$, cela se fait en reprenant mot \`a mot les arguments qui ont permis \`a Emerton \cite{Emcomp} de comprendre la 
       cohomologie compl\'et\'ee de la tour des courbes modulaires. En fait, en globalisant convenablement\footnote{De telle sorte que la repr\'esentation galoisienne associ\'ee \`a la forme automorphe globalisant $\pi$ soit irr\'eductible en r\'eduction mod $p$ et en restriction \`a 
       $\mathcal{G}_{\qp}$} la repr\'esentation 
       $\pi$ nous pouvons nous placer dans une situation relativement simple - mais qui demande quand m\^eme toute la force de la correspondance de Langlands locale 
       $p$-adique ! Ainsi, en regardant les espaces $\mathfrak{p}$-propres des deux c\^ot\'es de l'isomorphisme $\eqref{unif}$ pour un id\'eal maximal 
       convenable $\mathfrak{p}$ de l'alg\`ebre de Hecke sph\'erique et en utilisant la version\footnote{Notons que l'on a besoin d'une version forte de cette compatibilit\'e, i.e. il ne suffit pas de savoir que $\Pi^{\rm an}$ appara\^it dans l'espace propre
${\rm LA}(X(K^p))[\mathfrak{p}]$, mais qu'en plus $\Pi^{\rm an}$ est l'unique sous-quotient irr\'eductible de cet espace.}
        du th\'eor\`eme de compatibilit\'e local-global d'Emerton pour comprendre l'espace propre
       ${\rm LA}(X(K^p))[\mathfrak{p}]$, on obtient un morphisme $G$-\'equivariant non nul continu
       $(\Omega^1(\Sigma_n)^{\rho})^*\to \Pi^{\rm an}$, pour un certain $\Pi\in \mathcal{V}(\pi)$, ce qui induit par dualit\'e un morphisme
       $(\Pi^{\rm an})^*\to \Omega^1(\Sigma_n)^{\rho}$. 
       On v\'erifie sans mal que ce morphisme se restreint en un morphisme non nul $G$-\'equivariant continu 
       $$\Phi: (\Pi^{\rm an}/\Pi^{\rm lisse})^*\to \O(\Sigma_n)^{\rho}.$$
       D'apr\`es les r\'esultats de Colmez \'evoqu\'es plus haut, le membre de gauche, tout comme le membre de droite, ne d\'epend que de $M$ (ou, de fa\c con \'equivalente, de $\pi$, ou de $\rho$), et pas du choix de $\Pi \in \mathcal{V}(\pi)$ : on notera d\'esormais $\Pi(\pi,0)=\Pi^{\rm an}/\Pi^{\rm lisse}$.
\\ 
       
          La suite de la preuve, qui repr\'esente la partie la plus technique de l'article, consiste \`a montrer que 
          $\Phi$ est un isomorphisme, et qu'il est unique \`a scalaire pr\`es. L'argument est un peu acrobatique. 
         Nous commençons par munir 
          $\Pi(\pi,0)^*$ d'une structure de $\O(\Omega)$-module telle que $\Phi$ soit $\O(\Omega)$-lin\'eaire. Cela se fait en exploitant 
          la construction explicite de $\Pi$ via les $(\varphi,\Gamma)$-modules, et la comprehension de l'action de l'alg\`ebre de Lie 
          $\mathfrak{gl}_2$ de $G$ sur $\Pi^{\rm an}$.      
     Pour motiver un peu la construction, notons que l'op\'erateur 
       $\partial: \O(\Sigma_n)^{\rho}\to \O(\Sigma_n)^{\rho}$ de multiplication par $z\in \O(\Omega)$ 
       encode la structure de $\O(\Omega)$-module de $\O(\Sigma_n)^{\rho}$, et est uniquement caract\'eris\'e par 
       l'\'egalit\'e d'op\'erateurs sur $\O(\Sigma_n)^{\rho}$ 
        $$a^+-1=u^+\circ \partial,$$
       o\`u $a^+$ (respectivement $u^+$) d\'esigne l'action infinit\'esimale de 
        $\left(\begin{smallmatrix} \zpet & 0 \\ 0 & 1\end{smallmatrix}\right)$ (respectivement
        $\left(\begin{smallmatrix} 1 & \zp \\ 0 & 1\end{smallmatrix}\right)$) sur $\O(\Sigma_n)^{\rho}$. Notons que l'op\'erateur $u^+$ agit comme $-\frac{d}{dz}$. 
                          
          Le point est alors de refaire ces constructions du c\^ot\'e des $(\varphi,\Gamma)$-modules (tout cela est fortement inspir\'e d'un travail en cours de 
          Colmez \cite{Colmezpoids}). On d\'emontre ainsi l'existence d'un automorphisme $\partial$ du $L$-espace vectoriel topologique
          $\Pi(\pi,0)^*$ uniquement caract\'eris\'e par 
          le fait que 
            $$a^+-1=u^+\circ \partial.$$
             L'existence de $\partial$ est un th\'eor\`eme d\'elicat de Colmez \cite{Colmezpoids}, dont on donne une nouvelle preuve. 
      La notation $\partial$ peut para\^itre pour le moins \'etrange, sachant qu'il s'agit d'un op\'erateur de \og multiplication par $z$\fg{} : elle vient du fait que 
      $\partial$ encode la connexion sur le $(\varphi,\Gamma)$-module (sur l'anneau de Robba) attach\'e \`a $V(\Pi)$. 
       Au vu des remarques pr\'ec\'edentes, le th\'eor\`eme suivant ne devrait pas surprendre le lecteur, mais nous insistons sur le fait qu'il requiert un certain nombre d'estim\'ees pas totalement triviales et qu'il joue un r\^ole d\'ecisif dans la preuve des r\'esultats principaux de l'article. 
        
                  \begin{theoreme} \label{Module} Pour tout 
                  $\Pi\in \mathcal{V}(\pi)$
            il existe une unique structure de $\O(\Omega)$-module sur 
            $\Pi(\pi,0)^*$ qui \'etend sa structure de $L$-espace vectoriel, et telle que 
            $z\in \O(\Omega)$ agit comme $\partial$. 
          \end{theoreme}
          
          Un ingr\'edient crucial dans la preuve de ce th\'eor\`eme est la dualit\'e de Morita (\cite{Morita 1, STMorita}, par exemple), i.e. l'isomorphisme de 
          $G$-modules topologiques
  $$\Omega^1(\Omega)\simeq ({\rm St}^{\rm an})^*, \quad \mu\in ({\rm St}^{\rm an})^*\mapsto \omega_{f}=\left(\int_{\p1(\qp)} \frac{1}{z-x}\mu(x)\right)dz.$$
  Ici $z$ est \og la\fg{} variable sur $\p1$, ${\rm St}^{\rm an}$ est la Steinberg localement analytique, quotient de l'espace 
  ${\rm LA}(\p1(\qp))$ des fonctions localement analytiques sur $\p1(\qp)$ par les fonctions constantes.
  La structure de $\O(\Omega)$-module du th\'eor\`eme pr\'ec\'edent est alors donn\'ee par 
  $$\left(\int_{\p1(\qp)} \frac{1}{z-x}\mu(x)\right)\cdot l=\int_{\p1(\qp)}(\partial-x)^{-1}(l)\mu(x),$$
  pour tout $l\in (\Pi^{\rm an}/\Pi^{\rm lisse})^*$ (bien s\^ur, il faut donner un sens \`a ces expressions!). 
 
      Un r\'esultat frappant que l'on obtient, comme corollaire du r\'esultat final, est que 
           $\Pi(\pi,0)^*$ est localement libre de rang $\dim_{L}(\rho)$ comme $\O(\Omega)$-module. Cela semble tr\`es d\'elicat \`a d\'emontrer, et m\^eme \`a deviner,
           en utilisant seulement la th\'eorie des $(\varphi,\Gamma)$-modules, qui sert \`a construire $\Pi(\pi,0)$ et l'op\'erateur $\partial$.
             
         Une fois le th\'eor\`eme \ref{Module} d\'emontr\'e, nous montrons que $\Phi: \Pi(\pi,0)^* \to \O(\Sigma_n)^{\rho}$ est surjectif. 
         Cela se fait en deux \'etapes: nous montrons d'abord que $\Phi$ est d'image dense, et ensuite qu'il est surjectif. La densit\'e de l'image de $\Phi$ vient de l'irr\'eductibilit\'e du fibr\'e $G$-\'equivariant sur 
             $\Omega$ dont les sections globales sont $ \O(\Sigma_n)^{\rho}$, r\'esultat qui se d\'emontre en utilisant les r\'esultats de Kohlhaase 
             \cite{Kohl}, permettant de \og transf\'erer le probl\`eme\fg{} sur la tour de Lubin-Tate: via ce transfert, l'irr\'eductibilit\'e se ram\`ene \`a l'irr\'eductibilit\'e 
             du fibr\'e $D^*$-\'equivariant $\rho^*\otimes \O_{\check{\mathbf{P}^1}}$, qui est nettement plus facile \`a \'etablir\footnote{Mais qui utilise de mani\`ere cruciale
             le fait que $\rho$ est irr\'eductible et {\it{lisse}}.}.
             
            Expliquons enfin rapidement l'argument pour l'injectivit\'e de $\Phi$. 
                  On note $\O(k)(\Sigma_n)$ la repr\'esentation de $G$ sur 
       $\O(\Sigma_n)$ obtenue en tordant l'action naturelle comme suit\footnote{Comme
     $\O(\Sigma_n)$ est un $\O(\Omega)\simeq \O(\Sigma_0)^{D^*}$-module (l'isomorphisme \'etant donn\'e par le plongement diagonal
     de $\O(\Omega)$ dans $\O(\Sigma_0)$), et comme $(a-cz)^{-k}\in \O(\Omega)$, la formule pr\'ec\'edente a un sens.}
     $$\left(\begin{smallmatrix} a & b \\ c & d\end{smallmatrix}\right)*_kf=(a-cz)^{-k}\cdot \left(\left(\begin{smallmatrix} a & b \\ c & d\end{smallmatrix}\right).f\right).$$
           Puisque 
       $\Sigma_n$ est \'etale sur $\Sigma_0$, on a une trivialisation 
       $\Omega^1(\Sigma_n)\simeq \O(\Sigma_n)dz$, qui induit un isomorphisme de 
       $G$-repr\'esentations 
       $$\Omega^1(\Sigma_n)\simeq \O(2)(\Sigma_n)\otimes \det.$$
       On peut faire les m\^emes constructions purement \`a partir
            des $(\varphi,\Gamma)$-modules, ce qui permet de d\'efinir une repr\'esentation $\Pi(\pi,2)^*$ 
          en faisant agir $G$ sur $\Pi(\pi,0)^*$ par 
            $$\left(\begin{smallmatrix} a & b \\ c & d\end{smallmatrix}\right)*v:=\det g\cdot (a-c\partial)^{-2} \left(\begin{smallmatrix} a & b \\ c & d\end{smallmatrix}\right).v.$$ 
            Le morphisme 
            $\Phi$ \'etant $\O(\Omega)$-lin\'eaire, $G$-\'equivariant et surjectif, il induit un morphisme 
            $G$-\'equivariant continu et surjectif
            $\Phi: \Pi(\pi,2)^*\to \Omega^1(\Sigma_n)$, et il suffit de d\'emontrer que ce morphisme est injectif. Cela se fait en plusieurs \'etapes. 
            D'abord, nous utilisons encore une fois l'uniformisation de Cerednik-Drinfeld et un argument avec la suite spectrale de Hochschild-Serre 
            comme dans \cite{Harris Inv} et \cite{Fargues these} pour montrer que $H^1_{\rm dR}(\Sigma_n)^{\rho}$ admet deux copies de
            $\pi^*$ comme quotient. Ensuite, la \og th\'eorie du mod\`ele de Kirillov\fg{} de Colmez permet \footnote{Cela fait bon usage d'anneaux de Fontaine, de l'\'equation diff\'erentielle attach\'ee par Berger \cite{Ber} \`a une repr\'esentation de de Rham, ainsi que des r\'esultats de \cite{Annalen} et \cite[chap VI]{Cbigone}.} de montrer que 
            le conoyau de $u^+: \Pi(\pi,0)^*\to \Pi(\pi,2)^*$ est canoniquement (\`a scalaire pr\`es) isomorphe \`a $M_{\rm dR}^*\otimes \pi^*$. On d\'eduit de ce qui pr\'ec\`ede que le morphisme 
            $$\Phi: \Pi(\pi,2)^*/u^+(\Pi(\pi,0))^*\to  \Omega^1(\Sigma_n)/d(\O(\Sigma_n)^{\rho})$$
            est forc\'ement un isomorphisme, ce qui fournit au passage un isomorphisme\footnote{Une m\'ethode plus naturelle serait d'utiliser la cohomologie d'Hyodo-Kato d'un mod\`ele de $\Sigma_n$, mais cela pose un certain nombre de probl\`emes...}
            $$H^1_{\rm dR}(\Sigma_n)^{\rho}\simeq M_{\rm dR}^*\otimes \pi^*.$$
           De cela on d\'eduit assez facilement l'injectivit\'e de $\Phi$, en prouvant qu'il n'existe pas de sous-espace $G$-stable dans $\cap_{n\geq 0} (u^+)^n(\Pi(\pi,2)^*)$. Ce dernier espace est en fait nul (cela d\'ecoule de \cite{Colmezpoids} ou \cite{Dthese} et utilise de mani\`ere cruciale le fait que les repr\'esentations auxquelles on travaille ne sont pas triangulines), ce qui joue un r\^ole important dans la preuve du th\'eor\`eme \ref{corollaireter} ci-dessous. 
                   
            Ce qui pr\'ec\`ede montre  
            que l'application $u^+: \Pi(\pi,0)^*\to \Pi(\pi,0)^*$ induit une suite exacte de 
             $G$-modules de Fr\'echet 
             $$0\to \Pi(\pi,0)^*\to \Pi(\pi,2)^*\to M_{\rm dR}^*\otimes_{L} \pi^*\to 0$$
           et que l'isomorphisme $\Pi(\pi,0)^*\simeq \O(\Sigma_n)^{\rho}$ induit un 
                        isomorphisme de $G$-modules topologiques 
            $$\Omega^1(\Sigma_n)^{\rho}\simeq \Pi(\pi,2)^*.$$
     Le th\'eor\`eme \ref{main2} s'en d\'eduit en suivant soigneusement ces identifications. 
        
        \begin{remarque}
          Colmez a d\'emontr\'e \cite{Colmezpoids} que la repr\'esentation $\Pi(\pi,0)$ est (topologiquement) irr\'eductible, ce qui fournit une preuve directe de l'injectivit\'e. 
          Nous avons toutefois besoin de tous les ingr\'edients ci-dessus pour la preuve du th\'eor\`eme \ref{main2}. 
        \end{remarque}
                        
\subsection{Compl\'ements}

Nombre des objets construits \`a l'aide des $(\varphi,\Gamma)$-modules mentionn\'es pr\'ec\'edemment trouvent donc une interpr\'etation g\'eom\'etrique, \`a l'exception notable du faisceau $G$-\'equivariant $U\to tN_{\rm rig} \boxtimes U$ sur $\mathbf{P}^1(\qp)$ construit par Colmez\footnote{Pour une d\'efinition, voir le \S\  \ref{points fixes}.}, dont l'espace des sections globales contient $\Pi(\pi,0)^*$ et qui joue un r\^ole capital dans la th\'eorie. Dans la derni\`ere section de cet article, nous proposons une interpr\'etation g\'eom\'etrique naturelle de ce faisceau, qui prolonge naturellement la conjecture de Breuil-Strauch. Le lecteur est renvoy\'e \`a \ref{bord} pour un \'enonc\'e pr\'ecis. Contentons-nous pour finir cette partie de citer deux autres cons\'equences de nos r\'esultats.

           Soit $D(\Gamma)$ l'alg\`ebre des distributions sur $\Gamma={\rm Gal}(\qp^{\rm cyc}/\qp)\simeq \zpet$ \`a valeurs dans $L$. Si $V$ est une repr\'esentation de de Rham de 
      $\mathcal{G}_{K}:={\rm Gal}(\overline{\qp}/K)$, avec $K$ une extension finie de $\qp$, on note $H^1_e(\mathcal{G}_K, V)$ l'image de l'exponentielle de Bloch-Kato. 
       Soit maintenant $V\in \mathcal{V}(\pi)$.
        On dispose d'applications naturelles 
      $$\int_{1+p^n\zp}: H^1(\mathcal{G}_{\qp}, D(\Gamma)\otimes_{L} V)\to H^1(\mathcal{G}_{F_n}, V),$$
      o\`u $F_n=\qp(\mu_{p^n})$.
     
     \begin{theoreme}\label{corollairebis}
        Il existe un isomorphisme de $D(\Gamma)$-modules libres de rang $2$ 
          $$(\O(\Sigma_n)^{\rho})^{\left(\begin{smallmatrix} p & 0 \\0 & 1\end{smallmatrix}\right)=1}\simeq \{\mu 
          \in H^1(\mathcal{G}_{\qp}, D(\Gamma)\otimes_{L} V) ~ | \int_{1+p^n\zp} \mu\in H^1_{e}(\mathcal{G}_{F_n}, V) \quad \forall n\geq 0\}.$$
     \end{theoreme}
     
        Enfin, la trivialisation  
        $\Omega^1(\Sigma_n)=\O(\Sigma_n)dz$ permet de d\'efinir une application 
        $\frac{d}{dz}: \O(\Sigma_n)\to \O(\Sigma_n)$. On dit qu'une fonction 
        $f\in \O(\Sigma_n)$ est infiniment primitivable si $f$ est dans l'image de 
        $(\frac{d}{dz})^{\circ k}$ pour tout $k$. En d'autres termes, $f$ est infiniment primitivable
        si $f\in \cap_{k\geq 0} (u^+)^{k}(\O(\Sigma_n)).$
     
     \begin{theoreme}\label{corollaireter}
       Soit $f\in \O(\Sigma_n)$ une fonction infiniment primitivable sur $\Sigma_n$. Alors 
       $f\in \O(\Omega)$.  
     \end{theoreme}

     \begin{remarque}  Dans un article ult\'erieur \cite{nulles}, nous discuterons le lien entre $\O(\Sigma_n)$ et la courbe de Fargues-Fontaine : soit 
      $\O(\Sigma_n)_{\infty}$
le sous-espace de $\O(\Sigma_n)$ des fonctions 
      $f$ telles que 
   $$ \lim_{v_p(b)\to-\infty} \left(\begin{smallmatrix} 1 & b \\0 & 1\end{smallmatrix}\right)f=0.$$
   
 G\'eom\'etriquement, $\O(\Sigma_n)_{\infty}$ est le sous-espace de 
   $\O(\Sigma_n)$ form\'e des fonctions qui tendent vers z\'ero quand \og on s'approche dans les directions rationnelles du point $\infty$ du bord\fg{}. Soient
   $$\tilde{\mathbf{B}}_{\rm rig}^+=\bigcap_{n\geq 0} \varphi^n(\mathbf{B}_{\rm cris}^+), \quad \mathcal{H}_{\qp}={\rm Gal}(\overline{\qp}/\qp^{\rm cyc}).$$
    On d\'emontre alors \cite{nulles} qu'il                       
                          existe un isomorphisme de repr\'esentations de $B=\left(\begin{smallmatrix} \qpet & \qp \\0 & \qpet\end{smallmatrix}\right)$
            $$\O(\Sigma_n)^{\rho}_{\infty}\simeq (\tilde{\mathbf{B}}_{\rm rig}^+\otimes_{\qp^{\rm nr}} M(\pi))^{\mathcal{H}_{\qp}}\otimes \delta,$$
            o\`u $\delta: B\to \qpet$ est le caract\`ere $\delta\left(\left(\begin{smallmatrix} a & b \\ 0 & d\end{smallmatrix}\right)\right)=\frac{a}{d}$.  
     Pr\'ecisons simplement que l'action de $\left(\begin{smallmatrix} \zpet & 0 \\0 & 1\end{smallmatrix}\right)$ sur le terme de droite se fait \`a travers l'action naturelle de $\Gamma={\rm Gal}(\qp^{\rm cyc}/\qp)$, via l'isomorphisme $\left(\begin{smallmatrix} \zpet & 0 \\0 & 1\end{smallmatrix}\right)\simeq \Gamma$ induit par le caract\`ere cyclotomique. L'action de $\left(\begin{smallmatrix} p & 0 \\0 & 1\end{smallmatrix}\right)$ correspond \`a l'action du Frobenius sur $(\tilde{\mathbf{B}}_{\rm rig}^+\otimes_{\qp^{\rm nr}} M(\pi))^{\mathcal{H}_{\qp}}$. Notons aussi que tous les objets dans l'\'enonc\'e  pr\'ec\'edent ont un sens pour ${\rm GL}_2(F)$. On peut naturellement se demander si c'est plus qu'une coïncidence... 
\\

     \end{remarque}

\subsection{Plan de l'article} L'encha\^inement des chapitres de ce texte suit essentiellement le cheminement de la preuve esquiss\'ee ci-dessus, dont nous reprenons les notations. La construction du morphisme $\Phi : (\Pi^{\rm an}/\Pi^{\rm lisse})^* \to \O(\Sigma_n)^{\rho}$ est l'objet de la section \ref{construction morphisme}. Les deux chapitres pr\'ec\'edents contiennent des r\'esultats pr\'eliminaires \`a cette construction : description de la tour de Drinfeld et propri\'et\'es de l'action des groupes $G$ et $D^*$ sur la tour (section \ref{cotedrinfeld}) ; th\'eor\`eme d'uniformisation $p$-adique (section \ref{calculdeRham}), rappels sur les formes automorphes sur les alg\`ebres de quaternions (section \ref{calculdeRham}) et enfin le calcul de la $\pi$-partie de la cohomologie de Rham \`a supports compacts des rev\^etements de Drinfeld qui sera utile plus tard. La preuve du th\'eor\`eme de compatibilit\'e local-global (d'apr\`es Emerton) est repouss\'ee en appendice. Le chapitre \ref{rappel phi Gamma} est constitu\'e de quelques rappels standard sur la th\'eorie des $(\varphi,\Gamma)$-modules, tandis que le chapitre \ref{kirillovcolmez} contient des rappels, moins standard et fondamentaux pour la suite, sur la correspondance de Langlands $p$-adique : en particulier, la description de l'action infinit\'esimale de $G$ sur les vecteurs localement analytiques et la th\'eorie du mod\`ele de Kirillov de Colmez. Ces r\'esultats sont pleinement utilis\'es dans la section \ref{independance du quotient} pour construire $\Pi(\pi,0)$, puis dans la section \ref{cotecolmez} pour munir $\Pi(\pi,0)^*$ d'un op\'erateur $\partial$ et d'une structure de $\O(\Omega)$-module. La d\'emonstration de la surjectivit\'e de $\Phi$ est alors possible et expos\'ee dans le chapitre \ref{surjectivit\'e}. La fin de la preuve des th\'eor\`emes principaux est l'objet du chapitre \ref{injectivit\'e} et fait encore appel aux r\'esultats du chapitre \ref{kirillovcolmez}. Enfin, la section \ref{complements} contient quelques corollaires et une question. 
   
\subsection{Remerciements.}
Nous tenons \`a remercier chaleureusement Christophe Breuil, qui nous a expliqu\'e sa conjecture avec Matthias Strauch et a suivi nos progr\`es avec int\'er\^et et attention. Il est \'evident que cet article n'aurait jamais vu le jour sans les articles monumentaux \cite{Cbigone, Emcomp} de Pierre Colmez et Matthew Emerton. Nous remercions 
Pierre Colmez et Laurent Fargues pour des longues et fr\'equentes discussions, ainsi que pour leurs suggestions et encouragements : en particulier la pr\'epublication \cite{Colmezpoids} et une remarque de Fargues ont jou\'e un r\^ole d\'ecisif dans l'\'elaboration de ce travail. Pour des discussions utiles, nous tenons \'egalement \`a remercier Konstantin Ardakov, Elmar Grosse-Kl\"onne, Vincent Pilloni, Peter Scholze, Matthias Strauch, Jared Weinstein, et tout sp\'ecialement Benjamin Schraen. G.D. voudrait remercier l'IHES (en particulier Ahmed Abbes et Benjamin Schraen) et le M.S.R.I. pour les excellentes conditions de travail, ainsi que l'A.N.R Percolator pour le financement.
Nous remercions enfin le rapporteur pour la lecture attentive et pour bon nombre de remarques qui ont permis de clarifier et de corriger certaines assertions.

\section{Notations et conventions} 

\begin{enumerate}

    \item On fixe une cl\^oture alg\'ebrique $\overline{\mathbf{Q}}_p$ de $\qp$ et on note 
    $\mathbf{C}_p$ son compl\'et\'e. Toutes les extensions de $\qp$ consid\'er\'ees dans la suite seront \`a l'int\'erieur de $\mathbf{C}_p$. 
         On note $\qpp$ l'unique extension non ramifi\'ee quadratique de $\qp$, et on note 
    $\breve{\mathbf{Q}}_p$ le compl\'et\'e de l'extension maximale non ramifi\'ee $\qp^{\rm nr}$ de $\qp$ dans $\overline{\mathbf{Q}}_p$. Pour $n\geq 1$, on note $F_n=\qp(\mu_{p^n})$ et $\qp^{\rm cyc}$ le compl\'et\'e de la r\'eunion des $F_n$. On pose, enfin, 
     $$\mathcal{G}_{\qp}=\mathrm{Gal}(\overline{\qp}/\qp),\quad \mathcal{H}_{\qp}= \mathrm{Gal}(\overline{\qp}/\qp^{\rm cyc}),\quad \Gamma={\rm Gal}(\qp^{\rm cyc}/\qp).$$

\item Dans tout le texte, $G={\rm GL}_2(\qp)$. On note $G_0={\rm GL}_2(\zp)$ et $G_n=1+p^n {\rm M}_2(\zp)$, pour tout $n\geq 1$. 
On note aussi $B$ le sous-groupe de Borel (sup\'erieur) de $G$ et $P=\left(\begin{smallmatrix} \qpet & \qp \\0 & 1\end{smallmatrix}\right)$
le {\it{mirabolique}}. Enfin, on pose $\mathfrak{g}=\mathfrak{gl}_2={\rm Lie}(G)$ et on consid\`ere la base de 
$\mathfrak{g}$ donn\'ee par 
$$a^+=\left(\begin{smallmatrix} 1 & 0 \\ 0 & 0\end{smallmatrix}\right), \quad a^-=\left(\begin{smallmatrix} 0 & 0 \\ 0 & 1\end{smallmatrix}\right), \quad
u^+=\left(\begin{smallmatrix} 0 & 1 \\ 0 & 0\end{smallmatrix}\right), \quad u^-=\left(\begin{smallmatrix} 0 & 0 \\ 1 & 0\end{smallmatrix}\right).$$
On note $h=a^+-a^-\in U(\mathfrak{g})$. 

\item On fait la convention importante que {\it{toutes les actions de $G$ seront \`a gauche}}. En particulier, si 
$G$ agit sur un espace rigide analytique (ou un sch\'ema) $X$, on transforme l'action naturelle 
\`a droite de $G$ sur les sections globales d'un fibr\'e $G$-\'equivariant sur $X$ en une action \`a gauche (via l'anti-involution 
$g\to g^{-1}$ de $G$). 

\item On note $D$ l'unique alg\`ebre de quaternions non d\'eploy\'ee sur $\qp$ (\`a isomorphisme pr\`es), $\O_D$ son unique ordre maximal, $D_0=\O_D^*$, et $D_n= 1+p^n\O_D$, pour tout $n \geq 1$.
Fixons, une fois pour toutes, un plongement de $\mathbf{Q}_{p^2}$ dans $D$. Soit 
  $\varpi_D$ une uniformisante de $\O_D$ telle que 
\[ \O_D=\mathbf{Z}_{p^2}[\varpi_D], \quad \varpi_D^2=p, \quad \text{et} \quad\varpi_D x= \sigma(x) \varpi_D, \quad  \forall x \in \qpp, \]
o\`u $\sigma$ est le Frobenius de $\mathbf{Q}_{p^2}$.

\item La repr\'esentation supercuspidale $\pi$, de caract\`ere central trivial et d\'efinie sur une extension finie 
$L$ de $\qp$ (qui grandira selon les besoins...) sera fix\'ee une fois pour toutes.  
La repr\'esentation irr\'eductible de $D^*$ attach\'ee \`a $\pi$ par la correspondance de Jacquet-Langlands locale sera not\'ee
$\rho={\rm JL}(\pi)$. Noter que $\rho$ est aussi \`a caract\`ere central trivial. 

\item Si 
$X$ est un espace rigide analytique sur $\qp$, on note $\O(X)=L\otimes_{\qp} H^0(X, \O_X)$ et 
$\Omega^1(X)=L\otimes_{\qp} H^0(X, \Omega^1_{X})$. M\^eme convention pour 
$H^1_{\rm dR}(X)$ (on n'utilisera la cohomologie de de Rham que pour des espaces rigides analytiques lisses et Stein).

\item Soit $X$ est une vari\'et\'e analytique $p$-adique avec action localement analytique de $G$.  
    Si $V$ est un $\zp$-module topologique, on note $\con(X, V)$ 
   (resp. ${\rm LC}(X,V)$) l'ensemble des fonctions continues (resp. localement constantes) $\phi: X\to V$. On note simplement 
   $\con(X)$ (resp. ${\rm LC}(X)$, resp. ${\rm LA}(X)$) au lieu de $\con(X,L)$ (resp. ${\rm LC}(X,L)$, resp. les fonctions localement analytiques 
   $\phi: X\to L$). Tous ces espaces de fonctions sont munis d'actions naturelles (\`a gauche) de $G$. 
    
 \item Si $H$ est un groupe de Lie $p$-adique, on \'ecrit $D(H)$ pour l'alg\`ebre des distributions sur 
    $H$ \`a valeurs dans $L$. C'est le dual topologique fort de ${\rm LA}(H)$. 
        
\item Si $\mathcal{B}$ est une $L$-repr\'esentation de Banach de $G$, on note 
$\mathcal{B}^{\rm an}$ (resp. $\mathcal{B}^{\rm alg}$, resp. $\mathcal{B}^{\rm lisse}$) le sous-espace de 
$\mathcal{B}$ form\'e des vecteurs $v$ tels que l'application $G\to \mathcal{B}$, $g\mapsto g.v$ soit localement analytique
(resp. localement polynomiale, resp. localement constante). 
Si 
 ${\rm Alg}(G)$ est l'ensemble des (classes d'isomorphisme de) repr\'esentations alg\'ebriques irr\'eductibles de $G$ d\'efinies sur $L$, alors 
$\mathcal{B}_{\rm alg}$ est l'image du morphisme naturel (injectif)
$$\bigoplus_{W\in {\rm Alg}(G)} W\otimes_L{\rm Hom}_{\mathfrak{g}}(W, \mathcal{B}^{\rm an})\to \mathcal{B}^{\rm an},$$
o\`u $\mathfrak{g}={\rm Lie}(G)$, alors que $\mathcal{B}^{\rm lisse}$ est le sous-espace 
$(\mathcal{B}^{\rm an})^ {\mathfrak{g}=0}$ de $\mathcal{B}^{\rm an}$ des vecteurs tu\'es par tout \'el\'ement de 
$\mathfrak{g}$. Enfin, 
si $K_p$ est un sous-groupe ouvert compact de $G$, on note $\mathcal{B}_{K_p-\rm alg}$ l'image du morphisme 
 $$\bigoplus_{W\in {\rm Alg}(G)} W\otimes_{L} {\rm Hom}_{K_p} (W, \mathcal{B})\to \mathcal{B}.$$
 
 \item Soit $K'/K$ une extension finie galoisienne ($K$ et $K'$ \'etant des extensions finies de $\qp$), et soit $L$ une extension finie de $\qp$ telle que $|{\rm Hom}_{\qp}(K_0', L)|=[K_0':\qp]$, o\`u $K_0'$ d\'esigne l'extension maximale non ramifi\'ee de $\qp$ dans $K'$. Soit ${\rm WD}_{K'/K}$ la cat\'egorie des repr\'esentations de Weil-Deligne $(r, N, V)$ de $W(\overline{\qp}/K)$ sur des $L$-espaces vectoriels de dimension finie, telles que $r$ soit non ramifi\'ee en restriction \`a $W(\overline{\qp}/K')$. Fontaine \cite{per adiques} (voir aussi \cite[Ch. 4]{Breuil-Schneider} pour un r\'esum\'e) a d\'efini un foncteur 
 de la cat\'egorie des $(\varphi, N, {\rm Gal}(K'/K), L)$-modules\footnote{Ce sont donc des $K_0'\otimes_{\qp} L$-modules libres de type fini avec un Frobenius bijectif semi-lin\'eaire, un op\'erateur lin\'eaire (nilpotent) $N$ satisfaisant 
 $N\varphi=p\varphi N$ et une action semi-lin\'eaire (par rapport \`a $K_0'$) de ${\rm Gal}(K'/K)$, commutant \`a $\varphi$ et $N$.} vers ${\rm WD}_{K'/K}$, qui est une \'equivalence de cat\'egories gr\^ace \`a \cite[Prop. 4.1]{Breuil-Schneider}. Si on combine cela avec la correspondance de Langlands classique, on en d\'eduit une recette attachant \`a des (classes d'isomorphisme de) repr\'esentations lisses irr\'eductibles de ${\rm GL}_n(K)$ des (classes d'isomorphisme de) $(\varphi, N, {\rm Gal}(K'/K), L)$-modules (pour certains $K'$). En partant de la supercuspidale $\pi$ de ${\rm GL}_2(\qp)$ fix\'ee et en 
  tensorisant par $\qp^{\rm nr}$ au-dessus de $K_0'$ (pour $K=\qp$ et un choix convenable de $K'$), on en d\'eduit un $(\varphi, \mathcal{G}_{\qp})$-module 
  $M(\pi)$, qui jouera un r\^ole important dans cet article. 
 
     \end{enumerate}
    
\section{Rev\^etements du demi-plan de Drinfeld et fibr\'es vectoriels}\label{cotedrinfeld}

     Nous rappelons dans ce chapitre un certain nombre de r\'esultats relativement standard sur la tour de Drinfeld et nous \'etablissons le caract\`ere localement analytique de l'action de
     $G$ sur $\O(\Sigma_n)$, ainsi que le caract\`ere lisse de la $G$-repr\'esentation $H^1_{\rm dR, c}(\Sigma_n)$. Le lecteur pourra consulter \cite{Boutot-Carayol, Dat, Drinfeld, FGL, RZ} pour plus de d\'etails concernant la tour de Drinfeld.  

\subsection{L'espace de Drinfeld et ses rev\^ etements}

   Soit $S$ un $\breve{\mathbf{Z}}_p$-sch\'ema. Un \textit{$\O_D$-module formel sp\'ecial sur $S$}
   est un groupe formel $p$-divisible $X$ sur $S$, de dimension $2$ et hauteur $4$, muni d'une action de $\O_D$ telle que 
    l'action induite de $\mathbf{Z}_{p^2}$ sur l'alg\`ebre de Lie de $X$ fait de celle-ci un $\O_S \otimes_{\zp} \mathbf{Z}_{p^2}$-module localement libre de rang $1$.
 Il existe une unique classe de $\O_D$-isog\'enie de $\O_D$-modules formels sp\'eciaux sur $\bar{\mathbf{F}}_p$. Fixons un tel $\O_D$-module formel sp\'ecial $\mathbf{X}$. Le foncteur des d\'eformations de $\mathbf{X}$ par quasi-isog\'enies $\O_D$-\'equivariantes\footnote{Il s'agit du foncteur qui \`a $S$ un $\breve{\mathbf{Z}}_p$-sch\'ema sur lequel $p$ est nilpotent associe l'ensemble des classes d'isomorphisme de couples $(X,\rho)$, avec $X$ un $\O_D$-module formel sp\'ecial sur $S$ et $\rho$ une quasi-isog\'enie $\O_D$-\'equivariante entre $\mathbf{X}_{\bar{S}}$ et $X_{\bar{S}}$; ici $\bar{S}$ est le sous-sch\'ema ferm\'e de $S$ d\'efini par $p=0$.} est repr\'esentable \cite{RZ} par un sch\'ema formel $p$-adique sur $\breve{\mathbf{Z}}_p$. On note 
 $\breve{\mathcal{M}}_0$ la fibre g\'en\'erique rigide de ce sch\'ema formel. Un th\'eor\`eme fondamental de Drinfeld \cite{Drinfeld} fournit un isomorphisme 
\[  \breve{\mathcal{M}}_0 \simeq \breve{\Omega}\times \mathbf{Z}, \]
 o\`u $\breve{\Omega}=\Omega\hat{\otimes}_{\qp} \breve{\mathbf{Q}}_p$ et $\Omega$ est le demi-plan de Drinfeld, un espace rigide sur $\qp$ 
 dont les $\cp$-points sont $$\Omega(\cp)=\mathbf{P}^1(\cp)- \mathbf{P}^1(\qp).$$ 
L'espace $ \breve{\mathcal{M}}_0$ est muni d'une action \`a gauche de $G$ donn\'ee par
\[ g.(X,\rho) = (X, \rho \circ g^{-1}), \]
qui correspond par l'isomorphisme de Drinfeld \`a l'action usuelle par homographies de $G$ sur le demi-plan et au d\'ecalage par $-v_p(\det g)$ sur $\mathbf{Z}$, ainsi que d'une donn\'ee de descente \`a la Weil\footnote{Pour m\'emoire, une donn\'ee de descente \`a la Weil sur un sch\'ema $X$ sur $\breve{\mathbf{Z}}_p$ est un isomorphisme de sch\'emas $X \to \sigma_* X=X \otimes_{\breve{\mathbf{Z}}_p,\sigma} \breve{\mathbf{Z}}_p$ sur $\breve{\mathbf{Z}}_p$, o\`u $\sigma$ est le Frobenius.}, qui correspond via l'isomorphisme de Drinfeld 
au compos\'e de la donn\'ee de descente canonique et du d\'ecalage par $1$. Elle n'est donc pas effective, mais pour tout entier 
$t>0$, cette donn\'ee de descente sur le quotient $p^{t\z} \backslash \breve{\mathcal{M}_0}$ de $\breve{\mathcal{M}}_0$ par l'action de l'\'el\'ement $p^t$ du centre de $G$ devient effective.
En prenant $t=1$, on obtient un mod\`ele $\Sigma_0$ de $p^{\z} \backslash \breve{\mathcal{M}_0}$ sur $\qp$.

Soit $X^{\rm un}$ le groupe $p$-divisible rigide universel sur $\breve{\mathcal{M}_0}$. Si $n\geq 1$, on d\'efinit $$\breve{\mathcal{M}}_{n}=X^{\rm un}[p^n]-X^{\rm un}[\varpi_D^{2n-1}].$$ C'est un rev\^ etement \'etale galoisien de $\breve{\mathcal{M}_0}$ de groupe de Galois $\O_D^*/(1+p^n \O_D)$. Une fois encore, son quotient par l'action de $p^{t\z}$ descend \`a $\qp$ pour tout $t>0$. Pour $t=1$, cela fournit un mod\`ele $\Sigma_n$ de $p^{\z} \backslash\breve{\mathcal{M}}_{n}$ sur $\qp$. Il est muni d'actions \`a gauche de $G$ et \`a droite de $D^*$, qui commutent. 

\subsection{Quelques rappels sur les espaces Stein}

 Nous renvoyons le lecteur \`a \cite{GK1, GK2, Kiehl, SS} pour les preuves des r\'esultats \'enonc\'es dans ce paragraphe. 
 
    Soit $K$ un corps de caract\'eristique $0$, complet pour une valuation discr\`ete, et soit 
$X$ un espace rigide Stein sur $K$. Rappelons que cela veut dire que $X$ admet un recouvrement croissant admissible 
$(X_n)_{n\geq 0}$ par des ouverts affino\"ides tels que $\O(X_{n+1})\to \O(X_n)$ soit d'image dense pour tout $n$. 
Dans ce cas, la fl\`eche naturelle 
$\O(X)\to \varprojlim_{n} \O(X_n)$ est un isomorphisme d'espaces vectoriels topologiques, ce qui fait que 
$\O(X)$ est naturellement un $K$-espace de Fr\'echet (c'est en fait une alg\`ebre de Fr\'echet-Stein au sens de Schneider et Teitelbaum \cite{STInv}). Le th\'eor\`eme de Kiehl \cite{Kiehl} montre que les faisceaux coh\'erents sur $X$ n'ont pas de cohomologie 
en d\'egr\'e $>0$. 

   Supposons en outre que $X$ est lisse. D'apr\`es le th\'eor\`eme de Kiehl mentionn\'e ci-dessus,
 la cohomologie de de Rham de $X$ se calcule comme la cohomologie du complexe des sections globales du complexe de de Rham de $X$. 
De plus, les diff\'erentielles $d_X^k : \Omega^k(X) \to \Omega^{k+1}(X)$ sont des morphismes stricts d'image ferm\'ee. En particulier, les groupes de cohomologie de de Rham de $X$ sont des espaces de Fr\'echet (voir \cite[cor.3.2]{GK1} pour tout ceci). Si $d=\dim X$, alors 
$H_c^k(X,\mathcal{F})=0$ pour tout fibr\'e $\mathcal{F}$ sur $X$ et tout $k<d$. On d\'efinit alors $H_{\mathrm{dR,c}}^{k+d}(X)$ comme le $k$-\`eme groupe de cohomologie du complexe
\[ \dots \to H_c^d(X,\Omega^k) \to H_c^d(X,\Omega^{k+1}) \to \dots \]
La dualit\'e de Serre pour les vari\'et\'es Stein \cite{Chiar} montre que ce complexe est dual du complexe des sections globales du complexe de de Rham de $X$, tordu par $\Omega^d(X)$. Cela permet de montrer \cite[th. 4.11]{GK2} que si $X$ est pure de dimension $d$, alors pour tout $k$ on a des isomorphismes canoniques 
$$H_{\mathrm{dR}}^k(X)\simeq H_{\rm dR,c}^{2d-k}(X)^*\quad \text{et}\quad H_{\mathrm{dR,c}}^k(X)=H_{\mathrm{dR}}^{2d-k}(X)^*,$$
les duaux \'etant topologiques (comme toujours dans cet article). 
 La preuve de \cite[cor.3.2]{GK1} montre que pour tout 
    $k$ l'espace vectoriel topologique $H^k_{\rm dR}(X)$ est isomorphe \`a la limite inverse d'une suite $(V_n)_n$ d'espaces de dimension finie sur $K$. En particulier 
    $H^k_{\rm dR}(X)$ est un Fr\'echet r\'eflexif et son dual topologique $H_{\rm dR,c}^{2d-k}(X)$ est la limite inductive des $V_n^*$. On en d\'eduit que $H_{\mathrm{dR}}^k(X)$ est aussi le dual alg\'ebrique de $H_{\rm dR,c}^{2d-k}(X)$. Puisque $\Omega$ est un espace Stein\footnote{Voir la discussion suivant la proposition \ref{Sigma est Stein} pour une explication de ce fait standard.}, il en est de m\^eme de $\Sigma_0$ et puisque 
    $\Sigma_n$ est un rev\^etement \'etale fini de $\Sigma_0$, on obtient la

\begin{proposition}\label{Sigma est Stein}
Pour tout $n\geq 0$, l'espace rigide $\Sigma_n$ est un espace Stein. 
\end{proposition}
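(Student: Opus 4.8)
\demo
The plan is to deduce the Stein property of $\Sigma_n$ from that of $\Omega$, using the two structural facts recalled above: that $\Sigma_0$ is a disjoint union of two copies of $\Omega$, and that $\Sigma_n\to\Sigma_0$ is finite \'etale. First I would recall why $\Omega$ is Stein over $\qp$: for each integer $k\geq 0$ let $\Omega^{(k)}\subset\mathbf{P}^1$ be the affino\"ide subdomain obtained by removing the open ball of radius $p^{-k}$ around each point of $\mathbf{P}^1(\qp)$ (only finitely many balls are needed at each level, since $\mathbf{P}^1(\qp)$ is compact); then $\Omega=\bigcup_k\Omega^{(k)}$ is an increasing admissible affino\"ide covering, and the transition maps $\O(\Omega^{(k+1)})\to\O(\Omega^{(k)})$ have dense image (see e.g. \cite{SS}, \cite{Dat}). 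Since Drinfeld's isomorphism $\breve{\mathcal{M}}_0\simeq\breve{\Omega}\times\mathbf{Z}$ \cite{Drinfeld} and the descent to $\qp$ recalled above identify $\Sigma_0$ with $\Omega\sqcup\Omega$, and a finite disjoint union of Stein spaces is Stein (exhaust each component separately), $\Sigma_0$ is Stein.

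It then remains to prove the general fact that a finite morphism $g:Y\to X$ of rigid analytic spaces with $X$ Stein has $Y$ Stein, applied to $g:\Sigma_n\to\Sigma_0$ (which is finite, being finite \'etale). Fix an increasing admissible affino\"ide covering $(U_m)_{m\geq 0}$ of $X$ with $\O(U_{m+1})\to\O(U_m)$ of dense image, and put $V_m=g^{-1}(U_m)$. Each $V_m$ is affino\"ide, being the preimage of an affino\"ide under a finite morphism; the $(V_m)_m$ form an increasing admissible covering of $Y$, since admissible coverings pull back to admissible coverings; and, $g$ being finite, $\O(V_m)=H^0(U_m, g_*\O_Y)=\mathcal F(U_m)$ where $\mathcal F:=g_*\O_Y$ is a coherent $\O_X$-module. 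So the whole statement reduces to the claim that the restriction maps $\mathcal F(U_{m+1})\to\mathcal F(U_m)$ have dense image.

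This last claim is the only genuinely technical point — it is the density of restriction maps for a coherent sheaf on a Stein space — and I would either quote it from \cite{Kiehl}, \cite{GK1} or argue directly as follows. On the affino\"ide $U_{m+1}$ choose a finite presentation $\O_{U_{m+1}}^a\to\O_{U_{m+1}}^b\to\mathcal F|_{U_{m+1}}\to 0$ of the coherent sheaf $\mathcal F|_{U_{m+1}}$; restricting to $U_m$ preserves exactness, and writing $\mathcal K$ for the kernel of $\O_{U_m}^b\to\mathcal F|_{U_m}$, the sheaf $\mathcal K$ is coherent on the affino\"ide $U_m$, so $H^1(U_m,\mathcal K)=0$ by Kiehl's theorem and $\O(U_m)^b\to\mathcal F(U_m)$ is surjective (and likewise over $U_{m+1}$). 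Now $\O(U_{m+1})^b\to\O(U_m)^b$ has dense image by hypothesis on $X$, the surjection $\O(U_m)^b\to\mathcal F(U_m)$ is open by the open mapping theorem for $\qp$-Banach spaces (the finite $\O(U_m)$-module $\mathcal F(U_m)$ being a Banach space), and these surjections are compatible with the restriction map $\mathcal F(U_{m+1})\to\mathcal F(U_m)$; chasing the resulting commutative square shows that $\mathcal F(U_{m+1})\to\mathcal F(U_m)$ has dense image. This produces the required admissible affino\"ide exhaustion of $\Sigma_n$ with dense transition maps, proving that $\Sigma_n$ is Stein.
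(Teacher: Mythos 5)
Your argument is correct and follows the same route as the paper, which disposes of the proposition in a single sentence preceding its statement: $\Omega$ is Stein, hence so is $\Sigma_0\simeq\Omega\sqcup\Omega$, and $\Sigma_n\to\Sigma_0$ being finite \'etale transfers the Stein property. What you do is flesh out the finite-morphism step that the paper takes for granted, and your exhaustion of $\Sigma_n$ by preimages of a Stein covering of $\Sigma_0$ coincides (up to the choice of exhaustion of $\Omega$) with the covering $U_i=\tau_n^{-1}(B_i)$ the paper records right after the proposition.
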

Notons $\tau_n$ la compos\'ee du morphisme $\Sigma_n \to \Sigma_0$ et de la r\'etraction de $\Sigma_0$ sur l'arbre de Bruhat-Tits, dont on fixe l'origine en le r\'eseau standard et $B_i$ la boule centr\'ee en l'origine de rayon $i$ dans l'arbre. Alors la famille des $U_i=\tau_n^{-1}(B_i)$ ($n$ est sous-entendu dans la notation) forme un recouvrement de Stein de $\Sigma_n$. De plus, $U_i$ est stable par l'action de $G_i=1+p^iM_2(\zp)$ pour tout $i\geq 1$.

\subsection{Le caract\`ere localement analytique de $\O(\Sigma_n)^*$}

Soit $n\geq 0$ et $k \in \z$. On note $\O(k)(\Sigma_n)$ l'espace des fonctions rigides analytiques sur $\Sigma_n$, muni de l'action de $G$ 
\[ g.f= \frac{1}{(a-cz)^{k}}\cdot (g.f), \quad \text{si}\quad g=\left(\begin{smallmatrix} a & b \\ c & d\end{smallmatrix}\right)\in G\]
o\`u $g.f$ (dans le terme \`a droite) est l'action naturelle de $g\in G$ sur $\O(\Sigma_n)$ d\'eduite de l'action de $G$ sur $\Sigma_n$. On a utilis\'e la structure naturelle de 
$\O(\Omega)$-module de $\O(\Sigma_n)$ pour donner un sens \`a la multiplication par $\frac{1}{(a-cz)^{k}}\in \O(\Omega)$. 
Comme $\Sigma_n$ est Stein, $\O(k)(\Sigma_n)$ est l'espace des sections globales d'un fibr\'e $G$-\'equivariant sur $\Sigma_n$, not\'e $\O(k)$. 

Notons qu'en tout niveau le faisceau $\O(2) \otimes \det$ est simplement le faisceau des diff\'erentielles $\Omega^1$ : on le voit facilement en niveau $0$ et en niveau plus grand le faisceau $\Omega^1$ est simplement le tir\'e en arri\`ere du faisceau $\Omega^1$ en niveau $0$, puisque le rev\^ etement est \'etale.

            \begin{theoreme}\label{agit bien} 
              L'action de $G$ sur $\O(k)(\Sigma_n)^*$ est localement analytique, pour tout $k\in \z$. De mani\`ere \'equivalente, 
              $\O(k)(\Sigma_n)$ est un
              $D(G)$-module s\'epar\'ement continu pour tout $k$. 
              En particulier, $H_{\mathrm{dR}}^1(\Sigma_n)$ est un $D(G)$-module s\'epar\'ement continu.  
                        \end{theoreme}
                       
             \begin{proof} Il suffit de d\'emontrer que $\O(\Sigma_n)$ est un $D(G)$-module s\'epar\'ement continu, le reste s'en d\'eduit facilement.
             
             Soit $g_1,...,g_4$ une famille g\'en\'eratrice minimale de $G_2=1+p^2M_2(\zp)$. Notons pour $\alpha=(\alpha_1,...,\alpha_4)\in \mathbf{N}^4$
             $$b^{\alpha}=(g_1-1)^{\alpha_1}...(g_4-1)^{\alpha_4}\in \zp[G_2].$$
             Les \'el\'ements de $D(G_2)$ (l'alg\`ebre de distributions sur $G_2$ \`a valeurs dans $L$) s'\'ecrivent de mani\`ere unique 
             $$\lambda=\sum_{\alpha\in\mathbf{N}^4} a_{\alpha} b^{\alpha}$$
             avec $a_{\alpha}\in L$ et $\lim_{\alpha\to\infty} v_p(\alpha)+r|\alpha|=\infty$ pour tout 
             $r>0$, o\`u $|\alpha|=\alpha_1+...+\alpha_4$. On veut montrer que 
             $a_{\alpha} b^{\alpha} f$ tend vers $0$ dans $\O(\Sigma_n)$ pour toute telle suite 
             $(a_{\alpha})_{\alpha}$ et tout $f\in \O(\Sigma_n)$. 
             Consid\'erons le recouvrement de Stein $(U_i)_{i\geq 0}$ de $\Sigma_n$ introduit apr\`es la proposition \ref{Sigma est Stein}, chaque 
       $U_i$ \'etant stable sous l'action de $G_i$. Comme $\O(\Sigma_n)=\varprojlim_{i} \O(U_i)$, il suffit de d\'emontrer              
             que pour chaque $i$ fix\'e $a_{\alpha} b^{\alpha}f$ tend vers $0$ dans $\O(U_i)$ pour tout $f\in \O(\Sigma_n)$ (noter que 
             $b^{\alpha} f \in \O(\Sigma_n)\subset \O(U_i)$ pour tout $\alpha$). Comme 
             $$D(G_2)=\bigoplus_{g\in G_i\setminus G_2} D(G_i)\delta_{g},$$
             et $G_i\setminus G_2$ est fini 
             on se ram\`ene (en travaillant s\'epar\'ement avec chaque $\delta_g f=g.f$) \`a d\'emontrer que $\O(U_i)$ est un $D(G_i)$-module topologique (ce qui a un sens, puisque $G_i$ agit sur $U_i$).  
             
                Soit $b_i^{\alpha}$ l'analogue de $b^{\alpha}$ pour $G_i$ (i.e. $b_{i}^{\alpha}=(g_{i,1}-1)^{\alpha_1}...(g_{i,4}-1)^{\alpha_4}$, o\`u 
                $g_{i,j}$ forment une famille g\'en\'eratrice minimale de $G_i$) et soit $D_h(G_i)$ le compl\'et\'e de 
                $D(G_i)$ pour $$||\lambda:=\sum_{\alpha} a_{\alpha} b_i^{\alpha}||_h=\sup_{\alpha} |a_{\alpha}|p^{-\frac{|\alpha|}{p^h}}.$$
                Nous allons montrer que l'on peut trouver $h$ tel que l'action de $G_i$ sur $\O(U_i)$ s'\'etende en une structure de 
                $D_h(G_i)$-module topologique, ce qui suffira pour conclure. 
                
                Soit $\mathfrak{g}={\rm Lie}(G_i)$ et soit $\mathfrak{X}_1,...,\mathfrak{X_4}$ une base de 
                $\mathfrak{g}$. On note $\mathfrak{X}^{\alpha}=\mathfrak{X}_1^{\alpha_1}...\mathfrak{X}_4^{\alpha_4}\in U(\mathfrak{g})$. D'apr\`es un r\'esultat de Frommer \cite[1.4, lemma 3, corollaries 1, 2, 3]{Frommer}, 
                $D_h(G_i)$ est un module libre de type fini (\`a gauche et \`a droite) sur l'adh\'erence 
                $U_h(\mathfrak{g})$ de $U(\mathfrak{g})$ dans $D_h(G_i)$, et a une base form\'ee d'\'el\'ements de $\zp[G_i]$. De plus, 
               les \'el\'ements de $U_h(\mathfrak{g})$ s'\'ecrivent de mani\`ere unique 
               $\lambda=\sum_{\alpha} a_{\alpha} \mathfrak{X}^{\alpha}$, avec 
               $v_p(a_{\alpha})-c_h|\alpha|\to\infty$, o\`u $c_h>p^{h-1}$ ne d\'epend que de 
               $h$. 
               
               \begin{lemme}
                  L'action de $G_i$ sur $\O(U_i)$ est diff\'erentiable: pour tout $\mathfrak{X}\in \mathfrak{g}$ et 
                  $f\in \O(U_i)$ la limite 
                  $$\mathfrak{X}.f=\lim_{n\to \infty} \frac{e^{p^n\mathfrak{X}}.f-f}{p^n}$$
                  existe dans $\O(U_i)$. Cela munit $\O(U_i)$ d'une structure de $\mathfrak{g}$-module, et 
                  $f\mapsto \mathfrak{X}.f$ sont des endomorphismes continus du $L$-Banach $\O(U_i)$ (muni de la norme spectrale).
                                 \end{lemme}
               
               \begin{proof} Le morphisme \'etale fini $\Sigma_n\to \Sigma_0$ induit un morphisme \'etale fini $G_i$-\'equivariant
               $\pi: U_i\to U_{0,i}$, o\`u $U_{0,i}$ est l'analogue de $U_i$ en niveau $0$. Le lemme pr\'ec\'edent se v\'erifie sans aucun mal sur 
               $\O(U_{0,i})$, qui est donc muni d'une structure de $\mathfrak{g}$-module. Notons $\partial_{\mathfrak{X}}$ la d\'erivation continue de 
               $\O(U_{0,i})$ attach\'ee \`a $\mathfrak{X}\in \mathfrak{g}$. Comme $\pi: U_i\to U_{0,i}$ est fini \'etale, cette d\'erivation s'\'etend de mani\`ere unique en une d\'erivation continue de $\O(U_i)$, que nous notons encore $\partial_{\mathfrak{X}}$. Nous allons montrer que pour tout $f\in \O(U_i)$
               $$\lim_{n\to \infty} \frac{e^{p^n\mathfrak{X}}.f-f}{p^n}=\partial_{\mathfrak{X}}f,$$ 
     le lemme s'en d\'eduisant sans mal. Notons $g_n=e^{p^n\mathfrak{X}}\in G_i$, $f_n=g_n.f\in \O(U_i)$, et soit 
     $P=X^d+a_{d-1}X^{d-1}+...+a_0$ le polyn\^ome minimal de $f$ sur $\O(U_{0,i})$. 
     Comme $g_n.P(f_n)=g_n.(P(f))=0$, on obtient 
     $$ P(f_n)-P(f) + (g_n.P-P)(f_n) = 0$$
     On divise cette relation par $p^n$ et on fait $n\to\infty$, en utilisant le fait que 
     $\lim_{n\to\infty} \frac{g_n.P-P}{p^n}=\partial_{\mathfrak{X}}(P)$ puisque les $a_i$ sont dans $\O(U_{0,i})$ et 
     $\lim_{n\to\infty} f_n=f$ (cela d\'ecoule de la continuit\'e de l'action de $G_i$, qui se d\'eduit gr\^ace au th\'eor\`eme d'Elkik - voir \cite[lemma 2.5]{Scholze} - de l'\'enonc\'e analogue en niveau $0$ et du fait que 
     $\Sigma_n\to \Sigma_0$ est un rev\^etement fini \'etale). 
     Comme de plus 
     $$\lim_{n\to\infty} \frac{P(f_n) - P(f)}{f_n-f}=P'(f)\ne 0,$$
     on en d\'eduit que $\lim_{n\to\infty} \frac{f_n-f}{p^n}$ existe et 
     $$P'(f)\lim_{n\to\infty} \frac{f_n-f}{p^n}+\partial_{\mathfrak{X}}(P) f=0.$$ 
     Mais en appliquant $\partial_{\mathfrak{X}}$ \`a la relation $P(f)=0$ on obtient aussi 
     $$P'(f)\partial_{\mathfrak{X}}(f)+ \partial_{\mathfrak{X}}(P)(f)=0.$$
     En comparant les deux formules et en utilisant le fait que $P'(f)\ne 0$ (puisque le rev\^etement est \'etale), le r\'esultat s'en d\'eduit. 
           \end{proof}
               
             Revenons \`a la preuve du th\'eor\`eme \ref{agit bien}. on dispose de 
             quatre connexions continues $\partial_{\mathfrak{X}_j}$ sur le Banach 
             $\O(U_i)$. Elles sont toutes $C$-lipschitziennes pour un certain 
             $C>0$. Ainsi, la norme de l'op\'erateur continu $\mathfrak{X}^{\alpha}$ sur $\O(U_i)$
             (d\'eduit de la structure de 
              $U(\mathfrak{g})$-module donn\'ee par le lemme) est major\'ee par $C^{\alpha}$ pour tout 
              $\alpha\in\mathbf{N}^4$. Donc, si $p^{h/2}>C$, alors 
              $\sum_{\alpha} a_{\alpha} \mathfrak{X}^{\alpha}$ converge faiblement dans 
              $\O(U_i)$, ce qui permet de conclure que pour $h$ assez grand $D_h(G_i)$ agit continument sur 
              $\O(U_i)$. Cela permet de conclure. 
                                                     \end{proof}

\begin{remarque}
  Le m\^eme argument s'applique aux rev\^etements du demi-espace de Drinfeld en toute dimension, en utilisant \cite{boundary}, Prop. 1. 
\end{remarque}

\subsection{Num\'erologie et lissit\'e de $H_{\mathrm{dR,c}}^1(\Sigma_n)$} Rappelons que l'on utilise la base 
$$a^+=\left(\begin{smallmatrix} 1 & 0 \\ 0 & 0\end{smallmatrix}\right), \quad a^-=\left(\begin{smallmatrix} 0 & 0 \\ 0 & 1\end{smallmatrix}\right), \quad
u^+=\left(\begin{smallmatrix} 0 & 1 \\ 0 & 0\end{smallmatrix}\right), \quad u^-=\left(\begin{smallmatrix} 0 & 0 \\ 1 & 0\end{smallmatrix}\right).$$
de $\mathfrak{g}={\rm Lie}(G)$. Soit $z$ \og la\fg{} coordonn\'ee sur $\Omega$. La trivialisation $\Omega^1(\Sigma_n)=\O(\Sigma_n)dz$ induit une application 
$$\frac{d}{dz}: \O(\Sigma_n)\to \O(\Sigma_n), \quad df=\frac{df}{dz} dz\quad \forall f\in \O(\Sigma_n).$$
     L'\'enonc\'e peu app\'etissant suivant sera utilis\'e tr\`es souvent dans la suite.  
     
 \begin{lemme}\label{numerologie Lie} Soit $\partial: \O(\Sigma_n)\to \O(\Sigma_n)$ l'op\'erateur\footnote{Le lecteur trouvera certainement \'etrange d'appeler cet op\'erateur
$\partial$. Nous verrons plus loin qu'il est reli\'e \`a une connexion $\partial$ sur les $(\varphi,\Gamma)$-modules.} de multiplication par 
$z$. On a les formules suivantes pour l'action de l'alg\`ebre de Lie:

a) Sur $\O(k)(\Sigma_n)$ 
   $$ a^+=u^+\partial+1-k, \quad a^-=-\partial u^+, \quad u^-=-\partial^2 u^++k\partial.$$
   
 b) Sur  $\O(k)(\Sigma_n)^*$ 
  $$a^+=\partial u^++k-1, \quad a^-=-u^+\partial, \quad u^-=-k\partial-u^+\partial^2.$$
  
 c) Sur $\Omega^1(\Sigma_n)^*$
 $$a^+=\partial u^+, \quad a^-=-\partial u^+, \quad u^-=-\partial^2 u^+.$$
 
 De plus, on a $\partial u^+-u^+\partial=1$ sur tous ces espaces, et  
 $u^+=-\frac{d}{dz}$ sur $\O(\Sigma_n)$. 
 
\end{lemme}

\begin{proof} L'\'egalit\'e $\partial u^+-u^+\partial=1$ est imm\'ediate, car $u^+$ est une connexion telle que 
$u^+(z)=-1$. Le b) d\'ecoule directement de a) (ne pas oublier que $\langle Xl, v\rangle=-\langle l, Xv\rangle$, si 
$X\in \mathfrak{gl}_2$, $l\in \O(k)(\Sigma_n)^*$ et $v\in \O(k)(\Sigma_n)$). Le c) d\'ecoule de b), du fait que 
$\Omega^1(\Sigma_n)^*\simeq \O(2)(\Sigma_n)^*\otimes \det^{-1}$ et de l'\'egalit\'e 
$2\partial+u^+\partial^2=\partial^2 u^+$ (appliquer deux fois l'identit\'e $\partial u^+-u^+\partial=1$). 

Il reste donc \`a d\'emontrer le a), et il suffit de le faire pour $k=0$. 
Si $n=0$, il s'agit d'un exercice amusant\footnote{ou pas... Se rappeler que l'action de 
$G$ sur $\O(\Omega)$ est donn\'ee par 
$$\left(\left(\begin{smallmatrix} a & b \\c & d\end{smallmatrix}\right)f\right) (z)=f\left(\frac{dz-b}{a-cz}\right).$$} laiss\'e au lecteur.
Dans le cas g\'en\'eral, on utilise le fait que $\Sigma_n$ est un rev\^ etement \'etale de $\Sigma_0$. Toutes les \'egalit\'es ci-dessus sont des \'egalit\'es entre des d\'erivations continues
sur $\O(\Sigma_n)$, qui sont valides sur $\O(\Sigma_0)$; elles sont donc valables sur $\O(\Sigma_n)$ tout entier. L'\'egalit\'e  $u^+=-\frac{d}{dz}$ sur $\O(\Sigma_n)$ 
se d\'emontre de la m\^eme mani\`ere.
\end{proof}

   Une cons\'equence facile du lemme pr\'ec\'edent est la proposition suivante. Nous remercions Pierre Colmez pour l'argument simple et \'el\'egant ci-dessous.

\begin{proposition}\label{lissit\'e}
La $G$-repr\'esentation $H_{\mathrm{dR,c}}^1(\Sigma_n)$ est lisse.
\end{proposition}

\begin{proof}
  Puisque $u^+=-\frac{d}{dz}$ sur $\O(\Sigma_n)$ et que 
  $H_{\mathrm{dR,c}}^1(\Sigma_n)$ est le dual de $\Omega^1(\Sigma_n)/d(\O(\Sigma_n))$, on voit qu'il suffit de montrer la lissit\'e de
  $(\Omega^1(\Sigma_n)^*)^{u^+=0}$. Mais $\Omega^1(\Sigma_n)^*$ est une $G$-repr\'esentation localement analytique 
  (th\'eor\`eme \ref{agit bien}) et le point c) du lemme pr\'ec\'edent montre que tout \'el\'ement de $\Omega^1(\Sigma_n)^*$
  tu\'e par $u^+$ est en fait tu\'e par $\mathfrak{gl}_2$, et donc il est lisse. Cela permet de conclure. 
\end{proof}

\begin{remarque}  Comme nous l'a fait remarquer le rapporteur, si un groupe de Lie $p$-adique $G$ agit de mani\`ere localement analytique sur une courbe analytique lisse Stein $X$, alors $H^1_{\rm dR, c}(X)$ est un $G$-module lisse. Le point crucial est l'identit\'e 
$$(\mathfrak{X}.f)dg=(\mathfrak{X}.g)df$$
valable pour des fonctions analytiques $f,g$ sur $X$ et pour $\mathfrak{X}\in {\rm Lie}(G)$. Elle se d\'emontre en calculant les deux c\^ot\'es explicitement en termes d'une coordonn\'ee locale $z$ (les deux termes valent alors $\frac{\partial f}{\partial z}\frac{\partial g}{\partial z}  (\mathfrak{X}.z)dz$, comme le montre un calcul direct). Cela permet d'obtenir l'identit\'e 
$$\mathfrak{X}.(fdg)=d(f\cdot (\mathfrak{X}.g)),$$
qui montre que $\mathfrak{X}.\Omega^1(X)\subset d\mathcal{O}(X)$ et permet de conclure. 
\end{remarque}

\section{Uniformisation $p$-adique et cohomologie de de Rham}\label{calculdeRham}

 On fixe dans la suite un entier $n$ tel que 
 $\rho={\rm JL}(\pi)$ se factorise par $D^*/(1+p^n\O_D)$. 
  Le but de cette section est de d\'emontrer le th\'eor\`eme suivant. Sa d\'emonstration devrait s'adapter assez facilement \`a d'autres situations.
 
\begin{theoreme}\label{derham}
 On a \[ \dim_L\ho_G(H_{\rm dR,c}^1(\Sigma_n)^{\rho^{\vee}},\pi)=2.\]
 De mani\`ere \'equivalente, 
 \[\dim_L \ho_G( \pi^{*}, H^1_{\rm dR}(\Sigma_n)^{\rho})=2.\]
\end{theoreme}

 La preuve de ce r\'esultat se fait par voie globale : elle utilise le th\'eor\`eme d'uniformisation de Cerednik-Drinfeld des courbes de Shimura par les rev\^ etements de Drinfeld et est directement inspir\'ee du calcul de la cohomologie $\ell$-adique de certains espaces de Rapoport-Zink par Harris \cite{Harris Inv} et Fargues \cite{Fargues these}. 
Nous allons voir plus loin que l'on a en fait un isomorphisme\footnote{On en construira en fait un raisonnablement canonique.}
$$H^1_{\rm dR}(\Sigma_n)^{\rho}\simeq \pi^*\oplus \pi^*,$$
mais cela demande plus de travail: l'argument global utilis\'e permet de d\'emontrer facilement qu'il n'y a pas d'autre repr\'esentation de la s\'erie discr\`ete (ainsi que beaucoup de s\'eries principales) parmi les sous-quotients de $ H_{\rm dR,c}^1(\Sigma_n)^{\rho^{\vee}}$, mais il ne semble pas facile d'exclure la pr\'esence de n'importe quelle s\'erie principale avec ce genre d'argument (dans le cas 
$\ell$-adique, ce genre de difficult\'e est contourn\'e en utilisant l'isomorphisme de Faltings-Fargues \cite{faltings, FGL}; cela semble plus d\'elicat dans notre situation, mais c'est effectivement ce qui est fait dans \cite{CDN}, o\`u le r\'esultat est d\'emontr\'e avec ${\rm GL}_2(\qp)$ remplac\'e par ${\rm GL}_2(F)$, avec $F$ une extension finie quelconque de $\qp$). 

\begin{remarque} 
a) La cohomologie de de Rham de $\Omega$ vue comme repr\'esentation de $G$ (m\^eme en dimension quelconque) a \'et\'e calcul\'ee par Schneider et Stuhler, de Shalit, Orlik, par des m\'ethodes diverses et vari\'ees, cf. \cite{SS, Shalit, Orlik}. Leur preuve ne s'adapte pas en niveau sup\'erieur, mais a la vertu de ne pas faire appel \`a la th\'eorie automorphe.

b) Pour le rev\^etement de la tour de Drinfeld de niveau $1+\varpi_D \O_D$, le calcul de la cohomologie de de Rham est nettement plus simple 
gr\^ace \`a l'existence d'un mod\`ele formel explicite \cite{teitel} (voir \cite{luepan} pour les d\'etails). En particulier, pour ce rev\^etement le th\'eor\`eme pr\'ec\'edent admet une preuve purement locale, 
qui fournit d'ailleurs un isomorphisme $H^1_{\rm dR}(\Sigma_n)^{\rho}\simeq \pi^*\oplus \pi^*$. 
\end{remarque}

\subsection{Formes modulaires quaternioniques classiques et $p$-adiques}\label{formes} 

   Soit $\ob$ une alg\`ebre de quaternions sur $\mathbf{Q}$, non ramifi\'ee en $p$ et ramifi\'ee \`a l'infini. On regarde
   $\ob^*$ comme un groupe alg\'ebrique sur $\mathbf{Q}$ (donc $\ob^*(R)=(R\otimes_{\mathbf{Q}} \ob)^*$ pour toute
   $\mathbf{Q}$-alg\`ebre $R$).
      On fixe une identification
   $\ob^*(\qp)\simeq G$, ainsi qu'un sous-groupe ouvert compact 
   $K^p$ de $\ob^*(\mathbf{A}_f^p)$. On suppose que 
   $K^p=\prod_{\ell\ne p} K_{\ell}$, o\`u $K_{\ell}$ est un sous-groupe ouvert compact de 
   $\ob^*(\mathbf{Q}_{\ell})$, et on suppose qu'il existe au moins un premier 
   $\ell_0$ tel que $K_{\ell_0}$ soit sans torsion. 
   
   \begin{definition} On note 
   $$X=X(K^p)=\ob^*(\mathbf{Q})\backslash \ob^*(\mathbf{A}_f)/K^p$$
   et, si $K_p$ est un sous-groupe ouvert compact de $G$, on note 
      $$X(K_p)=X/K_p=\ob^*(\mathbf{Q})\backslash \ob^*(\mathbf{A}_f)/K^pK_p.$$
\end{definition}

    L'espace $X$ est la limite 
 projective des ensembles finis 
   $X(K_p)$. Il est muni d'une action naturelle (\`a droite) de $G$. L'espace $\con(X)$ est l'espace des formes automorphes $p$-adiques pour le groupe $\ob^*$. On va voir tout de suite que c'est un espace vectoriel topologique tout \`a fait raisonnable. Le r\'esultat suivant est standard et nous laissons sa preuve au lecteur. 
   
      \begin{lemme}\label{union disjointe}
     L'ensemble $\bar{B}^*(\q)\backslash \bar{B}^*(\mathbf{A}_f^p)/K^p$ est fini. Ecrivons 
   \[ \bar{B}^*(\mathbf{A}_f^p)/K^p= \coprod_{i=1}^r \bar{B}^*(\q) y_i, \]
   et notons 
  $\Gamma_i$ le stabilisateur de $y_i$ dans $\bar{B}^*(\q)$, vu comme un sous-groupe de $G\simeq \bar{B}^*(\qp)$. Alors 
  $\Gamma_i$ est discret cocompact dans $G$ et on a un isomorphisme de $G$-modules topologiques 
  $$X=\coprod_{i=1}^r \Gamma_i\backslash G.$$
   \end{lemme}
   
    En particulier 
   $X$ est une vari\'et\'e analytique $p$-adique compacte et l'action de $G$ y est localement analytique.
\\

Venons-en maintenant au lien avec les formes automorphes classiques (l'argument est tout \`a fait similaire \`a celui de \cite[\S\ 1]{taylor}). Rappelons que 
${\rm Alg}(G)$ d\'esigne l'ensemble des (classes d'isomorphisme de) $L$-repr\'esentations alg\'ebriques irr\'eductibles de 
$G$. Fixons $W\in {\rm Alg}(G)$ et notons $\mathcal{A}_{K_p}(W)$ l'espace des fonctions continues $f: X \to W$ telles que 
$f(xk)=k^{-1}.f(x)$ pour tous $x\in X$ et $k\in K_p$ (on peut y penser comme l'espace des formes automorphes $p$-adiques de poids $W$ et de niveau 
$K^pK_p$).  

\begin{lemme}\label{isom 1}
 Il existe un isomorphisme canonique 
   $$\ho_{K_p}(W^*,\con(X))\simeq \mathcal{A}_{K_p}(W).$$
\end{lemme}

\begin{proof}
  On a des isomorphismes canoniques
  $$\ho_{K_p}(W^*,\con(X))\simeq ((W^*)^*\otimes_{L} \con(X))^{K_p}\simeq (W\otimes_{L} \con(X))^{K_p}\simeq \con(X,W)^{K_p}.$$
  En suivant les actions de $K_p$, on voit que le dernier espace est pr\'ecis\'ement $\mathcal{A}_{K_p}(W)$, ce qui permet de conclure (notons que l'inverse de cet isomorphisme envoie simplement 
  $f\in \mathcal{A}_{K_p}(W)$ sur $\phi_f\in \ho_{K_p}(W^*,\con(X))$ d\'efini par $\phi_f(l)(x)=l(f(x))$ si $l\in W^*$ et $x\in X$).
\end{proof}

Fixons une fois pour toutes un isomorphisme $\iota: \overline{\qp}\simeq \mathbf{C}$, ce qui induit un plongement 
$L\to \mathbf{C}$ (en se rappelant que l'on a fix\'e un plongement de $L$ dans $\overline{\qp}$). 
Alors $W_{\infty}=W\otimes_{L} \mathbf{C}$ devient, via l'isomorphisme $\iota$, une $\mathbf{C}$-repr\'esentation de 
$\bar{B}^*(\mathbf{C})$, et donc de $\bar{B}^*(\mathbf{R})$ aussi. 

  On note ${\rm Aut}(K^pK_p)$ l'espace des fonctions lisses 
  \`a valeurs complexes sur $\bar{B}^*\backslash \bar{B}^*(\mathbf{A})/K_pK^p$ (ce sont les formes automorphes classiques de $\bar{B}^*(\mathbf{A})$, de niveau $K^pK_p$).
 Cet espace admet une action naturelle de $\bar{B}^*(\mathbf{R})$, via l'action \`a droite de 
ce groupe sur $\bar{B}^*\backslash \bar{B}^*(\mathbf{A})/K_pK^p$. 

\begin{lemme} \label{isom 2}
   On a un isomorphisme canonique 
   $$\mathcal{A}_{K_p}(W)\otimes_{L, \iota}\mathbf{C}\simeq \ho_{\bar{B}^*(\mathbf{R})}(W_{\infty}^*, {\rm Aut}(K^pK_p)).$$
\end{lemme}

\begin{proof}
  Nous allons nous contenter de d\'ecrire la fl\`eche en question. Si $f\in \mathcal{A}_{K_p}(W)$, on l'envoie sur 
  $\phi_f\in \ho_{\bar{B}^*(\mathbf{R})}(W_{\infty}^*, {\rm Aut}(K^pK_p))$ d\'efini par 
  $$\phi_f(l)(g)=l( g_{\infty}^{-1}.(g_p.f(g^{\infty})\otimes 1)),$$
  si $g=(g_q)_{q}=g_{\infty}\cdot g^{\infty}\in \bar{B}^*(\mathbf{A})$ et $l\in W_{\infty}^*$. Un exercice standard montre que 
  $f\mapsto \phi_f$ est un isomorphisme. 
\end{proof}

    
      La discussion pr\'ec\'edente entra\^ine alors directement le r\'esultat suivant (il suffit de remplacer $W$ par $W^*$ dans ce qui pr\'ec\`ede).

     \begin{lemme}\label{formes automorphes}  
      Soit $W\in {\rm Alg}(G)$ et $\iota: \overline{\qp}\simeq \mathbf{C}$. On a un isomorphisme canonique 
    \[ \ho_{K_p}(W,\con(X))\otimes_{L,\iota}\mathbf{C}\simeq  \bigoplus_{\pi} ~\pi_f^{K_pK^p}, \]
    la somme directe portant sur les repr\'esentations automorphes $\pi=\pi_{\infty}\otimes \pi_f$ de $\bar{B}^*(\mathbf{A})$ 
    telles que $\pi_{\infty}\simeq W_{\infty}$. Ainsi,
\[ \con(X)_{K_p-\rm alg}\otimes_{L,\iota}\mathbf{C}\simeq \bigoplus_{W\in {\rm Alg}(G)} \bigoplus_{\pi_{\infty}\simeq W_{\infty}} W_{\infty} \otimes \pi_f^{K_p}. \]

\end{lemme}

Rappelons que si $B$ est une alg\`ebre de quaternions sur $\q$, dont on note $S(B)$ l'ensemble (fini) des places de ramification, la correspondance de Jacquet-Langlands globale met en bijection naturelle $\pi \mapsto \pi'$ les repr\'esentations automorphes (de dimension infinie) sur $B^*(\mathbf{A})$ et les repr\'esentations automorphes sur ${\rm GL}_2(\mathbf{A})$ telles que $\pi'_v$ est dans la s\'erie discr\`ete pour toute place $v \in S(B)$.

\subsection{ La $\pi$-partie de la cohomologie de de Rham \`a supports de $\Sigma_n$}\label{Cer-Dr} 
Soit $B$ une alg\`ebre de quaternions sur $\q$, d\'eploy\'ee \`a l'infini et de discriminant $p\ell$, o\`u $\ell$ est un nombre premier diff\'erent de $p$ fix\'e\footnote{Ceci simplement pour fixer les id\'ees et appliquer tels quels les r\'esultats du paragraphe \ref{hecke}.}. Fixons un isomorphisme $B\otimes_{\mathbf{Q}} \qp\simeq D$, ainsi qu'un sous-groupe compact ouvert $K=K_p.K^p$ de $B^*(\mathbf{A}_f)$, avec $K_p=1+p^n \O_D$. A ces donn\'ees correspond une courbe de Shimura $\mathrm{Sh}_K$ sur $\mathbf{Q}$, classifiant des surfaces ab\'eliennes avec action de $\O_B$ et structure de niveau $K$. Les $\mathbf{C}$-points de cette courbe sont donn\'es par 
$${\rm Sh}_K(\mathbf{C})=B^*(\mathbf{Q})\backslash (X\times B^*(\mathbf{A}_f)/K),$$
o\`u $B^*(\mathbf{Q})$ agit sur $X=\mathbf{C}\setminus \mathbf{R}$ via le plongement 
$B^*(\mathbf{Q})\to B^*(\mathbf{R})\simeq {\rm GL}_2(\mathbf{R})$ et l'action naturelle de 
ce dernier groupe sur $X$.

Nous allons voir dans la suite $\mathrm{Sh}_K$ comme une courbe sur $\mathbf{Q}_p$, i.e. nous allons \'ecrire
$\mathrm{Sh}_K$ au lieu de $\mathrm{Sh}_K\otimes_{\mathbf{Q}} \qp$.
Le th\'eor\`eme d'uniformisation s'\'enonce alors ainsi\footnote{Rappelons que
$\breve{\mathcal{M}_n}$ est l'espace de Rapoport-Zink de niveau $1+p^n\O_D$ attach\'e \`a un 
$\O_D$-module formel sp\'ecial sur $\overline{\mathbf{F}_p}$, cf. l'introduction.} (pour une d\'emonstration, cf. \cite[Theorem 3.1]{BoutotZink}). 

\begin{theoreme}[Cerednik-Drinfeld]\label{uniformisation}
Si $K^p$ est suffisamment petit, il existe un isomorphisme d'espaces analytiques rigides sur $\breve{\mathbf{Q}}_p$, compatible avec les donn\'ees de descente \`a la Weil
\[  \bar{B}^{*}(\q) \backslash \left(\breve{\mathcal{M}}_n \times \bar{B}^{*}(\mathbf{A}_f^p)/K^p \right) \simeq (\mathrm{Sh}_K \otimes_{\qp} \breve{\mathbf{Q}}_p)^{\mathrm{\mathrm{an}}}, \]
o\`u $\bar{B}$ d\'esigne l'alg\`ebre de quaternions sur $\q$ isomorphe \`a $B$ hors de $\{p,\infty\}$, non ramifi\'ee en $p$ et ramifi\'ee en $\infty$.
\end{theoreme}

 On se place maintenant dans le cadre de la partie \ref{formes}, et on utilise les notations du lemme \ref{union disjointe} (avec un sous-groupe
 $K^p \subset \bar{B}^*(\mathbf{A}_f^p)$ suffisamment petit). 
 Le choix des $y_i$ induit un isomorphisme 
\[ \coprod_{i=1}^r \Gamma_i \backslash \breve{\mathcal{M}}_n \simeq \bar{B}^{*}(\q) \backslash \left(\breve{\mathcal{M}}_n \times \bar{B}^{*}(\mathbf{A}_f^p)/K^p \right),\]
  Soit $N$ tel que $p^N\in \Gamma_i$ pour tout $i$, soit 
  $ \breve{\mathcal{M}}_{n,N}=p^{N\mathbf{Z}}\backslash  \breve{\mathcal{M}}_n$ et soit 
  $\Sigma_{n,N}$ la descente \`a $\qp$ de $p^{N\mathbf{Z}}\backslash \breve{\mathcal{M}}_n$, via la donn\'ee de descente \`a la Weil. La suite exacte de 
 Hochschild-Serre \cite[par. 5]{SS} pour le rev\^etement galoisien $\breve{\mathcal{M}}_{n,N}\to \Gamma_i\backslash \breve{\mathcal{M}}_{n,N}$ et la compatibilit\'e de l'isomorphisme de Cerednik-Drinfeld avec la donn\'ee de descente \`a la Weil fournissent une suite spectrale 
 \[ E_2^{p,q}=\prod_{i=1}^r H^p(\Gamma_i,H_{\mathrm{dR}}^q(\Sigma_{n,N})) \Rightarrow H_{\mathrm{dR}}^{p+q}(\mathrm{Sh}_K). \]
  Puisque $H_{\mathrm{dR}}^k(\Sigma_{n,N})$ est le dual alg\'ebrique de $H_{\mathrm{dR,c}}^{2-k}(\Sigma_{n,N})$, la suite spectrale pr\'ec\'edente se r\'e\'ecrit 
$$E_{2}^{p,q}=\prod_{i=1}^r\ext_{\Gamma_i}^p(H_{\mathrm{dR,c}}^{2-q}(\Sigma_{n,N}),1)  \Rightarrow H_{\mathrm{dR}}^{p+q}(\mathrm{Sh}_K).$$
Le groupe $\Gamma_i$ \'etant discret et les $G$-repr\'esentations $H_{\mathrm{dR,c}}^{2-q}(\Sigma_{n,N})$ \'etant lisses (cela est trivial si $q=0$ ou $q=2$, et d\'ecoule de la proposition
\ref{lissit\'e} si $q=1$), la r\'eciprocit\'e de Frobenius (lisse) permet d'\'ecrire
$$\prod_{i=1}^r\ext_{\Gamma_i}^p(H_{\mathrm{dR,c}}^{2-q}(\Sigma_{n,N}),1)=\prod_{i=1}^r  \ext_{G}^p (H_{\mathrm{dR,c}}^{2-q}(\Sigma_{n,N}), \mathrm{Ind}_{\Gamma_i}^G 1).$$
Enfin, en remarquant que par d\'efinition 
$$\prod_{i=1}^r \mathrm{Ind}_{\Gamma_i}^G 1= {\rm LC}(X(K^p)),$$
on obtient une suite spectrale 
\[ E_2^{p,q}(K^p)=\ext_G^p ( H_{\mathrm{dR,c}}^{2-q}(\Sigma_{n,N}), {\rm LC}(X(K^p))) \Rightarrow H_{\mathrm{dR}}^{p+q}(\mathrm{Sh}_K). \]
 
On v\'erifie que cette suite spectrale ne d\'epend pas du choix des repr\'esentants $y_i$ \cite[prop. 4.3.11]{Fargues these}. Le groupe $D^*$ agit sur les groupes $E_2^{p,q}(K^p)$ ainsi que sur l'aboutissement de la suite spectrale, puisque le sous-groupe $1+ p^n \O_D$ est distingu\'e dans $D^*$, et la suite spectrale est $D^*$-\'equivariante 
(cela se v\'erifie comme dans \cite[lemme 4.3.13-4.3.14]{Fargues these}). 

Regardons maintenant la composante $\rho$-isotypique de la suite spectrale obtenue. Notons que $H_{\mathrm{dR,c}}^k(\Sigma_{n,N})^{\rho^{\vee}}=H_{\mathrm{dR,c}}^k(\Sigma_{n})^{\rho^{\vee}}$
puisque le caract\`ere central de $\rho$ est trivial. 
 Comme $\rho$ n'est pas de dimension $1$, $H_{\mathrm{dR,c}}^k(\Sigma_{n,N})^{\rho^{\vee}}$ est nul sauf pour $k=1$ (en effet, $D^*$ agit par la norme r\'eduite sur l'ensemble des composantes connexes g\'eom\'etriques de $\Sigma_{n}$ : cela se d\'eduit des r\'esultats de Strauch \cite{Strauch cc} pour la tour de Lubin-Tate et de l'isomorphisme de Faltings-Fargues \cite{faltings, FGL}, m\^eme s'il est probable qu'un argument plus simple existe...). 
 La ($\rho$-partie de la) suite spectrale d\'eg\'en\`ere donc trivialement et donne un isomorphisme
\begin{equation}  \ho_G(H_{\mathrm{dR,c}}^{1}(\Sigma_{n})^{\rho^{\vee}}, {\rm LC}(X(K^p))) \simeq H_{\mathrm{dR}}^{1}(\mathrm{Sh}_K)^{\rho} \label{egalitess} \end{equation}
  compatible au changement de niveau $K^p$, ce qui fournit un isomorphisme $\bar{B}^*(\mathbf{A}_f^{p})$-\'equivariant

\begin{equation} \underset{K^p} \varinjlim ~ \ho_G(H_{\mathrm{dR,c}}^{1}(\Sigma_n)^{\rho^{\vee}}, {\rm LC}(X(K^p))) \simeq \underset{K^p} \varinjlim ~ H_{\mathrm{dR}}^{1}(\mathrm{Sh}_K)^{\rho}. \label{egalitess} \end{equation}
Du th\'eor\`eme \ref{existeforme}, on d\'eduit\footnote{Notons que pour le calcul de la cohomologie de de Rham, on a besoin d'un \'enonc\'e bien plus faible que le th\'eor\`eme \ref{existeforme}, puisqu'on n'a besoin d'aucune condition sur la repr\'esentation r\'esiduelle. Comme $\pi$ est supercuspidale, il suffirait, pour $p>2$, de choisir pour $f$ une forme donn\'ee par l'induite automorphe d'un caract\`ere de Hecke d'un corps quadratique imaginaire.} en particulier l'existence d'une forme modulaire quaternionique $f$ de poids $2$ pour le groupe $\bar{B}^*$, telle que si $\pi(f)$ est la repr\'esentation automorphe de $\bar{B}^*(\mathbf{A})$, $\pi(f)_p=\pi$. Rappelons que $B^*(\mathbf{A}_f^p) \simeq \bar{B}^*(\mathbf{A}_f^p)$ et que l'on peut donc voir $\pi(f)^p$ alternativement comme une repr\'esentation de l'un ou l'autre groupe. On a alors d'une part
(en utilisant le lemme \ref{formes automorphes} pour $W$ triviale) 
\[ \underset{K^p} \varinjlim ~{\rm LC}(X(K^p))[\pi(f)^p] \simeq \pi \]
et (en utilisant des th\'eor\`emes de comparaison standard) 
\[ \underset{K^p} \varinjlim ~H_{\mathrm{dR}}^{1}(\mathrm{Sh}_K)^{\rho}[\pi(f)^p] \simeq E, \]
o\`u $E$ est de dimension 2. La composante $\pi(f)^p$-isotypique de l'\'egalit\'e \eqref{egalitess} s'\'ecrit donc
\[ \ho_G (H_{\mathrm{dR,c}}^{1}(\Sigma_n)^{\rho^{\vee}}, \pi) = E. \]
On en d\'eduit le th\'eor\`eme \ref{derham}.

\section{Construction d'un morphisme $G$-\'equivariant}\label{construction morphisme}

L'objectif de cette section est la preuve du th\'eor\`eme suivant:

\begin{theoreme}\label{existemorphisme}
Il existe $\Pi\in \mathcal{V}(\pi)$ et 
un morphisme $G$-\'equivariant continu non nul
\[ \Phi : (\Pi^{\rm an}/\Pi^{\rm lisse})^* \to \O(\Sigma_n)^{\rho}. \]
\end{theoreme}

  Nous verrons plus tard que le terme de gauche ne d\'epend pas du choix de $\Pi\in \mathcal{V}(\pi)$, que 
  $\Phi$ est automatiquement un isomorphisme et qu'il est unique \`a scalaire pr\`es, mais cela co\^utera nettement plus cher. 
  Tout comme le calcul de la cohomologie de de Rham dans le paragraphe pr\'ec\'edent, l'argument repose sur le th\'eor\`eme d'uniformisation $p$-adique et un ingr\'edient global, mais contrairement au chapitre pr\'ec\'edent (qui s'applique tel quel \`a toute extension finie de $\qp$), le r\'esultat de compatibilit\'e local-global \ref{locglobfort}, d\^u \`a Emerton, est pour l'instant connu uniquement pour $G$.
  
\subsection{Compatibilit\'e local-global (d'apr\`es Emerton)}\label{hecke}

    Nous rappelons dans 
ce paragraphe un r\'esultat de compatibilit\'e local-global pour l'alg\`ebre de quaternions $\ob$. Comme celle-ci est \textit{d\'eploy\'ee} en $p$, il suffit de copier l'argument d'Emerton \cite{Emcomp}. Aucune id\'ee nouvelle n'est donc requise. Cependant, comme les arguments n'ont jamais \'et\'e \'ecrits \textit{stricto sensu} pour ces alg\`ebres de quaternions, et comme ce contexte permet pas mal de simplifications, nous avons choisi de les r\'ediger, pour la commodit\'e du lecteur, dans un appendice. 
 
    Nous allons nous placer encore une fois dans le contexte de la section \ref{formes} et utiliser les notations introduites dans cette section. 
   On fixe un ensemble fini $\Sigma=\Sigma(K^p)$ de nombres premiers contenant $p$ et les places o\`u $K^p=\prod_{\ell\ne p} K_{\ell}$ est ramifi\'e. 
   Si $\ell\notin \Sigma$, on note $ \mathcal{H}(K_{\ell} \backslash \ob(\mathbf{Q}_{\ell})^*/K_{\ell},\O_L)$ l'alg\`ebre de Hecke sph\'erique correspondante. 
   Cette alg\`ebre est isomorphe \`a $\O_L[T_{\ell}, S_{\ell}^{\pm 1}]$ ($T_{\ell}$ resp. $S_{\ell}$ \'etant la fonction caract\'eristique de $K_{\ell} \left(\begin{smallmatrix} \ell & 0 \\0 & 1\end{smallmatrix}\right) K_{\ell}$ (resp. $K_{\ell} \left(\begin{smallmatrix} \ell & 0 \\0 & \ell \end{smallmatrix}\right) K_{\ell}$)) et agit par des op\'erateurs continus sur 
   $\con(X(K_p), M)$ et $\con(X, M)$ pour tout $\O_L$-module topologique $M$ et tout sous-groupe ouvert compact 
   $K_p$ de $G$. On note 
\[ \mathbf{T}_{\Sigma} := \otimes'_{\ell \notin \Sigma} \mathcal{H}(K_{\ell} \backslash \ob(\mathbf{Q}_{\ell})^*/K_{\ell},\O_L) \]
et, si $K_p$ est un sous-groupe ouvert compact de $\ob^*(\qp)$, on note $\tilde{\mathbf{T}}_{\Sigma}(K_p)$ l'image de $\mathbf{T}_{\Sigma} $ dans $\mathrm{End}_{\O_L}(\con(X(K_p),\O_L))$. Enfin, 
$$\tilde{\mathbf{T}}_{\Sigma}=\varprojlim_{K_p} \tilde{\mathbf{T}}_{\Sigma}(K_p)$$
d\'esigne 
 l'adh\'erence faible de l'image de $\mathbf{T}_{\Sigma}$ dans $\mathrm{End}_{\O_L}^{\rm cont}(\con(X,\O_L))$.
 L'alg\`ebre $\tilde{\mathbf{T}}_{\Sigma}(K_p)$ est une $\O_L$-alg\`ebre commutative, libre de type fini comme $\O_L$-module. L'alg\`ebre $\tilde{\mathbf{T}}_{\Sigma}$ est une $\O_L$-alg\`ebre compacte, r\'eduite, commutative (tous ces r\'esultats sont standard). 

\begin{definition} a) Soit $\phi: \tilde{\mathbf{T}}_{\Sigma}\to R$ un morphisme continu d'anneaux topologiques. 
Une repr\'esentation continue 
$r: G_{\q,\Sigma} \to {\rm GL}_2(R)$ est {\it{associ\'ee \`a}} $\phi$ si 
$$\det(X-r({\rm Frob}_{\ell}))=X^2-\phi(T_{\ell})X+\ell\phi(S_{\ell})\in R[X], \quad \forall  \ell\notin\Sigma.$$

b) Une repr\'esentation continue $\bar{r} : G_{\q,\Sigma} \to {\rm GL}_2(k_L)$ est dite {\it{modulaire}} (de niveau mod\'er\'e $K^p$) si elle est associ\'ee \`a la projection canonique 
$\tilde{\mathbf{T}}_{\Sigma}\to \tilde{\mathbf{T}}_{\Sigma}/\mathfrak{m}$ pour un id\'eal maximal $\mathfrak{m}$ de $\tilde{\mathbf{T}}_{\Sigma}$, de corps r\'esiduel $k_L$. 

\end{definition}

Fixons d\'esormais une repr\'esentation modulaire $\bar{r}: G_{\q,\Sigma} \to {\rm GL}_2(k_L)$ (de niveau mod\'er\'e $K^p$) et notons $\mathfrak{m}$ l'id\'eal maximal correspondant de $\tilde{\mathbf{T}}_{\Sigma}$. On fait l'hypoth\`ese suivante, qui simplifie consid\'erablement les arguments: 
\newline

\textbf{\textit{Hypoth\`ese.}} {\it{La repr\'esentation $\bar{r}_{|_{\mathcal{G}_{\qp}}}$ est absolument irr\'eductible.}} 

\begin{definition} On note $A$ le compl\'et\'e $\mathfrak{m}$-adique de $\tilde{\mathbf{T}}_{\Sigma}$. C'est une $\O_L$-alg\`ebre plate, locale, noeth\'erienne, r\'eduite de corps r\'esiduel $k_L$, facteur direct de $\tilde{\mathbf{T}}_{\Sigma}$. Si $M$ est un $\tilde{\mathbf{T}}_{\Sigma}$-module, on note 
$M_m=A\otimes_{\tilde{\mathbf{T}}_{\Sigma}} M$. Ainsi, $M_m$ est facteur direct de $M$. 
\end{definition}

Comme l'hypoth\`ese ci-dessus implique en particulier que $\bar{r}$ est irr\'eductible, il existe, d'apr\`es un th\'eor\`eme de Carayol \cite[th. 3]{Carayol2}, pour tout $K_p$ suffisamment petit
une unique repr\'esentation continue $$r^{\mathfrak{m}}(K_p) : G_{\q,\Sigma} \to {\rm GL}_2(\tilde{\mathbf{T}}_{\Sigma}(K_p)_{\mathfrak{m}})$$ associ\'ee au morphisme canonique 
$\tilde{\mathbf{T}}_{\Sigma}\to \tilde{\mathbf{T}}_{\Sigma}(K_p)_{\mathfrak{m}}$. La repr\'esentation 
$$r^{\mathfrak{m}}= \varprojlim r^{\mathfrak{m}}(K_p)$$
est alors associ\'ee au morphisme canonique $\tilde{\mathbf{T}}_{\Sigma}\to A$ et $\overline{r^{\mathfrak{m}}}=\overline{r}$.

 Notons ${\rm MaxSpec}(A[1/p])$ l'ensemble des id\'eaux maximaux de $A[1/p]$. Puisque $A[1/p]$ est un anneau de Jacobson, pour tout
 $\mathfrak{p}\in {\rm MaxSpec}(A[1/p])$ le corps r\'esiduel $k(\mathfrak{p})$ de $\mathfrak{p}$ est une extension finie de $L$ et l'image du morphisme canonique 
 $A\to k(\mathfrak{p})$ est contenue dans l'anneau des entiers de $k(\mathfrak{p})$. Soit 
$r(\mathfrak{p}) : G_{\q,\Sigma} \to {\rm GL}_2(k(\mathfrak{p}))$ la sp\'ecialisation de $r^{\mathfrak{m}}$ via $A[1/p] \to k(\mathfrak{p})$, et soit 
$\Pi(\mathfrak{p})$ la repr\'esentation de Banach unitaire de $G$ attach\'ee \`a $r(\mathfrak{p})|_{\mathcal{G}_{\qp}}$ via la correspondance de Langlands locale $p$-adique pour $G$. 
Le r\'esultat de compatibilit\'e local-global dont nous aurons besoin est alors\footnote{Nous rappelons qu'il est enti\`erement d\^u \`a Emerton.}

\begin{theoreme} \label{locglobfort}
 Pour tout id\'eal maximal $\mathfrak{p}$ de $A[1/p]$, on a un isomorphisme de repr\'esentations de $G$:
\[ \con(X)[\mathfrak{p}] \simeq \Pi(\mathfrak{p})^{\oplus r}, \]
pour un certain entier $r>0$.
\end{theoreme}

\begin{proof} Voir l'appendice.
\end{proof}

Le th\'eor\`eme suivant, dont la preuve est aussi adapt\'ee d'un argument d'Emerton \cite{Emcomp}, nous permettra d'appliquer la th\'eorie globale dans notre situation. Le lecteur pourra trouver plus de d\'etails dans la preuve du th\'eor\`eme 5.1 de \cite{BBourbaki}.

\begin{theoreme}\label{existeforme}
Il existe une forme modulaire quaternionique $f$ de poids $2$ pour le groupe $\bar{B}^*$, telle que si $\pi(f)$ est la repr\'esentation automorphe correspondante de $\bar{B}^*(\mathbf{A})$, $\pi(f)_p= \pi \otimes \xi \circ \det$, o\`u $\xi$ est un caract\`ere non ramifi\'e, et telle que $\overline{r}_f : \mathcal{G}_{\q} \to  {\rm GL}_2(\overline{\mathbf{F}_p})$ soit absolument irr\'eductible en restriction \`a $\mathcal{G}_{\qp}$.
\end{theoreme}

\begin{proof}
Soit $\sigma$ un ${\rm GL}_2(\zp)$-type minimal de $\pi$. Choisissons un r\'eseau ${\rm GL}_2(\zp)$-stable $\sigma_{0}$ dans $\sigma$, et notons $\overline{\sigma_0}=\sigma_0\otimes_{\O_L} k_L$. Fixons un poids de Serre $W\in {\rm soc}_{{\rm GL}_2(\zp)} (\overline{\sigma_0})$.

\begin{lemme}
Il existe une forme modulaire quaternionique $g$ pour $\bar{B}^*$ telle que si 
$r: \mathcal{G}_{\q}\to {\rm GL}_2(L)$ est la repr\'esentation galoisienne associ\'ee \`a $g$, alors: 

a) La restriction $\overline{r}_p:=\overline{r}|_{\mathcal{G}_{\qp}}$ de la 
r\'eduction (modulo $p$) 
 $\overline{r} : \mathcal{G}_{\q} \to {\rm GL}_2(\overline{\mathbf{F}}_p)$ est absolument irr\'eductible, et
 
 b) Si $\bar{\pi}$ est la repr\'esentation lisse de $G$ correspondant \`a $\overline{r}_p$ par la correspondance de Langlands locale modulo $p$, alors $W\in {\rm soc}_{{\rm GL}_2(\zp)} (\bar{\pi})$.\end{lemme} 

\begin{proof}
On peut supposer $W=\mathrm{Sym}^r \mathbf{F}_p^2$ pour un certain $0\leq r\leq p-1$. La description explicite de la correspondance de Langlands locale modulo $p$ montre qu'il suffit d'imposer que $\overline{r}_p={\rm Ind}_{\mathbf{Q}_{p^2}}^{\qp} \omega_2^{r+1}$, \`a twist par un caract\`ere non ramifi\'e pr\`es.
Donc il suffit de trouver $\overline{r}$ modulaire telle que $\overline{r}_p$ soit cette induite. Soit 
$F$ un corps quadratique imaginaire avec $p$ et $\ell$ inertes dans $F$. Soit $\chi$ un caract\`ere de Hecke $\chi$ de $F$ tel que
$\overline{\eta_p}=\omega_2^{r+1}$, $\chi_{\infty}(z)=z^{-1}$ et $\chi_{\ell}$ choisi de sorte que l'induite automorphe locale (pour le groupe $ {\rm GL}_2$) ${\rm Ind}_{F_{\ell}}^{\mathbf{Q}_{\ell}} \chi_{\ell}$ soit supercuspidale. 
 La th\'eorie de Hecke dit alors que l'induite automorphe globale de $\chi$ est la repr\'esentation automorphe associ\'ee \`a une forme modulaire de poids $2$, qui se transf\`ere (gr\^ace \`a la correspondance de Jacquet-Langlands globale) en une forme quaternionique pour notre alg\`ebre de quaternions $\bar{B}$. Cette forme satisfait aux conditions impos\'ees. 
  \end{proof}

  Soit $g$ une forme comme dans le lemme pr\'ec\'edent, choisissons un ensemble 
  $\Sigma$ suffisamment grand et notons 
$\mathfrak{m}$ l'id\'eal de l'alg\`ebre de Hecke $\tilde{\mathbf{T}}_{\Sigma}$  correspondant \`a $\overline{r}$. 
Soit 
$\mathfrak{p}$ l'id\'eal maximal de $A[1/p]$ associ\'e \`a $r$. Le th\'eor\`eme \ref{locglobfort}\footnote{Pour cet argument, l'\'enonc\'e plus faible \ref{locglobfaible} de l'appendice serait en fait suffisant.} fournit un plongement 
$\Pi(\mathfrak{p})\to \con(X(K^p))[\mathfrak{p}]$, et donc en r\'eduisant mod $p$, on en d\'eduit que $\overline{\pi}=\overline{\Pi(\mathfrak{p})}$ se plonge dans $\con(X(K^p),k_L)[\mathfrak{m}]$. En particulier,
\[ \ho_{ {\rm GL}_2(\zp)}(W, \con(X(K^p),k_L)_{\mathfrak{m}}) \neq 0, \]
De plus, le foncteur $\ho_{ {\rm GL}_2(\zp)}(\cdot, \con(X(K^p),\oo_L/\pi_L^n)_{\mathfrak{m}})$ est exact pour tout $n\geq 1$ (lemme \ref{injective 1} dans l'appendice). On en d\'eduit dans un premier temps (en prenant $n=1$) que 
\[ \ho_{ {\rm GL}_2(\zp)}(\overline{\sigma_0}, \con(X(K^p),k_L)_{\mathfrak{m}}) \neq 0 \]
puis, par r\'ecurrence sur $n$ et en passant \`a la limite projective que
\[ \ho_{ {\rm GL}_2(\zp)}(\sigma_0,\con(X(K^p),\oo_L)_{\mathfrak{m}}) \neq 0. \]
Ensuite, comme $\sigma$ est lisse, on a $$\ho_{ {\rm GL}_2(\zp)}(\sigma_0,\con(X(K^p),\O_L)_{\mathfrak{m}})[1/p]=\ho_{ {\rm GL}_2(\zp)}(\sigma_0, {\rm LC}(X(K^p),\O_L)_{\mathfrak{m}})[1/p]$$ et ce $L$-espace vectoriel est de dimension finie. Il existe donc un id\'eal maximal 
$\mathfrak{p}_1$ de $(\mathbf{T}_{\Sigma})_\mathfrak{m}[1/p]$ tel que 
\[ \ho_{ {\rm GL}_2(\zp)}(\sigma_0, {\rm LC}(X(K^p),\oo_L)_{\mathfrak{m}}[\mathfrak{p}_1]) \neq 0. \]
L'id\'eal $\mathfrak{p}_1$ correspond \`a une forme modulaire $f$ pour $\bar{B}^*$ et la non annulation obtenue nous dit donc que $\sigma$ se plonge dans $\pi(f)_p$ (utiliser le lemme \ref{formes automorphes}) et donc \cite{henniart} que $\pi(f)_p \simeq  \pi\otimes \xi \circ \det$, avec $\xi$ un caract\`ere non ramifi\'e. La propri\'et\'e de $r_f$ d\'ecoule du choix de $\mathfrak{p}_1$.
\end{proof}

\subsection{Nouvelle application du th\'eor\`eme d'uniformisation $p$-adique}
 
   Le but de cette partie est d'expliquer la preuve du r\'esultat suivant, qui est une r\'eformulation tr\`es simple, modulo quelques pr\'ecautions topologiques, mais bien pratique, du th\'eor\`eme de Cerednik-Drinfeld. On \'ecrira 
   ${\rm Hom}_G$ au lieu de ${\rm Hom}_G^{\rm cont}$ dans la suite. Nous renvoyons le lecteur aux sections \ref{formes} et \ref{Cer-Dr} pour 
   les objets apparaissant dans l'\'enonc\'e suivant. 
   
\begin{theoreme}\label{omega}
On a un isomorphisme de modules de Hecke
\[  \ho_G({\rm LA}(X(K^p))^*,\Omega^1(\Sigma_n)^{\rho})\simeq \Omega^1(\mathrm{Sh}_K)^{\rho}. \]
\end{theoreme}

\begin{proof}
     Reprenons les notations du lemme \ref{union disjointe}. En calculant les sections globales de 
     $\Omega^1$ des deux c\^ot\'es de l'isomorphisme de Cerednik-Drinfeld, puis en prenant les parties $\rho$-isotypiques, on obtient un isomorphisme 
    $$\bigoplus_{i=1}^r ((\Omega^1(\Sigma_{n}))^{\rho})^{\Gamma_i} \simeq \Omega^1(\mathrm{Sh}_K)^{\rho}.$$
     Pour simplifier les formules, nous allons poser 
     $$W=((\Omega^1(\Sigma_{n}))^{\rho})^*\quad \text{et} \quad X=X(K^p)=\coprod \Gamma_i \backslash G$$
     dans la suite de cette preuve. Le th\'eor\`eme \ref{agit bien} montre que 
     $W$ est une repr\'esentation localement analytique de $G$ sur un espace de type compact. 
     Puisque ${\rm LA}(X)$ et $W$ sont r\'eflexifs, le th\'eor\`eme \ref{omega} est une cons\'equence du 
     r\'esultat suivant, qui est une application simple de la r\'eciprocit\'e de Frobenius, modulo quelques probl\`emes topologiques. 

\begin{lemme}\label{Frob}
On a un isomorphisme de modules de Hecke 
\[\bigoplus_{i=1}^r ((\Omega^1(\Sigma_{n}))^{\rho})^{\Gamma_i}   \simeq \ho_G(W, {\rm LA}(X)). \]
\end{lemme}

\begin{proof}
Soit $\omega=(\omega_i)_{i=1,\dots,r} \in \bigoplus_{i=1}^r ((\Omega^1(\Sigma_{n}))^{\rho})^{\Gamma_i}$. On lui associe $\phi_{\omega} : W \to {\rm LA}(X)$, qui envoie $\ell \in W$ sur la fonction $\phi_{\omega}(\ell) : \Gamma_i g \mapsto \ell(g^{-1}.\omega_i)$ sur $X=\coprod \Gamma_i \backslash G$. Montrons tout d'abord que cette d\'efinition a un sens : si $g'=\gamma g$, avec $\gamma \in \Gamma_i$, $1 \leq i \leq r$, on a bien
\[ \ell((g')^{-1} .\omega_i)=\ell(g^{-1}\gamma_i^{-1}.\omega_i )=\ell(g^{-1}.\omega_i), \]
puisque $\omega_i$ est invariante par $\Gamma_i$. Pour voir que $\phi_{\omega}(l)$ est bien localement analytique, il suffit de noter que $\ell(g^{-1}.\omega_i)= (g.\ell)(\omega_i)$ et d'utiliser le fait que $G \to \Omega^1(\m_{n})^*, g \mapsto g.\ell$ est localement analytique (cf. \ref{agit bien}). De plus, $\phi_{\omega}$ est $G$-\'equivariante, puisque
\[ \phi_{\omega} (g.\ell)(\Gamma_i g')=(g.\ell)((g')^{-1}.\omega_i)=\ell(g^{-1}(g')^{-1}.\omega_i)=\ell((g' g)^{-1}.\omega_i) \]
\[     =\phi_{\omega}(\ell)(\Gamma_i g'g)=(g.\phi_{\omega}(\ell))(\Gamma_i g'). \]
Il reste \`a voir que $\phi_{\omega}$ est un morphisme continu. Cela revient \'evidemment \`a montrer que 
\[W \to {\rm LA}(G),  ~ \ell \mapsto (g.\ell)(\omega_i) \]
est continue, pour chaque $i=1, \dots,r$, ce qui d\'ecoule du r\'esultat g\'en\'eral suivant (en prenant $\lambda=ev_{\omega_i}$, $w=\ell$) :

\begin{lemme}
Soit $W$ une repr\'esentation localement analytique d'un groupe de Lie $p$-adique $G$ sur un espace de type compact. Fixons $\lambda \in W^*$. Le morphisme
\[ F: W \to {\rm LA}(G),  ~ w \mapsto (g \mapsto \lambda(g.w)) \]
est continu.
\end{lemme}

\begin{proof} En d\'ecomposant 
$G$ selon les classes \`a gauche modulo un sous-groupe ouvert compact de $G$, on peut supposer que 
$G$ est compact.
Comme $W$ et ${\rm LA}(G)$ sont des espaces r\'eflexifs, il suffit de montrer que $F^*: D(G) \to W^*$ est continue. Puisque $D(G)$ et $W$ sont tous deux des espaces de Fr\'echet, on peut tester la continuit\'e de $F^*$ avec des suites. Soit $(\mu_n)_n$ une suite d'\'el\'ements de $D(G)$ convergeant vers $0$. Par d\'efinition, $F^*$ envoie $\mu \in D(G)$ sur l'\'el\'ement $w \mapsto \int_G \lambda(g.w) \mu$. Or cette derni\`ere quantit\'e est exactement $\lambda(I(\rho_w)(\mu))=\lambda(\mu * w)$ dans les notations de \cite{ST}. D'apr\`es \cite[prop. 3.2]{ST}, $\mu \mapsto \mu * w$ est continue. En particulier, pour chaque $w \in W$, la suite $(\mu_n * w)_n$ tend vers $0$. On conclut alors avec le tr\`es utile lemme suivant, appliqu\'e \`a $V=W^*$.

\begin{lemme}\label{faible}
 Soit $V$ un espace vectoriel localement convexe sur un corps sph\'eriquement complet. Une suite 
 $(v_n)_n$ de $V$ converge vers $0$ si et seulement si elle converge faiblement vers $0$. 
\end{lemme}
\end{proof}

   Pour conclure la preuve du lemme \ref{Frob}, il suffit d'exhiber un inverse de l'application d\'ej\`a construite: cet inverse 
   envoie $\psi \in \ho_G(W, {\rm LA}(X))$ sur le $r$-uplet des \'el\'ements de $\Omega^1(\Sigma_{n})^{\rho}=W^*$ correspondant \`a $\ell \in W \mapsto \psi(\ell)(\gamma_i)$, pour un $\gamma_i \in \Gamma_i$ quelconque. On v\'erifie que l'isomorphisme construit commute \`a l'action de l'alg\`ebre de Hecke hors $p$. 
\end{proof}
   Cela finit la preuve du th\'eor\`eme \ref{omega}.
\end{proof}

\begin{remarque}\label{schneider}
La fl\`eche naturelle d\'eduite de la surjection $\Omega^1(\Sigma_n)^{\rho} \to H_{\mathrm{dR}}^1(\Sigma_n)^{\rho}$ :
\[ \ho_G((\Omega^1(\Sigma_n)^{\rho}))^*,{\rm LA}(X(K^p))) \to \ho_G(H_{\mathrm{dR,c}}^{1}(\Sigma_{n})^{\rho}, {\rm LC}(X(K^p))) \]
s'identifie via les isomorphismes de la proposition \ref{omega} et du th\'eor\`eme \ref{derham} \`a la filtration de Hodge
\[ \Omega^1(\mathrm{Sh}_K)^{\rho} \to H_{\mathrm{dR}}^1(\mathrm{Sh}_K)^{\rho}. \]
Pour le voir, on se ram\`ene imm\'ediatement au cas du rev\^ etement \'etale de la vari\'et\'e propre et lisse $X_{\Gamma}=\Gamma \backslash \breve{\mathcal{M}_{n}}$, avec $\Gamma$ discret cocompact dans $G$, par la vari\'et\'e Stein $\breve{\mathcal{M}}_{n}$ ; dans ce cas, la filtration de Hodge sur $H_{\mathrm{dR}}^1(X_{\Gamma})=H^1(\Gamma,\Omega^{\cdot}(\breve{\mathcal{M}}_{n}))$ est donn\'ee par
\[ \mathrm{Im} \left( H^1(\Gamma,\Omega^1(\breve{\mathcal{M}}_{n})[1]) \to H^1(\Gamma,\Omega^{\cdot}(\breve{\mathcal{M}}_{n})) \right). \]
\end{remarque}

\subsection{Preuve du th\'eor\`eme \ref{existemorphisme}}\label{construction}

On peut maintenant appliquer le th\'eor\`eme \ref{existeforme} : il existe une forme modulaire quaternionique $f$ de poids $2$ pour le groupe $\bar{B}^*$, telle que si $\pi(f)$ est la repr\'esentation automorphe correspondante de $\bar{B}^*(\mathbf{A})$, $\pi(f)_p= \pi \otimes \xi \circ \det$, o\`u $\xi$ est un caract\`ere non ramifi\'e, et telle que $\overline{r}_f : \mathcal{G}_{\q} \to {\rm GL}_2(\overline{\mathbf{F}_p})$ soit absolument irr\'eductible en restriction \`a $\mathcal{G}_{\qp}$. Soit $\Sigma$ un ensemble fini de premiers contenant $p$ et suffisamment grand pour que $\overline{r}_f$ d\'efinisse un id\'eal maximal de l'alg\`ebre de Hecke $\tilde{\mathbf{T}}_{\Sigma}$\footnote{Pour les notations relatives aux alg\`ebres de Hecke, voir le paragraphe \ref{hecke}.} ; soit $A$ le compl\'et\'e $\mathfrak{m}$-adique de $\tilde{\mathbf{T}}_{\Sigma}$ et $\mathfrak{p}$ l'id\'eal maximal de $A[1/p]$ associ\'e \`a la forme $f$. 

On a d'apr\`es le th\'eor\`eme \ref{omega}
\[ \ho_G(({\rm LA}(X(K^p))[\mathfrak{p}])^*,\Omega^1(\Sigma_n)^{\rho}) \simeq \Omega^1(\mathrm{Sh}_K)^{\rho}[\mathfrak{p}]\ne 0 \]
 Le th\'eor\`eme \ref{locglobfort} permet d'obtenir ainsi l'existence d'un 
morphisme $G$-\'equivariant continu non nul
\[ \Phi_0 : (\Pi^{\mathrm{an}})^* \to \Omega^1(\Sigma_n)^{\rho}, \]
o\`u $\Pi=\Pi(\mathfrak{p})\in\mathcal{V}(\pi)$ (modulo un twist non ramifi\'ee que l'on va ignorer dans la suite).
Le morphisme $\Phi_0$ induit, par restriction, un morphisme continu $G$-\'equivariant 
\[ \Phi : (\Pi^{\rm an}/\Pi^{\rm lisse})^* \to \Omega^1(\Sigma_n)^{\rho}. \]

Montrons que ce morphisme est non nul. S'il \'etait nul, $\Phi_0$ se factoriserait par le quotient
$(\Pi^{\rm lisse})^*$ de $(\Pi^{\rm an})^*$. 
Comme $\Phi_0$ n'est pas nul et $\Pi^{\rm lisse}\simeq \pi$ est irr\'eductible, $\pi^*$ se plongerait dans $\Omega^1(\Sigma_n)^{\rho}$, ce qui est absurde puisque $u^+$ n'annule aucun \'el\'ement de $\Omega^1(\Sigma_n)^{\rho}$ (se rappeler que $u^+=-\frac{d}{dz}$ sur $\mathcal{O}(\Sigma_n)$, cf. lemme \ref{numerologie Lie}). On en d\'eduit que $\Phi$ est effectivement non nul. 

En outre, la compos\'ee de $\Phi$ avec la surjection canonique $\Omega^1(\Sigma_n)^{\rho} \to H_{\mathrm{dR}}^1(\Sigma_n)^{\rho}$ est nulle : en dualisant cette fl\`eche on obtient en effet un morphisme $G$-\'equivariant $ H_{\mathrm{dR,c}}^1(\Sigma_n)^{\rho}\to \Pi^{\rm an}/\Pi^{\rm lisse}$, qui est forc\'ement nul car $\Pi^{\rm an}/\Pi^{\rm lisse}$ n'a pas de vecteurs lisses non nuls (exercice), alors que $ H_{\mathrm{dR,c}}^1(\Sigma_n)^{\rho}$ est lisse (proposition \ref{lissit\'e}). Par cons\'equent, $\Phi$ se factorise par $\O(\Sigma_n)^{\rho}$, ce qui finit la preuve du th\'eor\`eme \ref{existemorphisme}.

\section{$(\varphi,\Gamma)$-modules sur l'anneau de Robba et \'equations diff\'erentielles $p$-adiques}\label{rappel phi Gamma}

   Ce chapitre ne contient aucun r\'esultat original et sert uniquement comme r\'ef\'erence pour la suite.
  Le lecteur pourra consulter \cite{Ber}, \cite{BerAst}, \cite{FonAst}, \cite{Marmora} pour plus de d\'etails concernant l'\'equation diff\'erentielle $p$-adique attach\'ee \`a une repr\'esentation de de Rham de $\mathcal{G}_{\qp}$. Nous nous contenterons d'\'enoncer les principaux r\'esultats concernant cette construction. 

    Rappelons que l'on a fix\'e une suite $(\zeta_{p^n})_{n\geq 1}$, o\`u $\zeta_{p^n}$ est une racine primitive de l'unit\'e d'ordre $p^n$, et 
  $\zeta_{p^{n+1}}^p=\zeta_{p^n}$. Soit $\mathcal{E}^{]0,r_n]}$ l'anneau des fonctions analytiques (d\'efinies sur $L$) sur la couronne 
  $|\zeta_{p^n}-1|\leq |T|<1$, et soit 
  $$\mathcal{R}=\varinjlim_{n} \mathcal{E}^{]0,r_n]}\subset L[[T, T^{-1}]]$$
  {\it l'anneau de Robba}. L'anneau $\mathcal{R}$ est muni d'actions continues de 
$\Gamma={\rm Gal}(\qp^{\rm cyc}/\qp)$ et d'un Frobenius $\varphi$, commutant entre elles,
en posant $$\varphi(T)=(1+T)^p-1 \quad \text{et} \quad \sigma_a(T)=(1+T)^a-1,\quad a\in\zpet,$$
o\`u $\sigma_{a}\in \Gamma$ est tel que $\chi_{\rm cyc}(\sigma_a)=a$ (en d'autres termes 
$\sigma_a(\zeta)=\zeta^a$ pour tout $\zeta\in\mu_{p^{\infty}}$). 

 \begin{definition}       Un {\it $(\varphi,\Gamma)$-module $\Delta$ sur $\mathcal{R}$} 
  est un $\mathcal{R}$-module libre de type fini muni d'actions semi-lin\'eaires continues de 
  $\varphi$ et $\Gamma$, commutant entre elles et telles que l'application naturelle 
  $\mathcal{R}\otimes_{\varphi(\mathcal{R})}\varphi(\Delta)\to \Delta$ 
  soit un isomorphisme. 
  \end{definition}

     Soit $\Delta$ un $(\varphi,\Gamma)$-module sur $\mathcal{R}$. D'apr\`es un 
   r\'esultat standard de Berger (voir \cite[lemme 4.1]{Ber}), l'action de $\Gamma$ sur $\Delta$ se d\'erive, d'o\`u une connexion 
     $$\nabla: \Delta\to \Delta, \quad \nabla(z)=\lim_{a\to 1} \frac{\sigma_a(z)-z}{a-1}.$$
     Par exemple, si $\Delta=\mathcal{R}$ est le $(\varphi,\Gamma)$-module trivial, alors 
     \[ \nabla = t \partial, \quad \text{o\`u} \quad \partial= (1+T)\frac{d}{dT} \quad \text{et}\^e\quad t=\log(1+T)\in \mathcal{R}.\]
Notons que $\varphi(t)=pt$ et $\gamma(t)=\chi_{\rm cyc}(\gamma) t$ pour 
$\gamma\in \Gamma$, ce qui fait que si $\Delta$ est un $(\varphi,\Gamma)$-module sur $\mathcal{R}$, alors 
$t^k\Delta$ l'est ausssi, pour tout entier $k$.      

 \begin{example} a) La th\'eorie de Fontaine \cite{FoGrot} combin\'ee au th\'eor\`eme de surconvergence de Cherbonnier-Colmez \cite{CCsurconv} permettent d'attacher 
  \`a toute $L$-repr\'esentation $V$ de $\mathcal{G}_{\qp}$ un $(\varphi,\Gamma)$-module (\'etale) $D_{\rm rig}(V)$ sur 
  $\mathcal{R}$, de rang $\dim_L(V)$. 
  Berger a montr\'e \cite{Ber} que le foncteur $V\to D_{\rm rig}(V)$ est pleinement fid\`ele. 
  
  b) Soit $V$ une $L$-repr\'esentation de de Rham de $\mathcal{G}_{\qp}$. Un th\'eor\`eme fondamental de Berger \cite{Ber} 
  montre l'existence d'un unique sous-$(\varphi,\Gamma)$-module $N_{\rm rig}(V)\subset D_{\rm rig}(V)[1/t]$ tel que 
  $$N_{\rm rig}(V)[1/t]=D_{\rm rig}(V)[1/t] \quad \text{et}\quad \nabla(N_{\rm rig}(V))\subset t\cdot N_{\rm rig}(V).$$
  Le $(\varphi,\Gamma)$-module $N_{\rm rig}(V)$ devient une \'equation diff\'erentielle $p$-adique avec structure de Frobenius sur $\mathcal{R}$, gr\^ace \`a la connexion 
  $\partial=\frac{1}{t} \nabla$. Ce $(\varphi,\Gamma)$-module jouera un r\^ole fondamental dans cet article (c'est pr\'ecis\'ement cette construction qui a permis \`a Berger
    de d\'emontrer le th\'eor\`eme de monodromie $p$-adique \cite{Ber}). Contrairement \`a $V\to D_{\rm rig}(V)$, le foncteur 
    $V\to N_{\rm rig}(V)$ n'est pas pleinement fid\`ele\footnote{On perd la filtration de Hodge ; voir la fin de ce chapitre pour un \'enonc\'e plus pr\'ecis.}.
  \end{example}
  
  \begin{lemme}\label{injective} Soit $P\in L[X]$ un polyn\^ome non nul et soit $V$ une $L$-repr\'esentation absolument irr\'eductible 
de dimension $2$ de $\mathcal{G}_{\qp}$, non trianguline (au sens de \cite{Ctriang}). Alors $P(\nabla)$ est injectif sur $D_{\rm rig}(V)$. 
\end{lemme}

\begin{proof} D'apr\`es \cite[prop 2.1]{DJacquet} le noyau $X$ de $P(\nabla)$ sur $D_{\rm rig}(V)$ est de dimension finie sur 
$L$. Comme il est stable par $\varphi$, on en d\'eduit que si $X\ne 0$, alors $\varphi$ a des vecteurs propres sur $D_{\rm rig}(V)$ (\'eventuellement apr\`es avoir remplac\'e 
$L$ par une extension finie). Cela contredit \cite[lemme 3.2]{Ctriang}. 
\end{proof}

       Le r\'esultat suivant est une observation importante de Colmez, qui joue un r\^ole cl\'e dans \cite{Colmezpoids}. Nous nous en servirons aussi dans le chapitre suivant.

           \begin{proposition}\label{bijective} Soit $V$ une $L$-repr\'esentation de de Rham, absolument irr\'eductible 
de dimension $2$ de $\mathcal{G}_{\qp}$. Si $V$ n'est pas trianguline, alors 
           $\partial=\frac{1}{t}\nabla$ est bijectif sur $N_{\rm rig}(V)$.
         \end{proposition}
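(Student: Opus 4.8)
Le plan est de traiter s\'epar\'ement l'injectivit\'e et la surjectivit\'e de $\partial$ : la premi\`ere est imm\'ediate, et la seconde se ram\`enera, par dualit\'e, \`a l'injectivit\'e pour un twist du dual de $V$. Pour l'injectivit\'e : la multiplication par $t$ \'etant injective sur $D_{\rm rig}(V)[1/t]$, il suffit de voir que $\nabla$ est injectif sur $N_{\rm rig}(V)$, et m\^eme sur $D_{\rm rig}(V)[1/t]$ tout entier. Si $x\in D_{\rm rig}(V)$, $n\ge 0$ et $\nabla(t^{-n}x)=0$, l'\'egalit\'e $\nabla(t)=t$ donne $(\nabla-n)(x)=0$, et le lemme~\ref{injective} appliqu\'e au polyn\^ome $P(X)=X-n$ (ce qui est licite, $V$ \'etant absolument irr\'eductible, de dimension $2$ et non trianguline) force $x=0$. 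Donc $\nabla$, et par suite $\partial=\frac1t\nabla$, est injectif sur $N_{\rm rig}(V)$.

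Pour la surjectivit\'e : par construction $\nabla(N_{\rm rig}(V))\subset t\,N_{\rm rig}(V)$, donc $\partial$ pr\'eserve $N_{\rm rig}(V)$, et il reste \`a voir que $\mathrm{coker}(\partial, N_{\rm rig}(V))=0$. J'invoquerais ici la th\'eorie des \'equations diff\'erentielles $p$-adiques sur l'anneau de Robba : puisque $N_{\rm rig}(V)$ est libre de rang fini sur $\mathcal R$ et muni d'une structure de Frobenius, $\partial$ y a un noyau et un conoyau de dimension finie sur $L$ (pour le noyau, c'est \cite[prop.~2.1]{DJacquet}, et il est d'ailleurs nul d'apr\`es le paragraphe pr\'ec\'edent), et le r\'esidu sur $\Omega^1_{\mathcal R}=\mathcal R\,dt$, qui annule les formes $\partial$-exactes, induit un accouplement parfait
\[ \mathrm{coker}\big(\partial, N_{\rm rig}(V)\big)\times\ker\big(\partial, N_{\rm rig}(V)^{\vee}\big)\longrightarrow L, \]
o\`u $N_{\rm rig}(V)^{\vee}=\mathrm{Hom}_{\mathcal R}(N_{\rm rig}(V),\mathcal R\,dt)$ est le module diff\'erentiel dual ; c'est la dualit\'e de Serre en degr\'es $0$ et $1$ pour les $\partial$-modules sur $\mathcal R$.

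On conclut alors comme suit. Comme $\varphi(dt)=p\,dt$ et $\gamma(dt)=\chi_{\rm cyc}(\gamma)\,dt$, le module $\mathcal R\,dt$ est le $N_{\rm rig}$ d'un twist de Tate de rang $1$ de $\qp$ ; et la caract\'erisation de Berger de $N_{\rm rig}(-)$ dans $D_{\rm rig}(-)[1/t]$ par les deux conditions $\nabla N\subset tN$ et $N[1/t]=D_{\rm rig}(-)[1/t]$ est visiblement compatible au produit tensoriel et au $\mathrm{Hom}$. On obtient donc $N_{\rm rig}(V)^{\vee}\simeq N_{\rm rig}(V')$, o\`u $V'$ est le twist de Tate correspondant de $V^{*}$ : c'est encore une repr\'esentation de de Rham, absolument irr\'eductible de dimension $2$ et non trianguline (le fait d'\^etre trianguline \'etant pr\'eserv\'e par dualit\'e et par twist). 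La partie d'injectivit\'e, appliqu\'ee \`a $V'$, donne $\ker(\partial, N_{\rm rig}(V'))\subset\ker(\nabla, D_{\rm rig}(V')[1/t])=0$, d'o\`u $\mathrm{coker}(\partial, N_{\rm rig}(V))=0$, et $\partial$ est bijectif sur $N_{\rm rig}(V)$.

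Le point le plus d\'elicat sera l'\'etape interm\'ediaire : justifier la finitude du conoyau et la dualit\'e du r\'esidu pour le $\partial$-module $N_{\rm rig}(V)$ sur l'anneau de Robba, autrement dit que $\partial$ y est d'image ferm\'ee. On peut esp\'erer la contourner en partie en observant que, via la relation $p\varphi\partial=\partial\varphi$ et la d\'ecomposition \'etale $N_{\rm rig}(V)=\bigoplus_{i=0}^{p-1}(1+T)^i\varphi(N_{\rm rig}(V))$, le conoyau de $\partial$ h\'erite d'un Frobenius $L$-lin\'eaire injectif — donc bijectif s'il est de dimension finie — et d'une action continue de $\Gamma$ commutant \`a lui, donn\'ee li\'ee \`a $D_{\rm cris}(V)$, lequel est nul lorsque $V$ n'est pas trianguline ; mais la voie par dualit\'e me para\^it la plus \'economique.
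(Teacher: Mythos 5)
Your proof takes essentially the same route as the paper's: injectivity via Lemme~\ref{injective} (the paper picks a single $h$ with $t^hN_{\rm rig}\subset D_{\rm rig}$ and observes $(\nabla-h)(t^hx)=t^{h+1}\partial x$, rather than arguing uniformly on $D_{\rm rig}[1/t]$ as you do), and surjectivity via the residue/Serre duality for the $\partial$-module, reducing to injectivity of $\partial$ on $N_{\rm rig}(\check V)$ for the Cartier dual $\check V=V^*\otimes\chi_{\rm cyc}$, which is again non-triangulline. For the perfect pairing and the closed-image/finiteness issue you rightly flag as the delicate step, the paper simply cites \cite[prop.~20.4.2]{Kedbook}, whose proof uses the $p$-adic monodromy theorem.
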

         
         \begin{proof} Soient $N_{\rm rig}=N_{\rm rig}(V)$ et $D_{\rm rig}=D_{\rm rig}(V)$, et soit $h$ tel que 
                         $t^hN_{\rm rig}\subset D_{\rm rig}$. Puisque $(\nabla-h)(t^hx)=t^{h+1}\partial x$ et 
         $D_{\rm rig}^{\nabla=h}=0$ (lemme \ref{injective}), $\partial$ est injectif sur $N_{\rm rig}$. 
         La surjectivit\'e est plus subtile, et utilise le th\'eor\`eme de monodromie $p$-adique.
         Plus pr\'ecis\'ement, 
         \cite[prop. 20.4.2]{Kedbook} fournit un accouplement parfait $$N_{\rm rig}/\partial(N_{\rm rig})\otimes \check{N}_{\rm rig}^{\partial=0}\to L$$ o\`u 
           $\check{N}_{\rm rig}=N_{\rm rig}(\check{V})$ est l'\'equation diff\'erentielle attach\'ee au dual de Cartier $\check{V}=V^*\otimes \chi_{\rm cyc}$ de $V$. 
         Puisque $\check{V}$ n'est pas trianguline, la premi\`ere partie de la preuve montre que 
         $\check{N}_{\rm rig}^{\partial=0}=0$, ce qui permet de conclure. 

                        \end{proof}

       Soit $\Delta$ un $(\varphi,\Gamma)$-module sur $\mathcal{R}$. Berger montre \cite[th. I.3.3]{BerAst} 
       que pour tout $n$ assez grand (d\'ependant de $\Delta$) il existe un unique 
        sous $\mathcal{E}^{]0,r_n]}$-module $\Delta^{]0,r_n]}$ de $\Delta$\footnote{Si $\Delta=D_{\rm rig}(V)$, on notera $D^{]0,r_n]}$ au lieu de $(D_{\rm rig})^{]0,r_n]}$, et ainsi de suite, pour all\'eger les notations.} tel que 
        $\mathcal{R}\otimes_{\mathcal{E}^{]0,r_n]}} \Delta^{]0,r_n]}\to \Delta$ soit un isomorphisme 
        et tel que $\mathcal{E}^{]0,r_{n+1}]}\otimes_{\mathcal{E}^{]0,r_n]}} \Delta^{]0,r_n]}$ admette une base contenue dans
        $\varphi(\Delta^{]0,r_n]})$ (pour l'existence, il suffit de prendre pour 
        $\Delta^{]0,r_n]}$ le sous $\mathcal{E}^{]0,r_n]}$-module de $\Delta$
        engendr\'e par une base fix\'ee de $\Delta$). De plus, $\Delta^{]0,r_n]}$ est stable sous l'action de 
        $\Gamma$ et $\nabla$, et $\varphi(\Delta^{]0,r_n]})\subset \Delta^{]0,r_{n+1}]}$ pour tout $n$ assez grand.

Notons $L_n=L\otimes_{\qp} \qp(\zeta_{p^n})$. On
dispose pour tout $n\geq 1$ d'une injection $\Gamma$-\'{e}quivariante\footnote{Comme $f$ converge en $\zeta_{p^n}-1$, $f(\zeta_{p^n}e^{t/p^n}-1)$
   est bien d\'{e}fini en tant qu'\'{e}l\'{e}ment de $L_n[[t]]$.} d'anneaux $\varphi^{-n}: \mathcal{E}^{]0,r_n]}\to L_n[[t]]$, qui
 envoie $f$ sur $f(\zeta_{p^n}e^{t/p^n}-1)$. Si 
 $\Delta$ est un $(\varphi,\Gamma)$-module sur $\mathcal{R}$, on note (pour $n$ assez grand)
 $$\Delta_{\rm dif,n}^+=L_n[[t]]\otimes_{\mathcal{E}^{]0,r_n]}} \Delta^{]0,r_n]},\quad \Delta_{\rm dif}^+=\varinjlim_{n} \Delta_{\rm dif,n}^+,$$ le morphisme de transition $ \Delta_{\rm dif,n}^+\to  \Delta_{\rm dif, n+1}^+$ \'etant donn\'e par $u\otimes z\mapsto u\otimes \varphi(z)$ pour $u\in L_n[[t]]\subset L_{n+1}[[t]]$ et $z \in \Delta^{]0,r_n]}$ (noter que $\varphi(z)\in \Delta^{]0, r_{n+1}]}$). 
 
  Ainsi, $\Delta_{\rm dif}^+$ est un $L_{\infty}[[t]]:=\varinjlim_{n} L_n[[t]]$-module libre de m\^eme rang que $\Delta$ et il est muni d'une action de 
  $\Gamma$, qui respecte $\Delta_{\rm dif,n}^+$ pour $n$ assez grand. La d\'eriv\'ee de cette action fournit une connexion $\nabla$ sur $\Delta_{\rm dif}^+$, qui respecte $\Delta_{\rm dif,n}^+$  
          pour $n$ assez grand, et qui satisfait 
          $$\nabla(fz)=t\frac{df}{dt}\cdot z+f\cdot \nabla z,\quad \forall \, f\in L_{\infty}[[t]],\, z\in \Delta_{\rm dif}^+.$$
          
     \begin{example}\label{exemple fondamental} L'exemple suivant sera syst\'ematiquement utilis\'e dans la suite. 
    Soit $V$ une $L$-repr\'esentation de de Rham de $\mathcal{G}_{\qp}$. Si $n$ est assez grand, on dispose d'un {\it{morphisme de localisation}}\footnote{Rappelons que 
    $\mathcal{H}_{\qp}={\rm Gal}(\overline{\qp}/\qp^{\rm cyc})$.} 
     $$\varphi^{-n}: D^{]0,r_n]}(V)\to (\mathbf{B}_{\rm dR}^+\otimes_{\qp} V)^{\mathcal{H}_{\qp}},$$
     qui est $\Gamma$-\'equivariant et induit un morphisme injectif $$\varphi^{-n}: D_{\rm dif,n}^+(V)\to (\mathbf{B}_{\rm dR}^+\otimes_{\qp} V)^{\mathcal{H}_{\qp}}.$$ 
     Fontaine a montr\'e \cite{FonAst} que ce morphisme induit un isomorphisme canonique de $L_n[[t]]$-modules avec action semi-lin\'eaire de $\Gamma$
     $$D_{\rm dif,n}^+(V)={\rm Fil}^0(L_n((t))\otimes_{L} D_{\rm dR}(V)),$$
     o\`u l'on consid\`ere la filtration $t$-adique sur $L_n((t))$ et la filtration de Hodge sur $D_{\rm dR}(V)$. Berger a montr\'e \cite{Ber, BerAst} que via cet isomorphisme on peut d\'ecrire
     $N_{\rm dif,n}^+(V)\subset D_{\rm dif,n}^+(V)[1/t]$ par
     $$N_{\rm dif, n}^+(V)=L_n[[t]]\otimes_{L} D_{\rm dR}(V).$$
  On peut aussi d\'ecrire simplement $N_{\rm rig}(V)$ \`a partir de $D_{\rm rig}(V)$ via 
     $$N_{\rm rig}(V)=\{z\in D_{\rm rig}(V)[1/t]| \,\ \varphi^{-n}(z)\in N_{\rm dif,n}^+(V)\quad \forall n\gg0\}.$$
     De plus, pour $n$ assez grand 
           $$N^{]0,r_n]}(V)=\{z\in D_{\rm rig}(V)[1/t]| \,\ \varphi^{-k}(z)\in N_{\rm dif,k}^+(V)\quad \forall k\geq n\}.$$
           
    Enfin, on peut reconstruire $D_{\rm rig}(V)$ \`a partir de $N_{\rm rig}(V)$ et de la filtration de Hodge, via 
     $$D_{\rm rig}(V)=\{z\in N_{\rm rig}(V)[1/t]| \,\ \varphi^{-n}(z)\in {\rm Fil}^0(L_n((t))\otimes_{L} D_{\rm dR}(V))\quad \forall n\gg0\}.$$
  On a une description similaire de $D^{]0,r_n]}(V)$ pour $n$ assez grand.
     \end{example}
     
     Nous aurons aussi besoin d'une description plus pr\'ecise de $N_{\rm rig}(V)$. Supposons que 
     $V$ est une repr\'esentation potentiellement cristalline (pour simplifier) de $\mathcal{G}_{\qp}$, 
     et prenons une extension finie galoisienne $K$ de $\qp$ telle que $V$ soit cristalline en tant que 
     repr\'esentation de ${\rm Gal}(\overline{\qp}/K)$. Soit $K_0$ l'extension maximale non ramifi\'ee de $\qp$ dans $K$. 
     Une construction standard utilisant la th\'eorie du corps des normes de Fontaine-Wintenberger (voir le chapitre 1 de \cite{BerAst} pour les d\'etails\footnote{Nous allons noter $\mathcal{R}_K$ ce que Berger note 
     $B^{\dagger}_{\rm rig, K}$.}) permet d'associer \`a l'extension de corps perfecto\"ides $K^{\rm cyc}/\qp^{\rm cyc}$ une extension finie \'etale 
     $\mathcal{R}_K$ de l'anneau de Robba $\mathcal{R}$. De plus, $\mathcal{R}_K$ est muni d'un Frobenius $\varphi$ et d'une action 
     de ${\rm Gal}(K^{\rm cyc}/\qp)$, qui sont compatibles avec les actions correspondantes sur $\mathcal{R}$, et telles que 
     $\mathcal{R}_K^{{\rm Gal}(K^{\rm cyc}/\qp^{\rm cyc})}=\mathcal{R}$ et $\mathcal{R}_K^{{\rm Gal}(K^{\rm cyc}/K)}=K_0$. 
     Berger \cite{BerAst, Marmora} montre l'existence d'un isomorphisme 
          de comparaison, compatible avec toutes les structures suppl\'ementaires 
          $$\mathcal{R}_K\otimes_{\mathcal{R}} N_{\rm rig}(V)= \mathcal{R}_K\otimes_{K_0} D_{\rm cris, K}(V),$$
          o\`u $$D_{\rm cris,K}(V)=(\mathbf{B}_{\rm cris}\otimes_{\qp} V)^{{\rm Gal}(\overline{\qp}/K)}=D_{\rm pst}(V)^{{\rm Gal}(\overline{\qp}/K)}.$$
          En prenant les invariants par ${\rm Gal}(K^{\rm cyc}/\qp^{\rm cyc})$, on obtient la description suivante de $N_{\rm rig}(V)$
          \begin{eqnarray} N_{\rm rig}(V)=\left(\mathcal{R}_K\otimes_{K_0} D_{\rm pst}(V)^{{\rm Gal}(\overline{\qp}/K)}\right)^{{\rm Gal}(K^{\rm cyc}/\qp^{\rm cyc})}. \label{nrig} \end{eqnarray}
              Cette description montre que $N_{\rm rig}(V)$ ne d\'epend que du $(\varphi,\mathcal{G}_{\qp})$-module $D_{\rm pst}(V)$, \textit{sans sa filtration de Hodge}.\footnote{Elle montre aussi que le terme de droite de l'\'egalit\'e (4) ne d\'epend pas de $K$; cela est aussi une cons\'equence \'el\'ementaire de la th\'eorie de Galois.}
                            
     \begin{example} \label{plongement et scalaire}      Revenons maintenant \`a notre contexte usuel et consid\'erons le $(\varphi,\mathcal{G}_{\qp})$-module $M(\pi)$ attach\'e \`a 
            $\pi$ (cf. le dernier paragraphe des notations et conventions). Au vu de la discussion pr\'ec\'edente, la d\'efinition suivante n'est pas bien surprenante: 
            
                     \begin{definition}
                                 Soit $K$ une extension finie galoisienne de $\qp$, qui contient $L$ et telle que l'inertie $I_K$ de ${\rm Gal}(\overline{\qp}/K)$ agisse trivialement sur $M(\pi)$. 
          On pose 
        $$N_{\rm rig}(\pi)=\left(\mathcal{R}_K\otimes_{K_0} M(\pi)^{{\rm Gal}(\overline{\qp}/K)}\right)^{{\rm Gal}(K^{\rm cyc}/\qp^{\rm cyc})},$$
        o\`u $K_0$ est l'extension maximale non ramifi\'ee de $\qp$ dans $K$.  
      \end{definition}
      
      Le $(\varphi,\Gamma)$-module $N_{\rm rig}(\pi)$ sur $\mathcal{R}$ qui s'en d\'eduit 
        est ind\'ependant du choix de $K$, et il est libre de rang $2$ sur $\mathcal{R}$. Soit 
         $$M_{\rm dR}(\pi)=(\overline{\qp}\otimes_{\qp^{\rm nr}} M(\pi))^{\mathcal{G}_{\qp}}\simeq (K\otimes_{K_0} M(\pi)^{{\rm Gal}(\overline{\qp}/K)})^{{\rm Gal}(K/\qp)},$$
        un $L$-espace vectoriel de dimension $2$. 
        On a un isomorphisme canonique pour $n$ assez grand 
        $$(N_{\rm rig}(\pi))_{\rm dif,n}^+\simeq L_n[[t]]\otimes_{L} M_{\rm dR}(\pi). $$ 

     Toute $L$-droite $\mathcal{L}$ de  
     $M_{\rm dR}(\pi)$ d\'efinit
     une filtration exhaustive d\'ecroissante ${\rm Fil}_{\mathcal{L}}$ sur $M_{\rm dR}(\pi)$, en posant 
     $${\rm Fil}^{-1}(M_{\rm dR}(\pi))=M_{\rm dR}(\pi),\quad {\rm Fil}^0(M_{\rm dR}(\pi))=\mathcal{L}, \quad   
    {\rm Fil}^1(M_{\rm dR}(\pi))=0.$$
   
     Rappelons que $\mathcal{V}(\pi)$ est l'ensemble des (classes d'isomorphisme des) $\Pi\in {\rm Ban}^{\rm adm}(G)$ absolument irr\'eductibles telles que 
     $\Pi^{\rm lisse}\simeq \pi$. On voit dans la suite $\mathcal{V}(\pi)$ comme sous-ensemble de l'ensemble des (classes d'isomorphismes de) $L$-repr\'esentations
     absolument irr\'eductibles de $\mathcal{G}_{\qp}$, de dimension $2$, gr\^ace au foncteur de Colmez \cite[chap. II, IV]{Cbigone} et au r\'esultat principal de 
     \cite{PCD}.  
     
   Soit $V\in \mathcal{V}(\pi)$ et fixons une identification $D_{\rm pst}(V)\simeq M(\pi)$ en tant que $(\varphi, \mathcal{G}_{\qp})$-modules.
   Cet isomorphisme est unique \`a scalaire pr\`es et il induit un isomorphisme de $L$-espaces vectoriels de dimension $2$
   $$D_{\rm dR}(V)\simeq (\overline{\qp}\otimes_{\qp^{\rm nr}} D_{\rm pst}(V))^{\mathcal{G}_{\qp}}\simeq M_{\rm dR}(\pi).$$ 
   La filtration de Hodge ${\rm Fil}^0(D_{\rm dR}(V))$ d\'efinit ainsi une $L$-droite $\mathcal{L}(V)\subset M_{\rm dR}(\pi)$, qui ne d\'epend pas du choix
   de l'isomorphisme $D_{\rm pst}(V)\simeq M(\pi)$. R\'eciproquement, \'etant donn\'ee une $L$-droite $\mathcal{L}$ de $M_{\rm dR}(\pi)$, la filtration 
     ${\rm Fil}_{\mathcal{L}}$ sur $M_{\rm dR}(\pi)$ est faiblement admissible (cela d\'ecoule facilement du fait que la repr\'esentation du groupe de Weil attach\'ee \`a $M(\pi)$ est irr\'eductible, cf. la preuve du th\'eor\`eme 5.2 de \cite{Breuil-Schneider}). Le th\'eor\`eme de Colmez-Fontaine 
      \cite{CF} permet donc de construire une unique (\`a isomorphisme pr\`es) $L$-repr\'esentation $V_{\mathcal{L}}\in \mathcal{V}(\pi)$
      telle que $\mathcal{L}(V_{\mathcal{L}})=\mathcal{L}$. On d\'eduit alors de \cite[th1.3]{PCD} (cela utilise \cite{Emcomp}) que $V\to \mathcal{L}(V)$ 
      induit une bijection $\mathcal{V}(\pi)\to {\rm Proj}(M_{\rm dR}(\pi))$. 
          
          Pour r\'esumer, si $V\in \mathcal{V}(\pi)$, alors le choix d'un isomorphisme $D_{\rm pst}(V)\simeq M(\pi)$ induit:
          
          $\bullet$ un isomorphisme de $(\varphi,\Gamma)$-modules sur $\mathcal{R}$
  \begin{eqnarray} N_{\rm rig}(V)\simeq N_{\rm rig}(\pi), \label{iso nrig} \end{eqnarray}
  qui induit une identification de $L_n[[t]]$-modules avec action semi-lin\'eaire de $\Gamma$ (pour $n$ assez grand)
        $$N_{\rm dif,n}^+(V)\simeq L_n[[t]]\otimes_{L} M_{\rm dR}(\pi).$$
  
   $\bullet$ un isomorphisme de $L$-espaces vectoriels filtr\'es $D_{\rm dR}(V)\simeq (M_{\rm dR}(\pi), {\rm Fil}_{\mathcal{L}(V)})$. 
  
  $\bullet$ des identifications de $L_n[[t]]$-modules avec action semi-lin\'eaire de $\Gamma$ (pour $n$ assez grand)
     $$D_{\rm dif,n}^+(V)\simeq {\rm Fil}^0(L_n((t))\otimes M_{\rm dR}(\pi))\simeq t N_{\rm dif,n}^+(V)+L_n[[t]]\otimes_{L} \mathcal{L}(V),$$
     
       $\bullet$ des inclusions de $(\varphi,\Gamma)$-modules
        $$tN_{\rm rig}(\pi)\subset D_{\rm rig}(V)\subset N_{\rm rig}(\pi).$$
              
               Tout ceci est une cons\'equence de la discussion ci-dessus et du fait que l'on travaille en poids $0,1$. Toutes ces identifications et inclusions 
               {\it{d\'ependent du choix de l'isomorphisme}} $D_{\rm pst}(V)\simeq M(\pi)$, mais uniquement \`a scalaire pr\`es. Par cons\'equent, {\it{la $L$-droite 
               $\mathcal{L}(V)\subset M_{\rm dR}(\pi)$ est parfaitement bien d\'efinie}}, ainsi que les images dans $N_{\rm rig}(\pi)$, $L_n[[t]]\otimes_{L} M_{\rm dR}(\pi)$, de toutes ces identifications.
\end{example}

\section{Repr\'esentations localement analytiques de $G$ et mod\`ele de Kirillov-Colmez} \label{kirillovcolmez}

   Nous rappelons dans ce chapitre un certain nombre de constructions et r\'esultats concernant 
   la correspondance de Langlands locale $p$-adique, en particulier la th\'eorie du mod\`ele de Kirillov de Colmez 
   \cite[chap. VI]{Cbigone}, qui sera indispensable dans les chapitres suivants. 
   Notre discussion est assez rapide et nous renvoyons le lecteur au paragraphe 5 du chapitre VI de \cite{Cbigone} ou aux chapitres 4 et 5 de \cite{DComp}, o\`u tout ceci est d\'ecrit en d\'etail.

\subsection{$(\varphi,\Gamma)$-modules et repr\'esentations de $G$}

   Pour toute $L$-repr\'esentation $V$ de dimension $2$ de $\mathcal{G}_{\qp}$, Colmez \cite[ch.II,IV,V]{Cbigone}
   construit un faisceau $G$-\'equivariant\footnote{Le groupe $G$ agit sur $\p1(\qp)$ par $\left(\begin{smallmatrix} a & b \\ c & d \end{smallmatrix}\right).x=\frac{ax+b}{cx+d}$.} $U\to D_{\rm rig}(V)\boxtimes U$ sur $\p1(\qp)$, dont les sections sur $\zp$ sont donn\'ees par 
   $D_{\rm rig}(V)\boxtimes\zp=D_{\rm rig}(V)$. L'action du monoïde $P^+=\left(\begin{smallmatrix} \zp - \{0\} & \zp \\ 0 & 1 \end{smallmatrix}\right)$ (qui stabilise $\zp$) sur $D_{\rm rig}(V)$ est
    donn\'ee par\footnote{Rappelons que $a\to \sigma_a$ d\'esigne l'inverse de l'isomorphisme
    $\Gamma\simeq \zpet$ fourni par le caract\`ere cyclotomique.}
    \[  \left(\begin{smallmatrix} p^k a & b \\ 0 & 1 \end{smallmatrix}\right)z = (1+T)^b. \varphi^k(\sigma_a(z)),   \quad \forall z \in D_{\rm rig}(V), k \geq 0, a \in \zp^*, b\in \zp,\]
    alors que l'action de $\left(\begin{smallmatrix} 1 & 0 \\ p\zp & 1 \end{smallmatrix}\right)$ est tr\`es compliqu\'ee. 

    L'espace $D_{\rm rig}(V)\boxtimes \p1$ des sections globales du faisceau attach\'e \`a $V$ est un espace LF avec action continue de $G$. Par construction, le caract\`ere central de $D_{\rm rig}(V)\boxtimes \p1$ est\footnote{Vu comme caract\`ere de $\qpet$ par la th\'eorie du corps de classe local.}
   $$\delta_V=\chi_{\rm cyc}^{-1}\det V.$$ Dans toutes les applications que nous avons en vue, on aura $\det V=\chi_{\rm cyc}$ et donc $\delta_V=1$.
    
    Pour tout ouvert compact 
    $U$ de $\p1(\qp)$ on dispose d'une application de prolongement par z\'ero $D_{\rm rig}(V)\boxtimes U\to D_{\rm rig}(V)\boxtimes\p1$, qui permet d'identifier $D_{\rm rig}(V)\boxtimes U$ \`a un sous-espace de $D_{\rm rig}(V)\boxtimes\p1$. Soit $$w=\left(\begin{smallmatrix} 0 & 1\\ 1 & 0 \end{smallmatrix}\right)\in G,$$ et notons aussi $w$ la restriction \`a $D_{\rm rig}(V)\boxtimes \zpet$ de l'action de l'involution $w$ de $D_{\rm rig}(V)\boxtimes\p1$. Alors $D_{\rm rig}(V)\boxtimes\p1=D_{\rm rig}(V)+wD_{\rm rig}(V)$ et
 l'application $z\to ({\rm Res}_{\zp}(z), {\rm Res}_{\zp}(wz))$ induit une identification
   \[ D_{\rm rig}(V) \boxtimes \p1= \{ (z_1,z_2) \in D_{\rm rig}(V) \times D_{\rm rig}(V)|\,\, \mathrm{Res}_{\zpet}(z_2)=w(\mathrm{Res}_{\zpet}(z_1)) \}. \]

Nous allons utiliser syst\'ematiquement le r\'esultat suivant de Colmez \cite[ch V]{Cbigone}.    

  \begin{theoreme}\label{correspondance}
   Soit $V$ une $L$-repr\'{e}sentation de dimension~$2$ de $\mathcal{G}_{\qp}$.
   
  a) L'action de $G$ sur
   $D_{\rm rig}(V)\boxtimes \p1$ s'\'etend en une structure\footnote{Rappelons que $D(G)$ est l'alg\`ebre des distributions sur $G$.}
de $D(G)$-module topologique. 

b) Si $\check{V}=V^*\otimes \chi_{\rm cyc}\simeq V\otimes \delta_V^{-1}$ est le dual de Cartier de $V$, il existe un accouplement parfait de
$D(G)$-modules topologiques 
 $$\{\,\,\}_{\p1}: (D_{\rm rig}(\check{V})\boxtimes \p1)\times (D_{\rm rig}(V)\boxtimes\p1)\to L,$$
 
 c) Il existe une suite exacte canonique de 
  $D(G)$-modules topologiques 
          $$0\to (\Pi(\check{V})^{\rm an})^*\to D_{\rm rig}(V)\boxtimes \p1\to \Pi(V)^{\rm an}\to 0,$$
      et $(\Pi(\check{V})^{\rm an})^*$ s'identifie ainsi \`a l'orthogonal de $(\Pi(V)^{\rm an})^*\subset D_{\rm rig}(\check{V})\boxtimes\p1$ (ou de $\Pi(V)^*$) dans 
      $D_{\rm rig}(V)\boxtimes\p1$.
  \end{theoreme}
  
  \begin{remarque}
   a) Dans toutes nos applications on aura $\det V=\chi_{\rm cyc}$ et donc 
   $\delta_{V}=1$ et $\check{V}\simeq V$ canoniquement. La suite exacte pr\'ec\'edente devient dans ce cas 
   $$0\to (\Pi(V)^{\rm an})^*\to D_{\rm rig}(V)\boxtimes \p1\to \Pi(V)^{\rm an}\to 0$$
   et $(\Pi(V)^{\rm an})^*$ s'identifie \`a son propre orthogonal dans $D_{\rm rig}(V)\boxtimes \p1$ via l'accouplement 
   $\{\,\,\}_{\p1}: (D_{\rm rig}(V)\boxtimes \p1)\times (D_{\rm rig}(V)\boxtimes \p1)\to L$.
   
   b)  Si $U$ est un ouvert compact de $\p1(\qp)$ et si
$H$ est un sous-groupe ouvert compact de $G$ qui stabilise $U$, l'espace  
$D_{\rm rig}\boxtimes U\subset D_{\rm rig}\boxtimes \p1$ est
stable par $D(H)\subset D(G)$ et ${\rm Res}_U(\lambda\cdot z)=\lambda\cdot {\rm Res}_U(z)$
 pour tout $z\in D_{\rm rig}\boxtimes \p1$ et tout $\lambda\in D(H)$.
  \end{remarque}

\subsection{L'action infinit\'esimale de $G$}

 Soit $U(\mathfrak{gl_2})\subset D(G)$ l'alg\`{e}bre enveloppante de l'alg\`{e}bre de Lie de $G$ (tensoris\'ee avec $L$). 
  La discussion pr\'{e}c\'{e}dente montre l'existence d'une action de $\mathfrak{gl_2}$ sur $D_{\rm rig}\boxtimes \zp=D_{\rm rig}$, qui satisfait ${\rm Res}_{U}(X\cdot z)=X\cdot {\rm Res}_{U}(z)$ pour $z\in D_{\rm rig}\boxtimes \p1$, $X\in U(\mathfrak{gl_2})$ et $U\subset \zp$ ouvert compact. Bien que l'action du Borel soit relativement explicite, l'action de l'involution $w$ est tr\`es compliqu\'ee. Le r\'esultat suivant permet de contourner ce probl\`eme. 
  
Consid\'erons la base $$\quad a^+=\left(\begin{smallmatrix} 1 & 0 \\0 & 0\end{smallmatrix}\right),\quad
a^-=\left(\begin{smallmatrix} 0 & 0 \\0 & 1\end{smallmatrix}\right), \quad 
u^+=\left(\begin{smallmatrix} 0 & 1 \\0 & 0\end{smallmatrix}\right), \quad u^-=\left(\begin{smallmatrix} 0 & 0 \\1 & 0\end{smallmatrix}\right)$$
   de $\mathfrak{gl_2}$, ainsi que l'\'el\'ement de Casimir 
   $$C=u^+u^-+ u^-u^++\frac{1}{2}h^2\in U(\mathfrak{gl}_2),\quad \text{o\`u}\^e\quad h=a^+-a^-=\left(\begin{smallmatrix} 1 & 0 \\0 & -1\end{smallmatrix}\right).$$
     L'\'el\'ement $C$ engendre le centre de $U(\mathfrak{sl}_2)$. On identifie un \'el\'ement $f$ de $\mathcal{R}$ avec l'op\'erateur \og multiplication par $f$\fg{} sur 
  $D_{\rm rig}(V)$ dans l'\'enonc\'e suivant, qui est le r\'esultat principal de \cite{Annalen}.

  \begin{theoreme}\label{Casimirinf} Soient $a$ et $b$ les poids de Hodge-Tate g\'en\'eralis\'es de $V$, et soit $k=a+b$. 
  En tant qu'op\'erateurs sur $D_{\rm rig}(V)$ nous avons
  $$a^+=\nabla, \quad a^-=k-1-\nabla, \quad u^+=t, \quad u^-=-\frac{(\nabla-a)(\nabla-b)}{t}, \quad C=\frac{(a-b)^2-1}{2}.$$
     En particulier, si $a=0$ et $b=1$ on a 
   $$a^+=\nabla=-a^-, \quad C=0, \quad u^+=t, \quad u^-=-\frac{\nabla(\nabla-1)}{t}=-t\partial^{2},$$
   o\`u $\partial=\frac{1}{t}\nabla$, une connexion sur $D_{\rm rig}(V)[1/t]$.
  \end{theoreme}
  
\subsection{Vecteurs $P$-finis et mod\`ele de Kirillov} Rappelons que $P=\left(\begin{smallmatrix} \qpet & \qp \\0 & 1\end{smallmatrix}\right)$. 
On fixe une $L$-repr\'esentation $V$ de dimension $2$ de $\mathcal{G}_{\qp}$ et on suppose que $V$ est {\it{absolument irr\'eductible}}. 

\begin{definition} a) On dit qu'un vecteur $v\in \Pi(V)$ est {\it{$P$-fini}} s'il existe $n,k\geq 1$ tels que 
  $$\left( \left(\begin{smallmatrix} 1 & p^n \\0 & 1\end{smallmatrix}\right)-1\right)^k v=0$$
  et si $L\left [ \left(\begin{smallmatrix} \zpet & 0 \\0 & 1\end{smallmatrix}\right)\right] v$ est de dimension finie sur $L$. 
   On note $\Pi(V)^{P-\rm fini}$ l'espace des vecteurs $P$-finis de $\Pi(V)$.
   
   b) Un vecteur $v\in \Pi(V)^{P-\rm fini}$ est dit {\it{de pente infinie}} s'il existe $n,k\geq 1$ et $m\in \mathbf{Z}$ tels que  
   $$\left(\sum_{i=0}^{p^n-1}   \left(\begin{smallmatrix} 1 & i \\0 & 1\end{smallmatrix}\right) \right)^k\circ \left(\begin{smallmatrix} p^m & 0 \\0 & 1\end{smallmatrix}\right)v=0.$$
   On note $\Pi(V)^{P-\rm fini}_c\subset \Pi(V)^{P-\rm fini}$ l'espace des vecteurs $P$-finis de pente infinie.
   \end{definition}
   
  \begin{remarque}\label{Jacquet compact}  Soit $v\in \Pi(V)^{P-\rm fini}$ et soient $n,k$ comme dans la d\'efinition ci-dessus. 
  Alors pour tout $u\in  \left(\begin{smallmatrix} 1 & \qp \\0 & 1\end{smallmatrix}\right)$ on a 
  $(1-u)^kv\in  \Pi(V)^{P-\rm fini}_c$. Plus pr\'ecis\'ement, n'importe quels
  $m\geq -v_p(x)$ (avec $u=\left(\begin{smallmatrix} 1 & x \\0 & 1\end{smallmatrix}\right)$) et $N\geq m+n$ satisfont 
    $$\left(\sum_{i=0}^{p^N-1}   \left(\begin{smallmatrix} 1 & i \\0 & 1\end{smallmatrix}\right) \right)^k\circ \left(\begin{smallmatrix} p^m & 0 \\0 & 1\end{smallmatrix}\right) (1-u)^k v=0.$$
   La preuve de ce r\'esultat est un exercice amusant laiss\'e au lecteur. Nous utiliserons \`a plusieurs reprises cette observation (avec $k=1$). 
  \end{remarque}

    Soit $X$ un $L_{\infty}[[t]]$-module muni d'une action semi-lin\'eaire de $\Gamma$ (par rapport \`a l'action 
   naturelle de $\Gamma$ sur $L_{\infty}[[t]]=\varinjlim_{n} L_n[[t]]$). 
    On note ${\rm LP}(\qpet, X)^{\Gamma}$ l'espace des fonctions $\phi: \qpet\to X$ \`a support compact
  dans $\qp$ et satisfaisant $\phi(ax)=\sigma_{a}(\phi(x))$ pour tous $x\in\qpet$ et $a\in\zpet$. On note ${\rm LP}_c(\qpet, X)^{\Gamma}$
  le sous-espace de ${\rm LP}(\qpet, X)^{\Gamma}$ form\'e des fonctions nulles au voisinage de $0$. 
      L'application $\phi\to (\phi(p^i))_{i\in\mathbf{Z}}$ induit un isomorphisme $L$-lin\'eaire 
     $${\rm LP}_c(\qpet, X)^{\Gamma}\simeq \bigoplus_{i\in\mathbf{Z}} X.$$

     Rappelons que l'on a fix\'e un syst\`eme compatible $(\zeta_{p^n})_{n\geq 1}$ de racines de l'unit\'e, ce qui permet de d\'efinir un caract\`ere additif localement constant 
     $$\varepsilon: \qp\to \mu_{p^{\infty}}, \quad \varepsilon(b)=\zeta_{p^n}^{p^nb}, \,\ \forall n\geq -v_p(b).$$
    On munit les espaces ${\rm LP}(\qpet, X)^{\Gamma}$ et ${\rm LP}_c(\qpet, X)^{\Gamma}$
 d'une action de $P$, d\'efinie par 
      $$\left(\left(\begin{smallmatrix} a & b \\0 & 1\end{smallmatrix}\right)\phi \right) (x)=\varepsilon(bx)e^{tbx}\phi(ax).$$

    \begin{proposition}\label{inclusion Kir}
      Les sous-espaces 
     $\Pi(V)^{P-\rm fini}_c$ et $\Pi(V)^{P-\rm fini}$ sont stables sous l'action de $P$ et il existe une injection
     $P$-\'equivariante canonique $v\mapsto \phi_v$
     $$ \Pi(V)^{P-\rm fini}\to {\rm LP}(\qpet, D_{\rm dif}^-(V))^{\Gamma}, \quad \text{o\`u} \quad D_{\rm dif}^-(V)=\varinjlim_{n} D_{\rm dif,n}^+(V)[1/t]/D_{\rm dif,n}^+(V),$$
     qui induit un isomorphisme 
     $$\Pi(V)^{P-\rm fini}_c\simeq {\rm LP}_c(\qpet, D_{\rm dif}^-(V))^{\Gamma}.$$
     En particulier $v\to (\phi_v(p^i))_{i\in\mathbf{Z}}$ induit une bijection
    $$\Pi(V)^{P-\rm fini}_c\to \bigoplus_{i\in\mathbf{Z}} D_{\rm dif}^-(V).$$
    \end{proposition}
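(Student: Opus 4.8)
The plan is to transport the statement to the $(\varphi,\Gamma)$-module side through Théorème~\ref{correspondance}, and to read off $\phi_v$ from a lift of $v$ to $D_{\rm rig}(V)\boxtimes\p1$. First, the $P$-stability of $\Pi(V)^{P-\rm fini}$ and $\Pi(V)^{P-\rm fini}_c$ is a purely group-theoretic matter: conjugating $\left(\begin{smallmatrix} 1 & c \\ 0 & 1\end{smallmatrix}\right)$ by $\left(\begin{smallmatrix} a & b \\ 0 & 1\end{smallmatrix}\right)$ gives $\left(\begin{smallmatrix} 1 & ac \\ 0 & 1\end{smallmatrix}\right)$, and $\left(\begin{smallmatrix} a & b \\ 0 & 1\end{smallmatrix}\right)\left(\begin{smallmatrix} p^m & 0 \\ 0 & 1\end{smallmatrix}\right)=\left(\begin{smallmatrix} p^m & 0 \\ 0 & 1\end{smallmatrix}\right)\left(\begin{smallmatrix} a & p^{-m}b \\ 0 & 1\end{smallmatrix}\right)$, so both defining conditions are preserved, in the spirit of Remarque~\ref{Jacquet compact}. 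Along the way I would check that a $P$-finite vector is automatically locally analytic -- its orbit under a small unipotent subgroup is polynomial, and under a small torus spans a finite-dimensional space, which together with the admissibility of $\Pi(V)$ forces analyticity -- so that $v$ lifts to some $\tilde v\in D_{\rm rig}(V)\boxtimes\p1$ via the surjection of Théorème~\ref{correspondance}~c).

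Next I would construct $\phi_v$. The key input is Théorème~\ref{Casimirinf}: on $D_{\rm rig}(V)\boxtimes\zp=D_{\rm rig}(V)$ the operator $u^+$ is multiplication by $t$ and $\left(\begin{smallmatrix} \zpet & 0 \\ 0 & 1\end{smallmatrix}\right)$ acts through $\Gamma$, whereas on $D_{\rm dif}^-(V)=\varinjlim_n D_{\rm dif,n}^+(V)[1/t]/D_{\rm dif,n}^+(V)$ multiplication by $t$ is locally nilpotent. For $v$ $P$-finite, I would look, for each $i\in\mathbf{Z}$, at $z_i:={\rm Res}_{\zp}\big(\left(\begin{smallmatrix} p^{i} & 0 \\ 0 & 1\end{smallmatrix}\right)^{-1}\tilde v\big)\in D_{\rm rig}(V)$. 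The condition that $v$ be of finite type over $\left(\begin{smallmatrix} \zpet & 0 \\ 0 & 1\end{smallmatrix}\right)$ and killed by a power of $\left(\begin{smallmatrix} 1 & p^n \\ 0 & 1\end{smallmatrix}\right)-1$ (hence, on locally analytic vectors, by a power of $u^+=t$ up to a unit of the completed enveloping algebra) translates into: after applying a localization map $\varphi^{-n}\colon D^{]0,r_n]}(V)\to D_{\rm dif,n}^+(V)[1/t]$ and reducing modulo $D_{\rm dif,n}^+(V)$, the class of $z_i$ is a well-defined element $\phi_v(p^{i})\in D_{\rm dif}^-(V)$, independent of $n\gg0$; one then sets $\phi_v(ap^i)=\sigma_a(\phi_v(p^i))$, and the twist by $\varepsilon$ and $e^{tbx}$ in the $P$-action on ${\rm LP}(\qpet,D_{\rm dif}^-(V))^\Gamma$ is exactly what makes $v\mapsto\phi_v$ $P$-equivariant. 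If moreover $v$ has infinite slope, then $\left(\begin{smallmatrix} p^{m} & 0 \\ 0 & 1\end{smallmatrix}\right)\tilde v$ becomes supported at $0$ for $m\gg0$, i.e. its ${\rm Res}_{\zp}$ lies in $\varphi^m\big(D^{]0,r_n]}(V)\big)$, whence $\phi_v$ vanishes near $0$ and lands in ${\rm LP}_c(\qpet,D_{\rm dif}^-(V))^\Gamma$.

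It remains to treat injectivity and the identification with ${\rm LP}_c$. If $\phi_v=0$, then after reduction $\tilde v$ restricts to $0$ on arbitrarily large compact subsets of $\qp\subset\p1(\qp)$; adjusting $\tilde v$ within its ambiguity (an element of $(\Pi(\check V)^{\rm an})^*$, which by Théorème~\ref{correspondance}~c) is the orthogonal of $(\Pi(V)^{\rm an})^*$ in $D_{\rm rig}(V)\boxtimes\p1$), one makes $\tilde v$ supported at $\{\infty\}$, and since $V$ is absolutely irreducible this forces $v=0$; this gives injectivity on all of $\Pi(V)^{P-\rm fini}$. For surjectivity onto ${\rm LP}_c$, given such a $\phi$ I would, for each $i$, lift $\phi(p^i)\in D_{\rm dif}^-(V)$ first to $D_{\rm dif,n}^+(V)[1/t]$ and then along $\varphi^{-n}$ to $D^{]0,r_n]}(V)[1/t]$, multiply by a suitable $\varphi^m$ so as to land inside $D_{\rm rig}(V)\boxtimes p^{-m}\zp$, glue the resulting sections over $i\in\mathbf{Z}$ (compatibly with $\Gamma$, using the $\Gamma$-equivariance and compact support of $\phi$) into a class $\tilde v\in D_{\rm rig}(V)\boxtimes\p1$ supported in $\qp$, and take its image $v\in\Pi(V)^{\rm an}$; one checks $v\in\Pi(V)^{P-\rm fini}_c$ and $\phi_v=\phi$. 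The final bijection $v\mapsto(\phi_v(p^i))_i$ is then just the already-noted isomorphism ${\rm LP}_c(\qpet,D_{\rm dif}^-(V))^\Gamma\simeq\bigoplus_{i\in\mathbf{Z}}D_{\rm dif}^-(V)$.

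The main obstacle is the well-definedness in the second paragraph: making precise the dictionary between ``$P$-finite of infinite slope'' on $\Pi(V)$ and ``having a well-defined boundary value at $0$ in $D_{\rm dif}^-(V)$'' on $D_{\rm rig}(V)\boxtimes\p1$, and in particular that the lift ambiguity $(\Pi(\check V)^{\rm an})^*$ contributes nothing. This forces one to juggle the localization maps $\varphi^{-n}$, the partial trace operators $\sum_{i=0}^{p^n-1}\left(\begin{smallmatrix} 1 & i \\ 0 & 1\end{smallmatrix}\right)$ (which correspond to $\varphi\circ\psi$-type operators on $D_{\rm rig}(V)$), and the filtration by support on $\p1(\qp)$; this is precisely the technical heart of Colmez's Kirillov-model theory in \cite[chap.~VI]{Cbigone}, which we follow.
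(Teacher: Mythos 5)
Your starting point---that a $P$-finite vector of $\Pi(V)$ is automatically locally analytic, which is what licenses lifting $v$ to $\tilde v\in D_{\rm rig}(V)\boxtimes\p1$ via Th\'eor\`eme~\ref{correspondance}~c)---is a genuine gap. The argument you indicate (polynomial orbit under a small upper unipotent, finite-dimensional span under the torus, plus admissibility) only controls the orbit map on the upper Borel; local analyticity is a condition on a full neighbourhood of the identity in $G$, and nothing in the definition of $P$-finiteness gives any control over the action of the lower unipotent $\left(\begin{smallmatrix} 1 & 0 \\ p^n\zp & 1\end{smallmatrix}\right)$. In fact the inclusion $\Pi(V)^{P-\rm fini}_c\subset\Pi(V)^{\rm an}$ is exactly the nontrivial content of Th\'eor\`eme~\ref{dualKir} stated just below (Colmez, prop.~VI.5.12 of \cite{Cbigone}), and the paper never asserts the stronger containment of all of $\Pi(V)^{P-\rm fini}$ in $\Pi(V)^{\rm an}$. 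Using analyticity here is moreover circular: the Kirillov model whose construction is the object of Proposition~\ref{inclusion Kir} is precisely the tool used to establish that analyticity afterwards.

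Because of this one cannot begin from $D_{\rm rig}(V)\boxtimes\p1$. The construction referenced by the paper (\cite[VI.5]{Cbigone} and \cite[ch.~4]{DComp}) is carried out at the level of the \'etale $(\varphi,\Gamma)$-module $D(V)$ over $\mathcal{E}$, lifting $v\in\Pi(V)$ (not $\Pi(V)^{\rm an}$) to $D(V)\boxtimes\p1$; the $P$-finiteness hypothesis---namely that $((1+T)^{p^n}-1)^k$ annihilates ${\rm Res}_{\zp}(\tilde v)$ modulo the compact $\Lambda(\Gamma)$-lattice coming from $(\Pi(\check{V}))^*$---is then what forces the relevant residues into the overconvergent subring, where the localization maps $\varphi^{-n}$ make sense, and $\phi_v(p^i)\in D_{\rm dif}^-(V)$ is defined essentially as you go on to describe. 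Apart from this misplaced starting point, the rest of your outline (the twist by $\varepsilon(bx)e^{tbx}$ giving $P$-equivariance, infinite slope interpreted as vanishing of $\phi_v$ near $0$, reconstruction of $\tilde v$ from $\phi$ for surjectivity onto ${\rm LP}_c$, and the final bijection with $\bigoplus_{i\in\mathbf{Z}}D_{\rm dif}^-(V)$) is consonant with Colmez's chapter~VI, and you are right that the well-definedness of $\phi_v$---the interplay of the support filtration on $\p1(\qp)$, the operators of $\varphi\psi$-type, and the localization maps---is the technical heart of the argument.
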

    
    \begin{proof} Ces constructions ont \'et\'e introduites par Colmez dans le paragraphe VI.5 de \cite{Cbigone}. Leur extension sous la forme de la proposition \ref{inclusion Kir} (qui ne demande aucune id\'ee suppl\'ementaire) se trouve dans le chapitre 4 de \cite{DComp}, plus pr\'ecis\'ement les propositions 4.6, 4.8, 4.11, le lemme 4.12 et le corollaire 4.13 de loc.cit.  
        \end{proof}
  
   \subsection{Dualit\'e et mod\`ele de Kirillov} 
   
     Le r\'esultat suivant (th\'eor\`eme \ref{dualKir}) de Colmez est crucial, mais demande pas mal de pr\'eliminaires. 
     L'accouplement naturel $\check{V}\times V\to L(1)$ induit par fonctorialit\'e un accouplement 
     $$\langle \,\,\rangle: D_{\rm dif}^+(\check{V})[1/t]\times D_{\rm dif}^+(V)[1/t]\to L_{\infty}((t))dt,$$
     o\`u l'on note $dt$ la base canonique de $\qp(1)$. On d\'efinit un accouplement 
     $$\{\,\,\}_{\rm dif}: D_{\rm dif}^+(\check{V})[1/t]\times D_{\rm dif}^+(V)[1/t]\to L$$ en posant $$\quad \{\check{z}, z\}_{\rm dif}=\lim_{n\to\infty} \frac{1}{p^n} {\rm res}_{0}\left({\rm Tr}_{L_n((t))/L((t))}(\langle \sigma_{-1}(\check{z}), z\rangle)\right),$$
    o\`u ${\rm res}_0((\sum_{n\gg-\infty} a_n t^n)dt)=a_{-1}$.
    
     \begin{proposition} \label{accouplement}
         a) $\{\,\,\}_{\rm dif}$ est un accouplement $\Gamma$-\'equivariant parfait entre $ D_{\rm dif}^+(\check{V})[1/t]$ et $D_{\rm dif}^+(V)[1/t]$.
                  Pour tout $n$ assez grand l'orthogonal de $D_{\rm dif,n}^+(\check{V})$ est $D_{\rm dif,n}^+(V)$. Ainsi, 
         $\{\,\,\}_{\rm dif}$ induit un accouplement parfait entre $D_{\rm dif}^+(\check{V})$ et $D_{\rm dif}^{-}(V)$. 
         
         b) Si $V$ est de de Rham \`a poids de Hodge-Tate $0$ et $k\geq 1$, alors pour tout $n$ assez grand l'orthogonal de 
         $N_{\rm dif,n}^+(\check{V})$ est $t^k N_{\rm dif,n}^+(V)$.  
     \end{proposition}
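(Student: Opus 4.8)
Le plan est de ramener la d\'emonstration, via les descriptions de Fontaine et de Berger rappel\'ees dans l'exemple \ref{exemple fondamental}, \`a un calcul d'alg\`ebre lin\'eaire sur $L_n((t))$. Pour $n$ assez grand on a $D_{\rm dif,n}^+(W)={\rm Fil}^0(L_n((t))\otimes_L D_{\rm dR}(W))$ pour toute $L$-repr\'esentation de de Rham $W$, et $D_{\rm dif,n}^+(W)[1/t]=L_n((t))\otimes_L D_{\rm dR}(W)$ ; via ces identifications, $\langle\,,\rangle$ n'est autre que l'extension $L_n((t))$-bilin\'eaire de l'accouplement de de Rham parfait $D_{\rm dR}(\check{V})\times D_{\rm dR}(V)\to D_{\rm dR}(L(1))$, et $L_\infty((t))\,dt=\varinjlim_n D_{\rm dif,n}^+(L(1))[1/t]$. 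Je v\'erifierais d'abord que $\{\,,\,\}_{\rm dif}$ est bien d\'efini : si $\check{z}$ et $z$ vivent d\'ej\`a au niveau $m$, alors, $L_n/L_m$ \'etant totalement ramifi\'ee de degr\'e $p^{n-m}$, on a ${\rm Tr}_{L_n((t))/L((t))}=\frac{p^n}{p^m}\,{\rm Tr}_{L_m((t))/L((t))}$ sur $L_m((t))\,dt$, de sorte que $\frac{1}{p^n}{\rm res}_0\circ{\rm Tr}_{L_n((t))/L((t))}$ ne d\'epend pas de $n\geq m$ et que la limite est triviale ; le m\^eme calcul de trace, joint au fait que $t^{-1}dt$ est fix\'e par $\Gamma$ et que ${\rm Tr}_{L_n/L}\circ\sigma_a={\rm Tr}_{L_n/L}$, donne l'invariance sous $\Gamma$. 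Le twist par $\sigma_{-1}$ ne sert qu'\`a fixer une normalisation et n'intervient pas dans ces points.

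Pour le point a), l'observation cl\'e est que $\{\,,\,\}_{\rm dif}$ se factorise comme l'accouplement de de Rham suivi de $\frac{1}{p^n}{\rm res}_0\circ{\rm Tr}_{L_n((t))/L((t))}$, et que cette derni\`ere application est un accouplement $L$-bilin\'eaire parfait sur $L_n((t))$ : la forme trace $L_n\times L_n\to L$ est parfaite ($L_n$ \'etant une $L$-alg\`ebre \'etale) et l'accouplement des r\'esidus $(\sum a_i t^i,\sum b_j t^j)\mapsto\sum_{i+j=-1}a_ib_j$ l'est aussi. Avec la perfection de l'accouplement de de Rham, on en d\'eduit la perfection de $\{\,,\,\}_{\rm dif}$ sur $D_{\rm dif}^+(\check{V})[1/t]\times D_{\rm dif}^+(V)[1/t]$. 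Pour calculer l'orthogonal de $D_{\rm dif,n}^+(\check{V})={\rm Fil}^0(L_n((t))\otimes D_{\rm dR}(\check{V}))$, on utilise que l'accouplement de de Rham envoie ${\rm Fil}^i D_{\rm dR}(\check{V})$ dans l'annulateur de ${\rm Fil}^{1-i}D_{\rm dR}(V)$ (puisque $D_{\rm dR}(L(1))$ est concentr\'e en degr\'e de filtration $-1$), ces relations d'orthogonalit\'e \'etant exactes par comptage de dimensions (stricte compatibilit\'e de la filtration de Hodge avec l'accouplement de de Rham) ; un court d\'evissage dans une base adapt\'ee \`a la filtration, o\`u l'on tient le compte des puissances de $t$, montre alors que l'orthogonal de $D_{\rm dif,n}^+(\check{V})$ dans $L_n((t))\otimes D_{\rm dR}(V)$ est exactement ${\rm Fil}^0(L_n((t))\otimes D_{\rm dR}(V))=D_{\rm dif,n}^+(V)$. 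En passant au quotient par $D_{\rm dif,n}^+(V)$, puis \`a la limite inductive sur $n$, on obtient l'accouplement parfait entre $D_{\rm dif}^+(\check{V})$ et $D_{\rm dif}^-(V)$.

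Le point b) s'obtiendra \`a partir de a) en y injectant la position explicite des r\'eseaux $N_{\rm dif,n}^+$ et $D_{\rm dif,n}^+$ en poids de Hodge-Tate $0,k$ : dans une base adapt\'ee \`a la filtration, $D_{\rm dif,n}^+(V)$ et $N_{\rm dif,n}^+(V)=L_n[[t]]\otimes D_{\rm dR}(V)$ ne diff\`erent que par des puissances prescrites de $t$ dans les deux directions, avec $t^k N_{\rm dif,n}^+(V)\subseteq D_{\rm dif,n}^+(V)\subseteq N_{\rm dif,n}^+(V)$, la composante ${\rm Fil}^0$ \'etant d\'ej\`a satur\'ee ; il en va de m\^eme pour $\check{V}$, dont les poids de Hodge-Tate sont $1$ et $1-k$. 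En rempla\c cant dans le calcul pr\'ec\'edent les r\'eseaux ${\rm Fil}^0$ par les r\'eseaux $N$ et en recalculant l'orthogonal avec les puissances de $t$ additionnelles — le facteur suppl\'ementaire refl\'etant \`a la fois le twist de Tate incorpor\'e dans $\check{V}=V^*\otimes\chi_{\rm cyc}$ et la position de la filtration de Hodge de $V$ — on trouve $(N_{\rm dif,n}^+(\check{V}))^{\perp}=t^k N_{\rm dif,n}^+(V)$.

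Rien de tout ceci n'est profond ; le point d\'elicat sera la comptabilit\'e des exposants de $t$. Il faudra fixer une fois pour toutes la normalisation du g\'en\'erateur $dt$ de $D_{\rm dif,n}^+(L(1))$ relativement au g\'en\'erateur $t^{-1}dt$ de $D_{\rm dR}(L(1))$, l'effet de $\sigma_{-1}$, la compatibilit\'e des traces ${\rm Tr}_{L_n/L_m}$ entre niveaux, et la position pr\'ecise de la filtration de Hodge de $\check{V}$ vis-\`a-vis de celle de $V$ sous l'accouplement de de Rham, car une erreur sur l'un de ces points d\'eplacerait la r\'eponse d'une puissance de $t$. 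Le v\'eritable contenu consiste donc \`a choisir des conventions coh\'erentes puis \`a propager fid\`element les exposants ; les ingr\'edients structurels (descriptions de Fontaine et de Berger de $D_{\rm dif}^+$ et $N_{\rm dif}^+$, perfection des formes trace et r\'esidu, stricte compatibilit\'e de la filtration de Hodge) sont tous standard.
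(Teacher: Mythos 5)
Your proof has the right overall shape and matches the spirit of what Colmez actually does (the paper's own ``proof'' is a one-line citation to \cite[prop.\ VI.3.3, lemme VI.4.16]{Cbigone}), but there is a genuine gap in part a).

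The running hypothesis of this section is only that $V$ is absolutely irreducible of dimension $2$ --- it is \emph{not} assumed de Rham in part a). Your reduction hinges on writing $D_{\rm dif,n}^+(W)={\rm Fil}^0(L_n((t))\otimes_L D_{\rm dR}(W))$ and then factoring $\{\,,\,\}_{\rm dif}$ through the de Rham pairing $D_{\rm dR}(\check{V})\times D_{\rm dR}(V)\to D_{\rm dR}(L(1))$, followed by the trace-residue form. Both steps require $V$ to be de Rham: for non-de Rham $V$, $D_{\rm dif,n}^+(V)$ is still a free $L_n[[t]]$-lattice with semi-linear $\Gamma$-action, but it is not of the form ${\rm Fil}^0(L_n((t))\otimes D_{\rm dR}(V))$, and worse, the de Rham pairing itself is not perfect since $\dim_L D_{\rm dR}(V)$ can be strictly less than $\dim_L V$. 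So the factorization you propose is not available in general. The correct argument for a) in full generality is to show directly that the $L_n[[t]]$-bilinear pairing $D_{\rm dif,n}^+(\check{V})\times D_{\rm dif,n}^+(V)\to L_n[[t]]\,dt$ is already perfect as a pairing of $L_n[[t]]$-lattices (this follows from the exactness and monoidality of $W\mapsto D_{\rm dif,n}^+(W)$ applied to the perfect pairing $\check{V}\times V\to L(1)$, or, essentially equivalently, from the Sen--Fontaine description of $D_{\rm dif,n}^+$); once you have lattice duality, your computation of $\frac{1}{p^n}{\rm res}_0\circ{\rm Tr}_{L_n((t))/L((t))}$ as a perfect form on $L_n((t))$ with $L_n[[t]]$ its own orthogonal does conclude both the perfection on $[1/t]$ and the identification of $D_{\rm dif,n}^+(\check V)^\perp = D_{\rm dif,n}^+(V)$.

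Your well-definedness and $\Gamma$-equivariance checks are fine, and your treatment of part b) (where $V$ \emph{is} assumed de Rham, so that $D_{\rm dif,n}^+$ and $N_{\rm dif,n}^+$ have the explicit filtered descriptions and differ by prescribed powers of $t$) is the right approach, though you leave the $t$-exponent bookkeeping unexecuted, as you say yourself. To be safe: for weights $0,k$ one gets $D_{\rm dif,n}^+(V)=L_n[[t]]\cdot{\rm Fil}^0 D_{\rm dR}(V)+t^kN_{\rm dif,n}^+(V)$, $\check V$ has weights $1,1-k$, and pushing the lattice duality through gives $N_{\rm dif,n}^+(\check V)^\perp=t^kN_{\rm dif,n}^+(V)$, as claimed. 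You should also make explicit that a) is being applied with $V$ de Rham so that the orthogonal computation in a) and b) take place in compatible coordinate systems.
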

     
     \begin{proof}
      Ce sont des traductions \'el\'ementaires, voir par exemple la discussion qui pr\'ec\`ede le lemme VI.3.3 de \cite{Cbigone}, ainsi que le lemme 
      VI.4.16 de loc.cit. 
           \end{proof}
     
      D'apr\`es le corollaire VI.13 de \cite{CD}, il existe $m(V)$ assez grand tel que l'inclusion 
      $(\Pi(V)^{\rm an})^*\subset D_{\rm rig}(\check{V})\boxtimes\p1$ se factorise \`a travers 
      $$D^{]0,r_{m(V)}]}(\check{V})\boxtimes \p1:=\{z\in D_{\rm rig}(\check{V})\boxtimes \p1| \, {\rm Res}_{\zp}(z), {\rm Res}_{\zp}(wz)\in D^{]0,r_{m(V)}]}(\check{V})\},$$
      ce qui nous permet de d\'efinir pour $n\geq m(V)$ et $j\in\mathbf{Z}$ 
      $$i_{j,n}: (\Pi(V)^{\rm an})^*\to D_{\rm dif, n}^+(\check{V}), \quad i_{j,n}=\varphi^{-n}\circ {\rm Res}_{\zp}\circ  \left(\begin{smallmatrix} p^{n-j} & 0\\0 & 1\end{smallmatrix}\right).$$
      
      Rappelons qu'on dispose d'un accouplement canonique $G$-\'equivariant parfait $\{\,\,\}_{\p1}$ entre
      $D_{\rm rig}(\check{V})\boxtimes\p1$ et $D_{\rm rig}(V)\boxtimes \p1$, qui induit l'accouplement naturel entre 
      $(\Pi(V)^{\rm an})^*\subset D_{\rm rig}(\check{V})\boxtimes\p1$ et $\Pi(V)^{\rm an}=(D_{\rm rig}(V)\boxtimes\p1)/(\Pi(\check{V})^{\rm an})^*$. 
      Enfin, la proposition \ref{inclusion Kir} fournit un isomorphisme $v\to \phi_v$ entre
      $\Pi(V)^{P-\rm fini}_c$ et ${\rm LP}_c(\qpet, D_{\rm dif}^{-}(V))^{\Gamma}$, ce qui permet de donner un sens \`a l'\'egalit\'e ci-dessous:

 \begin{theoreme}\label{dualKir}
 On a $\Pi(V)^{P-\rm fini}_c\subset \Pi(V)^{\rm an}$ et pour tous $n\geq m(V)$, $v\in \Pi(V)^{P-\rm fini}_c$
 et $l\in (\Pi(V)^{\rm an})^*$ on a 
   $$\{l,v\}_{\p1}=\sum_{j\in\mathbf{Z}} \{i_{j,n}(l), \phi_v(p^{-j})\}_{\rm dif}.$$
 \end{theoreme}
 
 \begin{proof}
  Cette g\'en\'eralisation de la proposition VI.5.12 de \cite{Cbigone} est d\'emontr\'ee (de mani\`ere diff\'erente) dans 
  \cite[th. 5.3]{DComp}. 
 \end{proof}
 
 \subsection{Une description utile de $\Pi^{\rm lisse}$} Les deux r\'esultats techniques suivants seront utilis\'es constamment dans le chapitre suivant. 

\begin{proposition} \label{P-lisse} Soit $V\in \mathcal{V}(\pi)$ et notons $$\Pi(V)^{P-\rm lisse}_c=\{v\in \Pi(V)^{P-\rm fini}_c| \, u^+v=a^+v=0\}.$$
Alors $\Pi(V)^{P-\rm lisse}_c\subset \Pi(V)^{\rm lisse}$ et l'isomorphisme $\Pi(V)^{P-\rm fini}_c\simeq {\rm LP}_c(\qpet, D_{\rm dif}^{-}(V))^{\Gamma}$ induit un isomorphisme 
 de $P$-modules 
 $$\Pi(V)^{P-\rm lisse}_c\simeq {\rm LP}_c(\qpet, N_{\rm dif}^+(V)/D_{\rm dif}^+(V))^{\Gamma}.$$
\end{proposition}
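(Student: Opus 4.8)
The plan is to combine the identification of $P$-finite vectors of infinite slope from Proposition \ref{inclusion Kir} with the description of the infinitesimal action of $\mathfrak{gl}_2$ via $(\varphi,\Gamma)$-modules (Theorem \ref{Casimirinf}), and then translate the vanishing conditions $u^+v = a^+v = 0$ into conditions on the associated function $\phi_v \in {\rm LP}_c(\qpet, D_{\rm dif}^-(V))^{\Gamma}$. First I would recall that since $V \in \mathcal{V}(\pi)$, it has Hodge--Tate weights $0,1$, so Theorem \ref{Casimirinf} gives $u^+ = t$ and $a^+ = \nabla$ as operators on $D_{\rm rig}(V)$; these operators are compatible with the $\mathrm{Res}_U$ maps and hence act on the relevant $D_{\rm dif}$-spaces. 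The key point is to compute how $u^+$ and $a^+$ act on the coordinates $(\phi_v(p^i))_{i \in \mathbf{Z}} \in \bigoplus_{i} D_{\rm dif}^-(V)$ of a $P$-finite vector of infinite slope; this is where one uses the explicit $P$-action on ${\rm LP}_c(\qpet, D_{\rm dif}^-(V))^{\Gamma}$ given by $\left(\begin{smallmatrix} a & b \\ 0 & 1\end{smallmatrix}\right)\phi(x) = \varepsilon(bx)e^{tbx}\phi(ax)$, differentiated to get the Lie algebra action.

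The main computational step is this: differentiating the action of $\left(\begin{smallmatrix} 1 & b \\ 0 & 1\end{smallmatrix}\right)$ at $b=0$ shows that $u^+$ acts on $\phi \in {\rm LP}_c(\qpet, D_{\rm dif}^-(V))^{\Gamma}$ by $(u^+\phi)(x) = tx \cdot \phi(x)$ (multiplication by $tx$, using that $\frac{d}{db}(\varepsilon(bx)e^{tbx})|_{b=0} = tx$, the $\varepsilon$-part contributing nothing to the derivative since it is locally constant). So $u^+ v = 0$ forces $\phi_v(x) \in (D_{\rm dif}^-(V))[t\text{-torsion in the }x\text{-twisted sense}]$; concretely, since $x \in \qpet$ is invertible, $u^+\phi_v = 0$ means $t\phi_v(x) = 0$ in $D_{\rm dif}^-(V)$ for all $x$, i.e.\ $\phi_v$ takes values in the $t$-torsion submodule, which is $D_{\rm dif}^+(V)/D_{\rm dif}^+(V)$... more precisely in $\{z \in D_{\rm dif}^-(V) \mid tz = 0\}$. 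Then one invokes the condition $a^+ v = 0$: differentiating the action of $\left(\begin{smallmatrix} a & 0 \\ 0 & 1\end{smallmatrix}\right)$ and combining with the $\Gamma$-equivariance $\phi(ax) = \sigma_a(\phi(x))$, one finds $a^+$ acts compatibly with $\nabla$ on the target, so the condition cuts out exactly the sections where $\nabla$ vanishes appropriately. The upshot should be that $\phi_v$ takes values in $\{z \in D_{\rm dif}^+(V)[1/t]/D_{\rm dif}^+(V) \mid \nabla z = 0,\ tz \in D_{\rm dif}^+(V)\}$, and a short linear-algebra argument using the description $N_{\rm dif,n}^+(V) = L_n[[t]]\otimes_L D_{\rm dR}(V)$ and $D_{\rm dif,n}^+(V) = {\rm Fil}^0(L_n((t))\otimes M_{\rm dR})$ (Example \ref{exemple fondamental}) identifies this space with $N_{\rm dif}^+(V)/D_{\rm dif}^+(V)$: indeed $N_{\rm dif}^+/D_{\rm dif}^+$ is killed by $t$ and is exactly the ``weight $0$'' part, matching the $\nabla = 0$ condition since on the graded piece $\nabla$ acts by the Hodge--Tate weight.

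For the inclusion $\Pi(V)^{P-\rm lisse}_c \subset \Pi(V)^{\rm lisse}$, I would argue that a vector $v$ killed by $u^+$ and $a^+$ is automatically killed by all of $\mathfrak{gl}_2$: by Theorem \ref{Casimirinf}, $a^- = -\nabla = -a^+$ on $D_{\rm rig}(V)$ so $a^-v = 0$, and $u^- = -t\partial^2$; since $t = u^+$ and $u^+ v = 0$ while also $\nabla v = a^+v = 0$ gives $\partial v$ controlled, a direct manipulation (or using that the Casimir $C = 0$ together with $u^+v = a^\pm v = 0$ forces $u^+u^- v = 0$, hence $u^-v$ lies in a space on which $u^+$ is injective away from the lisse part) shows $u^- v = 0$. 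Alternatively, one notes $\Pi(V)^{\rm lisse} = (\Pi(V)^{\rm an})^{\mathfrak{g}=0}$ and that the explicit formulas reduce the system to $\nabla v = 0$, $tv = 0$ (in the appropriate sense), which already implies all four generators annihilate $v$. The hard part will be making the translation of the Lie algebra action onto the Kirillov coordinates fully rigorous --- in particular keeping track of the passage between $D_{\rm rig}(V)$ and the $D_{\rm dif}$-spaces, and checking that the operators $u^+, a^+$ genuinely correspond under $v \mapsto \phi_v$ to the multiplication-by-$tx$ and $\nabla$-type operators on ${\rm LP}_c$; this is essentially bookkeeping with the constructions of \cite[chap. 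VI]{Cbigone} and \cite[chap. 4--5]{DComp}, but it requires care. Once that dictionary is in place, the identification of the subspace is immediate from the structure of $N_{\rm dif}^+/D_{\rm dif}^+$.
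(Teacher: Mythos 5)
Your argument for the second part is correct and matches the route the paper indicates. Since the map $v\mapsto\phi_v$ of Proposition \ref{inclusion Kir} is $P$-equivariant and $\Pi(V)^{P-\rm fini}_c\subset\Pi(V)^{\rm an}$, differentiating the explicit action of $\left(\begin{smallmatrix}a&b\\0&1\end{smallmatrix}\right)$ gives $(u^+\phi)(x)=tx\,\phi(x)$ and $(a^+\phi)(x)=\nabla(\phi(x))$, so $u^+v=a^+v=0$ forces $\phi_v$ to take values in $(t^{-1}D_{\rm dif}^+(V)/D_{\rm dif}^+(V))^{\nabla=0}$; the identification of this space with $N_{\rm dif}^+(V)/D_{\rm dif}^+(V)$ (the weight-$0$ graded piece) is the computation you sketch and is indeed routine.

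The argument for the inclusion $\Pi(V)^{P-\rm lisse}_c\subset\Pi(V)^{\rm lisse}$, however, has a genuine gap that a ``direct manipulation'' will not fill. The Casimir identity gives $u^+(u^-v)=0$, and $[a^+,u^-]=-u^-$ gives $a^+(u^-v)=-u^-v$; but $\ker u^+$ on $\Pi^{\rm an}$ is large --- its $P$-finite part is valued in $t^{-1}D_{\rm dif}^+(V)/D_{\rm dif}^+(V)$, whose $\nabla=-1$ eigenspace is the nonzero image of $t^{-1}L_\infty\otimes_L{\rm Fil}^0(D_{\rm dR}(V))$ --- so these two facts do not force $u^-v=0$. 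The alternative reading (``$\nabla v=0$, $tv=0$ implies all generators kill $v$'') also fails: $u^-=-t\partial^2$ with $\partial=\frac{1}{t}\nabla$ involves dividing by $t$, which is illegitimate when $tv=0$, and $v$ lives in $\Pi^{\rm an}$, a \emph{quotient} of $D_{\rm rig}(V)\boxtimes\p1$, so one cannot manipulate the $D_{\rm rig}$-level formulas on $v$ directly.

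What is actually needed --- and what the paper's citation of Theorems \ref{Casimirinf}, \ref{dualKir} and Proposition \ref{accouplement} points to --- is a duality argument that uses the second part as an input. For $l\in(\Pi(V)^{\rm an})^*$, Theorem \ref{dualKir} gives $\{l,u^-v\}_{\p1}=-\{u^-l,v\}_{\p1}=-\sum_j\{i_{j,n}(u^-l),\phi_v(p^{-j})\}_{\rm dif}$. Since $u^-$ acts by $-t\partial^2$ (Theorem \ref{Casimirinf}) and $\partial$ preserves $N_{\rm dif,n}^+$, each $i_{j,n}(u^-l)$ lies in $tN_{\rm dif,n}^+(V)$, which by Proposition \ref{accouplement} b) (with $k=1$ and $\check V\simeq V$) is exactly the orthogonal of $N_{\rm dif,n}^+(V)$. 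As $\phi_v(p^{-j})\in N_{\rm dif,n}^+(V)/D_{\rm dif,n}^+(V)$ by the part you already proved, every term vanishes, so $u^-v=0$. The inclusion is thus a consequence of the Kirillov description plus the duality pairing, not a free Lie-algebra computation.
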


\begin{proof} L'espace $\Pi(V)^{P-\rm lisse}_c$ est not\'e $\Pi_{c}^{P-\rm alg}$ dans \cite{Annalen} (en prenant $k=1$ dans loc.cit.). 
L'inclusion $\Pi(V)^{P-\rm lisse}_c\subset \Pi(V)^{\rm lisse}$ d\'ecoule alors du th\'eor\`eme 5.6 de loc.cit (c'est une cons\'equence facile des th\'eor\`emes
\ref{Casimirinf} et \ref{dualKir}, combin\'es avec la proposition \ref{accouplement}). La deuxi\`eme partie d\'ecoule de la proposition 
5.4 de loc.cit (et se d\'eduit aussi de la proposition \ref{inclusion Kir} et de l'\'egalit\'e 
 $$ (t^{-1} D_{\rm dif}^+(V)/D_{\rm dif}^+(V))^{\nabla=0}=N_{\rm dif}^+(V)/D_{\rm dif}^+(V),$$
 qui se d\'emontre sans aucun probl\`eme). 
\end{proof}
   
   \begin{remarque}
    En se rappelant que $$N_{\rm dif}^+(V)=L_{\infty}[[t]]\otimes_{L} D_{\rm dR}(V) \quad \text{et}\quad D_{\rm dif}^+(V)=tN_{\rm dif}^+(V)+L_{\infty}[[t]]\otimes_{L} {\rm Fil}^0(D_{\rm dR}(V)),$$
    on obtient un isomorphisme canonique $\Gamma$-\'equivariant 
    $$N_{\rm dif}^+(V)/D_{\rm dif}^+(V)\simeq L_{\infty}\otimes_{L} D_{\rm dR}(V)/{\rm Fil}^0(D_{\rm dR}(V)).$$
    L'action de $\Gamma$ sur le terme de droite \'etant lisse, on en d\'eduit que ${\rm LP}_c(\qpet, N_{\rm dif}^+(V)/D_{\rm dif}^+(V))^{\Gamma}$ n'est rien d'autre que 
    l'espace des fonctions localement constantes $\phi: \qpet\to  L_{\infty}\otimes_{L} D_{\rm dR}(V)/{\rm Fil}^0(D_{\rm dR}(V))$ \`a support compact dans $\qpet$ 
    et telles que $\phi(ax)=\sigma_{a}(\phi(x))$ pour $x\in\qpet$ et $a\in \zpet$. On a un isomorphisme naturel (utiliser le th\'eor\`eme de Hilbert 90) 
    $${\rm LP}_c(\qpet, L_{\infty}\otimes_{L} D_{\rm dR}(V)/{\rm Fil}^0(D_{\rm dR}(V)))^{\Gamma} \otimes_L L_{\infty} \simeq {\rm LC}_c(\qpet, L_{\infty}\otimes_{L} D_{\rm dR}(V)/{\rm Fil}^0(D_{\rm dR}(V))),$$ et le terme de droite (avec son action naturelle de $P$, d\'efinie en utilisant le caract\`ere additif $\varepsilon$) est le mod\`ele de Kirillov usuel de $\pi\otimes_{L} L_{\infty}$, ce qui explique le nom de ce chapitre. 
           \end{remarque}
  
  \begin{theoreme}\label{Ann} 
   Si $V\in \mathcal{V}(\pi)$, alors 
   $$\Pi(V)^{\rm lisse}=\{v\in \Pi(V)^{\rm an} ~ | ~ u^+v=a^+v=0\}=\Pi(V)^{P-\rm lisse}_c$$
   et on a un isomorphisme canonique de $P$-modules 
    $$\Pi(V)^{\rm lisse}\simeq {\rm LC}_c(\qpet, N_{\rm dif}^+(V)/D_{\rm dif}^+(V))^{\Gamma}.$$
\end{theoreme}

\begin{proof} Notons $\Pi=\Pi(V)$. Commençons par montrer la premi\`ere \'egalit\'e. Une inclusion \'etant \'evidente, supposons que
$v\in \Pi^{\rm an}$ est tu\'e par $u^+$ et $a^+$, et montrons que $v$ est tu\'e par 
$u^-$. Soit $x\in \qp$ et soit $v_x=\left(\begin{smallmatrix} 1 & x \\0 & 1\end{smallmatrix}\right)v-v$, de telle sorte que 
$u^+v_x=0$ et 
$$a^+v_x=a^+\left(\begin{smallmatrix} 1 & x \\0 & 1\end{smallmatrix}\right)v=\left(\begin{smallmatrix} 1 & x \\0 & 1\end{smallmatrix}\right)(a^+v+xu^+v)=0.$$
On en d\'eduit\footnote{Noter que $v_x\in \Pi(V)_{c}^{P-\rm fini}$ gr\^ace \`a la remarque \ref{Jacquet compact}.} que $v_x\in \Pi^{P-\rm lisse}_c$ et donc, gr\^ace \`a la proposition \ref{P-lisse}, 
$u^{-}v_x=0$. L'identit\'e 
$$u^-\left(\begin{smallmatrix} 1 & x \\0 & 1\end{smallmatrix}\right)=\left(\begin{smallmatrix} 1 & x \\0 & 1\end{smallmatrix}\right)(-x^2u^++u^--xh)$$
et le fait que $v$ est tu\'e par $u^+$ et $h=2a^+$ montrent que $u^-v_x=(\left(\begin{smallmatrix} 1 & x \\0 & 1\end{smallmatrix}\right)-1)u^-v$. Ainsi, 
$u^-v\in \Pi^{\left(\begin{smallmatrix} 1 & \qp \\0 & 1\end{smallmatrix}\right)}=0$, la derni\`ere \'egalit\'e \'etant une cons\'equence de
\cite[lemme 7.1]{DComp} (c'est un r\'esultat \'el\'ementaire). Cela montre la premi\`ere \'egalit\'e. 

  Pour conclure la premi\`ere partie, il reste \`a prouver l'\'egalit\'e $\Pi(V)^{P-\rm lisse}_c=\Pi(V)^{\rm lisse}$. Une inclusion est fournie par la proposition \ref{P-lisse}. 
  L'autre inclusion vient du fait que $\Pi(V)^{\rm lisse}\simeq \pi$ est supercuspidale, donc tout vecteur de $\Pi(V)^{\rm lisse}$ est combinaison lin\'eaire de vecteurs du type
  $(1-u)v$ avec $u\in \left(\begin{smallmatrix} 1 & \qp \\0 & 1\end{smallmatrix}\right)$ et $v\in \Pi(V)^{\rm lisse}$. On conclut en utilisant l'inclusion
  $(1-u)\Pi(V)^{P-\rm lisse}\subset \Pi(V)^{P-\rm lisse}_c$ (remarque \ref{Jacquet compact}). 
  
   La deuxi\`eme partie s'obtient en combinant ce qu'on vient de d\'emontrer avec la proposition \ref{P-lisse}.
  \end{proof}

    \section{Le $G$-module $\Pi(\pi,0)$}\label{independance du quotient}

   Le but de ce chapitre est d'expliquer la preuve du th\'eor\`eme suivant, d\^u \`a Colmez \cite{Cbigone, Colmezpoids}.
   
   \begin{theoreme}
     Soient $V_1,V_2\in \mathcal{V}(\pi)$. Le choix d'isomorphismes 
     $D_{\rm pst}(V_1)\simeq M(\pi)$ et $D_{\rm pst}(V_2)\simeq M(\pi)$ induit un isomorphisme de 
     $G$-modules topologiques :
        $$\Pi(V_1)^{\rm an}/\Pi(V_1)^{\rm lisse}\simeq \Pi(V_2)^{\rm an}/\Pi(V_2)^{\rm lisse}.$$
   \end{theoreme}
  
     Nous allons en fait d\'emontrer un r\'esultat nettement plus pr\'ecis, et construire 
      un isomorphisme explicite, ce qui est indispensable pour nos besoins. Comme la construction demande un certain nombre de pr\'eliminaires techniques, nous renvoyons le lecteur au th\'eor\`eme \ref{almost canonique}. Nous avons cherch\'e \`a distinguer le plus soigneusement possible les identifications parfaitement canoniques de celles qui ne le sont qu'\`a scalaire pr\`es, ce qui alourdit un peu la r\'edaction, mais \'evite tout risque de confusion. 

\subsection{Points fixes de $\psi$ et le $G$-module $tN_{\rm rig}(V)\boxtimes \p1$}\label{points fixes}
Soit $V\in \mathcal{V}(\pi)$. Notons 
$$tN_{\rm rig}(V)\boxtimes \p1=\{z\in D_{\rm rig}(V)\boxtimes \p1| \,\ {\rm Res}_{\zp}(z), {\rm Res}_{\zp}(wz)\in tN_{\rm rig}(V)\}$$
et d\'efinissons d'une mani\`ere similaire $tN^{]0,r_n]}(V)\boxtimes \p1$ pour $n$ assez grand. Il n'est pas clair {\it{\`a priori}} que 
$tN_{\rm rig}(V)\boxtimes \p1$ soit stable sous l'action de $G$, mais nous allons voir que c'est en effet le cas. Ce r\'esultat a d\'ej\`a \'et\'e d\'emontr\'e de mani\`ere tr\`es d\'etourn\'ee dans le chapitre VI de 
\cite{Cbigone}, puis d'une mani\`ere compl\`etement diff\'erente dans \cite{Colmezpoids}. Nous en donnons une preuve diff\'erente.

\begin{proposition}\label{inclusion rayon}
  Pour tout $V\in \mathcal{V}(\pi)$ il existe $n$ tel que 
  l'inclusion\footnote{On identifie implicitement ici $\check{V}$ et $V$.} $(\Pi(V)^{\rm an})^*\subset D_{\rm rig}(V)\boxtimes \p1$ induise une inclusion 
  $$(\Pi(V)^{\rm an}/\Pi(V)^{\rm lisse})^*\subset tN^{]0,r_n]}(V)\boxtimes \p1.$$
\end{proposition}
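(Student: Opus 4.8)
The statement to prove is: for $V\in\mathcal V(\pi)$, there exists $n$ such that the inclusion $(\Pi(V)^{\rm an})^*\subset D_{\rm rig}(V)\boxtimes\p1$ descends to an inclusion $(\Pi(V)^{\rm an}/\Pi(V)^{\rm lisse})^*\subset tN^{]0,r_n]}(V)\boxtimes\p1$. Two things must be checked: (i) that $(\Pi(V)^{\rm an})^*$, viewed inside $D_{\rm rig}(V)\boxtimes\p1$, in fact lands inside $tN_{\rm rig}(V)\boxtimes\p1$ and even inside $tN^{]0,r_n]}(V)\boxtimes\p1$ for a suitable $n$; and (ii) that the image of the subspace $(\Pi(V)^{\rm lisse})^*$ — equivalently, the kernel of the surjection $(\Pi(V)^{\rm an})^*\twoheadrightarrow(\Pi(V)^{\rm an}/\Pi(V)^{\rm lisse})^*$ dual to the inclusion $\Pi(V)^{\rm lisse}\hookrightarrow\Pi(V)^{\rm an}$ — is exactly the intersection of $(\Pi(V)^{\rm an})^*$ with the annihilator of $\Pi(V)^{\rm lisse}$, so that the quotient sits inside the quotient of the lattice. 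Point (ii) is essentially formal once (i) is established: the exact sequence of Theorem~\ref{correspondance}(c) (with $\check V\simeq V$ since $\det V=\chi_{\rm cyc}$) identifies $(\Pi(V)^{\rm an})^*$ with its own orthogonal inside $D_{\rm rig}(V)\boxtimes\p1$ under $\{\,,\,\}_{\p1}$, and the dual of $\Pi(V)^{\rm lisse}$ is a $G$-stable quotient, hence the kernel is the orthogonal of $\Pi(V)^{\rm lisse}$ computed inside $(\Pi(V)^{\rm an})^*$.

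The real content is (i). Here the plan is to use the description of $\Pi(V)^{\rm lisse}$ from Theorem~\ref{Ann} together with the Kirillov-model pairing of Theorem~\ref{dualKir}. Concretely: take any $l\in(\Pi(V)^{\rm an})^*$ whose class kills $\Pi(V)^{\rm lisse}$; I want to show $\reso(l):={\rm Res}_{\zp}(l)$ and $\reso(wl)$ lie in $tN^{]0,r_n]}(V)$. By Berger's description (Example~\ref{exemple fondamental}), $tN^{]0,r_n]}(V)$ is cut out inside $D_{\rm rig}(V)[1/t]$ by the conditions $\varphi^{-k}(z)\in tN_{\rm dif,k}^+(V)=t\,L_k[[t]]\otimes_L D_{\rm dR}(V)$ for all $k\ge n$; since we already know $\reso(l)\in D^{]0,r_n]}(V)$ (by Corollary~VI.13 of \cite{CD}, for $n\ge m(V)$), it suffices to show that the images $i_{j,n}(l)=\varphi^{-n}({\rm Res}_{\zp}(\matrice{p^{n-j}}001 l))$ land in $t N_{\rm dif,n}^+(V)$, rather than merely in $D_{\rm dif,n}^+(V)=tN_{\rm dif,n}^+(V)+L_n[[t]]\otimes\mathrm{Fil}^0$. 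Equivalently, the reduction of $i_{j,n}(l)$ modulo $t$ must lie in $\mathrm{Fil}^0 D_{\rm dR}(V)/t(\cdots)$ — no, more precisely, one tests $i_{j,n}(l)$ against $N_{\rm dif,n}^+(\check V)$ under $\{\,,\,\}_{\rm dif}$ and uses Proposition~\ref{accouplement}(b): the orthogonal of $N_{\rm dif,n}^+(\check V)$ is $tN_{\rm dif,n}^+(V)$ (taking $k=1$ there, i.e. Hodge–Tate weights $0,1$). So the claim $i_{j,n}(l)\in tN_{\rm dif,n}^+(V)$ is equivalent to $\{i_{j,n}(l),z\}_{\rm dif}=0$ for all $z\in N_{\rm dif,n}^+(\check V)$. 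Now invoke Theorem~\ref{dualKir}: for $v$ a $P$-finite vector of infinite slope, $\{l,v\}_{\p1}=\sum_j\{i_{j,n}(l),\phi_v(p^{-j})\}_{\rm dif}$. The point is that the $P$-smooth-of-infinite-slope vectors $\Pi(V)^{P-\rm lisse}_c$ correspond under $v\mapsto\phi_v$ precisely to ${\rm LP}_c(\qpet,N_{\rm dif}^+(V)/D_{\rm dif}^+(V))^{\Gamma}$ (Proposition~\ref{P-lisse}) — wait, this gives values in the quotient; one needs rather to run the argument with the $P$-finite vectors whose Kirillov functions take values that hit all of $N_{\rm dif,n}^+(\check V)$. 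By the bijection $\Pi(\check V)^{P-\rm fini}_c\simeq\bigoplus_i D_{\rm dif}^-(\check V)$ and the fact that $N_{\rm dif}^+(\check V)/D_{\rm dif}^+(\check V)\subset D_{\rm dif}^-(\check V)$, every element of $N_{\rm dif,n}^+(\check V)/D_{\rm dif,n}^+(\check V)$ arises as $\phi_v(p^{-j})$ for some $v\in\Pi(\check V)^{P-\rm lisse}_c$; and by Theorem~\ref{Ann}, $\Pi(\check V)^{P-\rm lisse}_c=\Pi(\check V)^{\rm lisse}$. Since $l$ kills $\Pi(V)^{\rm lisse}$ — hence, dually, pairs to zero with all of $\Pi(\check V)^{\rm lisse}\subset\Pi(\check V)^{\rm an}$ under $\{\,,\,\}_{\p1}$ — the right-hand sum in Theorem~\ref{dualKir} vanishes for all such $v$, which forces $i_{j,n}(l)\perp(N_{\rm dif,n}^+(\check V)/D_{\rm dif,n}^+(\check V))$, i.e. $i_{j,n}(l)\in tN_{\rm dif,n}^+(V)+D_{\rm dif,n}^+(V)=D_{\rm dif,n}^+(V)$ — which we already knew — hmm.

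The gap in the naive version above is that $\{\,,\,\}_{\rm dif}$ restricted to $D_{\rm dif,n}^+(\check V)$ only sees $D_{\rm dif}^-(V)=D_{\rm dif}^+(V)[1/t]/D_{\rm dif}^+(V)$, so pairing with smooth vectors of infinite slope (whose Kirillov values lie in $N^+/D^+$) can only detect the image of $i_{j,n}(l)$ in $N_{\rm dif,n}^+(V)/D_{\rm dif,n}^+(V)$, not distinguish $tN^+$ from $D^+$ directly. The correct route, and the part I expect to be the main obstacle, is to combine this with the infinitesimal action: by Theorem~\ref{Casimirinf} (weights $0,1$), $u^+$ acts as multiplication by $t$ on $D_{\rm rig}(V)$, so $t\,D_{\rm rig}(V)=u^+\cdot D_{\rm rig}(V)$, and more to the point the conditions defining $tN^{]0,r_n]}$ can be rephrased via $u^+$ and $a^+=\nabla$. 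So the strategy is: show first that $\reso(l)\in N^{]0,r_n]}(V)$ by the pairing argument above (pairing against $\Pi(\check V)^{\rm lisse}$ detects exactly the $N^+/D^+$-component, and its vanishing says $\varphi^{-k}(\reso(l))\in D_{\rm dif,k}^+(V)+$ nothing extra, i.e. $\in N_{\rm dif,k}^+(V)$ once one also uses $\reso(l)\in D^{]0,r_n]}(V)\subset N^{]0,r_n]}(V)$ — so actually $\reso(l)\in N^{]0,r_n]}(V)$ automatically and the smooth-vector test upgrades this to $tN$); then deduce the same for $\reso(wl)$ by applying $w$; and finally, to get the sharpened statement that the class of $l$ lies in $tN$ and not just $N$, observe that the composite $(\Pi^{\rm an})^*\to N^{]0,r_n]}(V)\boxtimes\p1\to(N/tN)^{]0,r_n]}(V)\boxtimes\p1$ is $G$-equivariant with target a space whose $G$-action, by Theorem~\ref{Casimirinf}, factors through the smooth quotient (elements killed by $u^+=t$), hence this composite factors through $(\Pi^{\rm an})^*\to(\Pi^{\rm lisse})^*$ and is therefore zero on $(\Pi^{\rm an}/\Pi^{\rm lisse})^*$. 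This last $G$-equivariance/smoothness argument — identifying $N_{\rm rig}(V)/tN_{\rm rig}(V)\boxtimes\p1$ as (essentially) a smooth $G$-representation built from $M_{\rm dR}(\pi)$, using that $u^+$ acts as $t$ which is $0$ on the quotient — is the crux; the rest is bookkeeping with Berger's and Colmez's constructions. I would present it in that order: (a) localization lands in $N^{]0,r_n]}$ via Example~\ref{exemple fondamental} and Proposition~\ref{accouplement}; (b) the quotient $N/tN\boxtimes\p1$ carries a $G$-action factoring through smooth vectors, via Theorem~\ref{Casimirinf}; (c) hence the image of $(\Pi^{\rm an}/\Pi^{\rm lisse})^*$ dies in $N/tN\boxtimes\p1$, i.e. lands in $tN^{]0,r_n]}\boxtimes\p1$; (d) the pairing description (Theorems~\ref{dualKir},~\ref{Ann}, Propositions~\ref{P-lisse},~\ref{accouplement}) is what makes (c) precise and checks that no smooth vectors are lost, i.e. the map $(\Pi^{\rm an}/\Pi^{\rm lisse})^*\to tN^{]0,r_n]}\boxtimes\p1$ remains injective.
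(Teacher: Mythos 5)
Your ``naive version'' \emph{is} the paper's proof, and it works; the gap you then identify is not a gap, so the second half of your proposal (factoring through $N/tN\boxtimes\p1$ and a smoothness argument for its $G$-action) is unnecessary.

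Here is why the pairing argument closes. The element $i_{j,n}(l)$ lives in $D_{\rm dif,n}^+(\check V)$, not in $D_{\rm dif,n}^+(V)[1/t]$, so it is not the object being reduced modulo $D_{\rm dif,n}^+(V)$; the Kirillov values $\phi_v(p^{-j})$ are. By Proposition~\ref{accouplement}(a), $i_{j,n}(l)\in D_{\rm dif,n}^+(\check V)$ already pairs to zero with $D_{\rm dif,n}^+(V)$, so the pairing $\{i_{j,n}(l),\cdot\}_{\rm dif}$ descends to a functional on $D_{\rm dif,n}^+(V)[1/t]/D_{\rm dif,n}^+(V)$, which is exactly where the Kirillov values of $\Pi(V)^{\rm lisse}=\Pi(V)^{P\text{-}\rm lisse}_c$ live. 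Vanishing of $\{l,v\}_{\p1}$ for all $v\in\Pi(V)^{\rm lisse}$, read through Theorem~\ref{dualKir} and Proposition~\ref{P-lisse}, gives that $i_{j,n}(l)$ is orthogonal to all of $N_{\rm dif,n}^+(V)/D_{\rm dif,n}^+(V)$; combined with the automatic orthogonality to $D_{\rm dif,n}^+(V)$, this means $i_{j,n}(l)$ is orthogonal to all of $N_{\rm dif,n}^+(V)$. But by Proposition~\ref{accouplement}(b) with $k=1$ (Hodge--Tate weights $0,1$), the orthogonal of $N_{\rm dif,n}^+(V)$ inside $D_{\rm dif,n}^+(\check V)[1/t]$ is precisely $tN_{\rm dif,n}^+(\check V)$. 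Intersecting with $D_{\rm dif,n}^+(\check V)$ (where $i_{j,n}(l)$ lies) gives $i_{j,n}(l)\in tN_{\rm dif,n}^+(\check V)$, which under the identification $\check V\simeq V$ is exactly the wanted conclusion; Example~\ref{exemple fondamental} then passes from the local condition at all $n\geq m(V)$ to ${\rm Res}_{\zp}(l)\in tN^{]0,r_{m(V)}]}(V)$, and applying the same to $wl$ finishes. So the pairing argument \emph{does} distinguish $tN^+$ from $D^+$; your sentence ``can only detect the image of $i_{j,n}(l)$ in $N^+/D^+$'' mislocates $i_{j,n}(l)$ on the wrong side of the duality.
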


\begin{proof} Soit $m(V)$ comme dans la discussion qui suit la proposition \ref{accouplement} et soit 
$l\in (\Pi(V)^{\rm an}/\Pi(V)^{\rm lisse})^*$, vu comme \'el\'ement de $(\Pi(V)^{\rm an})^*$ nul sur 
$\Pi(V)^{\rm lisse}$. En combinant l'isomorphisme 
$\Pi(V)^{\rm lisse}\simeq {\rm LC}_c(\qpet, N_{\rm dif}^+(V)/D_{\rm dif}^+(V))^{\Gamma}$ 
(th\'eor\`eme \ref{Ann}) avec le th\'eor\`eme 
\ref{dualKir} et la proposition \ref{accouplement}, on obtient $i_{j,n}(l)\in tN_{\rm dif,n}^+(V)$  
pour $n\geq m(V)$ et $j\in \mathbf{Z}$. En particulier (en prenant $j=n$) $\varphi^{-n}({\rm Res}_{\zp}(l))\in tN_{\rm dif,n}^+(V)$ pour 
$n\geq m(V)$ et donc ${\rm Res}_{\zp}(l)\subset tN^{]0,r_{m(V)}]}(V)$ (cf. l'exemple \ref{exemple fondamental}), ce qui permet de conclure (en remplaçant $l$ par
$wl$). 
\end{proof}

   Rappelons que si $\Delta$ est un $(\varphi,\Gamma)$-module sur $\mathcal{R}$, il existe un unique op\'erateur
   $\psi$ sur $\Delta$ qui commute avec $\Gamma$, s'annule sur $\sum_{i=1}^{p-1} (1+T)^i\varphi(\Delta)$ et satisfait $\psi\circ \varphi={\rm id}$.
   Soit maintenant $V\in \mathcal{V}(\pi)$ et identifions comme toujours $\check{V}$ avec $V$.
   Par construction, on a 
   $${\rm Res}_{\zp}\circ \left(\begin{smallmatrix} p^{-1} & 0 \\0 & 1\end{smallmatrix}\right)=\psi\circ {\rm Res}_{\zp}\quad \text{sur}\quad D_{\rm rig}(V)\boxtimes\p1.$$
    Ainsi, l'inclusion
$(\Pi(V)^{\rm an})^*\subset D_{\rm rig}(V)\boxtimes \p1$ compos\'ee avec ${\rm Res}_{\zp}$ induit une inclusion 
$$[(\Pi(V)^{\rm an})^*]^{\left(\begin{smallmatrix} p & 0 \\0 & 1\end{smallmatrix}\right)=1}\subset D_{\rm rig}(V)^{\psi=1}.$$

\begin{theoreme} \label{psi egal un} L'inclusion pr\'ec\'edente est un isomorphisme de $D(\Gamma)$-modules et induit un isomorphisme de $D(\Gamma)$-modules
$$[(\Pi(V)^{\rm an}/\Pi(V)^{\rm lisse})^*]^{\left(\begin{smallmatrix} p & 0 \\0 & 1\end{smallmatrix}\right)=1}\simeq (tN_{\rm rig}(V))^{\psi=1}.$$
 En composant avec ${\rm Res}_{\zpet}$ on obtient un isomorphisme de $D(\Gamma)$-modules
 $$[(\Pi(V)^{\rm an}/\Pi(V)^{\rm lisse})^*]^{\left(\begin{smallmatrix} p & 0 \\0 & 1\end{smallmatrix}\right)=1}\simeq (1-\varphi)(tN_{\rm rig}(V))^{\psi=1}.$$
\end{theoreme}

\begin{proof} La seconde partie est une cons\'equence de la premi\`ere, de l'\'egalit\'e ${\rm Res}_{\zpet}=1-\varphi$ sur $(tN_{\rm rig}(V))^{\psi=1}$
et du fait que $D_{\rm rig}(V)^{\varphi=1}=0$, car $V$ n'est pas trianguline. Commençons par montrer que 
$[(\Pi(V)^{\rm an})^*]^{\left(\begin{smallmatrix} p & 0 \\0 & 1\end{smallmatrix}\right)=1}\subset D_{\rm rig}(V)^{\psi=1}$ est un isomorphisme. 
Soit $D(V)$ le $(\varphi,\Gamma)$-module \'etale sur l'anneau de Fontaine $\mathcal{E}$ attach\'e \`a $V$ par l'\'equivalence de cat\'egories de Fontaine 
\cite{FoGrot}. 
D'apr\`es \cite[prop. V.1.18]{Cbigone} l'application naturelle $D(\Gamma)\otimes_{\Lambda(\Gamma)} D(V)^{\psi=1}\to D_{\rm rig}(V)^{\psi=1}$ est un isomorphisme de
$D(\Gamma)$-modules, o\`u $\Lambda(\Gamma)$ est l'alg\`ebre des mesures sur $\Gamma$ \`a valeurs dans $L$. Or \cite[remarque V.14]{CD} l'application 
${\rm Res}_{\zp}$ induit un isomorphisme de $\Lambda(\Gamma)$-modules $$(\Pi(V)^*)^{\left(\begin{smallmatrix} p & 0 \\0 & 1\end{smallmatrix}\right)=1}\simeq D(V)^{\psi=1}.$$
Soit alors $z\in D_{\rm rig}(V)^{\psi=1}$ et \'ecrivons 
$$z=\sum_{i=1}^n \int_{\Gamma} \gamma(x_i)\mu_i(\gamma)$$
avec $x_i\in D(V)^{\psi=1}$ et $\mu_i\in D(\Gamma)$. Si $l_i\in (\Pi^*)^{\left(\begin{smallmatrix} p & 0 \\0 & 1\end{smallmatrix}\right)=1}$ satisfont 
${\rm Res}_{\zp}(l_i)=x_i$, alors 
$$z={\rm Res}_{\zp}(l), \quad \text{o\`u} \quad l=\sum_{i=1}^n \int_{\Gamma} \left(\begin{smallmatrix} \chi_{\rm cyc}(\gamma) & 0 \\0 & 1\end{smallmatrix}\right)l_i\mu_i(\gamma)\in 
[(\Pi(V)^{\rm an})^*]^{\left(\begin{smallmatrix} p & 0 \\0 & 1\end{smallmatrix}\right)=1},$$ ce qui permet de conclure. 

  Ensuite, la proposition \ref{inclusion rayon} montre que ${\rm Res}_{\zp}$ induit une inclusion 
$$[(\Pi(V)^{\rm an}/\Pi(V)^{\rm lisse})^*]^{\left(\begin{smallmatrix} p & 0 \\0 & 1\end{smallmatrix}\right)=1}\subset (tN_{\rm rig}(V))^{\psi=1}.$$
Soit $z\in (tN_{\rm rig}(V))^{\psi=1}$. D'apr\`es ce que l'on vient de faire, il existe $l\in [(\Pi(V)^{\rm an})^*]^{\left(\begin{smallmatrix} p & 0 \\0 & 1\end{smallmatrix}\right)=1}$
tel que ${\rm Res}_{\zp}(l)=z$. Nous allons montrer que $l$ s'annule sur $\Pi(V)^{\rm lisse}=\Pi(V)^{P-\rm lisse}_c$ (l'\'egalit\'e d\'ecoule du th\'eor\`eme \ref{Ann}), ce qui permettra de conclure. Soit 
$a\geq m(V)$ tel que $z\in D^{]0,r_a]}(V)$. 
Comme $\left(\begin{smallmatrix} p & 0 \\0 & 1\end{smallmatrix}\right)l=l$, on a $i_{j,n}(l)=\varphi^{-n}(z)\in tN_{\rm dif, n}^+(V)$ pour
$n\geq a$ et $j\in \mathbf{Z}$. Le r\'esultat s'obtient alors en combinant les propositions \ref{accouplement} et \ref{P-lisse} avec le th\'eor\`eme
\ref{dualKir}. 
\end{proof}
  
\begin{proposition}\label{stable par w} Pour tout $V\in \mathcal{V}(\pi)$ 

  a) Le sous-espace $tN_{\rm rig}(V)\boxtimes \zpet:=(tN_{\rm rig}(V))^{\psi=0}$ est stable sous l'involution
   $w=\left(\begin{smallmatrix} 0 & 1 \\1 & 0\end{smallmatrix}\right)$ de $D_{\rm rig}(V)\boxtimes \zpet=D_{\rm rig}(V)^{\psi=0}$. 
   
   b) Le sous-espace $tN_{\rm rig}(V)\boxtimes \p1$ de $D_{\rm rig}(V)\boxtimes \p1$ est stable sous l'action de $G$. 
\end{proposition}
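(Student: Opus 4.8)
The crux of the proposition is part (a): the involution $w$ of $D_{\rm rig}(V)\boxtimes\zpet=D_{\rm rig}(V)^{\psi=0}$ is given by an extremely complicated formula, so its stability of the subspace $(tN_{\rm rig}(V))^{\psi=0}$ cannot be read off directly. Granting (a), part (b) is essentially formal. So the plan is to prove (a) first, by transporting the question to a manifestly $w$-stable space, and then to deduce (b).

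For (a), I would work with the subspace $(\Pi(V)^{\rm an}/\Pi(V)^{\rm lisse})^*\subset D_{\rm rig}(V)\boxtimes\p1$ (Theorem~\ref{correspondance}, identifying $\check V$ with $V$). It is $G$-stable, being the set of functionals on $\Pi(V)^{\rm an}$ vanishing on the $G$-stable subspace $\Pi(V)^{\rm lisse}$, hence in particular $w$-stable. Since the central character is trivial, $w\left(\begin{smallmatrix} p & 0\\0 & 1\end{smallmatrix}\right)w$ acts on it as $\left(\begin{smallmatrix} p^{-1} & 0\\0 & 1\end{smallmatrix}\right)$, so $w$ interchanges the conditions $\left(\begin{smallmatrix} p & 0\\0 & 1\end{smallmatrix}\right)v=v$ and $\left(\begin{smallmatrix} p^{-1} & 0\\0 & 1\end{smallmatrix}\right)v=v$; these being equivalent, the subspace $[(\Pi(V)^{\rm an}/\Pi(V)^{\rm lisse})^*]^{\left(\begin{smallmatrix} p & 0\\0 & 1\end{smallmatrix}\right)=1}$ is $w$-stable. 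Now ${\rm Res}_{\zpet}$ is $w$-equivariant (because $w$ preserves $\zpet\subset\p1(\qp)$ and the involution of $D_{\rm rig}(V)^{\psi=0}$ is by definition the restriction to sections over $\zpet$ of the $G$-action), and by Theorem~\ref{psi egal un} it carries $[(\Pi(V)^{\rm an}/\Pi(V)^{\rm lisse})^*]^{\left(\begin{smallmatrix} p & 0\\0 & 1\end{smallmatrix}\right)=1}$ onto $(1-\varphi)(tN_{\rm rig}(V))^{\psi=1}$. One checks that this is all of $(tN_{\rm rig}(V))^{\psi=0}$: $1-\varphi$ is injective on $D_{\rm rig}(V)^{\psi=1}$ since $D_{\rm rig}(V)^{\varphi=1}=0$ ($V$ not being trianguline, Lemme~\ref{injective}), and surjective onto $(tN_{\rm rig}(V))^{\psi=0}$ by the vanishing of the relevant Iwasawa-type cohomology, again using non-triangulinity. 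Therefore $(tN_{\rm rig}(V))^{\psi=0}$ is the image of a $w$-stable space under a $w$-equivariant map, hence $w$-stable, which is (a).

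For (b), once (a) holds the involution $w$ on $(tN_{\rm rig}(V))^{\psi=0}$ is well-defined, and a short computation in the coordinates $z\mapsto({\rm Res}_{\zp}(z),{\rm Res}_{\zp}(wz))$ shows that $tN_{\rm rig}(V)\boxtimes\p1=\{(z_1,z_2)\in D_{\rm rig}(V)\boxtimes\p1\mid z_1,z_2\in tN_{\rm rig}(V)\}$ equals $tN_{\rm rig}(V)+w(tN_{\rm rig}(V))$ inside $D_{\rm rig}(V)+wD_{\rm rig}(V)=D_{\rm rig}(V)\boxtimes\p1$; i.e. it is the space of global sections of the $G$-equivariant sheaf on $\p1(\qp)$ attached by the formalism of \S\ref{points fixes} to the pair $(tN_{\rm rig}(V),w)$, which is a $w$-stable sub-object of $(D_{\rm rig}(V),w)$. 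The inclusion into $D_{\rm rig}(V)\boxtimes\p1$ is then $G$-equivariant, because the $G$-action is reconstructed from operations --- the $P^+$-action on $\zp$-sections (expressed via $\varphi$, the $\sigma_a$, and multiplication by $(1+T)^b$, $b\in\zp$), the operator $\psi$ for $\left(\begin{smallmatrix} p^{-1} & 0\\0 & 1\end{smallmatrix}\right)$, the involution $w$ on $\zpet$-sections, restriction to opens and extension by zero --- all of which preserve $tN_{\rm rig}(V)$, resp. $(tN_{\rm rig}(V))^{\psi=0}$ by (a). Hence $tN_{\rm rig}(V)\boxtimes\p1$ is $G$-stable.

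The main obstacle is (a): it is the only place where the complicated involution $w$ genuinely intervenes, and the argument avoids any explicit manipulation of it by passing, through Theorem~\ref{psi egal un}, to the $G$-representation $\Pi(V)^{\rm an}/\Pi(V)^{\rm lisse}$; it therefore ultimately rests on the global construction of Section~\ref{construction morphisme} and on the Kirillov-model duality of the previous chapter (Theorems~\ref{dualKir} and~\ref{Ann}, Proposition~\ref{inclusion rayon}). A lesser, purely local technical point is the surjectivity of $1-\varphi$ onto $(tN_{\rm rig}(V))^{\psi=0}$.
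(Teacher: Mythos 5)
Your strategy for (a) matches the paper's almost exactly — pass to the $w$-stable subspace $(\Pi(V)^{\rm an}/\Pi(V)^{\rm lisse})^*$, use triviality of the central character to see that its $\left(\begin{smallmatrix} p & 0\\0 & 1\end{smallmatrix}\right)$-invariants are $w$-stable, and transport this through the $w$-equivariant ${\rm Res}_{\zpet}$ of Theorem~\ref{psi egal un} to conclude that $(1-\varphi)(tN_{\rm rig}(V))^{\psi=1}$ is stable under $w$. Part (b) is also argued as in the paper.

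The gap is in the one sentence ``One checks that this is all of $(tN_{\rm rig}(V))^{\psi=0}$.'' You assert that $1-\varphi: (tN_{\rm rig}(V))^{\psi=1}\to (tN_{\rm rig}(V))^{\psi=0}$ is surjective, invoking ``the vanishing of the relevant Iwasawa-type cohomology.'' This is unjustified, and for $(\varphi,\Gamma)$-modules over the Robba ring it is not a routine fact: to produce a preimage of $y\in D^{\psi=0}$ one would want something like $\sum_{k\geq 0}\varphi^k y$ to converge, which fails in general, and in any case $tN_{\rm rig}(V)$ is not an \'etale $(\varphi,\Gamma)$-module, so no standard Iwasawa-cohomology vanishing applies off the shelf. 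The paper uses the strictly weaker fact (quoting \cite[prop. 4.3.8]{KPL}) that $(1-\varphi)(tN_{\rm rig}(V))^{\psi=1}$ generates $(tN_{\rm rig}(V))^{\psi=0}$ as a $D(\Gamma)$-module. This weakening is what makes the rest of the argument necessary: one must then propagate $w$-stability from the generating set to the whole $D(\Gamma)$-module, using the identity $w\circ\sigma_a=\sigma_{a^{-1}}\circ w$ on $D_{\rm rig}(V)\boxtimes\zpet$ (a consequence of $w\left(\begin{smallmatrix} a & 0\\0 & 1\end{smallmatrix}\right)=\left(\begin{smallmatrix} a & 0\\0 & a\end{smallmatrix}\right)\left(\begin{smallmatrix} a^{-1} & 0\\0 & 1\end{smallmatrix}\right)w$), which is precisely the step you skipped because the claimed equality made it unnecessary. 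To repair the argument, replace the surjectivity claim by $D(\Gamma)$-generation and add the compatibility of $w$ with the $D(\Gamma)$-action.
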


\begin{proof} a) Le caract\`ere central de $\Pi(V)^{\rm an}$ \'etant trivial, l'involution $w$ de $(\Pi(V)^{\rm an}/\Pi(V)^{\rm lisse})^*$ laisse stable
$[(\Pi(V)^{\rm an}/\Pi(V)^{\rm lisse})^*]^{\left(\begin{smallmatrix} p & 0 \\0 & 1\end{smallmatrix}\right)=1}$. 
La derni\`ere partie du th\'eor\`eme \ref{psi egal un} entra\^ine la stabilit\'e de $(1-\varphi)(tN_{\rm rig}(V))^{\psi=1}$ par 
$w$. Ensuite, $tN_{\rm rig}(V)\boxtimes \zpet=(tN_{\rm rig})^{\psi=0}$ est engendr\'e comme 
$D(\Gamma)$-module par  $(1-\varphi)(tN_{\rm rig}(V))^{\psi=1}$ (cela suit par exemple de \cite[prop. 4.3.8]{KPL}), ce qui permet de conclure, en utilisant le fait que 
$w\circ \sigma_{a}=\sigma_{a^{-1}}\circ w$ sur $D_{\rm rig}(V)\boxtimes \zpet$ (qui suit de l'\'egalit\'e 
$w\left(\begin{smallmatrix} a & 0 \\0 & 1\end{smallmatrix}\right)=\left(\begin{smallmatrix} a & 0 \\0 & a\end{smallmatrix}\right)\left(\begin{smallmatrix} a^{-1} & 0 \\0 & 1\end{smallmatrix}\right)w$).

b) C'est une cons\'equence formelle de la stabilit\'e de $tN_{\rm rig}(V)$ par $P^+=\left(\begin{smallmatrix} \zp\setminus\{0\} & \zp \\0 & 1\end{smallmatrix}\right)$ et du point a).
\end{proof}

\subsection{La repr\'esentation $\Pi(\pi,0)$} Nous avons besoin du r\'esultat suivant de Colmez, qui permet de se d\'ebarrasser de la d\'ependance en la filtration de Hodge dans les constructions pr\'ec\'edentes.  

   \begin{theoreme} \label{independence day}    Soit $V\in \mathcal{V}(\pi)$. Choisissons un isomorphisme $D_{\rm pst}(V)\simeq M(\pi)$, induisant un isomorphisme 
     $N_{\rm rig}(V)\simeq N_{\rm rig}(\pi)$. L'involution de $(tN_{\rm rig}(\pi))^{\psi=0}$ induite (proposition \ref{stable par w}) par l'involution 
     $w$ de $(tN_{\rm rig}(V))^{\psi=0}$ ne d\'epend ni du choix de $V\in \mathcal{V}(\pi)$ ni du choix de l'isomorphisme 
     $D_{\rm pst}(V)\simeq M(\pi)$.
  \end{theoreme}
  
  \begin{proof}
    L'ind\'ependance par rapport au choix de l'isomorphisme $D_{\rm pst}(V)\simeq M(\pi)$ est claire, car deux tels isomorphismes diff\`erent par un scalaire. 
    L'ind\'ependance par rapport au choix de $V\in \mathcal{V}(\pi)$ a \'et\'e d\'emontr\'ee (par voie tr\`es d\'etourn\'ee) dans le paragraphe 9 du chapitre VI de \cite{Cbigone} (se rappeler que les \'el\'ements de $\mathcal{V}(\pi)$ sont classifi\'es par la filtration de Hodge
    sur $M_{\rm dR}(\pi)$). Le lecteur trouvera une preuve nettement plus simple dans \cite{Colmezpoids}, qui exploite la description explicite de l'action infinit\'esimale de 
  $G$ sur $tN_{\rm rig}(V)\boxtimes \p1$.
  \end{proof}

   Notons encore $w$ l'involution de $(tN_{\rm rig}(\pi))^{\psi=0}$ obtenue dans le th\'eor\`eme \ref{independence day}. L'existence de cette involution 
   combin\'ee avec le fait que $tN_{\rm rig}(\pi)$ est un 
    $(\varphi,\Gamma)$-module sur $\mathcal{R}$ permettent de copier les constructions usuelles de Colmez (voir le paragraphe 2 du chapitre II de \cite{Cbigone}) 
    et d'obtenir un faisceau $G$-\'equivariant $U\to tN_{\rm rig}(\pi)\boxtimes U$ sur $\p1(\qp)$ dont les sections sur $\zp$ sont 
    $tN_{\rm rig}(\pi)$, muni de l'action canonique de $P^+$ d\'efinie par 
    $$\left(\begin{smallmatrix} p^k a & b \\0 & 1\end{smallmatrix}\right)z=(1+T)^b\sigma_a(\varphi^k(z)).$$
    Notons qu'en copiant ces constructions on obtient {\it{\`a priori}} 
  seulement un faisceau \'equivariant sous l'action du groupe libre engendr\'e par $P^+$ et $w$, mais toutes les relations qui ont lieu dans $G$ sont satisfaites aussi par les sections de ce faisceau, car c'est le cas pour le faisceau attach\'e \`a $D_{\rm rig}(V)$ (pour un $V\in \mathcal{V}(\pi)$ quelconque) et car $tN_{\rm rig}(V)\subset D_{\rm rig}(V)$. 
  
     Soit $V\in \mathcal{V}(\pi)$. Fixons 
  un isomorphisme $D_{\rm pst}(V)\simeq M(\pi)$, qui induit un isomorphisme $tN_{\rm rig}(V)\simeq tN_{\rm rig}(\pi)$. Cet isomorphisme s'\'etend par construction en un isomorphisme 
  $G$-\'equivariant $$tN_{\rm rig}(V)\boxtimes \p1\simeq tN_{\rm rig}(\pi)\boxtimes \p1,$$  {\it{qui d\'epend du choix de l'isomorphisme 
   $D_{\rm pst}(V)\simeq M(\pi)$, mais uniquement \`a scalaire pr\`es}}. Ensuite, par construction de $tN_{\rm rig}(V)\boxtimes \p1$, on dispose d'une inclusion $G$-\'equivariante canonique
   $$ tN_{\rm rig}(V)\boxtimes \p1\subset D_{\rm rig}(V)\boxtimes \p1.$$
   On en d\'eduit une inclusion $G$-\'equivariante
   $$tN_{\rm rig}(\pi)\boxtimes \p1\subset D_{\rm rig}(V)\boxtimes \p1,$$
   {\it{canonique \`a scalaire pr\`es}}; son image est donc un sous-$G$-module canonique de
   $D_{\rm rig}(V)\boxtimes \p1$. Enfin, la proposition 
   \ref{inclusion rayon} fournit une inclusion canonique 
    $(\Pi(V)^{\rm an}/\Pi(V)^{\rm lisse})^*\subset tN_{\rm rig}(V)\boxtimes \p1$, donc une inclusion $G$-\'equivariante 
    $$(\Pi(V)^{\rm an}/\Pi(V)^{\rm lisse})^*\subset tN_{\rm rig}(\pi)\boxtimes \p1,$$
    {\it{canonique \`a scalaire pr\`es}}.
   
     Apr\`es ces pr\'eliminaires un peu p\'edants mais malheureusement n\'ecessaires, on peut d\'emontrer le r\'esultat suivant, d\^u \`a Colmez \cite{Cbigone, Colmezpoids}. Notre preuve est diff\'erente de celles trouv\'ees dans loc.cit., mais elle s'inspire fortement d'une astuce que l'on peut trouver dans \cite{Colmezpoids}. 
     
     \begin{theoreme}\label{almost canonique}
       Soient $V_1, V_2\in \mathcal{V}(\pi)$. On a 
       $$(\Pi(V_1)^{\rm an}/\Pi(V_1)^{\rm lisse})^*=(\Pi(V_2)^{\rm an}/\Pi(V_2)^{\rm lisse})^*$$ \`a l'int\'erieur de $tN_{\rm rig}(\pi)\boxtimes \p1$. 
       Ainsi, il existe un isomorphisme canonique \`a scalaire pr\`es $\Pi(V_1)^{\rm an}/\Pi(V_1)^{\rm lisse}\simeq \Pi(V_2)^{\rm an}/\Pi(V_2)^{\rm lisse}$. 
     \end{theoreme}
     
         \begin{proof} La seconde assertion est une cons\'equence de la premi\`ere et de la discussion qui pr\'ec\`ede le th\'eor\`eme \ref{almost canonique}. 
         Notons pour simplifier $\Delta_j=D_{\rm rig}(V_j)$ et $N_{\rm rig}=N_{\rm rig}(\pi)$, ainsi que $\Pi_j=\Pi(V_j)$. Fixons des identifications $N_{\rm rig}(V_j)=N_{\rm rig}$
         et regardons $(\Pi_1^{\rm an}/\Pi_1^{\rm lisse})^*$ comme un sous-$D(G)$-module de $\Delta_2\boxtimes \p1$, via la compos\'ee d'inclusions
$(\Pi_1^{\rm an}/\Pi_1^{\rm lisse})^*\subset tN_{\rm rig}\boxtimes \p1\subset \Delta_2\boxtimes \p1$ construites ci-dessus.  

   Montrons d'abord que $(\Pi_1^{\rm an}/\Pi_1^{\rm lisse})^*$ est contenu dans 
$(\Pi_2^{\rm an})^*\subset \Delta_2\boxtimes \p1$. On d\'eduit du th\'eor\`eme de Hahn-Banach et du caract\`ere r\'eflexif de $\Pi_1^{\rm an}/\Pi_1^{\rm lisse}$
que $\mathfrak{g}(\Pi_1^{\rm an})^*$ est dense dans $(\Pi_1^{\rm an}/\Pi_1^{\rm lisse})^*$, et puisque $(\Pi_2^{\rm an})^*$ est ferm\'e dans $\Delta_2\boxtimes \p1$, il suffit de montrer que 
$\mathfrak{g}(\Pi_1^{\rm an})^*\subset (\Pi_2^{\rm an})^*$, et m\^eme 
$u^+ (\Pi_1^{\rm an})^*\subset  (\Pi_2^{\rm an})^*$ (si cela est vrai, on d\'eduit le r\'esultat pour $u^-$ en conjuguant par $w$, et pour 
$a^+$ et $a^-$ en utilisant les relations $h=u^+u^- -u^-u^+=2a^+=-2a^-$).
 
Autrement dit, il s'agit de montrer que 
$u^+(\Pi_1^{\rm an})^*$ et $(\Pi_2^{\rm an})^*$ sont orthogonaux dans 
$\Delta_2\boxtimes\p1$. Soient $\Pi_1^{0}$ et $\Pi_2^{0}$ les boules unit\'es pour des normes $G$-invariantes sur 
$\Pi_1$, respectivement $\Pi_2$. Alors $(\Pi_1^{0})^*\otimes_{\O_L} L=\Pi_1^*$ et $(\Pi_2^{0})^*\otimes_{\O_L} L=\Pi_2^*$ sont denses dans $(\Pi_1^{\rm an})^*$ et
$(\Pi_2^{\rm an})^*$, respectivement; il suffit donc de montrer que $u^+ (\Pi_1^{(0)})^*$ et $(\Pi_2^{(0)})^*$ sont orthogonaux dans $\Delta_2\boxtimes\p1$.
Mais pour tous 
$x\in (\Pi_1^{(0)})^*$, $y\in (\Pi_2^{(0)})^*$ et $n\in \mathbf{Z}$ on a 
$$\{u^+x, y\}_{\p1}=\{\left(\begin{smallmatrix} p^n & 0 \\0 & 1\end{smallmatrix}\right)u^+x, \left(\begin{smallmatrix}  p^n & 0 \\0 & 1\end{smallmatrix}\right)y\}_{\p1}=$$
$$p^n\{u^+ \left(\begin{smallmatrix} p^n & 0 \\0 & 1\end{smallmatrix}\right)x, \left(\begin{smallmatrix} p^n & 0 \\0 & 1\end{smallmatrix}\right)y\}_{\p1}\in p^n \{u^+ (\Pi_1^{(0)})^*, (\Pi_2^{(0)})^*\}_{\p1}.$$
La compacit\'e de $ (\Pi_1^{(0)})^*$ et $(\Pi_2^{(0)})^*$ et la continuit\'e de $\{\,\,\}_{\p1}$ permettent de conclure que $\{u^+x,y\}_{\p1}=0$ et donc $\mathfrak{g}(\Pi_1^{\rm an})^*\subset (\Pi_2^{\rm an})^*$, comme voulu (cette derni\`ere partie de l'argument est fortement inspir\'ee de \cite{Colmezpoids}). 

   Il nous reste \`a montrer que tout \'el\'ement $l\in (\Pi_1^{\rm an}/\Pi_1^{\rm lisse})^*$, vu comme \'el\'ement de 
   $ (\Pi_2^{\rm an})^*$, s'annule sur $\Pi_2^{\rm lisse}$. D'apr\`es la proposition \ref{inclusion rayon} il existe 
   $a\geq m(V_1)$ tel que $ (\Pi_1^{\rm an}/\Pi_1^{\rm lisse})^*\subset tN^{]0,r_a]}\boxtimes \p1$, ce qui fait que 
   $i_{j,n}(l)\in tN_{\rm dif,n}^+(V_2)$ pour tous $j\in\mathbf{Z}$ et $n\geq a$. On conclut en utilisant les th\'eor\`eme 
   \ref{dualKir} et $\ref{Ann}$, ainsi que les propositions \ref{inclusion Kir} et \ref{accouplement}. 
         \end{proof}
               
            \begin{definition}
             On note $\Pi(\pi,0)^*$ le sous-$G$-module de $tN_{\rm rig}(\pi)\boxtimes \p1$ image de 
             $(\Pi(V)^{\rm an}/\Pi(V)^{\rm lisse})^*$ pour n'importe quel $V\in \mathcal{V}(\pi)$ et n'importe quel isomorphisme
             $D_{\rm pst}(V)\simeq M(\pi)$. On note $\Pi(\pi,0)$ le dual topologique de $\Pi(\pi,0)^*$ muni de l'action duale de
              $G$. 
            \end{definition}
                     
             Ainsi, $\Pi(\pi, 0)$ est une repr\'esentation localement analytique de $G$ sur un espace de type compact et pour tout 
             $V\in \mathcal{V}(\pi)$ on dispose d'un isomorphisme $\Pi(V)^{\rm an}/\Pi(V)^{\rm lisse}\simeq \Pi(\pi,0)$, canonique \`a scalaire pr\`es. On peut reformuler comme suit le th\'eor\`eme \ref{existemorphisme}.
             
             \begin{theoreme}\label{existemorphisme2}
             Il existe un morphisme $G$-\'equivariant continu non nul
\[ \Phi : \Pi(\pi,0)^* \to \O(\Sigma_n)^{\rho}. \]
\end{theoreme}

\section{Structure de $\O(\Omega)$-module sur $\Pi(\pi,0)^*$}\label{cotecolmez}
    
   Ce chapitre assez technique est un des points centraux de ce texte. On y explique dans un premier temps la construction d'un op\'erateur $\partial$, d\'eduit de la connexion de l'\'equation diff\'erentielle $p$-adique de Berger, sur $\Pi(\pi,0)^*$. Cet op\'erateur joue un r\^ole tout \`a fait semblable \`a l'op\'erateur de \og multiplication par $z$\fg{} sur 
    $\O(\Sigma_n)$ ($z$ est la coordonn\'ee sur $\Omega$) et cette heuristique guide sa construction. L'observation de base est que cet op\'erateur sur $\O(\Sigma_n)$
    peut se d\'ecrire en comparant les actions infinit\'esimales 
    $a^+$ et $u^+$ des groupes $\left(\begin{smallmatrix} \zp^* & 0 \\0 & 1\end{smallmatrix}\right)$
    et $\left(\begin{smallmatrix} 1 & \zp \\0 & 1\end{smallmatrix}\right)$. En effet, on v\'erifie sans mal que
    $$a^+-1=u^+\circ \partial.$$
    Nous verrons que l'on peut d\'efinir un unique automorphisme $\partial$ du 
    $L$-espace vectoriel topologique $\Pi(\pi,0)^*$ qui satisfait la relation pr\'ec\'edente. Ce r\'esultat est d\^u \`a Colmez \cite{Colmezpoids}, mais nous avons d\'ecid\'e de le reprendre de mani\`ere assez d\'etaill\'ee, avec un argument diff\'erent et plus direct.

    Puis nous montrons que l'analogie pr\'ec\'edente n'est pas anodine et am\'eliorons le r\'esultat en munissant $\Pi(\pi,0)^*$ d'une structure de $\O(\Omega)$-module telle que la variable $z$ agisse via $\partial$. Cet \'enonc\'e, dont la preuve exploite la dualit\'e de Morita \cite{Morita 1, STMorita}, jouera un r\^ole capital dans la suite. 
\\

  {\it{Nous fixons $\Pi\in \mathcal{V}(\pi)$ et notons $V=V(\Pi)$, $D_{\rm rig}=D_{\rm rig}(V)$. 
  On fixe un isomorphisme $D_{\rm pst}(V)\simeq M(\pi)$, ce qui induit un isomorphisme 
   $N_{\rm rig}=N_{\rm rig}(V)\simeq N_{\rm rig}(\pi)$
et (th\'eor\`eme \ref{almost canonique} et ce qui suit) une identification $\Pi(\pi,0)=\Pi^{\rm an}/\Pi^{\rm lisse}$.}} La construction qui suit ne d\'epend pas des choix faits, mais il convient de les introduire pour certains arguments techniques.  

\subsection{Construction de l'op\'erateur $\partial$ sur $\Pi(\pi,0)^*$}\label{construction partiel}

     Le r\'esultat suivant, qui sera utilis\'e constamment dans la suite, repose sur la proposition \ref{bijective}.
            
      \begin{proposition}\label{imagefermee}
      L'op\'erateur $u^+$ est d'image ferm\'ee sur $(\Pi^{\rm an})^*$ et induit un hom\'eomorphisme entre
      $(\Pi^{\rm an})^*$ et $u^+((\Pi^{\rm an})^*)$.
   \end{proposition}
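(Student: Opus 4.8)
The plan is to realise $(\Pi^{\rm an})^*$ inside Colmez's $\mathbf{P}^1$-model and to reduce the statement to elementary properties of multiplication by $t$ and of the connection $\nabla$ on the $(\varphi,\Gamma)$-module. By Theorem \ref{correspondance} (using $\check V\simeq V$, legitimate since $\det V=\chi_{\rm cyc}$) I would regard $(\Pi^{\rm an})^*$ as a closed $D(G)$-submodule of $D_{\rm rig}(V)\boxtimes\mathbf{P}^1$, the action of $u^+$ being the restriction of the one on $D_{\rm rig}(V)\boxtimes\mathbf{P}^1$. Since $V$ has Hodge--Tate weights $0,1$, Theorem \ref{Casimirinf} gives, on $D_{\rm rig}(V)$, that $u^+$ is multiplication by $t$ and $u^-=-\frac{\nabla(\nabla-1)}{t}$, and that ${\rm Res}_{U}$ intertwines the $\mathfrak{gl}_2$-actions. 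I will use the identification $z\mapsto({\rm Res}_{\zp}(z),{\rm Res}_{\zp}(wz))$ of $D_{\rm rig}(V)\boxtimes\mathbf{P}^1$ with a closed subspace of $D_{\rm rig}(V)\times D_{\rm rig}(V)$, under which $u^+$ becomes $(z_1,z_2)\mapsto(tz_1,u^-z_2)$.

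Injectivity of $u^+$ is then immediate: if $u^+z=0$ then $t\cdot{\rm Res}_{\zp}(z)=0$, so ${\rm Res}_{\zp}(z)=0$ because $t$ is not a zero divisor in the Robba ring; applying $w$ and using $wu^+=u^-w$ gives $\nabla(\nabla-1)\cdot{\rm Res}_{\zp}(wz)=0$, hence ${\rm Res}_{\zp}(wz)=0$ by Lemma \ref{injective} applied to $P(X)=X(X-1)$ (applicable because $V$ is not triangulin), so $z=0$. In particular $u^+$ is injective on $(\Pi^{\rm an})^*$, and even on all of $D_{\rm rig}(V)\boxtimes\mathbf{P}^1$.

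For the homeomorphism statement I would first observe that it suffices to prove that $u^+$ is a homeomorphism of $D_{\rm rig}(V)\boxtimes\mathbf{P}^1$ onto a closed subspace: the restriction of a homeomorphism to the closed subspace $(\Pi^{\rm an})^*$ is again a homeomorphism onto a closed subspace, which is then closed in $(\Pi^{\rm an})^*$. In the pair model this reduces to showing that multiplication by $t$ and the operator $u^-=-\frac{\nabla(\nabla-1)}{t}$ are each homeomorphisms of $D_{\rm rig}(V)$ onto closed subspaces. For multiplication by $t$ one uses that at each finite level $t\mathcal{E}^{]0,r_n]}$ is exactly the closed subspace of functions vanishing at $\zeta_{p^k}-1$ for $k\ge n$, so that division by $t$ is well defined there, together with the open mapping theorem at the Fréchet level and passage to the LF limit. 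For $u^-$, injectivity is Lemma \ref{injective} with $P(X)=X(X-1)$, and closedness of the image of $\nabla(\nabla-1)$ on $D_{\rm rig}(V)$ follows by writing $\nabla=t\partial$ and invoking Proposition \ref{bijective}: $\partial$ is a continuous linear bijection of $N_{\rm rig}(V)$, hence a homeomorphism by the open mapping theorem for LF spaces, so $\nabla$ is a homeomorphism of $N_{\rm rig}(V)$ onto the closed submodule $tN_{\rm rig}(V)$; tracking the closed submodule $D_{\rm rig}(V)\subset N_{\rm rig}(V)$ and then dividing by $t$ on the closed image $\nabla(\nabla-1)D_{\rm rig}(V)\subset tD_{\rm rig}(V)$ gives the claim. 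Gluing in $D_{\rm rig}(V)\times D_{\rm rig}(V)$ and restricting to the closed subspace $D_{\rm rig}(V)\boxtimes\mathbf{P}^1$ finishes the argument.

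The main obstacle is exactly this last step: the closed-image assertions for multiplication by $t$ and for $P(\nabla)$, which require combining the open mapping theorem at finite levels with the explicit structure of the Robba ring and of the $p$-adic differential equation $N_{\rm rig}(V)$. Here Proposition \ref{bijective} — equivalently, the hypothesis that $V$ is not triangulin — is the essential input; everything else is formal. Alternatively, one could first use Proposition \ref{inclusion rayon} to place $(\Pi^{\rm an}/\Pi^{\rm lisse})^*$ inside $tN^{]0,r_n]}(V)\boxtimes\mathbf{P}^1$ and run the same analysis there, where $u^+$ is again essentially multiplication by $t$.
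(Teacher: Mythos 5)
Your proposal is correct and follows essentially the same route as the paper's proof: you work in the pair model $z\mapsto(\mathrm{Res}_{\zp}z,\mathrm{Res}_{\zp}wz)$, reduce $u^+$ to the coordinate operators $t$ and $u^-=-t\partial^2$, and invoke Proposition~\ref{bijective} together with the open mapping theorem to control the second coordinate, exactly as in the text. The only presentational difference is that you package the argument as a closed-embedding statement for all of $D_{\mathrm{rig}}(V)\boxtimes\mathbf{P}^1$ followed by restriction to the closed $u^+$-stable submodule $(\Pi^{\mathrm{an}})^*$, whereas the paper argues sequence-by-sequence inside $(\Pi^{\mathrm{an}})^*$ and verifies membership of the limit at the end; in both cases the real work is the level-by-level Fr\'echet open-mapping argument through the $D^{]0,r_a]}$ and $N^{]0,r_a]}$, which you rightly flag as the delicate part.
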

   
   \begin{proof} Consid\'erons une suite $z_n=(x_n,y_n)\in (\Pi^{\rm an})^*\subset D_{\rm rig}\boxtimes\p1$
  telle que $u^+z_n$ converge dans $(\Pi^{\rm an})^*$ vers $z=(x,y)\in (\Pi^{\rm an})^*$. Puisque 
  $u^+z_n=(tx_n, -t\partial^2(y_n))$ (utiliser le th\'eor\`eme \ref{Casimirinf}) converge dans $D_{\rm rig}\boxtimes \p1$ vers 
  $(x,y)$, il existe $a>0$ tel que 
   $\lim_{n\to\infty} tx_n=x$, $\lim_{n\to \infty} (-t\partial^2(y_n))=y$ dans 
    $D^{]0,r_a]}$. Comme
   $t D^{]0,r_a]}$ est ferm\'e dans $D^{]0,r_a]}$ et 
      la multiplication par $t$ est un hom\'eomorphisme 
       $D^{]0,r_a]}$ dans $tD^{]0,r_a]}$ (par le th\'eor\`eme de l'image ouverte), on conclut que $x=tx'$ avec $x'\in D^{]0,r_a]}$ et $x_n$ tend vers $x'$ dans $D^{]0,r_a]}$.
      
 Pour les $y_n$ l'argument est plus d\'elicat. D'abord,      
     le m\^eme raisonnement
      avec $D^{]0,r_a]}$ remplac\'e par $N^{]0,r_a]}$ (noter que $\partial^2(y_n)\in N^{]0,r_a]}$ pour tout $n$ assez grand, en grandissant \'eventuellement $a$) montre que $y=-tu$ avec 
      $u\in N^{]0,r_a]}$, et $\partial^2(y_n)$ tend vers $u$ dans $N^{]0,r_a]}$. En particulier $\partial^2(y_n)$ converge vers 
      $u$ dans $N_{\rm rig}$. Mais $\partial: N_{\rm rig}\to N_{\rm rig}$
  \'etant une bijection lin\'eaire continue entre des espaces LF (proposition \ref{bijective}), c'est un hom\'eomorphisme (th\'eor\`eme de l'image ouverte), 
  donc $y_n$ converge dans $N_{\rm rig}$ vers $y':=\partial^{-2}(u)$.
  
     On a donc montr\'e l'existence de $x'\in D_{\rm rig}$ et $y'\in N_{\rm rig}$ tels que 
     $x=tx'$, $y=-t\partial^2(y')$ et $x_n$ tend vers $x'$, alors que $y_n$ tend vers $y'$ dans $N_{\rm rig}$.
     Il existe donc $b>a$ tel que $y_n$ tend vers $y'$ dans $N^{]0,r_b]}$. Alors $y_n\in D^{]0,r_b]}$ converge dans $N^{]0,r_b]}$ vers $y'$ et comme 
     $D^{]0,r_b]}$ est ferm\'e dans $N^{]0,r_b]}$, on obtient $y'\in D^{]0,r_b]}$ et donc 
      $z':=(x',y')\in D_{\rm rig}\times D_{\rm rig}$. 
     
       Pour montrer que l'image de $u^+$ est ferm\'ee, il reste \`a v\'erifier que 
     $z':=(x',y')\in (\Pi^{\rm an})^*$, car si ce r\'esultat est \'etabli, l'\'egalit\'e $z=u^+z'$ est claire par construction. 
     Le fait que ${\rm Res}_{\zpet}(y')=w({\rm Res}_{\zpet}(x'))$
   suit en passant \`a la limite dans 
   ${\rm Res}_{\zpet}(y_n)=w({\rm Res}_{\zpet}(x_n))$.
   Donc $z'\in D_{\rm rig}\boxtimes \p1$. Il reste \`a v\'erifier que 
   $z'$ est orthogonal \`a $(\Pi^{\rm an})^*$, ce qui suit du fait que les 
   $(x_n,y_n)$ le sont.  
           Enfin, la derni\`ere assertion est une cons\'equence de ce que l'on a d\'ej\`a d\'emontr\'e et du th\'eor\`eme de l'image ouverte pour les Fr\'echets, ce qui permet de conclure.
   \end{proof}
   
   \begin{remarque} On peut se demander si $aM$ est ferm\'e dans $M$ pour tout $a\in A$, toute 
   alg\`ebre de Fr\'echet-Stein $A$ et tout $A$-module coadmissible $M$ (au sens de Schneider et Teitelbaum \cite{STInv}). Cela d\'ecoule directement des r\'esultats de \cite{STInv} quand $A$ est commutative, mais tombe malheureusement en d\'efaut si $A$ ne l'est plus (m\^eme si 
   $M$ est un $A$-module simple). Le lecteur pourra s'amuser \`a construire des contre-exemples en utilisant 
  (par exemple) des induites comme dans le chapitre $5$ de \cite{ST} (prendre, avec les notations de loc. cit., un caract\`ere $\chi$ tel que $c(\chi)$ soit un nombre de Liouville $p$-adique et montrer en utilisant les formules explicites du lemme 5.2 de loc. cit, que $u^-$ n'est pas d'image ferm\'ee sur $M_{\chi}^-$). 
   \end{remarque}

   \begin{corollaire}\label{closed image}
      L'op\'erateur $u^+$ est d'image ferm\'ee sur $\Pi(\pi,0)^*$ et 
      $$\Pi(\pi,0)^*/u^+(\Pi(\pi,0)^*) \simeq (\Pi(\pi,0)^{u^+=0})^*. $$
   \end{corollaire}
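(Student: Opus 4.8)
The plan is to deduce the corollary formally from Proposition~\ref{imagefermee} together with the reflexivity of the compact-type space $\Pi(\pi,0)$, once a couple of soft compatibilities are in place. First I would record that $u^+$ stabilises $\Pi(\pi,0)^*$ and that the latter is a \emph{closed} subspace of $(\Pi^{\rm an})^*$: by construction $\Pi(\pi,0)^*=(\Pi^{\rm an}/\Pi^{\rm lisse})^*$ is the continuous dual of a $G$-representation, hence a $\mathfrak g$-module, and the inclusion $(\Pi^{\rm an}/\Pi^{\rm lisse})^*\subset(\Pi^{\rm an})^*\subset D_{\rm rig}\boxtimes\p1$ is $D(G)$-equivariant (Theorem~\ref{correspondance}), so $u^+$ acts on $\Pi(\pi,0)^*$ as the restriction of $u^+$ on $(\Pi^{\rm an})^*$; moreover $\Pi^{\rm lisse}$ is closed in $\Pi^{\rm an}$ (it is cut out by the continuous action of $\mathfrak g$, cf.\ Theorem~\ref{Ann}), so $\Pi(\pi,0)^*=(\Pi^{\rm lisse})^{\perp}$ is closed in $(\Pi^{\rm an})^*$ and carries the subspace topology, all three terms of $0\to\Pi^{\rm lisse}\to\Pi^{\rm an}\to\Pi^{\rm an}/\Pi^{\rm lisse}\to 0$ being of compact type.

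Granting this, closedness of $u^+(\Pi(\pi,0)^*)$ is immediate: Proposition~\ref{imagefermee} asserts that $u^+\colon(\Pi^{\rm an})^*\to(\Pi^{\rm an})^*$ is a homeomorphism onto its image and that this image is closed; restricting a topological embedding with closed range to the closed subspace $\Pi(\pi,0)^*$ therefore produces a subset which is again closed in $(\Pi^{\rm an})^*$, hence closed in $\Pi(\pi,0)^*$. In particular $\Pi(\pi,0)^*/u^+(\Pi(\pi,0)^*)$ is a Hausdorff (Fr\'echet) space.

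For the cokernel I would put $N=\Pi(\pi,0)^{u^+=0}$, a closed subspace of the compact-type space $\Pi(\pi,0)$, so that restriction of functionals realises $N^*$ as the strict quotient $\Pi(\pi,0)^*/N^{\perp}$. From $\langle u^+l,v\rangle=-\langle l,u^+v\rangle$ one gets $u^+(\Pi(\pi,0)^*)\subseteq N^{\perp}$ at once. For the reverse inclusion I use reflexivity of $\Pi(\pi,0)$: the annihilator of $u^+(\Pi(\pi,0)^*)$ taken in $\Pi(\pi,0)$ is precisely $\{v:u^+v=0\}=N$, so the bipolar theorem gives $\overline{u^+(\Pi(\pi,0)^*)}=N^{\perp}$, and by the closedness just obtained $u^+(\Pi(\pi,0)^*)=N^{\perp}$. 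Hence $\Pi(\pi,0)^*/u^+(\Pi(\pi,0)^*)=\Pi(\pi,0)^*/N^{\perp}\simeq N^*=(\Pi(\pi,0)^{u^+=0})^*$ as topological vector spaces. The only genuinely non-formal ingredient here is Proposition~\ref{imagefermee} (which rests, through Proposition~\ref{bijective}, on the $p$-adic monodromy theorem); I expect the one point deserving care to be the verification that the subspace and the intrinsic topologies on $\Pi(\pi,0)^*=(\Pi^{\rm lisse})^{\perp}$ coincide, since this is exactly what licenses passing between ``closed in $(\Pi^{\rm an})^*$'' and ``closed in $\Pi(\pi,0)^*$'' and ensures the final quotient is the asserted one.
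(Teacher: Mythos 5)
Your argument is correct and follows essentially the same route as the paper's (very terse) proof, which simply invokes the preceding proposition for closedness and then observes that the quotient is a nuclear Fr\'echet whose dual is $\Pi(\pi,0)^{u^+=0}$. The one place you add genuine value is in making the phrase \og d\'ecoule directement \fg{} precise: Proposition~\ref{imagefermee} is stated for $(\Pi^{\rm an})^*$, and passing to $\Pi(\pi,0)^*$ does require knowing that $\Pi(\pi,0)^*=(\Pi^{\rm lisse})^{\perp}$ sits in $(\Pi^{\rm an})^*$ as a closed subspace with the subspace topology, so that a topological embedding with closed range restricts to one with closed range; you identify this correctly (the alternative reading, that the proof of~\ref{imagefermee} simply repeats verbatim with $tN^{]0,r_a]}$ in place of $D^{]0,r_a]}$, is also available but less economical). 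For the cokernel, your bipolar/annihilator computation $u^+(\Pi(\pi,0)^*)=N^{\perp}$ followed by $\Pi(\pi,0)^*/N^{\perp}\simeq N^*$ is the dual rephrasing of the paper's argument, which computes $Q^*=\Pi(\pi,0)^{u^+=0}$ directly and then invokes reflexivity of the nuclear Fr\'echet $Q$; the two use the same ingredients (reflexivity of compact-type spaces and of nuclear Fr\'echets, duality between closed subspaces and quotients) in opposite order. No gap.
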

   
   \begin{proof} Le premier point d\'ecoule directement de la proposition pr\'ec\'edente. 
  Ensuite,  
   $\Pi(\pi,0)^*/u^+\Pi(\pi,0)^*$ est un Fr\'echet nucl\'eaire d'apr\`es ce que l'on vient de d\'emontrer. Il est donc r\'eflexif 
 et comme son dual est trivialement $\Pi(\pi,0)^{u^+=0}$, cela permet de conclure.
   \end{proof}
  
\begin{theoreme}
   Il existe une unique application lin\'eaire continue $\partial:
   \Pi(\pi,0)^*\to \Pi(\pi,0)^*$ telle que l'on ait une \'egalit\'e d'op\'erateurs sur $\Pi(\pi,0)^*$
   $$a^+-1=u^+\circ \partial.$$
\end{theoreme}

\begin{proof} L'unicit\'e d\'ecoule simplement du fait que $u^+$ est injectif sur 
$\Pi(\pi,0)^*$, le point d\'elicat est l'existence. Soit $l\in \Pi(\pi,0)^*$, on veut d\'emontrer que 
$l_1:=a^+l-l$ est dans $u^+ \Pi(\pi,0)^*$. D'apr\`es le corollaire pr\'ec\'edent, il suffit de voir que 
$l_1$ s'annule sur $\Pi(\pi,0)^{u^+=0}=(\Pi^{\rm an}/\Pi^{\rm lisse})^{u^+=0}$. En consid\'erant $l_1$ comme une forme lin\'eaire sur 
$\Pi^{\rm an}$ s'annulant sur $\Pi^{\rm lisse}$, il s'agit de montrer que $l_1(a^+v+v)=0$
pour tout $v\in \Pi^{\rm an}$ tel que $u^+v\in \Pi^{\rm lisse}$. Il suffit donc de d\'emontrer le 

\begin{lemme}
  Soit $v\in \Pi^{\rm an}$ tel que $u^+v\in \Pi^{\rm lisse}$. Alors 
  $v_1:=a^+v+v\in\Pi^{\rm lisse}$.
\end{lemme}

\begin{proof} La relation $u^+(a^++1)=a^+u^+$ et le fait que $a^+u^+v=0$ montrent que 
$u^+v_1=0$. Comme le Casimir agit par $0$ sur $\Pi^{\rm an}$ (th\'eor\`eme \ref{Casimirinf}), on obtient aussi 
$$u^-u^+v+a^+v+(a^+)^2v=0,$$
et donc $a^+v_1=0$ (car $u^-u^+v=0$ par hypoth\`ese). Le th\'eor\`eme \ref{Ann} permet de conclure. 
\end{proof}

  Ce qui pr\'ec\`ede montre l'existence d'une unique application $\partial: \Pi(\pi,0)^*\to \Pi(\pi,0)^*$ telle que 
  $a^+-1=u^+\partial$. Par unicit\'e, $\partial$ est lin\'eaire. La continuit\'e suit de la continuit\'e de l'application 
  $a^+-1: \Pi(\pi,0)^*\to \Pi(\pi,0)^*$ et du fait que $u^+$ r\'ealise un hom\'eomorphisme de 
  $\Pi(\pi,0)^*$ sur le ferm\'e $u^+(\Pi(\pi,0)^*)$ de $\Pi(\pi,0)^*$.
\end{proof}

   Le r\'esultat suivant montre que la connaissance de $u^+$ et $\partial$ sur 
   $\Pi(\pi,0)^*$ \'equivaut \`a la connaissance de toute l'action de $U(\mathfrak{sl}_2)$:
   
   \begin{proposition}\label{numer}
    a) En tant qu'op\'erateurs sur $\Pi(\pi,0)^*$ $$a^+=\partial\circ u^+,\quad \partial u^+-u^+\partial=1, \quad u^-=-\partial a^+=-\partial^2 u^+.$$
    
    b) Pour tout $\Pi\in \mathcal{V}(\pi)$, le choix d'un isomorphisme $\Pi(\pi,0)\simeq \Pi^{\rm an}/\Pi^{\rm lisse}$ induit une inclusion $\mathfrak{g}(\Pi^{\rm an})^*\subset \Pi(\pi, 0)^*$ et 
    on a une \'egalit\'e d'op\'erateurs sur $(\Pi^{\rm an})^*$ 
    $$a^+=\partial\circ u^+, \quad u^-=-\partial a^+=-\partial^2 u^+.$$
   \end{proposition}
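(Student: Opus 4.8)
The plan is to derive both parts formally from five already-established facts: the defining relation $a^+-1=u^+\circ\partial$ of the operator $\partial$ on $\Pi(\pi,0)^*$; the bracket relations $[a^+,u^+]=u^+$ and $[u^+,u^-]=h$ in $U(\mathfrak{gl}_2)$, which hold on every $U(\mathfrak{gl}_2)$-module, in particular on $\Pi^{\rm an}$, on $(\Pi^{\rm an})^*$ and on $\Pi(\pi,0)^*=(\Pi^{\rm an}/\Pi^{\rm lisse})^*$; the vanishing of the Casimir $C=u^+u^-+u^-u^++\frac{1}{2}h^2$ on $\Pi^{\rm an}$ (Theorem \ref{Casimirinf}, since the Hodge--Tate weights of $V=V(\Pi)$ are $0,1$), hence on $(\Pi^{\rm an})^*$ and on $\Pi(\pi,0)^*$; the triviality of the central character, which forces the central element $a^++a^-\in\mathfrak{gl}_2$ to act by $0$ on all these spaces, i.e. $h=a^+-a^-=2a^+$ and $\frac{1}{2}h^2=2(a^+)^2$; and the injectivity of $u^+$ on $(\Pi^{\rm an})^*$ (Proposition \ref{imagefermee}) and on $\Pi(\pi,0)^*$ (which is exactly the fact used to construct $\partial$). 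As a preliminary I would combine $C=0$, $h=2a^+$ and $[u^+,u^-]=2a^+$ to record the operator identity $u^+u^-=a^+-(a^+)^2$, valid on $(\Pi^{\rm an})^*$ and on $\Pi(\pi,0)^*$, which is used repeatedly below.

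For part (a), starting from $a^+=u^+\partial+1$ on $\Pi(\pi,0)^*$ and applying $[a^+,u^+]=u^+$ one gets $u^+=[u^+\partial,u^+]=u^+(\partial u^+-u^+\partial)$, so $u^+\bigl([\partial,u^+]-1\bigr)=0$; injectivity of $u^+$ yields $\partial u^+-u^+\partial=1$, and therefore $a^+=u^+\partial+1=\partial u^+$. For the remaining relation, note $\partial a^+=\partial^2 u^+$ and $u^+(\partial a^+)=(u^+\partial)a^+=(a^+-1)a^+=(a^+)^2-a^+=-u^+u^-$ by the recorded identity; injectivity of $u^+$ gives $\partial a^+=-u^-$, i.e. $u^-=-\partial a^+=-\partial^2 u^+$ on $\Pi(\pi,0)^*$.

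For part (b), I would first recall that every element of $\mathfrak{g}(\Pi^{\rm an})^*$ annihilates the $\mathfrak{g}$-submodule $\Pi^{\rm lisse}$ (this is already used in the proof of Theorem \ref{almost canonique}), so $\mathfrak{g}(\Pi^{\rm an})^*\subset(\Pi^{\rm an}/\Pi^{\rm lisse})^*$; transported along the chosen $G$-equivariant identification $\Pi(\pi,0)^*=(\Pi^{\rm an}/\Pi^{\rm lisse})^*$ this is the inclusion $\mathfrak{g}(\Pi^{\rm an})^*\subset\Pi(\pi,0)^*$, so that for $l\in(\Pi^{\rm an})^*$ the elements $u^+l$, $a^+l$, $u^-l$ all lie in $\Pi(\pi,0)^*$ and all the composites below are defined. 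Applying the defining relation of $\partial$ to $x=u^+l\in\Pi(\pi,0)^*$ and using $[a^+,u^+]=u^+$ on $(\Pi^{\rm an})^*$ gives $u^+\partial(u^+l)=a^+(u^+l)-u^+l=u^+(a^+l)$; injectivity of $u^+$ on $(\Pi^{\rm an})^*$ yields $\partial(u^+l)=a^+l$, i.e. $a^+=\partial\circ u^+$ on $(\Pi^{\rm an})^*$. Applying the defining relation to $x=a^+l\in\Pi(\pi,0)^*$ and the recorded identity gives $u^+\partial(a^+l)=a^+(a^+l)-a^+l=(a^+)^2l-a^+l=-u^+u^-l$; injectivity of $u^+$ yields $\partial(a^+l)=-u^-l$, i.e. $u^-=-\partial\circ a^+=-\partial^2\circ u^+$ on $(\Pi^{\rm an})^*$ (the last equality from $a^+=\partial\circ u^+$).

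I do not expect a genuine obstacle here: the content is entirely formal, and the only thing needing care is the bookkeeping of domains. The operator $\partial$ is an endomorphism of $\Pi(\pi,0)^*$ only, so in part (b) one must always land in $\Pi(\pi,0)^*$ --- via the inclusion $\mathfrak{g}(\Pi^{\rm an})^*\subset\Pi(\pi,0)^*$ --- before applying $\partial$, and the displayed equalities in (b) are to be read as equalities of maps $(\Pi^{\rm an})^*\to\Pi(\pi,0)^*$; one must also note that the $\mathfrak{gl}_2$-action on $\Pi(\pi,0)^*$ inherited from the identification with $(\Pi^{\rm an}/\Pi^{\rm lisse})^*$ coincides with the one coming from its $D(G)$-module structure, which holds because the isomorphism $\Pi(\pi,0)\simeq\Pi^{\rm an}/\Pi^{\rm lisse}$ is $G$-equivariant.
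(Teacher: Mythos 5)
Your argument is correct and is essentially the paper's own proof: both rely on the defining relation $a^+-1=u^+\partial$, the bracket relations $[a^+,u^+]=u^+$ and $[u^+,u^-]=h=2a^+$, the vanishing of the Casimir giving $u^+u^-=a^+-(a^+)^2$, and the injectivity of $u^+$, in the same way. The only differences are cosmetic---you first extract $\partial u^+-u^+\partial=1$ before $a^+=\partial u^+$ whereas the paper does the reverse, and you spell out the derivation of $u^-=-\partial a^+$ in part (b), which the paper merely says follows as in (a).
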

   
   \begin{proof}
 a) On a $$u^+a^+=a^+u^+-u^+=(a^+-1)u^+=u^+\partial u^+,$$
 ce qui permet de conclure pour la premi\`ere \'egalit\'e, car $u^+$ est injective sur $\Pi(\pi,0)^*$.
La seconde relation s'en d\'eduit. Ensuite,  comme le Casimir agit trivialement sur $\Pi(\pi,0)^*$ et que 
 $u^+u^--u^-u^+=h=2a^+$, on a 
 $$u^+u^-=a^+-(a^+)^2=-u^+\partial a^+,$$
 donc $u^-=-\partial a^+=-\partial^2 u^+$, comme voulu.
 
 b) L'inclusion est claire. Soit $l\in (\Pi^{\rm an})^*$, alors $l_1=u^+l\in \Pi(\pi,0)^*$ et donc 
$a^+l_1-l_1=u^+\partial l_1$, autrement dit 
$a^+u^+l-u^+l=u^+\partial u^+l$. En utilisant la relation 
$u^+a^+=a^+u^+-u^+$ sur $(\Pi^{\rm an})^*$ on obtient bien 
$u^+(a^+l-\partial u^+l)=0$, ce qui permet de conclure pour la premi\`ere relation, car 
$u^+$ est injectif sur $(\Pi^{\rm an})^*$. Puisque $\Pi^{\rm an}$ a un caract\`ere infinit\'esimal nul (th\'eor\`eme \ref{Casimirinf})
la relation $u^-=-\partial a^+=-\partial^2 u^+$ s'en d\'eduit comme dans la preuve de la partie a). 
   \end{proof}

 Enfin, il sera utile de comprendre le lien entre l'action de $G$ et les op\'erateurs $\partial$ et $u^+$, lien fourni par le :     
          
    \begin{theoreme}\label{adjoint}
       Pour tout $g=\left(\begin{smallmatrix} a & b \\ c & d\end{smallmatrix}\right)\in G$ l'op\'erateur $a-c\partial$ est inversible sur $\Pi(\pi,0)^*$ et 
    $$g\circ \partial \circ g^{-1}=(d\partial-b)(a-c\partial)^{-1}, \quad gu^+g^{-1}=\frac{1}{\det g} (a-c\partial)^2u^+.$$
       \end{theoreme}
       
       \begin{proof}
       
       
       
       
              
              
              En conjuguant la relation $a^+-1=u^+\circ \partial$ avec 
              $g$, on obtient 
              $$ga^+g^{-1}-1=gu^+g^{-1}\circ g\partial g^{-1}.$$
              Un calcul imm\'ediat montre les identit\'es suivantes dans $D(G)$, donc aussi dans $\Pi(\pi,0)^*$
              $$ga^{+}g^{-1}=\frac{1}{\det g}(ad a^+-abu^++cdu^--bca^-) \quad \text{et}\quad gu^+g^{-1}=\frac{1}{\det g} (-ach+a^2u^+-c^2u^-).$$
           En utilisant la proposition \ref{numer} (ainsi qu'un calcul direct laiss\'e au lecteur), ces identit\'es se r\'e\'ecrivent 
               \begin{eqnarray}gu^+g^{-1}=\frac{1}{\det g} (a-c\partial)^2u^+ \quad \text{et}\quad ga^+g^{-1}=\frac{1}{\det g} (a-c\partial)(d\partial-b)u^+ \label{formules} \end{eqnarray}
          Nous avons besoin du 
          
          \begin{lemme}\label{p\'enible}
           On a une \'egalit\'e d'op\'erateurs sur $\Pi(\pi,0)^*$
           $$g\partial g^{-1}(a-c\partial)=d\partial-b.$$
          \end{lemme}
          
          \begin{proof} 
          
             Pour simplifier les formules, posons
                $x=a-c\partial$ et $y=d\partial-b$.
              La relation $\partial u^+-u^+\partial=1$ 
             fournit alors 
             $$yu^+x-xu^+y=\det g,$$
             qui, combin\'ee avec 
         la relation $\eqref{formules}$, donne
             $$\left(ga^+g^{-1}-1\right)x=\left(\frac{1}{\det g} xyu^+-1\right)x=\frac{1}{\det g} xyu^+x-x=$$
             $$\frac{1}{\det g} x(xu^+y+\det g)-x=\frac{1}{\det g} x^2u^+y=gu^+ g^{-1}y.$$
            En combinant ceci avec la relation
             $\left(ga^+g^{-1}-1\right)x=gu^{+}g^{-1} \circ g\partial g^{-1} x$ et avec l'injectivit\'e de 
             $gu^{+}g^{-1}$ sur $\Pi(\pi,0)^*$, on obtient enfin 
             $g\partial g^{-1} x=y$, ce qui permet de conclure.
          \end{proof}
           
           Le th\'eor\`eme \ref{adjoint} est ainsi r\'eduit \`a la preuve de l'inversibilit\'e de $a-c\partial$. 
                Or, on d\'eduit du lemme \ref{p\'enible} la relation
                $$(c\cdot g\partial g^{-1}+d)(a-c\partial)=\det g,$$
                ce qui permet de conclure. 
                     \end{proof}

\subsection{Construction de la structure de $\O(\Omega)$-module}\label{structure O(Omega)}
        
   Afin de prouver dans la section suivante que le morphisme $\Phi$ du th\'eor\`eme \ref{existemorphisme2} est surjectif, il sera vital de savoir que l'on peut munir $\Pi(\pi,0)^*$ d'une structure de $\O(\Omega)$-module, et c'est \`a cette t\^ache qu'est consacr\'ee ce paragraphe. Vu la construction de l'op\'erateur $\partial$ dans le paragraphe \ref{construction partiel}, le lecteur ne sera pas surpris par l'\'enonc\'e du
        
   \begin{theoreme} \label{module} Il existe une unique structure de $\O(\Omega)$-module sur $\Pi(\pi,0)^{*}$ 
   qui soit compatible avec sa structure de $L$-espace vectoriel et telle que 
   $z.l=\partial(l)$ pour tout $l\in \Pi(\pi,0)^*$. 
       \end{theoreme}

     La preuve du th\'eor\`eme \ref{module} occupe le reste de ce chapitre. L'argument suit un chemin un peu d\'etourn\'e, car l'op\'erateur 
     $\partial$ sur $\Pi(\pi,0)$ (ou son dual) ne pr\'eserve pas de sous-espaces de Banach \og \'evidents\fg{} de 
     $\Pi(\pi,0)$ (en particulier, il ne pr\'eserve pas les vecteurs localement analytiques de rayon fix\'e). La raison en est que m\^eme si 
     $\partial$ est 
      construit \`a partir de la connexion 
     $\partial$ sur $N_{\rm rig}$, sa d\'efinition fait aussi appara\^itre $\partial^{-1}$, qui est difficilement contr\^olable. 
      Pour contourner ces difficult\'es, nous utilisons la dualit\'e de Morita, 
     des arguments d'analyse fonctionnelle et le r\'esultat technique suivant. 
     On note $\langle\,\,,\,\rangle$ l'accouplement canonique entre $\Pi(\pi,0)^*$ et $\Pi(\pi,0)$.

      \begin{proposition}\label{loc an}
       Soient $v\in \Pi(\pi,0)$ et $l\in \Pi(\pi,0)^*$. L'application 
       $$\phi_{l,v}: \qp\to L, \quad \phi_{l,v}(x)=\langle (\partial-x)^{-1}l, v\rangle$$
       s'\'etend en une fonction localement analytique sur $\p1(\qp)$, nulle \`a l'infini.
     \end{proposition}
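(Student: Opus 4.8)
The plan is to realise the resolvent $(\partial-x)^{-1}$ through the $G$-action on $\Pi(\pi,0)^*$, so that the local analyticity of $\phi_{l,v}$ reduces to that of the orbit maps of $\Pi(\pi,0)$. I would start from the théorème \ref{adjoint}: applied to $u_x=\matrice{1}{x}{0}{1}$ it gives the identity of operators $u_x\circ\partial\circ u_x^{-1}=\partial-x$ on $\Pi(\pi,0)^*$; applied to $\matrice{0}{-1}{1}{0}\in G$ (for which $a-c\partial=-\partial$) it shows $\partial$ is a bijection of $\Pi(\pi,0)^*$, and applied to $\matrice{x}{-1}{1}{0}\in G$ that $\partial-x$ is a bijection of $\Pi(\pi,0)^*$ for every $x\in\qp$ (so $(\partial-x)^{-1}l$ indeed makes sense). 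Since $\Pi(\pi,0)^*$ is a Fréchet space and $\partial$ is continuous, the open mapping theorem makes $\partial^{-1}$ continuous, and likewise its transpose $\partial^\vee$, a continuous automorphism of the reflexive space $\Pi(\pi,0)$ determined by $\langle\partial m,v\rangle=\langle m,\partial^\vee v\rangle$. As $u_x^{-1}=u_{-x}$, this yields $(\partial-x)^{-1}=u_x\circ\partial^{-1}\circ u_{-x}$ on $\Pi(\pi,0)^*$.

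First I would show that $\phi_{l,v}$ is locally analytic on $\qp$. Using that the $G$-action on $\Pi(\pi,0)$ is dual to the one on $\Pi(\pi,0)^*$, one rewrites, for $x\in\qp$,
$$\phi_{l,v}(x)=\langle u_x\partial^{-1}u_{-x}l,\,v\rangle=\langle l,\ u_x(\partial^\vee)^{-1}u_{-x}v\rangle .$$
Now $x\mapsto u_x$ is analytic from $\qp$ to $G$, the map $x\mapsto u_{-x}v$ is locally analytic from $\qp$ to $\Pi(\pi,0)$ because $\Pi(\pi,0)$ is a locally analytic representation on a space of compact type, and $(\partial^\vee)^{-1}$ is continuous linear; composing with the action map of $\Pi(\pi,0)$ (which is locally analytic) shows $x\mapsto u_x(\partial^\vee)^{-1}u_{-x}v$ is locally analytic from $\qp$ to $\Pi(\pi,0)$. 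Since $l$ is continuous, $\phi_{l,v}$ is locally analytic on $\qp$.

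Next I would study $\phi_{l,v}$ near $\infty$ in the coordinate $y=1/x$. For $\bar u_y=\matrice{1}{0}{y}{1}$ the théorème \ref{adjoint} gives $\bar u_y\circ\partial\circ\bar u_y^{-1}=\partial\,(1-y\partial)^{-1}$, and since $\bar u_y\in G$ for every $y\in\qp$ the operator $1-y\partial=a-c\partial$ is invertible on $\Pi(\pi,0)^*$. As $\partial-1/y=-(1/y)(1-y\partial)$ for $y\neq0$, one gets $(\partial-1/y)^{-1}=-y\,\partial^{-1}\bar u_y\partial\bar u_{-y}$, hence
$$\phi_{l,v}(1/y)=-y\,\big\langle l,\ \bar u_y\,\partial^\vee\,\bar u_{-y}(\partial^\vee)^{-1}v\big\rangle ,\qquad y\in\zp,\ y\neq0 .$$
Exactly as in the previous step, $y\mapsto \bar u_y\,\partial^\vee\,\bar u_{-y}(\partial^\vee)^{-1}v$ is locally analytic from $\zp$ to $\Pi(\pi,0)$, so $y\mapsto\phi_{l,v}(1/y)$ extends to a locally analytic function on $\zp$ vanishing at $y=0$. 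Since $\p1(\qp)$ is covered by the affine chart $\qp$ and a chart around $\infty$ in the coordinate $y$, on whose overlap the two formulas for $\phi_{l,v}$ coincide, I would conclude that $\phi_{l,v}$ extends to an element of ${\rm LA}(\p1(\qp))$ which vanishes at $\infty$.

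I do not expect a serious obstacle: the only delicate points are soft functional-analytic ones — that $\partial$ and $\partial^\vee$ are topological automorphisms (immediate from the théorème \ref{adjoint} and the open mapping theorem for Fréchet spaces), and that composing the locally analytic orbit maps of $\Pi(\pi,0)$ with continuous linear operators and with the jointly locally analytic action map again gives locally analytic functions. Both are standard; the whole content of the argument is the reduction to group elements via the relations $u_x\partial u_x^{-1}=\partial-x$ and $\bar u_y\partial\bar u_y^{-1}=\partial(1-y\partial)^{-1}$ furnished by the théorème \ref{adjoint}.
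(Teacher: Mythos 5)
Your proof is correct and proceeds along essentially the same lines as the paper's: both rest on the conjugation identities $u_x\partial u_x^{-1}=\partial-x$ and $\bar u_y\partial\bar u_y^{-1}=\partial(1-y\partial)^{-1}$ supplied by th\'eor\`eme \ref{adjoint}, the fact that $\partial$ is a topological automorphism of the compact-type space $\Pi(\pi,0)$, and the observation that the pairing $\phi_{l,v}(x)=\langle l,u_x\partial^{-1}u_{-x}v\rangle$ can be analysed through the orbit maps of $\Pi(\pi,0)$. The only difference is one of presentation: the paper first establishes a general cocycle identity $\phi_{l,v}(gx)=\frac{cx+d}{\det g}\phi_{(c\partial+d)g^{-1}l,\,g^{-1}v}(x)$, reduces to analyticity near $0$, and then runs an explicit computation with the filtration $\Pi(\pi,0)^{(h)}$ by radius of analyticity --- paying specific attention to the fact that these Banach pieces are \emph{not} stable under $\partial^{-1}$, so the coefficients $p^{nN}\partial^{-1}(v_n)$ have to be re-located into a larger piece $\Pi(\pi,0)^{(h')}$ via the null-sequence characterization of convergence in an LB-space. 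You compress this into the statement that composing a locally analytic orbit map with a continuous linear automorphism, and then with the jointly analytic action on a Banach piece, again gives a locally analytic function; this is the same content, wrapped in Grothendieck's factorization theorem for maps into compact-type spaces, and handles both charts (near $0$ and near $\infty$) symmetrically in place of the paper's transformation lemma. Both arguments are valid; the paper's is more explicit where the subtlety actually lives.
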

     
     \begin{proof} Posons $\phi_{l,v}(\infty)=0$. Pour montrer que $\phi_{l,v}$ est localement analytique, nous aurons besoin du r\'esultat suivant: 
          
        \begin{lemme}
       Pour tous $g\in G$, $l\in \Pi(\pi,0)^*$, $v\in \Pi(\pi,0)$ et $x\in\p1(\qp)$ on a 
       $$\phi_{l,v}(gx)=\frac{cx+d}{\det g} \phi_{(c\partial+d)g^{-1}l, g^{-1}v}(x).$$
          \end{lemme}   
     
     \begin{proof}
        C'est un calcul un peu fastidieux dont l'ingr\'edient cl\'e est le th\'eor\`eme \ref{adjoint}. Explicitement, en utilisant deux fois ce th\'eor\`eme
        on obtient, en posant $g=\left(\begin{smallmatrix} a & b \\c & d\end{smallmatrix}\right)$:
                $$\phi_{l,v}(gx)=\langle (\partial-gx)^{-1}l, v\rangle=(cx+d)\langle (d\partial-b+x(c\partial-a))^{-1}l, v\rangle$$
                $$=-(cx+d)\langle (a-c\partial)^{-1} (x-(d\partial-b)(a-c\partial)^{-1})^{-1}l, v\rangle=-(cx+d)\langle (a-c\partial)^{-1} (x-g\partial g^{-1})^{-1}l, v\rangle$$
                $$=(cx+d)\langle (a-c\partial)^{-1} g(\partial-x)^{-1} g^{-1}l, v\rangle=(cx+d)\langle (a-c g^{-1}\partial g)^{-1} (\partial-x)^{-1} g^{-1}l, g^{-1}v\rangle$$
                $$=\frac{cx+d}{\det g} \langle (c\partial+d)(\partial-x)^{-1} g^{-1}l, g^{-1}v\rangle=\frac{cx+d}{\det g} \phi_{(c\partial+d) g^{-1}l, g^{-1}v}.$$
   \end{proof}
     
        Il suffit donc de voir que $\phi_{l,v}$ est analytique au voisinage de $0$. Notons que gr\^ace au th\'eor\`eme 
        \ref{adjoint} on a\footnote{On d\'efinit les op\'erateurs $\partial$, $\partial^{-1}$ sur $\Pi(\pi,0)$ par dualit\'e.} 
        $$\phi_{l,v}(x)=\langle \left(\begin{smallmatrix} 1 & x \\0 & 1\end{smallmatrix}\right)\partial^{-1} \left(\begin{smallmatrix} 1 & -x \\0 & 1\end{smallmatrix}\right)l, v\rangle=
        \langle l, \left(\begin{smallmatrix} 1 & x \\0 & 1\end{smallmatrix}\right)\partial^{-1} \left(\begin{smallmatrix} 1 & -x \\0 & 1\end{smallmatrix}\right)v\rangle.$$

        En \'ecrivant 
        $\Pi(\pi,0)=\Pi^{\rm an}/\Pi^{\rm lisse}$, pour un choix de $\Pi \in \mathcal{V}(\pi)$, et en filtrant $\Pi$ \og par rayon d'analyticit\'e\fg{} (voir \cite[chap. IV]{CD}
        pour les d\'etails), on peut \'ecrire $\Pi(\pi,0)$ comme une r\'eunion croissante d'espaces de Banach 
        $\Pi(\pi,0)^{(h)}$ ($h\in \mathbf{N}^*$), stables par $\left(\begin{smallmatrix} 1 & \zp \\0 & 1\end{smallmatrix}\right)$, mais 
        \textit{pas} par $\partial$ ou $\partial^{-1}$. Soit $h$ tel que 
        $v\in \Pi(\pi,0)^{(h)}$. La fonction 
        $x\to \left(\begin{smallmatrix} 1 & x \\0 & 1\end{smallmatrix}\right)v$ \'etant analytique au voisinage de $0$, \`a valeurs dans le Banach 
        $\Pi(\pi,0)^{(h)}$, on peut \'ecrire pour $x\in p^N\zp$ ($N$ assez grand, ne d\'ependant que de $v$)
        $$ \left(\begin{smallmatrix} 1 & -x \\0 & 1\end{smallmatrix}\right)v=\sum_{n\geq 0} x^n v_n$$
        avec $v_n\in \Pi(\pi,0)^{(h)}$. On a $p^{nN}v_n\to 0$ dans 
        $\Pi(\pi,0)^{(h)}$, donc aussi dans $\Pi(\pi,0)$. Puisque $\partial$ est un hom\'eomorphisme de 
        $\Pi(\pi,0)$ (car elle est lin\'eaire bijective et que $\Pi(\pi,0)$ est un espace de type compact), 
        on a $p^{nN}\partial^{-1}(v_n)\to 0$ dans $\Pi(\pi,0)$. Il existe donc 
        $h'\geq h$ tel que $p^{nN} \partial^{-1}(v_n)\to 0$ dans $\Pi(\pi,0)^{(h')}$. Posons 
        $$v'_n=p^{nN} \partial^{-1}(v_n).$$
         Ainsi, pour tout 
        $x\in p^N \zp$ on a (par continuit\'e de $\partial^{-1}$ et de la restriction de $l$ \`a $\Pi(\pi,0)^{(h')}$)
        $$ \phi_{l,v}(x)=
        \langle l, \left(\begin{smallmatrix} 1 & x \\0 & 1\end{smallmatrix}\right)\partial^{-1} \left(\begin{smallmatrix} 1 & -x \\0 & 1\end{smallmatrix}\right)v\rangle=\sum_{n\geq 0} 
        \left(\frac{x}{p^N}\right)^n l\left( \left(\begin{smallmatrix} 1 & x \\0 & 1\end{smallmatrix}\right)v'_n\right).$$
        
         Puisque $v'_n$ tendent vers $0$ dans $\Pi(\pi,0)^{(h')}$, il existe $M$, d\'ependant de $h'$, tel que les fonctions 
         $$f_n: \zp\to L, \quad f_n(x)=l\left( \left(\begin{smallmatrix} 1 & x \\0 & 1\end{smallmatrix}\right)v'_n\right)$$
         tendent vers $0$ dans l'espace des fonctions analytiques sur $a+p^M\zp$ pour tout $a\in\zp$ (cela d\'ecoule du th\'eor\`eme  
         IV.6 et de la remarque IV.15 de \cite{CD}). On en d\'eduit que la fonction 
         $$x\mapsto \phi_{l,v}(x)=\sum_{n\geq 0}   \left(\frac{x}{p^N}\right)^n f_n(x)$$ est analytique au voisinage de $0$, ce qui permet de conclure. 
             \end{proof}
     
          Passons maintenant \`a la preuve du th\'eor\`eme \ref{module}. 
                        Soit ${\rm St}^{\rm an}$ la {\it Steinberg analytique}, quotient de l'espace ${\rm LA}(\p1(\qp))$ des fonctions localement analytiques sur $\p1(\qp)$, \`a valeurs dans $L$, par les fonctions constantes. Nous ferons un usage constant du r\'esultat classique et fondamental suivant, connu sous le nom de dualit\'e de Morita \cite{Morita 1}.

             \begin{proposition} \label{Morita}
             
             a) Soit $\lambda\in \O(\Omega)^*$. La fonction $f_{\lambda}$ d\'efinie (pour $x\in\qp$) par
             $$f_{\lambda}(x)=\lambda\left(\frac{1}{z-x}\right)$$
s'\'etend en une fonction localement analytique sur $\p1(\qp)$, nulle \`a l'infini. De plus, 
l'application $\lambda\mapsto f_{\lambda}$ induit un isomorphisme de $L$-espaces vectoriels topologiques\footnote{C'est m\^eme un isomorphisme de $G$-repr\'esentations, si l'on remplace $\O(\Omega)$ par $\Omega^1(\Omega)$.}
$$\O(\Omega)^*\simeq {\rm St}^{\rm an}.$$

b) La transpos\'ee de l'isomorphisme pr\'ec\'edent est (via l'identification $\O(\Omega)^{**}=\O(\Omega)$) l'application $\mu\mapsto f_{\mu}\in \O(\Omega)$, o\`u
$$f_{\mu}(z)=\int_{\p1(\qp)} \frac{1}{z-x} \mu(x).$$
          \end{proposition}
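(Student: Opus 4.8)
The plan is to deduce part~(b) from part~(a) by transposition, so that all the content lies in the isomorphism $\O(\Omega)^*\simeq{\rm St}^{\rm an}$ of~(a), which I would establish by describing $\O(\Omega)$ explicitly along a Stein exhaustion and then passing to the limit. Fix an increasing admissible affinoid exhaustion $\Omega=\bigcup_m\Omega_m$ in which, as usual, $\Omega_m$ is the complement in $\mathbf{P}^1$ of a finite disjoint union $\coprod_{i\in I_m}D_i^{(m)}$ of open discs of radius $r_m$ centered at points of $\mathbf{P}^1(\qp)$, with $r_m$ decreasing to $0$, the coverings $\{D_i^{(m)}\}_i$ refining in $m$ and $\bigcap_m\bigcup_iD_i^{(m)}=\mathbf{P}^1(\qp)$. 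Since the restrictions $\O(\Omega_{m+1})\to\O(\Omega_m)$ have dense image, $\O(\Omega)^*=\varinjlim_m\O(\Omega_m)^*$, so every $\lambda$ factors through some Banach space $\O(\Omega_m)$. For such $m$, one checks that $x\mapsto\frac1{z-x}$ is a locally analytic map $\mathbf{P}^1(\qp)\to\O(\Omega_m)$, vanishing at $\infty$: on the ball around $a_i^{(m)}$ it equals $\sum_{k\geq0}(x-a_i^{(m)})^k(z-a_i^{(m)})^{-k-1}$, convergent in $\O(\Omega_m)$ because $|z-a_i^{(m)}|\geq r_m>|x-a_i^{(m)}|$ for $z\in\Omega_m$, and similarly in the chart at $\infty$. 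Hence $f_\lambda=\lambda\circ(\tfrac1{z-\cdot})$ is locally analytic on $\mathbf{P}^1(\qp)$ and vanishes at $\infty$; identifying the functions vanishing at $\infty$ with ${\rm LA}(\mathbf{P}^1(\qp))/L={\rm St}^{\rm an}$ gives the continuous map $\lambda\mapsto f_\lambda$. The same $\O(\Omega_m)$-valued function shows that for $\mu\in({\rm St}^{\rm an})^*$ the expression $f_\mu(z)=\int_{\mathbf{P}^1(\qp)}\tfrac1{z-x}\mu(x)$ is well defined in each $\O(\Omega_m)$, compatibly in $m$, hence in $\O(\Omega)$; and for $\lambda$ factoring through $\O(\Omega_m)$ the continuity of $\lambda$ on that Banach space legitimises $\langle\lambda,f_\mu\rangle=\int\lambda(\tfrac1{z-x})\mu(x)=\langle f_\lambda,\mu\rangle$, which says precisely that $\mu\mapsto f_\mu$ is the transpose of $\lambda\mapsto f_\lambda$. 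Thus~(b) follows once~(a) is proved.

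For~(a) the crux is the affinoid statement. For $U=\mathbf{P}^1\setminus\coprod_iD_i$ the complement of finitely many disjoint open discs, Mittag-Leffler on $\mathbf{P}^1$ (partial fractions) gives a topological direct-sum decomposition of $\O(U)$, up to the one-dimensional space of constants, into the ``principal part at $D_i$'' spaces $\O^{(0)}(\mathbf{P}^1\setminus D_i)$; and the Cauchy-kernel (residue) pairing identifies the strong dual of $\O^{(0)}(\mathbf{P}^1\setminus D_i)$ with a space of analytic functions on the disc $D_i$, normalised so that $\tfrac1{z-x}$ (for $x\in D_i$) pairs with an analytic function $h$ by $h\mapsto h(x)$. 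Assembling this over $i\in I_m$ and keeping track of the one constant produces a topological isomorphism of $\O(\Omega_m)^*$ with $\bigl(\bigoplus_{i\in I_m}\O(D_i^{(m)})\bigr)/L$ which, by the explicit formula of the first paragraph, is exactly $\lambda\mapsto(f_\lambda|_{D_i^{(m)}})_i$.

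Then I would pass to $\varinjlim_m$. The transition maps on the left are dual to the (dense-image, and for a suitably chosen exhaustion compact) restrictions $\O(\Omega_{m+1})\to\O(\Omega_m)$; on the right they restrict the functions $f_\lambda|_{D_i^{(m)}}$ to the finer discs of level $m+1$. Since a locally analytic function on the compact set $\mathbf{P}^1(\qp)$ is, for $m$ large enough depending on it, analytic on each $D_i^{(m)}$, one has $\varinjlim_m\bigl(\bigoplus_{i\in I_m}\O(D_i^{(m)})\bigr)/L={\rm LA}(\mathbf{P}^1(\qp))/L={\rm St}^{\rm an}$ as locally convex inductive limits. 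Taking $\varinjlim_m$ of the affinoid isomorphisms yields $\O(\Omega)^*\simeq{\rm St}^{\rm an}$; since all the spaces in sight are of compact type and the maps strict, this is a topological isomorphism, proving~(a). (Concretely, injectivity of $\lambda\mapsto f_\lambda$ amounts to the density in $\O(\Omega)$ of the $\qp$-rational partial fractions $\{(z-x)^{-k}\}$, and surjectivity to the density of the Cauchy kernels in ${\rm LA}(\mathbf{P}^1(\qp))$; both drop out of the affinoid description.)

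The main obstacle will be exactly that affinoid step: the Mittag-Leffler decomposition of $\O(U)$, the Cauchy-kernel duality for the complement of a single open disc, and the bookkeeping of the lone global constant, together with the familiar open/closed-disc and boundedness subtleties in pinning down the relevant duals, plus checking that every map involved is strict so that the $\varprojlim$/$\varinjlim$ manipulations give honest topological isomorphisms. None of this is deep---which is why the paper quotes Morita duality from \cite{STMorita} rather than reproving it---but it is where all the work is. Finally, the footnote's refinement (that replacing $\O(\Omega)$ by $\Omega^1(\Omega)$ makes the isomorphism $G$-equivariant) is then a formal check, using that $g=\matrice{a}{b}{c}{d}$ sends $dz$ to $(a-cz)^{-2}(\det g)\,dz$, which absorbs the twist by which $G$ acts on ${\rm St}^{\rm an}$ through its action on $\mathbf{P}^1(\qp)$.
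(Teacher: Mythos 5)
The paper does not prove this proposition: it is quoted as the classical Morita duality, with a reference to \cite{STMorita}, so there is no internal proof to compare against. Your sketch is essentially the standard argument from that literature (Stein exhaustion $\Omega=\bigcup_m\Omega_m$ by complements of finitely many open discs around points of $\p1(\qp)$, Mittag--Leffler decomposition of $\O(\Omega_m)$ into the constant plus principal parts, Cauchy-kernel duality identifying the dual of each principal-part space with analytic functions on the corresponding disc via $\langle 1/(z-x),h\rangle=h(x)$, then the colimit over $m$ giving ${\rm LA}(\p1(\qp))/L$), and it is correct in outline; part (b) does follow formally once one knows $x\mapsto 1/(z-x)$ is a locally analytic $\O(\Omega_m)$-valued function, exactly as you argue. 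Two points of bookkeeping deserve the care you already flag: the dual of the Banach space of principal parts on $\{|z-a_i|\geq r_m\}$ is the space of \emph{bounded} analytic functions on the open disc $D_i^{(m)}$ (equivalently, functions analytic on slightly smaller closed discs), not all of $\O(D_i^{(m)})$, and it is only in the colimit over shrinking radii that this discrepancy disappears and one recovers ${\rm LA}(\p1(\qp))$; and at the disc around $\infty$ the component of $f_\lambda$ mixes $\lambda(1)$ with the principal part there, which is precisely what makes the target $(\bigoplus_i\O(D_i^{(m)}))/L$ (equivalently, functions vanishing at $\infty$) rather than the full direct sum --- your accounting of the lone constant is consistent with this. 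Finally, your remark on the footnote is the right one: the untwisted map $\O(\Omega)^*\to{\rm St}^{\rm an}$ is only an isomorphism of topological vector spaces, and $G$-equivariance appears after twisting by the cocycle $\det g\cdot(a-cz)^{-2}$, i.e.\ for $\Omega^1(\Omega)$, which is how the paper uses it later.
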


 Soit $$\mathcal{S}={\rm Vect}_{x\in \qp} \frac{1}{z-x}\subset \O(\Omega)$$ le sous $L$-espace vectoriel de $\O(\Omega)$ engendr\'e par les fonctions
$\frac{1}{z-x}$ pour $x\in \qp$. Si $f\in \mathcal{S}$, on note $\mu_f$ l'\'el\'ement de $({\rm St^{\rm an}})^*$ qui correspond \`a $f$ via l'isomorphisme 
 $\O(\Omega)\simeq ({\rm St^{\rm an}})^*$. Explicitement, 
    $$ \mu_f=\sum_{x\in \qp} a_x \delta_x-\left(\sum_{x\in \qp} a_x\right)\delta_{\infty}\quad \text{si}\quad  f=\sum_{x\in \qp} \frac{a_x}{z-x}\in \mathcal{S}.$$
Notons que $\mathcal{S}$ est dense dans $\O(\Omega)$ (cela d\'ecoule directement de la 
     proposition \ref{Morita} et du th\'eor\`eme de Hahn-Banach).

        Le th\'eor\`eme \ref{adjoint} donne un sens \`a la d\'efinition suivante: 
        
       \begin{definition} Si $f=\sum_{x\in\qp} \frac{a_x}{z-x}\in \mathcal{S}$, on d\'efinit 
       un op\'erateur lin\'eaire continu 
       $$T_f: \Pi(\pi,0)^*\to \Pi(\pi,0)^*, \quad T_f(l)=\sum_{x\in \qp} a_x(\partial-x)^{-1}(l)\in \Pi(\pi,0)^*.$$
     \end{definition}

     Notons que par construction on a pour tous $l\in \Pi(\pi,0)^*, v\in \Pi(\pi,0), f\in \mathcal{S}$     
     \begin{eqnarray} \langle T_f(l), v\rangle=\sum_{x\in\qp} a_x \phi_{l,v}(x)=\int_{\p1(\qp)} \phi_{l,v}\mu_f,   \label{integrale} \end{eqnarray}
    puisque $\phi_{l,v}$ s'annule \`a l'infini.

\begin{proposition}
  Soit $f\in \O(\Omega)$ et soit $(f_n)_{n}$ une suite d'\'el\'ements de $\mathcal{S}$ qui converge vers $f$ dans $\O(\Omega)$. Alors la suite
 d'op\'erateurs $T_{f_n}$ converge faiblement vers un op\'erateur continu $T_f: \Pi(\pi,0)^*\to \Pi(\pi,0)^*$.
\end{proposition}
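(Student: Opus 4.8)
The plan is to show that the operators $T_{f_n}$ form a weakly Cauchy sequence in the space of continuous endomorphisms of $\Pi(\pi,0)^*$, and that the weak limit is independent of the chosen approximating sequence. Fix $l \in \Pi(\pi,0)^*$ and $v \in \Pi(\pi,0)$. By formula $\eqref{integrale}$ we have $\langle T_{f_n}(l), v\rangle = \int_{\piqp(\qp)} \phi_{l,v}\,\mu_{f_n}$, where $\phi_{l,v} \in {\rm St}^{\rm an}$ by Proposition \ref{loc an} and $\mu_{f_n} \in ({\rm St}^{\rm an})^*$ is the distribution attached to $f_n$ under Morita duality (Proposition \ref{Morita}). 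Since $f_n \to f$ in $\O(\Omega)$, the corresponding distributions $\mu_{f_n} = \mu_f^{(n)}$ converge to the distribution $\mu_f \in ({\rm St}^{\rm an})^*$ attached to $f$ via the isomorphism $\O(\Omega) \simeq ({\rm St}^{\rm an})^*$; hence $\langle T_{f_n}(l), v\rangle \to \int_{\piqp(\qp)} \phi_{l,v}\,\mu_f$. This already shows that for each fixed $l$ the sequence $(T_{f_n}(l))_n$ converges weakly in $\Pi(\pi,0)$, and therefore, by Lemme \ref{faible} applied to $V = \Pi(\pi,0)$ (the field $L$ being spherically complete after enlarging if needed, or rather: we only need weak convergence here), the limit exists in $\Pi(\pi,0)^*$; call it $T_f(l)$. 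The assignment $l \mapsto T_f(l)$ is manifestly linear, and one defines $T_f$ this way, checking independence of the approximating sequence from the fact that the formula $\langle T_f(l), v\rangle = \int_{\piqp(\qp)} \phi_{l,v}\,\mu_f$ depends only on $f$.

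The remaining point is the continuity of $T_f$ as an operator on $\Pi(\pi,0)^*$. First I would record that $T_f$ is weakly continuous: for fixed $v \in \Pi(\pi,0)$, the functional $l \mapsto \langle T_f(l), v\rangle = \int_{\piqp(\qp)} \phi_{l,v}\,\mu_f$ is continuous in $l$, because $l \mapsto \phi_{l,v}$ is a continuous map $\Pi(\pi,0)^* \to {\rm St}^{\rm an}$ (this follows from the proof of Proposition \ref{loc an}, where $\phi_{l,v}$ was produced by an explicit convergent power series whose coefficients depend continuously on $l$ through the restriction of $l$ to a Banach space $\Pi(\pi,0)^{(h')}$) and pairing against the fixed distribution $\mu_f$ is continuous. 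Since $\Pi(\pi,0)^*$ is a reflexive Fréchet space (it is the dual of the compact-type space $\Pi(\pi,0)$, hence Fréchet nuclear, cf. Corollaire \ref{closed image}), a linear map which is weakly continuous and has weakly continuous transpose is continuous for the strong topologies; concretely one argues that $T_f^*: \Pi(\pi,0) \to \Pi(\pi,0)$, $v \mapsto (l \mapsto \int \phi_{l,v}\mu_f)$ is well-defined and continuous on the space of compact type $\Pi(\pi,0)$ by the closed graph theorem, and then $T_f = (T_f^*)^*$ is continuous on $\Pi(\pi,0)^*$.

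The main obstacle is the continuity statement rather than the mere existence of the weak limit: one must control $T_f$ uniformly, and the delicate feature — already flagged in the text preceding Proposition \ref{loc an} — is that $\partial$ and $\partial^{-1}$ do not preserve the Banach pieces $\Pi(\pi,0)^{(h)}$ filtering $\Pi(\pi,0)$ by radius of analyticity, so one cannot simply estimate $(\partial-x)^{-1}$ on a fixed Banach space. The way around this is exactly the mechanism of Proposition \ref{loc an}: the pairing $\langle (\partial-x)^{-1} l, v\rangle$ is reorganised, via the conjugation identity from Theorem \ref{adjoint}, into $\langle l, \left(\begin{smallmatrix} 1 & x \\ 0 & 1\end{smallmatrix}\right)\partial^{-1}\left(\begin{smallmatrix} 1 & -x \\ 0 & 1\end{smallmatrix}\right) v\rangle$, and the single application of $\partial^{-1}$ to the analytic vector-valued function $x \mapsto \left(\begin{smallmatrix} 1 & -x \\ 0 & 1\end{smallmatrix}\right)v$ produces, after passing to a possibly larger Banach piece $\Pi(\pi,0)^{(h')}$, a power series with coefficients tending to $0$; the estimates of \cite[th. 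IV.6, rem. IV.15]{CD} then give the uniform control needed to conclude that $T_f^*$ maps bounded sets to bounded sets, hence is continuous. Once these functional-analytic estimates are in place the proposition follows formally.
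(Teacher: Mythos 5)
Your proposal starts from the right ingredients (formula \eqref{integrale}, the Morita duality, the lemma \ref{faible}), but it departs from the paper's argument at two points, one of which is a genuine gap.

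\emph{Existence of $T_f(l)$ in $\Pi(\pi,0)^*$.} You argue that since $\langle T_{f_n}(l), v\rangle$ converges for every $v$, ``by Lemme \ref{faible} the limit exists in $\Pi(\pi,0)^*$''. This is not what the lemma says: it states that a sequence in a locally convex space converges (strongly) to $0$ if and only if it converges weakly to $0$. It cannot produce a limit out of thin air. The pointwise limit $\Lambda : v \mapsto \int_{\p1(\qp)} \phi_{l,v}\,\mu_f$ of the continuous functionals $T_{f_n}(l)$ is a priori only a linear form on $\Pi(\pi,0)$; before you can even form the sequence $T_{f_n}(l)-\Lambda$ and apply the lemma, you must know $\Lambda$ is continuous. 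This would require a Banach--Steinhaus argument on the barrelled space $\Pi(\pi,0)$, which you do not give. The paper avoids the issue entirely by a telescoping trick: with $g_n = f_n - f_{n-1}$ one has $g_n \to 0$ in $\O(\Omega)$, hence (Morita) $\mu_{g_n}\to 0$ in $({\rm St}^{\rm an})^*$, hence $\langle T_{g_n}(l), v\rangle \to 0$ for every $v$, hence $T_{g_n}(l) \to 0$ \emph{strongly} in $\Pi(\pi,0)^*$ by Lemma \ref{faible} --- this is exactly the case the lemma covers, a sequence in a known space tending weakly to $0$. Ultrametric completeness of the Fr\'echet space $\Pi(\pi,0)^*$ then makes the partial sums converge, with no candidate limit needed in advance.

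\emph{Continuity of $T_f$.} Your route goes through the transpose $T_f^*$, the closed graph theorem, and hinges on the continuity of $l\mapsto \phi_{l,v}$ from $\Pi(\pi,0)^*$ to ${\rm St}^{\rm an}$, which is never established in the text --- the proof of Proposition \ref{loc an} only shows that, for fixed $l$ and $v$, the function $\phi_{l,v}$ is locally analytic; extracting a continuity statement in $l$ uniformly would require re-doing the estimates. The paper's proof is much shorter: the $T_{f_n}$ are continuous endomorphisms of the Fr\'echet space $\Pi(\pi,0)^*$ converging pointwise to $T_f$, so $T_f$ is continuous by Banach--Steinhaus.
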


\begin{proof} Posons $g_n=f_n-f_{n-1}$, de telle sorte que $g_n\in \mathcal{S}$ tend vers $0$ dans 
$\O(\Omega)$. Si $v\in \Pi(\pi,0)$ et $l\in \Pi(\pi,0)^*$ on a 
$$\langle T_{g_n}(l), v\rangle=\int_{\p1(\qp)} \phi_{l,v} \mu_{g_n}.$$
Comme $g_n$ tend vers $0$ dans $\O(\Omega)$, $\mu_{g_n}$ tend vers $0$ dans $({\rm St^{\rm an}})^*$ (par dualit\'e de Morita)
et donc $\mu_{g_n}$ tend faiblement vers $0$ dans $({\rm St^{\rm an}})^*$, ce qui permet de conclure que 
$\lim_{n\to\infty} \langle T_{g_n}(l), v\rangle=0$. Combin\'e avec le lemme \ref{faible} et la r\'eflexivit\'e de $\Pi(\pi,0)$, 
cela montre que $\lim_{n\to\infty} T_{g_n}(l)=0$ dans $\Pi(\pi,0)^*$. Ainsi, 
$\lim_{n\to\infty} (T_{f_n}(l)-T_{f_{n-1}}(l))=0$ et comme $\Pi(\pi,0)^*$ est complet, la suite d'op\'erateurs $(T_{f_n})_n$ converge faiblement. La continuit\'e de la limite d\'ecoule du th\'eor\`eme de Banach-Steinhaus.  
\end{proof}

\begin{proposition}\label{continuit\'e}
a) Pour tous $f\in \O(\Omega), l\in \Pi(\pi,0)^*, v\in \Pi(\pi,0)$ on a 
  $$\langle T_f(l), v\rangle=\int_{\p1(\qp)} \phi_{l,v}\mu_f.$$
  
  b)  Si $f_n\in \O(\Omega)$ tend vers $f\in \O(\Omega)$, alors pour tout 
  $l\in \Pi(\pi,0)^*$ on a $$\lim_{n\to\infty} T_{f_n}(l)=T_f(l)\quad \text{dans}\quad \Pi(\pi,0)^*.$$
\end{proposition}

\begin{proof} a) Soit $(f_n)_n$ comme dans la proposition pr\'ec\'edente, alors
$$\langle T_{f}(l), v\rangle=\lim_{n\to\infty} \langle T_{f_n}(l), v\rangle=\lim_{n\to\infty} \int_{\p1(\qp)}\phi_{l,v}\mu_{f_n}=\int_{\p1(\qp)} \phi_{l,v}\mu_f,$$
la derni\`ere \'egalit\'e \'etant une cons\'equence du fait que $(\mu_{f_n})$ converge faiblement vers $\mu_f$ (encore une fois, par dualit\'e de Morita).

b) Le lemme \ref{faible} montre qu'il suffit de v\'erifier que 
$\lim_{n\to\infty} \langle T_{f_n}(l), v\rangle=\langle T_f(l),v\rangle$ pour tout 
$v\in \Pi(\pi,0)^*$. Cela d\'ecoule directement du a) et de la dualit\'e de Morita.
\end{proof}

    Le th\'eor\`eme \ref{module} se d\'eduit maintenant facilement de ce qui pr\'ec\`ede. En effet, la structure de $\O(\Omega)$-module sur 
    $\Pi(\pi,0)^*$ s'obtient en posant $f.l=T_f(l)$ pour $f\in \O(\Omega)$ et $l\in \Pi(\pi,0)^*$. Le seul point qu'il nous reste \`a v\'erifier est que 
    $(fg).l=f.(g.l)$ pour tous $f,g\in \O(\Omega)$ et $l\in \Pi(\pi,0)^*$. Par densit\'e de $\mathcal{S}$ dans $\O(\Omega)$ et la continuit\'e des applications $l\mapsto f.l$ et $f\mapsto f.l$ (qui d\'ecoule des r\'esultats pr\'ec\'edents), on se ram\`ene au cas o\`u $f,g\in \mathcal{S}$, et ensuite au cas $f=\frac{1}{z-x}$ et $g=\frac{1}{z-y}$, avec
    $x,y\in \qp$. De plus (encore par continuit\'e), on peut supposer que $x\ne y$. Mais alors 
    $$fg=\frac{1}{x-y}\cdot \left(\frac{1}{z-x}-\frac{1}{z-y}\right)\in \mathcal{S},$$
    et le r\'esultat voulu d\'ecoule de l'identit\'e 
    $$(\partial-x)^{-1}\circ (\partial-y)^{-1}=\frac{1}{x-y}((\partial-x)^{-1}-(\partial-y)^{-1})$$
    valable sur $\Pi(\pi,0)^*$.
    
    Le r\'esultat suivant est une cons\'equence imm\'ediate du th\'eor\`eme \ref{module}, mais il nous sera tr\`es utile par la suite.

\begin{proposition}\label{commute}
  Tout morphisme $G$-\'equivariant continu 
$\Phi: \Pi(\pi,0)^*\to \O(\Sigma_n)^{\rho}$ est $\O(\Omega)$-lin\'eaire. 
\end{proposition}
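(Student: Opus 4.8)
The plan is to exploit the rigidity of the two $\O(\Omega)$-module structures: each of the modules $\Pi(\pi,0)^*$ and $\O(\Sigma_n)^{\rho}$ is pinned down by the single relation $a^+-1=u^+\circ\partial$ in the infinitesimal action of $G$, together with the injectivity of $u^+$. So the proof should reduce to three steps: (1) check that $\Phi$ commutes with the action of $\mathfrak{gl}_2$, in particular with $u^+$ and $a^+$; (2) deduce that $\Phi$ commutes with $\partial$, i.e. with multiplication by $z$; (3) upgrade ``commutes with multiplication by $z$'' to ``$\O(\Omega)$-linear'' by a density argument. The genuinely hard input, namely the construction of the $\O(\Omega)$-module structure on $\Pi(\pi,0)^*$, is already done in Theorem~\ref{module}, so what remains is short.

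For step (1): both $\Pi(\pi,0)^*$ and $\O(\Sigma_n)^{\rho}$ are strong duals of locally analytic representations of $G$ of compact type --- $\Pi(\pi,0)$ by construction, and $(\O(\Sigma_n)^{\rho})^*$ by Theorem~\ref{agit bien}, being a direct summand of the locally analytic representation $\O(\Sigma_n)^*$. A continuous $G$-equivariant map between such spaces is automatically $D(G)$-equivariant (dualize and use that continuous $G$-maps between locally analytic representations commute with $U(\mathfrak{gl}_2)$, the infinitesimal action being recovered as a limit of group operators), hence $\Phi$ commutes with $u^+$ and with $a^+$. For step (2): on $\Pi(\pi,0)^*$ one has $u^+\partial=\partial u^+-1=a^+-1$ by Proposition~\ref{numer}, and on $\O(\Sigma_n)^{\rho}=\O(0)(\Sigma_n)^{\rho}$ one has $a^+-1=u^+\partial$ by Lemma~\ref{numerologie Lie}; moreover $u^+$ is injective on both sides --- on the target because $u^+=-\frac{d}{dz}$ and $H^0_{\mathrm{dR}}(\Sigma_n)^{\rho}=0$ ($\rho$ being non trivial), and on the source because $u^+$ is injective on $\Pi(\pi,0)^*$ (it acts as multiplication by $t$ on $D_{\rm rig}(V)$, cf. Theorem~\ref{Casimirinf}, which is injective). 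Applying $\Phi$ to $a^+l-l=u^+(\partial l)$ and using step (1) gives $u^+(\Phi(\partial l))=u^+(\partial\Phi(l))$, whence $\Phi(\partial l)=\partial\Phi(l)$ by injectivity of $u^+$ on $\O(\Sigma_n)^{\rho}$. Thus $\Phi$ commutes with multiplication by $z$.

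For step (3): for each $x\in\qp$ the function $z-x$ is nowhere zero on $\Omega(\cp)=\p1(\cp)\setminus\p1(\qp)$, so $z-x\in\O(\Omega)^{\times}$; hence $\partial-x$ is invertible on $\O(\Sigma_n)^{\rho}$, and also on $\Pi(\pi,0)^*$ by Theorem~\ref{module}, with inverse given by multiplication by $\frac{1}{z-x}$. Since $\Phi$ commutes with $\partial$ and with scalars it commutes with $\partial-x$, hence with $(\partial-x)^{-1}$, i.e. with multiplication by $\frac{1}{z-x}$; so it commutes with multiplication by every element of $\mathcal{S}=\mathrm{Vect}_{x\in\qp}\frac{1}{z-x}$. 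As $\mathcal{S}$ is dense in $\O(\Omega)$ (a consequence of Morita duality, Proposition~\ref{Morita}, and Hahn--Banach) and the multiplication maps $f\mapsto f\cdot l$ are (separately) continuous on both sides --- on the left by the functional-analytic results of \S\ref{structure O(Omega)} underlying Theorem~\ref{module} --- the continuity of $\Phi$ then yields $\Phi(f\cdot l)=f\cdot\Phi(l)$ for all $f\in\O(\Omega)$ and $l\in\Pi(\pi,0)^*$. The only point needing a little care is the automatic $D(G)$-equivariance of $\Phi$ in step (1), but this is a soft, standard fact; everything else is forced by the relations already established.
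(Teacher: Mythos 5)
Your proof is correct and follows essentially the same route as the paper: commutation with $a^+$ and $u^+$ from $G$-equivariance, then $u^+\Phi(\partial l)=u^+\partial\Phi(l)$ and injectivity of $u^+$ on $\O(\Sigma_n)^{\rho}$ to get commutation with $\partial$, then passing through $(\partial-x)^{-1}$, $\mathcal{S}$, and density plus the (separate) continuity of $f\mapsto f\cdot l$ (Proposition~\ref{continuit\'e}~b). Your extra remarks on automatic $D(G)$-equivariance and on injectivity of $u^+$ on the source are correct but not needed; the argument is the paper's.
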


\begin{proof} Les op\'erateurs $\partial$ et $u^+$ satisfont 
$a^+-1=u^+\partial$ sur les deux espaces 
 $\Pi(\pi,0)^*$ et $\O(\Sigma_n)^{\rho}$. Puisque 
 $\Phi$ est $G$-\'equivariant, il commute \`a $a^+$ et $u^+$. On en d\'eduit que 
 $u^+ \Phi(\partial l)=u^+ \partial \Phi(l)$ pour tout 
$l \in \Pi(\pi,0)^*$. Puisque $u^+$ est injectif sur $\O(\Sigma_n)^{\rho}$, on en d\'eduit que 
$\Phi$ commute avec $\partial$. Il commute donc aussi avec $(\partial-x)^{-1}$ pour tout 
$x\in \qp$. On en d\'eduit que $\Phi(f.l)=f\Phi(l)$ pour tout $f\in \mathcal{S}$ et tout 
$l\in \Pi(\pi,0)^*$, et le r\'esultat s'ensuit gr\^ace \`a la densit\'e de $\mathcal{S}$ dans 
$\O(\Omega)$ et au point b) de la proposition \ref{continuit\'e}.
\end{proof}

\section{Surjectivit\'e de $\Phi$}\label{surjectivit\'e}

   Ce chapitr\'e est consacr\'e \`a la preuve du r\'esultat suivant, preuve qui doit beaucoup \`a des discussions avec Laurent Fargues et Pierre Colmez. 
   
   \begin{theoreme}\label{surjectif tour} 
   Tout morphisme $G$-\'equivariant continu et non nul 
$\Phi: \Pi(\pi,0)^*\to \O(\Sigma_n)^{\rho}$ est surjectif. 
   \end{theoreme}
   
   En appliquant \cite[lemma 3.6]{STInv} et le th\'eor\`eme \ref{existemorphisme2}, on en d\'eduit que 
le $D(G)$-module $\O(\Sigma_n)^{\rho}$ est coadmissible. En d\'ecomposant $\O(\Sigma_n)$ selon l'action de 
$D^*$, on obtient la coadmissibilit\'e du $D(G)$-module $\O(\Sigma_n)$. 

    La preuve du th\'eor\`eme \ref{surjectif tour} se fait en deux \'etapes: on \'etablit d'abord que 
    l'image de $\Phi$ est dense en utilisant des r\'esultats de Kohlhaase \cite{Kohl} sur le lien entre certains fibr\'es 
    $G$-\'equivariants sur la tour de Drinfeld et certains fibr\'es $D^*$-\'equivariants sur la tour de Lubin-Tate. Un argument d'analyse fonctionnelle 
    permet alors de conclure que $\Phi$ est surjectif. Les deux \'etapes utilisent de mani\`ere cruciale la structure de 
    $\O(\Omega)$-module sur $\Pi(\pi,0)^*$ et le fait que $\Phi$ est $\O(\Omega)$-lin\'eaire. Nous utiliserons aussi syst\'ematiquement les 
    r\'esultats du chapitre 3 de \cite{STInv} concernant les modules coadmissibles sur une alg\`ebre de Fr\'echet-Stein (qui sera dans notre cas l'alg\`ebre des fonctions rigides analytiques
    sur une vari\'et\'e Stein sur $\qp$).

\subsection{Les tours jumelles}

Si $X$ vit sur $\qp$, $\breve{X}$ d\'esigne l'extension des scalaires $X \hat{\otimes}_{\qp} \breve{\mathbf{Q}}_p$. 
\begin{lemme}\label{produitcomplete}
Soit $F: X \to Y$ un morphisme d'espaces de Fr\'echet, tel que $\breve{F} : \breve{X} \to \breve{Y}$ soit d'image dense. Alors $F$ est 
d'image dense.
\end{lemme}
\begin{proof}
Soit $l$ une forme lin\'eaire continue s'annulant sur l'image de $F$. Alors pour tout $a\in \breve{\mathbf{Q}}_p$
$$\breve{l}(\breve{F}(x\hat{\otimes} a))=\breve{l}(F(x)\hat{\otimes} a)=a\cdot l(F(x))=0.$$
Donc $\breve{l}\circ \breve{F}$ s'annule sur l'image de $X\otimes_{\qp} \breve{\mathbf{Q}}_p$ dans 
$\breve{X}$. Cette image \'etant dense, on a $\breve{l}\circ \breve{F}=0$ et comme 
$\breve{F}$ est d'image dense on a $\breve{l}=0$. Mais alors pour tout 
$y\in Y$ on a $l(y)=\breve{l}(y\hat{\otimes} 1)=0$, d'o\`u le r\'esultat. 
\end{proof}

Avant de prouver que $\Phi$ est surjective, on va montrer que $\Phi$ est d'image dense. D'apr\`es le lemme pr\'ec\'edent, il suffit de le faire pour $\breve{\Phi}$.
\begin{proposition}\label{image dense}
L'image de $\breve{\Phi}$ est dense dans $\O(\breve{\m}_n)^{\rho}$.
\end{proposition}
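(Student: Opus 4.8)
L'image de $\breve{\Phi}$ est dense dans $\O(\breve{\mathcal{M}}_n)^{\rho}$.

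Le plan est d'exploiter la structure de $\O(\Omega)$-module construite au chapitre pr\'ec\'edent : puisque $\breve{\Phi}$ est $\O(\breve{\Omega})$-lin\'eaire (proposition \ref{commute}, \'etendue \`a $\breve{\mathbf{Q}}_p$), $\O(\breve{\Omega})$ est une alg\`ebre de Fr\'echet-Stein et $\O(\breve{\m}_n)^{\rho}$ est un module coadmissible sur elle, l'adh\'erence de l'image de $\breve{\Phi}$ est un sous-$\O(\breve{\Omega})$-module ferm\'e, donc lui-m\^eme coadmissible. Il revient donc au m\^eme de prouver que le quotient coadmissible $Q=\O(\breve{\m}_n)^{\rho}/\overline{\mathrm{Im}\,\breve{\Phi}}$ est nul. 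Or un module coadmissible sur $\O(\breve{\Omega})$ correspond, par l'\'equivalence de Schneider-Teitelbaum (chap. 3 de \cite{STInv}) appliqu\'ee \`a ce cadre g\'eom\'etrique, \`a un faisceau coh\'erent $G$-\'equivariant sur $\breve{\Omega}$ ; il s'agit donc de montrer que le faisceau coh\'erent $G$-\'equivariant $\mathcal{F}$ sur $\breve{\Omega}$ dont les sections globales sont $\O(\breve{\m}_n)^{\rho}$ n'a aucun quotient coh\'erent $G$-\'equivariant non nul et de support propre, i.e. que $\mathcal{F}$ est \emph{irr\'eductible} comme faisceau $G$-\'equivariant (le morphisme $\breve{\Phi}$ \'etant non nul, son image est non nulle, donc si $\mathcal{F}$ est irr\'eductible le quotient $Q$ est nul). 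Le point est que $\mathcal{F}$ est localement libre de rang fini (le rev\^etement $\breve{\m}_n\to\breve{\Omega}\times\mathbf{Z}$ \'etant \'etale, et $\rho$ irr\'eductible).

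Pour \'etablir cette irr\'eductibilit\'e, l'id\'ee est de \og transf\'erer le probl\`eme\fg{} sur la tour de Lubin-Tate gr\^ace aux r\'esultats de Kohlhaase \cite{Kohl}, qui comparent le faisceau $G$-\'equivariant sur $\breve{\Omega}$ d\'eduit de $\O(\breve{\m}_n)$ \`a un faisceau $D^*$-\'equivariant sur $\check{\mathbf{P}}^1$ (le projectif rigide sur $\breve{\mathbf{Q}}_p$) provenant de la tour de Lubin-Tate, l'isomorphisme de Faltings-Fargues identifiant les deux tours. Apr\`es passage \`a la composante $\rho$-isotypique, le faisceau $D^*$-\'equivariant qui appara\^it est (un twist convenable de) $\rho^*\otimes\O_{\check{\mathbf{P}}^1}$. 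Il suffit donc de montrer que ce dernier faisceau $D^*$-\'equivariant est irr\'eductible. Mais $\check{\mathbf{P}}^1$ \'etant de dimension $1$, tout sous-faisceau coh\'erent $D^*$-\'equivariant propre de $\rho^*\otimes\O_{\check{\mathbf{P}}^1}$ serait soit de support de dimension $0$, soit un sous-fibr\'e strict ; dans les deux cas on obtient une contradiction avec l'irr\'eductibilit\'e \emph{et la lissit\'e} de $\rho$ : un sous-faisceau gratte-ciel en un point $x$ d\'efinirait une sous-repr\'esentation du stabilisateur de $x$ dans $D^*$ dans $\rho^*$, stabilisateur qui est ouvert et donc dont l'action sur $\rho^*$ (lisse) ne fixe aucune droite propre de mani\`ere \'equivariante ; et un sous-fibr\'e $D^*$-\'equivariant strict donnerait par sp\'ecialisation en un point une sous-repr\'esentation de dimension interm\'ediaire, impossible par irr\'eductibilit\'e de $\rho$.

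Le passage d\'elicat est la mise en \oe uvre pr\'ecise du transfert \`a la Kohlhaase-Faltings-Fargues : il faut v\'erifier que la comparaison de faisceaux \'equivariants de \cite{Kohl} est compatible aux actions de $G$ et de $D^*$ de la fa\c{c}on attendue, respecte la structure de $\O$-module, et survit au passage \`a la composante $\rho$-isotypique (ce qui utilise que $\rho$ se factorise par $D^*/(1+p^n\O_D)$, hypoth\`ese faite sur $n$), de sorte que le faisceau $D^*$-\'equivariant r\'esultant soit bien du type $\rho^*\otimes\O_{\check{\mathbf{P}}^1}$ \`a twist pr\`es. Une fois cette traduction \'etablie, l'argument d'irr\'eductibilit\'e sur $\check{\mathbf{P}}^1$ est \'el\'ementaire. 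Enfin, le lemme \ref{produitcomplete} permet de redescendre la densit\'e de $\breve{\Phi}$ \`a celle de $\Phi$ ; l'\'etape suivante (surjectivit\'e proprement dite) sera obtenue par un argument d'analyse fonctionnelle standard sur les morphismes stricts entre Fr\'echet coadmissibles, un morphisme \`a image dense et \og presque surjectif\fg{} entre modules coadmissibles sur une alg\`ebre de Fr\'echet-Stein \'etant automatiquement surjectif (de nouveau chap. 3 de \cite{STInv}).
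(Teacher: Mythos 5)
Your overall strategy coincides with the paper's: interpret the closure of the image of $\breve{\Phi}$ as a closed, hence coadmissible, sub-$\O(\breve{\Omega})$-module; translate to a short exact sequence of $G$-equivariant coherent sheaves on $\breve{\Omega}$, which are automatically vector bundles because $\breve{\Omega}$ is a smooth curve and $G$-stable torsion is empty; transfer to the Lubin--Tate side via Kohlhaase's equivalence; and reduce to a statement about the $D^*$-equivariant bundle $\rho^*\otimes\O_{\check{\mathbf{P}}^1}$.

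The final step, however, contains a genuine gap. You claim that a proper $D^*$-equivariant sub-bundle of $\rho^*\otimes\O_{\check{\mathbf{P}}^1}$ is impossible because specializing at a point $x$ would produce a proper sub-representation of $\rho^*$, contradicting irreducibility. But the fibre at $x$ only carries a representation of the \emph{stabilizer} $D^*_x$, not of $D^*$; irreducibility of $\rho$ as a $D^*$-representation gives no contradiction there, and for a smooth $\rho$ any sufficiently small compact open subgroup acts trivially on $\rho^*$, so there are plenty of $D^*_x$-invariant lines. In fact the strong sheaf-theoretic irreducibility you assert is false for an irreducible but non-smooth $\rho$: the paper's remark recalls that for the rational Dieudonn\'e module $V(\mathbf{X})$ (irreducible of dimension $2$ over $D^*$) one has the $D^*$-equivariant exact sequence $0\to\O_{\p1}(-1)\to V(\mathbf{X})\otimes\O_{\p1}\to\O_{\p1}(1)\to 0$. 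Your fibre argument would rule this out just as readily, so it must be wrong. The correct use of smoothness is global, not fibrewise: the paper twists the exact sequence of $D^*$-equivariant bundles by $\O(\lambda)$ so that both kernel and cokernel have positive slopes, takes $H^0$, and obtains a $D^*$-equivariant injection $H^0(\mathbf{D}_{\rm LT}(\mathcal{F})\otimes\O(\lambda))\hookrightarrow\rho^*\otimes\mathrm{Sym}^{\lambda}(L^2)$; the target is an irreducible $D^*$-representation \emph{because} $\rho^*$ is smooth, the source is nonzero by positivity of slopes, hence the injection is an isomorphism, hence $H^0(\mathbf{D}_{\rm LT}(\mathcal{F}')\otimes\O(\lambda))=0$, hence (positive slopes again) $\mathcal{F}'=0$. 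This slope-and-global-sections argument is the one that genuinely exploits smoothness and cannot be replaced by a single-fibre argument.

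A minor point: you invoke the Faltings--Fargues isomorphism between the two towers; the paper explicitly remarks that Kohlhaase's descent-theoretic construction of the functors $\mathbf{D}_{\rm LT}$ and $\mathbf{D}_{\rm Dr}$ is elementary and does \emph{not} rely on that isomorphism.
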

\begin{proof}

Notons $W = \overline{\mathrm{Im}(\breve{\Phi})}$ l'adh\'erence de l'image de $\breve{\Phi}$ : c'est un sous-$\O(\breve{\Omega})$-module coadmissible (sur $\O(\breve{\Omega})$) de $\O(\breve{\m}_{n})^{\rho}$, puisqu'un sous-module ferm\'e d'un module coadmissible est lui-m\^ eme coadmissible et puisque $\O(\breve{\m}_n)$ est coadmissible, en tant que 
$\O(\breve{\Omega})$-module projectif de type fini \cite{STInv}. On a donc une suite exacte $G$-\'equivariante de $\O(\breve{\Omega})$-modules coadmissibles
\[ 0 \to W \to \O(\breve{\m}_{n})^{\rho} \to \O(\breve{\m}_{n})^{\rho}/W \to 0. \]
Comme $\breve{\Omega}$ est une vari\'et\'e Stein, cette suite exacte revient exactement \`a la donn\'ee d'une suite exacte de faisceaux coh\'erents $G$-\'equivariants sur $\breve{\Omega}$, que nous noterons
\[ 0 \to \mathcal{F} \to \mathcal{G} \to \mathcal{F}' \to 0. \]
Le faisceau $\mathcal{G}$ est un fibr\'e vectoriel, puisque $\breve{\m}_{n}$ est un rev\^ etement \'etale de $\breve{\Omega}$. Les faisceaux coh\'erents $\mathcal{F}$ et $\mathcal{F}'$ aussi : comme $\breve{\Omega}$ est une courbe lisse, il suffit en effet de voir qu'ils sont sans torsion. Or le support de la partie de torsion d'un faisceau coh\'erent $G$-\'equivariant sur $\O(\breve{\Omega})$ est un sous-ensemble discret de $\breve{\Omega}$ stable par $G$ : c'est donc l'ensemble vide. La suite exacte ci-dessus est donc une suite exacte de \textit{fibr\'es $G$-\'equivariants sur $\breve{\Omega}$}. 
\\

Pour montrer que ces deux fibr\'es sont isomorphes, nous allons faire appel aux r\'esultats de \cite{Kohl}. Commen\c cons par fixer quelques notations. On note $\breve{\m}_{0}^{(0)}$ la composante de $\breve{\m}_0$ correspondant au lieu o\`u la quasi-isog\'enie est de hauteur $0$, et $\breve{\m}_{n}^{(0)}$ ses rev\^ etements. On note aussi $\breve{\mathcal{L}}_{0}^{(0)}$ l'espace de Lubin-Tate (\cite{LT}, c'est une boule ouverte de rayon $1$) et $\breve{\mathcal{L}}_{n}^{(0)}$ ses rev\^ etements (galoisiens de groupe $G_0/G_n$).

L'observation de base de Kohlhaase est que l'anneau $\O(\breve{\m}_{n}^{(0)})$, resp. $\O(\breve{\mathcal{L}}_{n}^{(0)})$, est $(\breve{F}_n,G_n)$-r\'egulier\footnote{Rappelons que $F_n=\qp(\mu_{p^n})$.}, resp. $(\breve{F}_n,D_n)$-r\'egulier, au sens de Fontaine. Ceci va permettre de d\'efinir de cat\'egories de fibr\'es \'equivariants int\'eressants, \og \`a la Fontaine\fg{}.

Si $\mathcal{N}$ est un fibr\'e vectoriel $G_0$-\'equivariant de rang $r$ sur $\breve{\m}_{0}^{(0)}$ et si $N=H^0(\breve{\m}_{0}^{(0)}, \mathcal{N})$, on pose 
 \[ \Lambda_n(\mathcal{N}) = (\O(\breve{\m}_{n}^{(0)}) \otimes_{\O(\breve{\m}_{0}^{(0)})} N)^{G_n}.\]
De la r\'egularit\'e de $\O(\breve{\m}_{n}^{(0)})$, on d\'eduit par un argument standard que l'application naturelle
\begin{eqnarray} \O(\breve{\m}_{n}^{(0)}) \otimes_{\breve{F}_n} \Lambda_n(\mathcal{N}) \to \O(\breve{\m}_{n}^{(0)}) \otimes_{\O(\breve{\m}_{0}^{(0)})} N  \label{fibre lt} \end{eqnarray}
est injective, pour tout $n$. En particulier, $\Lambda_n(\mathcal{N})$ est un $\breve{F}_n$-espace vectoriel avec action semi-lin\'eaire de $G_0/G_n$, de dimension inf\'erieure ou \'egale \`a $r$. 

De m\^eme, si $\mathcal{N}$ est un fibr\'e vectoriel $D_0$-\'equivariant sur $\breve{\mathcal{L}}_{0}^{(0)}$, et si $N=H^0(\breve{\mathcal{L}}_{0}^{(0)}, \mathcal{N})$, on pose
\[ \Delta_n(\mathcal{N}) = (\O(\breve{\mathcal{L}}_{n}^{(0)}) \otimes_{\O(\breve{\mathcal{L}}_{0}^{(0)})} N)^{D_n}. \]
L'application naturelle
\begin{eqnarray}\O(\breve{\mathcal{L}}_{n}^{(0)}) \otimes_{\breve{F}_n} \Delta_n(\mathcal{N}) \to \O(\breve{\mathcal{L}}_{n}^{(0)}) \otimes_{\O(\breve{\mathcal{L}}_{0}^{(0)})} N  \label{fibre dr} \end{eqnarray}
est injective pour tout $n$. 

\begin{definition} Un fibr\'e vectoriel $G_0$-\'equivariant $\mathcal{N}$ sur $\breve{\m}_{0}^{(0)}$ est dit \textit{de Lubin-Tate} s'il existe $n\geq 0$ tel que l'application $\eqref{fibre lt}$ soit un isomorphisme. On d\'efinit alors $\mathbf{D}_{\rm LT}(\mathcal{N})$ comme le fibr\'e $D_0$-\'equivariant sur $\breve{\mathcal{L}}_{0}^{(0)}$ dont l'espace des sections globales est le $\O(\breve{\mathcal{L}}_{n}^{(0)})^{G_0}=\O(\breve{\mathcal{L}}_{0}^{(0)})$-module
$$ H^0(\breve{\mathcal{L}}_{0}^{(0)},\mathbf{D}_{\rm LT}(\mathcal{N}))=( \O(\breve{\mathcal{L}}_{n}^{(0)}) \otimes_{\breve{F}_n} \Lambda_n(\mathcal{N}) )^{G_0}. $$
Un fibr\'e vectoriel $D_0$-\'equivariant $\mathcal{N}=\tilde{N}$ sur $\breve{\mathcal{L}}_{0}^{(0)}$ est dit \textit{de Drinfeld} s'il existe $n\geq 0$ tel que l'application $\eqref{fibre dr}$ soit un isomorphisme. On d\'efinit alors $\mathbf{D}_{\rm Dr}(\mathcal{N})$ comme le fibr\'e $G_0$-\'equivariant sur $\breve{\m}_{0}^{(0)}$ dont l'espace des sections globales est le $\O(\breve{\m}_{n}^{(0)})^{D_0}=\O(\breve{\m}_{0}^{(0)})$-module
\[ H^0(\breve{\m}_{0}^{(0)}, \mathbf{D}_{\rm Dr}(\mathcal{N}))=( \O(\breve{\m}_{n}^{(0)}) \otimes_{\breve{F}_n} \Delta_n(\mathcal{N}) )^{D_0}. \]
\end{definition}

Un fibr\'e $G$-\'equivariant $\mathcal{N}$ sur $\breve{\Omega}$ est dit de \textit{Lubin-Tate} si $(\pi_{\mathrm{Dr}}^{(0)})^*(\mathrm{Res}_{G_0}^G(\mathcal{N}))$ est de Lubin-Tate au sens pr\'ec\'edent, o\`u $\pi_{\mathrm{Dr}}^{(0)} : \breve{\m}_{0}^{(0)} \to \breve{\Omega}$ est l'application des p\'eriodes de $\breve{\m}_{0}$ restreinte \`a la composante connexe $\breve{\m}_{0}^{(0)}$.

De m\^eme, un fibr\'e $D^*$-\'equivariant $\mathcal{N}$ sur $\breve{\mathbf{P}}^1$ est dit \textit{de Drinfeld} si $(\pi_{\rm LT}^{(0)})^*(\mathrm{Res}_{D_0}^{D^*}(\mathcal{N}))$ est de Drinfeld au sens pr\'ec\'edent, o\`u $\pi_{\rm LT}^{(0)} : \breve{\mathcal{L}}_{0}^{(0)} \to \breve{\mathbf{P}}^1$ est l'application des p\'eriodes de Gross-Hopkins \cite{GH}.

Si $\mathcal{N}$ est un fibr\'e de Lubin-Tate sur $\breve{\Omega}$ (resp. si $\mathcal{N}$ est un fibr\'e de Drinfeld sur $\breve{\mathbf{P}}^1$), le fibr\'e $\mathbf{D}_{\rm LT}(\mathcal{N})$ est $D^*$-\'equivariant  (resp. le fibr\'e $\mathbf{D}_{\rm Dr}(\mathcal{N})$ est $G$-\'equivariant) et de plus il descend en un fibr\'e $D^*$-\'equivariant encore not\'e $\mathbf{D}_{\rm LT}(\mathcal{N})$ sur $\breve{\mathbf{P}}^1$ (resp. en un fibr\'e $G$-\'equivariant encore not\'e $\mathbf{D}_{\rm Dr}(\mathcal{N})$ sur $\breve{\Omega}$).

\begin{theoreme}[Kohlhaase]\label{kohlhaase}

a) Les foncteurs $\mathbf{D}_{\rm LT}$ et $\mathbf{D}_{\mathrm{Dr}}$ r\'ealisent des \'equivalences de cat\'egories quasi-inverses l'une de l'autre entre la cat\'egorie des fibr\'es $G$-\'equivariants sur $\breve{\Omega}$ dits \textit{de Lubin-Tate} et celle des fibr\'es $D^*$-\'equivariants sur $\breve{\mathbf{P}}^1$ dits \textit{de Drinfeld}.

b) Si $\rho$ est une repr\'esentation lisse de dimension finie de $D^*$, le fibr\'e $\rho^* \otimes \O_{\breve{\mathbf{P}}^1}$ est un fibr\'e de Drinfeld et 
on a un isomorphisme canonique 
$$\mathbf{D}_{\rm Dr}(\rho^* \otimes \O_{\breve{\mathbf{P}}^1})=\O(\breve{\m}_{n})^{\rho}.$$

c) Les cat\'egories des fibr\'es de Drinfeld et des fibr\'es de Lubin-Tate sont stables par sous-objet et quotient, et les foncteurs $\mathbf{D}_{\rm LT}$ et $\mathbf{D}_{\mathrm{Dr}}$ pr\'eservent les suites exactes. 
\end{theoreme}

\begin{remarque}
Les m\'ethodes de Kohlhaase \cite{Kohl} sont \'el\'ementaires et \textit{n'utilisent pas} l'isomorphisme entre les tours de Lubin-Tate et de Drinfeld, d\^u \`a Faltings et Fargues \cite{faltings, FGL}.
\end{remarque}

Revenons \`a la preuve de la proposition \ref{image dense}. Le fibr\'e $\mathcal{G}$ est un fibr\'e de Lubin-Tate, d'apr\`es le deuxi\`eme point du th\'eor\`eme, puisque $\mathcal{G}=\mathbf{D}_{\mathrm{Dr}}(\rho^* \otimes \O_{\breve{\mathbf{P}}^1})$. Le troisi\`eme point du th\'eor\`eme prouve que $\mathcal{F}$ et $\mathcal{F}'$ sont aussi de Lubin-Tate, et que l'on a une suite exacte de fibr\'es $D^*$-\'equivariants sur $\breve{\mathbf{P}}^1$ : 
\[ 0 \to \mathbf{D}_{\rm LT}(\mathcal{F}) \to \rho^* \otimes \O_{\breve{\mathbf{P}}^1} \to \mathbf{D}_{\rm LT}(\mathcal{F'}) \to 0. \]
Choisissons $\lambda$ un entier positif suffisamment grand pour que les fibr\'es $\mathbf{D}_{\rm LT}(\mathcal{F}) \otimes \O(\lambda)$ et $\mathbf{D}_{\rm LT}(\mathcal{F}') \otimes \O(\lambda)$ soient \`a pentes positives. En tordant par $\O(\lambda)$, on a donc une injection $D^*$-\'equivariante
\[ 0 \to H^0(\mathbf{D}_{\rm LT}(\mathcal{F}) \otimes \O(\lambda)) \to \rho^* \otimes \mathrm{Sym}^{\lambda}(L^2) \]
Or, comme $\rho^*$ est lisse, le membre de droite est une repr\'esentation irr\'eductible et cette fl\`eche est donc un isomorphisme. En outre, la suite exacte longue de cohomologie donne l'annulation de $H^0(\mathbf{D}_{\rm LT}(\mathcal{F}') \otimes \O(\lambda))=0$. Comme $\mathbf{D}_{\rm LT}(\mathcal{F}') \otimes \O(\lambda)$ est un fibr\'e de pentes positives, il est nul et donc 
$\mathbf{D}_{\rm LT}(\mathcal{F})$ est en fait isomorphe \`a $\rho^* \otimes \O_{\breve{\mathbf{P}}^1}$. Par cons\'equent, $\mathcal{F}$ est isomorphe \`a $\mathcal{G}$. Ainsi, on a bien $W=\O(\breve{\m}_{n})^{\rho}$. 
\end{proof}

\begin{remarque}
La preuve ci-dessus est simple mais ne fonctionne plus en dimension sup\'erieure (la th\'eorie de Kohlhaase n'est pas limit\'ee \`a la dimension $1$) ; toutefois, elle s'applique encore si l'on remplace $\qp$ par une extension finie. Elle utilise de fa\c con cruciale le fait que $\rho$ est lisse. Si $V(\mathbf{X})$ est le module de Dieudonn\'e rationnel du groupe formel de hauteur $2$ sur $\overline{\mathbf{F}}_p$, $V(\mathbf{X})$ est une repr\'esentation irr\'eductible de degr\'e $2$ de $D^*$ et pourtant l'on a une suite exacte de fibr\'es $D^*$-\'equivariants sur $\p1$ (\cite{GH}) :
\[ 0 \to \O_{\p1}(-1) \to V(\mathbf{X}) \otimes \O_{\p1} \to \O_{\p1}(1) \to 0, \]
qui tir\'ee en arri\`ere \`a $\breve{\mathcal{L}}_{0}^{(0)}$ redonne la suite exacte de fibr\'es \'equivariants fournie par la th\'eorie de Grothendieck-Messing.
\end{remarque}

Notons qu'au passage on a d\'emontr\'e la
\begin{proposition}\label{irreductible}
Le $\O(\Omega)$-module $\O(\Sigma_n)^{\rho}$ avec action semi-lin\'eaire de $G$ est topologiquement irr\'eductible.
\end{proposition}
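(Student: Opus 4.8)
The plan is to repeat, for an arbitrary nonzero closed $G$-stable $\O(\Omega)$-submodule, the argument already carried out in the proof of Proposition \ref{image dense} for the submodule $\overline{\mathrm{Im}(\Phi)}$; indeed the remark following that proof notes precisely that Proposition \ref{irreductible} is obtained along the way.

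First I would take a nonzero closed $G$-stable $\O(\Omega)$-submodule $W \subset \O(\Sigma_n)^{\rho}$. Since $\O(\Omega)$ is Fr\'echet--Stein and $\O(\Sigma_n)^{\rho}$ is a coadmissible $\O(\Omega)$-module (indeed a finite projective one, compare the proof of Proposition \ref{image dense}), the closed submodule $W$ is again coadmissible, and because $\Omega$ is Stein the exact sequence $0 \to W \to \O(\Sigma_n)^{\rho} \to \O(\Sigma_n)^{\rho}/W \to 0$ is the sequence of global sections of an exact sequence of $G$-equivariant coherent sheaves $0 \to \mathcal{F} \to \mathcal{G} \to \mathcal{F}' \to 0$ on $\Omega$. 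As in loc.\ cit., $\mathcal{F}$ and $\mathcal{F}'$ are torsion-free: the torsion subsheaf of a $G$-equivariant coherent sheaf on the smooth curve $\Omega$ is supported on a $G$-stable, rigid-analytically discrete subset of $\Omega$, hence is $0$. So this is a short exact sequence of $G$-equivariant vector bundles on $\Omega$.

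Next I would extend scalars to $\breve{\mathbf{Q}}_p$ and feed the pulled-back sequence into Kohlhaase's dictionary (Theorem \ref{kohlhaase}). Since $\mathcal{G}$ becomes $\mathbf{D}_{\mathrm{Dr}}(\rho^{*} \otimes \O_{\breve{\mathbf{P}}^1})$, both $\mathcal{F}$ and $\mathcal{F}'$ are of Lubin--Tate type and we obtain an exact sequence of $D^{*}$-equivariant bundles on $\breve{\mathbf{P}}^1$
\[ 0 \to \mathbf{D}_{\mathrm{LT}}(\mathcal{F}) \to \rho^{*} \otimes \O_{\breve{\mathbf{P}}^1} \to \mathbf{D}_{\mathrm{LT}}(\mathcal{F}') \to 0. \]
The key point, established inside the proof of Proposition \ref{image dense}, is the topological irreducibility of $\rho^{*} \otimes \O_{\breve{\mathbf{P}}^1}$ as a $D^{*}$-equivariant bundle: twisting a nonzero $D^{*}$-equivariant subbundle by $\O(\lambda)$ with $\lambda \gg 0$ so that its slopes become positive, its global sections embed $D^{*}$-equivariantly into $\rho^{*} \otimes \mathrm{Sym}^{\lambda}(L^2)$, which is irreducible because $\rho^{*}$ is smooth and irreducible, and one concludes by positivity of slopes that the subbundle is $0$ or everything. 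Hence $\mathbf{D}_{\mathrm{LT}}(\mathcal{F})$ is $0$ or all of $\rho^{*} \otimes \O_{\breve{\mathbf{P}}^1}$; applying $\mathbf{D}_{\mathrm{Dr}}$ gives $\mathcal{F} = 0$ or $\mathcal{F} = \mathcal{G}$ after base change, hence already over $\qp$ by faithful flatness of $\O(\Omega) \to \O(\breve{\Omega})$, i.e.\ $W = 0$ or $W = \O(\Sigma_n)^{\rho}$, which is the asserted topological irreducibility.

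The main obstacle is not this last chain of implications but making sure that every ingredient it rests on is legitimate: coadmissibility of closed $G$-stable submodules and their identification with $G$-equivariant coherent subsheaves, exactness of extension of scalars to $\breve{\mathbf{Q}}_p$ together with its compatibility both with this identification and with Kohlhaase's functors, and the fact that $\mathbf{D}_{\mathrm{LT}}$ and $\mathbf{D}_{\mathrm{Dr}}$ preserve exact sequences and the slope bookkeeping used to handle the $\O(\lambda)$-twists. All of these have, however, already been set up in the proof of Proposition \ref{image dense}, so here it is essentially a matter of observing that that proof in fact establishes the stronger statement.
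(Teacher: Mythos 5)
Your proposal is correct and follows essentially the same route as the paper: reduce the irreducibility of $\O(\Sigma_n)^{\rho}$ to irreducibility over $\breve{\mathbf{Q}}_p$, which is obtained from Kohlhaase's dictionary together with the irreducibility (among $D^*$-equivariant bundles) of $\rho^* \otimes \O_{\breve{\mathbf{P}}^1}$. The only organizational differences are that you identify $W$ with a $G$-equivariant bundle over $\Omega$ and then base-change, whereas the paper base-changes first and then sheafifies, and that what you call ``faithful flatness'' for the final descent (together with the tacit fact that $\breve{W}\neq 0$) is exactly the content the paper extracts from lemme \ref{produitcomplete} via the density-plus-closedness argument.
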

\begin{proof}
Soit $V \subset \O(\Sigma_n)^{\rho}$ un sous $\O(\Omega)$-module ferm\'e non nul et stable sous l'action de $G$. Comme on l'a vu dans la preuve du lemme \ref{produitcomplete}, $\breve{V}$ est lui-m\^eme non nul et le paragraphe pr\'ec\'edent montre que le $\O(\breve{\Omega})$-module $\O(\breve{\Sigma}_n)^{\rho}$ avec action semi-lin\'eaire de $G$ est irr\'eductible. En particulier, $\breve{V}$ en est un sous-espace dense et par le lemme \ref{produitcomplete} on en d\'eduit que $V$ est dense dans $\O(\Sigma_n)^{\rho}$ ; comme il est ferm\'e, il est \'egal \`a tout l'espace. 
\end{proof}

\begin{remarque}
 Nous ne pr\'etendons pas avoir d\'emontr\'e que le $G$-module topologique $\O(\Sigma_n)^{\rho}$ est irr\'eductible ! Toutefois il l'est effectivement : combiner \cite{Colmezpoids} et le th\'eor\`eme \ref{main1}.
\end{remarque}

\begin{proposition}\label{Phi scalaire}
 Tout endomorphisme $\O(\Omega)$-lin\'eaire et $G$-\'equivariant de $\O(\Sigma_n)^{\rho}$ est scalaire.
\end{proposition}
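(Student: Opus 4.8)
The plan is to reduce to an irreducibility/Schur-type argument over $\breve{\mathbf{Q}}_p$, where the results of Kohlhaase have just given us a very concrete description of $\O(\breve{\Sigma}_n)^{\rho}$ as $\mathbf{D}_{\rm Dr}(\rho^*\otimes\O_{\breve{\mathbf{P}}^1})$. First I would take $\Psi$ an $\O(\Omega)$-linear, $G$-equivariant endomorphism of $\O(\Sigma_n)^{\rho}$ and extend scalars to get $\breve{\Psi}:\O(\breve{\Sigma}_n)^{\rho}\to\O(\breve{\Sigma}_n)^{\rho}$, which is $\O(\breve{\Omega})$-linear and $G$-equivariant; it suffices to show $\breve{\Psi}$ is a scalar in $\breve{\mathbf{Q}}_p$, because then its restriction to the $\qp$-rational subspace $\O(\Sigma_n)^{\rho}$ maps into itself, forcing the scalar to lie in $\qp$ (one can test this on a single nonzero $\qp$-rational vector). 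So the whole problem is a Schur lemma for the $G$-equivariant coherent sheaf on $\breve{\Omega}$ attached to $\O(\breve{\Sigma}_n)^{\rho}$.

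Next I would translate $\breve{\Psi}$ through the Kohlhaase equivalence $\mathbf{D}_{\rm LT}$: since the category of Lubin-Tate fibres on $\breve{\Omega}$ is equivalent to the category of Drinfeld fibres on $\breve{\mathbf{P}}^1$ (Theorem \ref{kohlhaase}), the endomorphism $\breve{\Psi}$ of the fibre $\mathcal{G}$ corresponding to $\O(\breve{\Sigma}_n)^{\rho}$ corresponds functorially to a $D^*$-equivariant endomorphism of $\rho^*\otimes\O_{\breve{\mathbf{P}}^1}$. An $\O_{\breve{\mathbf{P}}^1}$-linear $D^*$-equivariant endomorphism of $\rho^*\otimes\O_{\breve{\mathbf{P}}^1}$ is the same as a $D^*$-equivariant endomorphism of the constant sheaf $\rho^*$, i.e. an element of $\mathrm{End}_{D^*}(\rho^*)$; since $\rho$ (hence $\rho^*$) is irreducible as a representation of $D^*$, Schur's lemma gives $\mathrm{End}_{D^*}(\rho^*)=\overline{\mathbf{Q}}_p$-scalars, and after possibly enlarging $L$ so that $\rho$ is absolutely irreducible this is exactly $L$; in particular the image under $\mathbf{D}_{\rm LT}$ is a scalar, hence so is $\breve{\Psi}$.

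The one point that needs a little care — and which I expect to be the main technical obstacle — is matching up the coefficient fields and the global-sections functor: $\mathbf{D}_{\rm LT}$ and $\mathbf{D}_{\rm Dr}$ involve descent along $\O(\breve{\mathcal{L}}_n^{(0)})$ and $\O(\breve{\m}_n^{(0)})$ and the semilinear action of $G_0/G_n$ resp. $D_0/D_n$, so one must check that an $\O(\breve{\Omega})$-linear endomorphism really does correspond to an $\O_{\breve{\mathbf{P}}^1}$-linear one and not to something more general, and that the ring of $D^*$-equivariant scalars on the Drinfeld side is genuinely $L$ (this uses the absolute irreducibility and smoothness of $\rho$, exactly as in Proposition \ref{irreductible}). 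Once that bookkeeping is in place the statement follows. Note that, just as for Proposition \ref{irreductible}, the argument is not purely local: it passes through $\breve{\mathbf{Q}}_p$ and the twin-tower formalism, but it uses neither the Faltings--Fargues isomorphism nor the $p$-adic Langlands correspondence.

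\begin{proof}
Soit $\Psi$ un tel endomorphisme. Il suffit de montrer que $\breve{\Psi}$ est scalaire: en effet, si $\breve{\Psi}=\lambda\cdot\mathrm{id}$ avec $\lambda\in\breve{\mathbf{Q}}_p$, alors pour tout $f\in \O(\Sigma_n)^{\rho}$ non nul on a $\Psi(f)=\lambda f$ dans $\O(\breve{\Sigma}_n)^{\rho}$, et comme $\Psi(f)$ et $f$ sont tous deux dans $\O(\Sigma_n)^{\rho}$ on obtient $\lambda\in\qp$ (puis $\lambda\in L$ si l'on tient compte des coefficients). Or, comme $\breve{\Omega}$ est Stein, $\breve{\Psi}$ correspond \`a un endomorphisme $G$-\'equivariant, $\O_{\breve{\Omega}}$-lin\'eaire, du fibr\'e $G$-\'equivariant $\mathcal{G}$ sur $\breve{\Omega}$ d'espace des sections globales $\O(\breve{\Sigma}_n)^{\rho}$. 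D'apr\`es le th\'eor\`eme \ref{kohlhaase} b), $\mathcal{G}=\mathbf{D}_{\mathrm{Dr}}(\rho^*\otimes\O_{\breve{\mathbf{P}}^1})$ est un fibr\'e de Lubin-Tate, et par le th\'eor\`eme \ref{kohlhaase} a) le foncteur $\mathbf{D}_{\rm LT}$ transforme $\breve{\Psi}$ en un endomorphisme $D^*$-\'equivariant, $\O_{\breve{\mathbf{P}}^1}$-lin\'eaire, de $\rho^*\otimes\O_{\breve{\mathbf{P}}^1}$, c'est-\`a-dire en un \'el\'ement de $\mathrm{End}_{D^*}(\rho^*)$. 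Comme $\rho$ est absolument irr\'eductible (quitte \`a agrandir $L$), le lemme de Schur donne $\mathrm{End}_{D^*}(\rho^*)=L$, donc cet endomorphisme est scalaire; par pleine fid\'elit\'e de $\mathbf{D}_{\rm LT}$, $\breve{\Psi}$ l'est aussi, ce qui conclut.
\end{proof}
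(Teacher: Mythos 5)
Your proof is correct and follows essentially the same route the paper takes: extend scalars to $\breve{\mathbf{Q}}_p$, transport the endomorphism through Kohlhaase's equivalence $\mathbf{D}_{\rm LT}$ to an $\O_{\breve{\mathbf{P}}^1}$-linear $D^*$-equivariant endomorphism of $\rho^*\otimes\O_{\breve{\mathbf{P}}^1}$, and conclude by Schur's lemma using the (absolute) irreducibility and smoothness of $\rho$, as in the irreducibility argument of Proposition \ref{image dense}. The only thing you make explicit that the paper leaves implicit is the descent of the scalar from $\breve{\mathbf{Q}}_p$ back to $L$ (the paper's ``on raisonne comme avant''), and your one-line justification for that step is fine.
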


\begin{proof}
On raisonne comme avant: il suffit de le v\'erifier apr\`es avoir \'etendu les scalaires \`a 
$\breve{\mathbf{Q}}_p$, dans quel cas cela suit de l'\'equivalence de cat\'egories de Kohlhaase et du fait que les endomorphismes 
du fibr\'e $D^*$-\'equivariant $\rho^* \otimes \O_{\breve{\mathbf{P}}^1}$ sont scalaires (par un argument semblable \`a celui utilis\'e pour d\'emontrer son irr\'eductibilit\'e). 
\end{proof}

\subsection{Un r\'esultat d'analyse fonctionnelle}

Si l'on savait que $\Pi(\pi,0)^*$ \'etait coadmissible comme $\O(\Omega)$-module, on pourrait conclure directement que $\Phi$ est surjective, car un morphisme continu entre modules coadmissibles sur une alg\`ebre de Fr\'echet-Stein est automatiquement d'image ferm\'ee \cite{STInv}. Mais il semble difficile de montrer un tel \'enonc\'e de coadmissibilit\'e - nous l'obtiendrons uniquement comme cons\'equence de notre r\'esultat principal ! Pour conclure que l'image de $\Phi$ est effectivement $\O(\Sigma_n)^{\rho}$ tout entier, il reste donc \`a d\'emontrer la
\begin{proposition}\label{ferme}
Soit $X$ une vari\'et\'e Stein sur $\qp$. Soit $N$ un $\O(X)$-module projectif de type fini et soit $Y$ un $\O(X)$-module qui soit un espace de Fr\'echet. Tout morphisme continu $F: Y \to N$ qui est $\O(X)$-lin\'eaire et d'image dense est surjectif. 
\end{proposition}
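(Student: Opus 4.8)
Le plan est de montrer que $M:=F(Y)$ est un sous-$\O(X)$-module \emph{ferm\'e} de $N$ ; comme il est dense par hypoth\`ese, on aura alors $M=N$. Remarquons d'abord que $M$ est bien un sous-$\O(X)$-module : $F$ \'etant $\O(X)$-lin\'eaire et $Y$ un $\O(X)$-module, on a $a\cdot F(y)=F(ay)\in M$ pour tous $a\in\O(X)$ et $y\in Y$. Le conoyau $C:=N/M$ est donc un $\O(X)$-module \emph{de type fini}, comme quotient du module de type fini $N$.

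L'ingr\'edient principal est que, $X$ \'etant Stein, l'alg\`ebre $\O(X)$ est une alg\`ebre de Fr\'echet-Stein sur laquelle tout module de type fini est coadmissible (il correspond \`a un faisceau coh\'erent sur $X$), cf. \cite{STInv} ainsi que les r\'esultats sur les espaces Stein rappel\'es au \S\ \ref{cotedrinfeld}. En particulier $C$ est coadmissible, et la projection canonique $q\colon N\to C$ est un morphisme $\O(X)$-lin\'eaire entre modules coadmissibles, donc automatiquement continu et strict pour les topologies canoniques \cite{STInv}. La strictesse de $q$ signifie que la topologie canonique de $C$ co\"{\i}ncide avec la topologie quotient sur $N/M$ ; or la premi\`ere est s\'epar\'ee (c'est celle d'un espace de Fr\'echet). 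Donc la topologie quotient sur $N/M$ est s\'epar\'ee, c'est-\`a-dire que $M=\ker q$ est ferm\'e dans $N$. \'Etant de plus dense, $M$ est alors \'egal \`a $N$, ce qui \'etablit la surjectivit\'e de $F$.

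La seule \'etape non formelle est la coadmissibilit\'e de $C$, et c'est elle qui utilise de mani\`ere essentielle l'hypoth\`ese que $X$ est Stein : l'\'enonc\'e deviendrait faux si l'on rempla\c cait $\O(X)$ par l'alg\`ebre d'un affino\"{\i}de $X_m\subset X$ du recouvrement de Stein, comme le montre d\'ej\`a la fl\`eche de restriction $\O(X)\to\O(X_m)$, d'image dense mais non surjective. Une variante \'evitant d'invoquer la coadmissibilit\'e de $C$ consisterait \`a proc\'eder ponctuellement : si $C\neq 0$, un argument \`a la Nakayama fournit un point $x\in X$ tel que $C\neq\mathfrak{m}_x C$, o\`u $\mathfrak{m}_x\subset\O(X)$ est l'id\'eal des fonctions s'annulant en $x$, autrement dit $M+\mathfrak{m}_x N\subsetneq N$ ; mais $N/\mathfrak{m}_x N$ est un $L$-espace vectoriel de dimension finie, donc s\'epar\'e et complet, et l'image du sous-module dense $M$ y \'etant dense, elle est \'egale \`a $N/\mathfrak{m}_x N$, d'o\`u une contradiction. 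Cette seconde approche demande toutefois de contr\^oler les id\'eaux maximaux de $\O(X)$, ce qui rend la premi\`ere voie plus robuste.
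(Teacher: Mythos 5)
Your main argument breaks down at the step ``$C$ est de type fini, donc coadmissible''. Over a Fr\'echet-Stein algebra it is the finitely generated \emph{sub}modules of a coadmissible module that are automatically coadmissible and closed (\cite{STInv}, Lemma 3.6); an arbitrary finitely generated module need not be coadmissible, and the cokernel $C=N/M$ is not, a priori, a submodule of anything coadmissible. In fact, if your argument worked it would prove that \emph{every} dense $\O(X)$-submodule of a finite type projective $N$ equals $N$, with no use whatsoever of the hypotheses that $Y$ is Fr\'echet and $F$ continuous --- and that stronger statement is false. For $X=\mathbf{A}^{1,\mathrm{an}}_{\qp}$ and $g_n(z)=\prod_{k\geq n}(1-p^kz)\in\O(X)$, the ideal $I=(g_1,g_2,\dots)$ is proper, since any finite subfamily $g_{n_1},\dots,g_{n_r}$ has the common zero $z=p^{-\max n_i}$ and hence cannot generate $\O(X)$, yet $I$ is dense, because on each closed disk $|z|\leq p^m$ the function $g_{m+1}$ is a unit and $\O(X)\to\O(B_m)$ has dense image. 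The cyclic module $C=\O(X)/I$ is thus finitely generated, nonzero, and not coadmissible (as $I$ is not closed). The same example defeats your ``Nakayama'' variant as well, because $V(I)=\emptyset$ gives $I+\mathfrak{m}_x=\O(X)$ for every $x\in X$, so $C=\mathfrak{m}_xC$ for all $x$; the support of a nonzero finitely generated $\O(X)$-module can miss every classical point of $X$.

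The Fr\'echet hypothesis on $Y$ and the continuity of $F$ are therefore essential, and this is exactly what the paper's proof uses. It reduces to $N=\O(X)$ by projectivity and then, summing suitably weighted series $\sum_k c_k\xi_ky_k$ which converge in the Fr\'echet space $Y$, constructs finitely many elements $f_1,\dots,f_m$ of the image $I=F(Y)$ with $V(f_1,\dots,f_m)=\emptyset$. Only then is \cite{STInv}, Lemma 3.6 invoked, and it is applied to $J=(f_1,\dots,f_m)$, a finitely generated \emph{sub}module of the coadmissible module $\O(X)$; $J$ is therefore closed, while the affinoid Nullstellensatz on each member $U_j$ of a Stein covering shows $J\cdot\O(U_j)=\O(U_j)$, hence $J$ is dense, hence $J=\O(X)$ and a fortiori $I=\O(X)$. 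The density-plus-coadmissibility device is sound, but it can only be brought to bear after the genuinely nontrivial construction of $f_1,\dots,f_m$, which is the step your proposal skips.
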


\begin{proof} Notons $N':=\mathrm{Im}(F) \subset N$. 
Par hypoth\`ese il existe un $\O(X)$-module $N''$ tel que $N \oplus N''= \O(X)^d$, $d>0$. En rempla\c cant
$N$ par $N \oplus N''=\O(X)^d$, $Y$ par\footnote{Noter que $N''$ (et donc $Y\oplus N''$) est naturellement muni d'une structure d'espace de Fr\'echet, car c'est un sous-$\O(X)$-module ferm\'e du Fr\'echet $\O(X)^d$.} $Y\oplus N''$ et $F$ par $F\oplus {\rm id}$, on peut donc supposer $N$ libre de rang fini. En raisonnant dans une base, on est m\^ eme ramen\'e au cas o\`u $N=\O(X)$, $N'=I$ est un id\'eal de $\O(X)$.

    Supposons que l'on peut trouver un nombre fini d'\'el\'ements $f_1,...,f_m \in I$ tels que $V(f_1,\dots,f_m)$ soit vide. 
L'id\'eal $J=(f_1,\dots,f_m)$ de $\O(X)$ est de type fini, donc coadmissible et donc ferm\'e \cite{STInv} dans $\O(X)$. De plus, si $(U_j)_j$ est un recouvrement Stein de $X$, alors 
$V(J\cap \O(U_j))=\emptyset$, donc $J\cap \O(U_j)=\O(U_j)$ par le Nullstellensatz affino\"ide, pour tout $j$. On en d\'eduit que $J$ est dense, et comme il est ferm\'e, on a $J=\O(X)$ et donc aussi $I=\O(X)$, ce qui finit la preuve de la proposition. 

      Il reste donc \`a montrer l'existence de $f_1,...,f_m$.
        On choisit les $f_i$ par r\'ecurrence. On prend $f_1\in I$ non nulle. Supposons $f_1,\dots,f_{i-1}$ choisis et $V(f_1,\dots,f_{i-1})$ non vide. 
Donnons-nous un recouvrement de Stein $(U_j)$ de $X$. Pour chaque $j$, $V(f_1,\dots,f_{i-1}) \cap U_j$ est affino\"ide et n'a donc qu'un nombre fini de composantes irr\'eductibles\footnote{Soit $A$ une alg\`ebre affino\"ide. Les composantes irr\'eductibles de l'espace affino\"ide $\mathrm{Sp} A$ sont les ensembles analytiques $\mathrm{Sp} A/\mathfrak{p}$, o\`u $\mathfrak{p}$ est un id\'eal premier minimal de $A$ ; comme $A$ est noeth\'erienne, il n'y en a qu'un nombre fini.}. On choisit pour chaque composante un point ferm\'e quelconque de cette composante. R\'ep\'etant cette op\'eration pour chaque $j$, on obtient une suite d\'enombrable $(z_n)_{n \in \mathbf{N}}$ de points de $X$. Pour chaque $k$, on choisit un \'el\'ement $\xi_k \in \O(X)$ s'annulant en $z_j$, pour tout $j=0,\dots,k-1$ et ne s'annulant pas en $z_k$ (l'existence de $\xi_k$ est facile si l'on souvient que, $X$ \'etant Stein, il existe une immersion ferm\'ee $\iota$ de $X$ dans l'espace affine ; il suffit de prendre $\xi_k$ de la forme $P \circ \iota$, o\`u $P$ est un polyn\^ome bien choisi). 

On va construire $f_i \in I$ ne s'annulant en aucun des $z_n$. Par hypoth\`ese, $I$ est dense dans $\O(X)$, il existe donc un \'el\'ement $h_n \in I$ ne s'annulant pas sur l'orbite de $z_n$, pour chaque $n \geq 0$. Par d\'efinition $h_n=F(y_n)$, avec $y_n \in Y$. Comme $Y$ est un espace de Fr\'echet, sa topologie est d\'efinie par une famille d\'enombrable de semi-normes $(q_n)_{n\geq 0}$. Posons $c_0=1$, puis choisissons $c_n \in \mathbf{Q}_p$ par r\'ecurrence sur $n$, de sorte que
$ \sum_{k=0}^n c_k  \xi_k h_k$
ne s'annule pas en $z_n$, ce qui est possible puisque $h_n(z_n) \neq 0$, et de sorte que
\[ |c_n| \max_{k=0,\dots,n} q_k ( \xi_n  y_n) \leq 2^{-n} \]
(cette expression a un sens, puisque $Y$ est un $\O(X)$-module). 
 Notons $$f_i=F( \sum_{k=0}^{\infty} c_k \xi_k  y_k)\in I,$$
 la somme
$\sum_{k=0}^{\infty} c_k \xi_k  y_k$
\'etant convergente dans $Y$. 
 Soit $n \geq 0$. Alors pour tout $k > n$, 
\[ F(c_k \xi_k y_k)(z_n)=c_k \xi_k(z_n) h_k(z_n) = 0, \]
et donc
\[ f_i(z_n)= \sum_{k=0}^n c_k \xi_k(z_n)  h_k (z_n) \neq 0, \]
par choix de la suite $c$. Donc $f_i$ ne s'annule en aucun des $z_n$. 

Soit $j\geq 0$. Si $Z$ est une composante irr\'eductible de l'affino\"ide $V(f_1,\dots,f_{i-1}) \cap U_j$, il existe $n$ tel que $z_n \in Z$ par choix de la suite $z$, mais par hypoth\`ese $f_i$ ne s'annule pas en $z_n$. Ainsi, $V(f) \cap Z$ est un ferm\'e strict de $Z$ et donc $\dim V(f) \cap Z < \dim Z$. Ceci valant pour chaque composante irr\'eductible de $V(f_1,\dots,f_{i-1}) \cap U_j$, on en d\'eduit que
\[ \dim V(f_1,\dots,f_i) \cap U_j \leq  \dim V(f_1,\dots,f_{i-1}) \cap U_j-1 \]
Comme $(U_j)$ est un recouvrement ouvert affino\"ide admissible de $X$, on en d\'eduit en passant \`a la limite sur $j$ que 
\[ \dim V(f_1,\dots,f_i) < \dim V(f_1,\dots,f_{i-1}), \]
comme voulu.
\end{proof}

    Le th\'eor\`eme \ref{surjectif tour} r\'esulte alors de la proposition pr\'ec\'edente\footnote{Noter que $\O(\Sigma_n)^{\rho}$ est un facteur direct de $\O(\Sigma_n)$ en tant que $\O(\Omega)$-module, et que 
$\O(\Sigma_n)$ est projectif de type fini comme $\O(\Omega)$-module \cite[prop. A5]{Kohl}} (avec 
$X=\Omega$, $Y=\Pi(\pi,0)^*$ et $N=\O(\Sigma_n)^{\rho}$)
et des propositions \ref{commute} et \ref{image dense}. 

\section{Injectivit\'e de $\Phi$ et fin de la preuve}\label{injectivit\'e}
 
Nous allons expliquer dans cette partie la preuve de l'injectivit\'e de $\Phi$ (ce qui terminera la d\'emonstration du th\'eor\`eme \ref{main1}) et celle du th\'eor\`eme \ref{main2}. Cela passe par l'introduction d'une nouvelle repr\'esentation de $G$, not\'ee $\Pi(\pi,2)$, dont le dual est construit \`a partir de la repr\'esentation $\Pi(\pi,0)^*$ et de l'op\'erateur $\partial$, ainsi que par l'\'etude  de l'action de $u^+$ sur $\Pi(\pi,2)$, et plus pr\'ecis\'ement du $G$-module $\Pi(\pi,2)^{u^+=0}$, reposant sur la th\'eorie du mod\`ele de Kirillov de Colmez rappel\'ee dans la section \ref{kirillovcolmez}. Nous commen\c cons donc par l\`a. Notons que l'injectivit\'e de $\Phi$ est une cons\'equence imm\'ediate de l'irr\'eductibilit\'e de $\Pi(\pi,0)$, qui est d\'emontr\'ee dans \cite{Colmezpoids}, mais nous aurons besoin de la plupart des constructions et r\'esultats techniques de ce chapitre dans la preuve du th\'eor\`eme \ref{main2}. Ces m\^emes r\'esultats permettent de d\'emontrer l'injectivit\'e de 
$\Phi$ sans utiliser son irr\'eductibilit\'e. 

\subsection{La repr\'esentation $\Pi(\pi,2)$}

    Nous avons d\'ej\`a observ\'e que la $G$-repr\'esentation 
        $\Omega^1(\Sigma_n)$ s'obtient directement \`a partir
        de $\O(\Sigma_n)$, en tordant par le cocycle 
       $c\in Z^1(G,\O(\Omega)^*)$, 
       $$c(g)=\det g\cdot (a-cz)^{-2}, \quad \text{si} \quad g= \left(\begin{matrix} a & b \\ c & d\end{matrix}\right)\in G.$$
        Nous allons construire un analogue $\Pi(\pi,2)^*$ de 
      $\Omega^1(\Sigma_n)$ \`a partir de $\Pi(\pi,0)^*$, par le m\^eme proc\'ed\'e : le travail d\'ej\`a effectu\'e rend la marche \`a suivre \'evidente.
      
           La premi\`ere partie du th\'eor\`eme \ref{adjoint} permet de d\'efinir une action de $G$ sur $\Pi(\pi,0)^*$ en posant 
        $$g*l=\det g\cdot (a-c\partial)^{-2}(g.l) \quad \text{si} \quad  g=\left(\begin{matrix} a & b \\ c & d\end{matrix}\right)\in G.$$
        Cette repr\'esentation se r\'ealise sur l'espace vectoriel topologique $\Pi(\pi,0)^*$, mais pour \'eviter les confusions il convient d'introduire une variable formelle 
        $dz$ et poser 
        \begin{eqnarray} \Pi(\pi,2)^*=\Pi(\pi,0)^*\,dz, \quad \text{avec} \quad g(l\,dz)=(\det g\cdot (a-c\partial)^{-2} (g.l)) \,dz \label{definition pi deux} \end{eqnarray} 
                pour $l\in \Pi(\pi,0)^*$ et $g= \left(\begin{matrix} a & b \\ c & d\end{matrix}\right)\in G$. Le th\'eor\`eme \ref{adjoint} et le corollaire \ref{closed image} se reformulent alors comme suit:
        
        \begin{proposition}\label{fleche d}
          $\Pi(\pi,2)^*$ est une repr\'esentation de $G$ et l'application 
          $$d: \Pi(\pi,0)^*\to \Pi(\pi,2)^*, \quad d(l)=-u^+(l) \,dz$$
          est $G$-\'equivariante, d'image ferm\'ee. 
        \end{proposition}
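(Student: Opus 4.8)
The statement splits naturally into two parts: first, that $\eqref{definition pi deux}$ really defines a (continuous, separately continuous $D(G)$-) action of $G$ on $\Pi(\pi,2)^*$; second, that $d(l)=-u^+(l)\,dz$ is $G$-equivariant with closed image. For the first point, the plan is to check the cocycle relation for the formal symbol $c(g)=\det g\cdot(a-c\partial)^{-2}$, exactly as one does for $\Omega^1(\Sigma_n)$ with $z$ replaced by $\partial$. This is where Theorem~\ref{adjoint} is indispensable: it tells us that $a-c\partial$ is invertible on $\Pi(\pi,0)^*$ for every $g=\matrice abcd\in G$, so the expression $(a-c\partial)^{-2}$ makes sense, and moreover the conjugation formula $g\circ\partial\circ g^{-1}=(d\partial-b)(a-c\partial)^{-1}$ is precisely what is needed to verify that $g_1*(g_2*l)=(g_1g_2)*l$. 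Concretely, writing $g_1g_2=\matrice{a'}{b'}{c'}{d'}$, one must see that the ``new'' factor $(a'-c'\partial)^{-2}$ agrees with $\det g_1\cdot(a_1-c_1\partial)^{-2}\cdot\bigl(g_1\text{-conjugate of }(a_2-c_2\partial)^{-2}\bigr)$; using $g_1(a_2-c_2\partial)g_1^{-1}=a_2-c_2\cdot g_1\partial g_1^{-1}=a_2-c_2(d_1\partial-b_1)(a_1-c_1\partial)^{-1}$ and clearing denominators reduces this to the matrix identity $a'-c'\partial\cdot(\text{something})$, i.e. to the classical computation already done for $\O(\Omega)$. Continuity (equivalently, that $\Pi(\pi,2)^*$ is a separately continuous $D(G)$-module) follows from the corresponding property of $\Pi(\pi,0)^*$ together with the continuity of $\partial$ and of $g\mapsto(a-c\partial)^{-1}$, the latter coming from the last sentence of the proof of Theorem~\ref{adjoint}.

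For the equivariance of $d$, the plan is a direct computation: one has $g(-u^+l\,dz)=\bigl(\det g\cdot(a-c\partial)^{-2}\,g.(-u^+l)\bigr)\,dz$, and on the other side $-u^+(g.l)\,dz$, so it suffices to show $u^+(g.l)=\det g\cdot(a-c\partial)^{-2}\,g.(u^+l)$, i.e. that $g\circ u^+\circ g^{-1}=\frac{1}{\det g}(a-c\partial)^2 u^+$ on $\Pi(\pi,0)^*$ (after replacing $l$ by $g^{-1}l$). But this is exactly the second formula of Theorem~\ref{adjoint}. So $d$ is $G$-equivariant essentially by the definition of the twisted action and Theorem~\ref{adjoint}.

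The remaining point — that $d$ has closed image — is the one requiring a genuine (though short) argument, and I expect it to be the only real obstacle. Since $d=-u^+(\cdot)\,dz$ and the twist by $dz$ is a topological isomorphism of the underlying vector space $\Pi(\pi,0)^*$ onto itself, the image of $d$ is closed in $\Pi(\pi,2)^*$ if and only if $u^+(\Pi(\pi,0)^*)$ is closed in $\Pi(\pi,0)^*$. But this is precisely Corollaire~\ref{closed image}, which was deduced from Proposition~\ref{imagefermee} (itself resting on the bijectivity of $\partial$ on $N_{\rm rig}(V)$, Proposition~\ref{bijective}). So the proof of Proposition~\ref{fleche d} is a bookkeeping assembly: the $G$-action is legitimate by the cocycle check powered by Theorem~\ref{adjoint}, the equivariance of $d$ is the identity $g u^+ g^{-1}=\frac{1}{\det g}(a-c\partial)^2 u^+$ from the same theorem, and the closedness of the image is Corollaire~\ref{closed image} transported across the (trivial) twist by $dz$. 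The one place where care is needed is to make sure the formal symbol $dz$ is handled consistently — it carries no intrinsic meaning beyond recording the twist, so all topological statements (Fréchet/LF structure, continuity, closedness) are literally inherited from $\Pi(\pi,0)^*$, and only the $G$-action is modified.
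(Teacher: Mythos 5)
Your proof is correct and matches the paper's approach: the paper gives no explicit argument and simply presents Proposition~\ref{fleche d} as a reformulation of Theorem~\ref{adjoint} (for the well-definedness of the twisted action and the intertwining identity $gu^+g^{-1}=\frac{1}{\det g}(a-c\partial)^2u^+$) together with Corollaire~\ref{closed image} (for the closed image). You spell out the cocycle verification via $g\partial g^{-1}=(d\partial-b)(a-c\partial)^{-1}$, which the paper leaves implicit; this is exactly the right bookkeeping.
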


          \subsection{Le noyau de $u^+$ sur $\Pi(\pi,2)$} 
          
          Nous reprenons les notations des sections \ref{rappel phi Gamma} et \ref{kirillovcolmez}. Le but de ce paragraphe est d'\'etablir le th\'eor\`eme \ref{kirillov} ci-dessous.
          
          \begin{proposition}\label{plonger avec dependance} 
           Soit $V\in \mathcal{V}(\pi)$. Le plongement $v\mapsto \phi_v$ de $\Pi(V)^{P-\rm fini}$ dans 
           ${\rm LP}(\qpet, D_{\rm dif}^{-}(V))^{\Gamma}$ induit un plongement $P$-\'equivariant 
           $$\Psi_V: (\Pi(V)^{\rm an}/\Pi(V)^{\rm lisse})^{u^+=0}\to {\rm LP}(\qpet, t^{-1}N_{\rm dif}^+(V)/N_{\rm dif}^+(V))^{\Gamma}.$$
          \end{proposition}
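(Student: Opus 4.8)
The plan is to combine three ingredients already available: the injection $v\mapsto \phi_v$ of $\Pi(V)^{P-\rm fini}$ into ${\rm LP}(\qpet,D_{\rm dif}^-(V))^{\Gamma}$ (Proposition \ref{inclusion Kir}), the explicit description of the infinitesimal action $u^+=t$, $a^+=\nabla$ on $D_{\rm rig}(V)$ (Theorem \ref{Casimirinf}, in the case of Hodge--Tate weights $0,1$), and the identification $\Pi(V)^{\rm lisse}\simeq {\rm LC}_c(\qpet,N_{\rm dif}^+(V)/D_{\rm dif}^+(V))^{\Gamma}$ (Theorem \ref{Ann}). First I would observe that an element $v$ of $\Pi(V)^{\rm an}$ with $u^+v=0$ is automatically $P$-finite. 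Indeed $u^+v=0$ together with the formula $a^+u^+-u^+a^+=u^+$ shows that $a^+$ stabilizes the line $Lv$, hence $L[\left(\begin{smallmatrix}\zpet&0\\0&1\end{smallmatrix}\right)]v$ is finite-dimensional after exponentiating; and $\left(\left(\begin{smallmatrix}1&p^n\\0&1\end{smallmatrix}\right)-1\right)v$ lies in the image of $u^+$ in a suitable completion, so a small power of it kills $v$ — more simply, one filters $\Pi(V)^{\rm an}$ by radius of analyticity and uses that $u^+v=0$ forces $v$ to be locally constant in the unipotent direction. Thus $\Pi(V)^{{\rm an},u^+=0}\subset \Pi(V)^{P-\rm fini}$, and applying Remark \ref{Jacquet compact} one even lands in $\Pi(V)^{P-\rm fini}_c$ after subtracting the value at $0$; I would therefore work directly with the Kirillov map on $P$-finite vectors.

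Next I would compute the image. By Proposition \ref{inclusion Kir} a vector $v\in\Pi(V)^{P-\rm fini}_c$ corresponds to $\phi_v\in{\rm LP}_c(\qpet,D_{\rm dif}^-(V))^{\Gamma}$ with $\phi_v(p^i)\in D_{\rm dif}^-(V)=\varinjlim_n D_{\rm dif,n}^+(V)[1/t]/D_{\rm dif,n}^+(V)$. The condition $u^+v=0$, i.e. $t\cdot\phi_v=0$ componentwise after unwinding the $P$-action formula $\left(\left(\begin{smallmatrix}a&b\\0&1\end{smallmatrix}\right)\phi\right)(x)=\varepsilon(bx)e^{tbx}\phi(ax)$ (the $e^{tbx}$ factor is what transports multiplication by $t$ to the action of $u^+$), translates into $\phi_v(p^i)\in (t^{-1}D_{\rm dif,n}^+(V)/D_{\rm dif,n}^+(V))$ for each $i$. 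Here is where the passage to $N_{\rm dif}^+$ enters: since $V$ has Hodge--Tate weights $0,1$, one has $tN_{\rm dif,n}^+(V)\subset D_{\rm dif,n}^+(V)\subset N_{\rm dif,n}^+(V)$, and $(t^{-1}D_{\rm dif,n}^+(V)/D_{\rm dif,n}^+(V))^{\nabla=0}=N_{\rm dif,n}^+(V)/D_{\rm dif,n}^+(V)$ — this is exactly the computation already invoked after Proposition \ref{P-lisse}. But imposing $u^+v=0$ alone does not kill $a^+$; it is only after passing to the quotient by $\Pi(V)^{\rm lisse}$, which by Theorem \ref{Ann} is precisely $\{u^+v=a^+v=0\}$, that the correct target becomes the \emph{kernel of $t$} inside $t^{-1}N_{\rm dif,n}^+(V)/N_{\rm dif,n}^+(V)$. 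So I would define $\Psi_V$ as the composite: take $v$ representing a class in $(\Pi(V)^{\rm an}/\Pi(V)^{\rm lisse})^{u^+=0}$, lift to $\Pi(V)^{P-\rm fini}$, form $\phi_v$, reduce modulo $N_{\rm dif}^+(V)$, and check this lands in ${\rm LP}(\qpet,t^{-1}N_{\rm dif}^+(V)/N_{\rm dif}^+(V))^{\Gamma}$, is $P$-equivariant (immediate from $P$-equivariance of $v\mapsto\phi_v$), and is injective.

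Injectivity of $\Psi_V$ is the main obstacle and I would handle it as follows. The composite map from $\Pi(V)^{P-\rm fini}$ to ${\rm LP}(\qpet,D_{\rm dif}^-(V))^{\Gamma}$ is already injective by Proposition \ref{inclusion Kir}; reducing further modulo $N_{\rm dif}^+(V)$ loses exactly $\{v : \phi_v(p^i)\in N_{\rm dif,n}^+(V)/D_{\rm dif,n}^+(V)\ \forall i\}$. I claim this kernel is $\Pi(V)^{\rm lisse}$. One inclusion is Theorem \ref{Ann} / Proposition \ref{P-lisse}, which says precisely that $\Pi(V)^{\rm lisse}\simeq{\rm LC}_c(\qpet,N_{\rm dif}^+(V)/D_{\rm dif}^+(V))^{\Gamma}$ under the Kirillov map. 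For the reverse inclusion: if $v\in\Pi(V)^{{\rm an},u^+=0}$ has $\phi_v$ valued in $N_{\rm dif}^+(V)/D_{\rm dif}^+(V)$, then $a^+v$ corresponds to $\nabla\phi_v=0$ in that quotient (the $\Gamma$-action on $N_{\rm dif}^+(V)/D_{\rm dif}^+(V)\simeq L_\infty\otimes_L D_{\rm dR}(V)/{\rm Fil}^0$ is smooth, as noted in the remark after Proposition \ref{P-lisse}), so $a^+v=0$ in $\Pi(V)^{\rm an}/\Pi(V)^{\rm lisse}$; combined with $u^+v=0$ and the Casimir relation $C=0$ on $\Pi(V)^{\rm an}$ (Theorem \ref{Casimirinf}), which forces $u^-v=0$ modulo lisse vectors, the vector $v$ is annihilated by all of $\mathfrak{gl}_2$ modulo $\Pi(V)^{\rm lisse}$, hence lisse by Theorem \ref{Ann}. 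This shows $\Psi_V$ is well-defined and injective on the quotient. The only remaining care is topological — checking $\Psi_V$ is continuous and that the target space carries the evident $P$-action via $\varepsilon$ and $e^{tbx}$ — but this is routine once the algebraic picture above is in place, essentially identical to the verifications in \cite{Cbigone} chapter VI and \cite{DComp} chapters 4--5 already cited.
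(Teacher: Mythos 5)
Your overall strategy is the same as the paper's (lift, apply the Kirillov model of Proposition \ref{inclusion Kir}, use Th\'eor\`eme \ref{Ann} to control the passage to the quotient), and your construction of $\Psi_V$ together with the injectivity argument via $\Pi(V)^{\rm lisse}\simeq{\rm LC}_c(\qpet,N_{\rm dif}^+/D_{\rm dif}^+)^{\Gamma}$ is essentially right. But the middle of your argument conflates two distinct conditions, which leaves a genuine gap: a class in $(\Pi(V)^{\rm an}/\Pi(V)^{\rm lisse})^{u^+=0}$ lifts to a vector $v$ satisfying $u^+v\in\Pi(V)^{\rm lisse}$, \emph{not} $u^+v=0$. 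Your entire image computation proceeds from $u^+v=0$ (giving $t\phi_v=0$, hence values in $t^{-1}D_{\rm dif}^+/D_{\rm dif}^+$), and the step where you claim that ``passing to the quotient'' moves the target to ``the kernel of $t$ inside $t^{-1}N_{\rm dif}^+/N_{\rm dif}^+$'' is not a well-defined operation --- the kernel of $t$ on $t^{-1}N_{\rm dif}^+/N_{\rm dif}^+$ is the whole space, so the phrase contributes nothing. The paper sidesteps this by first rewriting $(\Pi(V)^{\rm an}/\Pi(V)^{\rm lisse})^{u^+=0}=(\Pi(V)^{\rm an})^{a^+u^+=(u^+)^2=0}/\Pi(V)^{\rm lisse}$ via Th\'eor\`eme \ref{Ann}; the lifts then have $\phi_v$ valued in $(t^{-2}D_{\rm dif}^+/D_{\rm dif}^+)^{\nabla t=0}$, and an explicit (and necessary) computation identifies this with $t^{-1}N_{\rm dif}^+/D_{\rm dif}^+$, after which the quotient gives $t^{-1}N_{\rm dif}^+/N_{\rm dif}^+$. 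A more direct variant, closer to what you seem to intend, is to note that $u^+v\in\Pi(V)^{\rm lisse}$ already implies $t\phi_v(p^i)\in N_{\rm dif}^+/D_{\rm dif}^+$, hence $\phi_v(p^i)\in t^{-1}N_{\rm dif}^+/D_{\rm dif}^+$; but you need to say \emph{that}, not $u^+v=0$.

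Two smaller points. First, your justification of $P$-finiteness is wrong as stated: $u^+v=0$ does not make $a^+$ ``stabilize the line $Lv$'' --- it only shows $a^+v\in\ker u^+$, which need not be one-dimensional. What actually works is the Casimir identity $C=0$ (Th\'eor\`eme \ref{Casimirinf}) together with $u^+v=0$: it yields a degree-two polynomial relation $a^+(a^++1)v=0$, hence the diagonal torus orbit is finite-dimensional. For the correct class of lifts (with $(u^+)^2v=a^+u^+v=0$ rather than $u^+v=0$) this needs to be redone. Second, in the injectivity part you invoke the Casimir to get $u^-v=0$, but this is superfluous: once $u^+v=a^+v=0$, Th\'eor\`eme \ref{Ann} already gives $v\in\Pi(V)^{\rm lisse}$.
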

          
          \begin{proof}
             En utilisant le th\'eor\`eme \ref{Ann}, on obtient 
             $$(\Pi(V)^{\rm an}/\Pi(V)^{\rm lisse})^{u^+=0}=(\Pi(V)^{\rm an})^{a^+u^+=(u^+)^2=0}/\Pi(V)^{\rm lisse}.$$
             Le plongement $\Pi(V)^{P-\rm fini}\subset {\rm LP}(\qpet, D_{\rm dif}^{-}(V))^{\Gamma}$ induit un plongement\footnote{On \'ecrit $t$ pour l'op\'erateur de multiplication par 
             $t$ sur $D_{\rm dif}^{-}(V)$.}
             $$(\Pi(V)^{\rm an})^{a^+u^+=(u^+)^2=0}\subset {\rm LP}(\qpet, (t^{-2}D_{\rm dif}^{+}(V)/D_{\rm dif}^+(V))^{\nabla t=0})^{\Gamma}.$$
             En utilisant la description explicite $$N_{\rm dif}^+(V)=L_{\infty}[[t]]\otimes_{L} D_{\rm dR}(V)\quad \text{et} \quad D_{\rm dif}^+(V)=tN_{\rm dif}^+(V)+L_{\infty}[[t]]\otimes_{L} {\rm Fil}^0(D_{\rm dR}(V)),$$
            un calcul imm\'ediat montre que l'inclusion 
             $N_{\rm dif}^+(V)\subset t^{-1}D_{\rm dif}^+(V)$ induit un isomorphisme canonique $\Gamma$-\'equivariant 
             $$t^{-1}N_{\rm dif}^+(V)/D_{\rm dif}^+(V)\simeq (t^{-2}D_{\rm dif}^{+}(V)/D_{\rm dif}^+(V))^{\nabla t=0}.$$
             On obtient donc un plongement $v\mapsto \phi_v$
             $$(\Pi(V)^{\rm an})^{a^+u^+=(u^+)^2=0}\subset {\rm LP}(\qpet, t^{-1}N_{\rm dif}^+(V)/D_{\rm dif}^+(V))^{\Gamma}.$$
             En combinant ceci avec le th\'eor\`eme \ref{Ann} on obtient le r\'esultat voulu. 
          \end{proof}
          
          Soit $V\in \mathcal{V}(\pi)$ et fixons un isomorphisme $\alpha: D_{\rm pst}(V)\simeq M(\pi)$. Il induit un isomorphisme de $\Gamma$-modules
          $\alpha_{\rm dif}: N_{\rm dif}^+(V)\simeq L_{\infty}[[t]]\otimes_{L} M_{\rm dR}(\pi)$ et donc un isomorphisme de $\Gamma$-modules 
           $$\alpha_{\rm dif}: t^{-1}N_{\rm dif}^+(V)/N_{\rm dif}^+(V)\simeq L_{\infty}(-1)\otimes_{L} M_{\rm dR}(\pi).$$
           L'isomorphisme $\alpha$ induit aussi un plongement $\alpha_{\p1}: (\Pi(V)^{\rm an}/\Pi(V)^{\rm lisse})^{*}\to tN_{\rm rig}(\pi)\boxtimes \p1$ (via l'isomorphisme
           $tN_{\rm rig}(V)\boxtimes \p1\simeq tN_{\rm rig}(\pi)\boxtimes \p1$ induit par $\alpha$) et par d\'efinition $\alpha_{\p1}:  (\Pi(V)^{\rm an}/\Pi(V)^{\rm lisse})^{*}\to \Pi(\pi,0)^*$ est un isomorphisme. On note $$\xi_{V,\alpha}: \Pi(\pi, 0)\simeq \Pi(V)^{\rm an}/\Pi(V)^{\rm lisse}$$ l'isomorphisme induit par la transpos\'ee de $\alpha_{\p1}$ compos\'e avec l'isomorphisme 
            $$[(\Pi(V)^{\rm an}/\Pi(V)^{\rm lisse})^{*}]^*\simeq \Pi(V)^{\rm an}/\Pi(V)^{\rm lisse}.$$
            En utilisant le plongement $\Psi_V$ introduit dans la proposition pr\'ec\'edente on obtient un plongement 
            $$\iota_{V,\alpha}: \Pi(\pi, 0)^{u^+=0}\to {\rm LC}(\qpet, L_{\infty}(-1)\otimes_{L} M_{\rm dR}(\pi)), \quad \iota_{V,\alpha}=\alpha_{\rm dif}\circ \Psi_V\circ (\xi_{V,\alpha}|_{\Pi(\pi,0)^{u^+=0}}).$$

          \begin{theoreme}\label{plonger sans d\'ependance}
               Le plongement $\iota_{V,\alpha}$ introduit ci-dessus est ind\'ependant, \`a homoth\'etie pr\`es, du choix de $V\in \mathcal{V}(\pi)$ et de l'isomorphisme 
               $\alpha: D_{\rm pst}(V)\simeq M(\pi)$. De plus, pour tout choix de $V$ et $\alpha$ 
           le diagramme suivant de $P$-repr\'esentations est commutatif \`a scalaire pr\`es 
               
$$\begin{array}[c]{ccc}
\Pi(\pi,0)^{u^+=0} &\stackrel{}{\rightarrow}& {\rm LC}(\qpet, L_{\infty}(-1)\otimes_{L} M_{\rm dR}(\pi))^{\Gamma}\\
\downarrow\scriptstyle{}&&\downarrow\scriptstyle{}\\
(\Pi(V)^{\rm an}/\Pi(V)^{\rm lisse})^{u^+=0} &\stackrel{}{\rightarrow}& {\rm LC}(\qpet, t^{-1}N_{\rm dif}^+(V)/N_{\rm dif}^+(V))^{\Gamma}
\end{array}.$$
 En particulier, on dispose d'un plongement $P$-\'equivariant canonique \`a scalaire pr\`es :
$$\iota: \Pi(\pi, 2)^{u^+=0}\to {\rm LC}(\qpet, L_{\infty}\otimes_{L} M_{\rm dR}(\pi))^{\Gamma}.$$
   \end{theoreme}
   
   \begin{proof}  La derni\`ere assertion d\'ecoule  
   imm\'ediatement de la premi\`ere, en utilisant le fait que comme $P$-repr\'esentations, $\Pi(\pi,0)$ et $\Pi(\pi,2)$ ne diff\`erent que par torsion par le caract\`ere $\left( \begin{smallmatrix}  a & b \\ 0 & 1 \end{smallmatrix}  \right) \mapsto a$. 
   
    L'ind\'ependance (\`a homoth\'etie pr\`es) par rapport \`a $\alpha$ (\`a $V$ fix\'e) est imm\'ediate (changer $\alpha$ en $x\alpha$ avec $x\in L$ a pour effet le changement de 
    $\iota_{V,\alpha}$ en $x^2\iota_{V,\alpha}$, car $\xi_{V,x\alpha}=x\xi_{V,\alpha}$), mais l'ind\'ependance par rapport \`a $V$ l'est nettement moins. 
    Nous aurons besoin du r\'esultat suivant, qui demande quelques pr\'eliminaires. Par construction on a 
    $\Pi(\pi, 0)^*\subset tN_{\rm rig}(\pi)\boxtimes \p1$ et m\^eme $\Pi(\pi,0)^*\subset tN^{]0,r_n]}\boxtimes \p1$ pour $n$ assez grand (d\'ependant de $V$ uniquement). Cela permet de d\'efinir 
    des applications 
    $$i_{j,n}: \Pi(\pi,0)^*\to t(N_{\rm rig}(\pi))_{\rm dif}^+=tL_{\infty}[[t]]\otimes_{L} M_{\rm dR}(\pi), \quad i_{j,n}=\varphi^{-n}\circ {\rm Res}_{\zp}\circ  \left(\begin{matrix} p^{n-j} & 0 \\ 0 & 1\end{matrix}\right)$$
    pour $n$ assez grand et $j\in\mathbf{Z}$. Notons que $i_{j,n}(L)=\alpha_{\rm dif}(i_{j,n}(l))$ si $l\in (\Pi(V)^{\rm an}/\Pi(V)^{\rm lisse})^*$ satisfait $L=\alpha_{\p1}(l)$ (il suffit de suivre les d\'efinitions). Enfin, on \'ecrit $\{\,\,\}_{\rm dif,V}$ pour l'accouplement entre $tN_{\rm dif}^+(V)$ et $t^{-1}N_{\rm dif}^+(V)/N_{\rm dif}^+(V)$ induit par l'accouplement entre 
    $D_{\rm dif}(\check{V})^+[1/t]=D_{\rm dif}(V)^+[1/t]$ et $D_{\rm dif}(V)^{+}[1/t]$.
    
    \begin{lemme} Soit $\Pi(\pi,0)^{u^+=0}_{c}$ le sous-espace de $\Pi(\pi,0)^{u^+=0}$ engendr\'e par les $(1-n)v$ avec
    $n\in  \left(\begin{smallmatrix} 1 & \qp \\ 0 & 1\end{smallmatrix}\right)$ et $v\in \Pi(\pi,0)^{u^+=0}$. 
    Si $L\in \Pi(\pi,0)^*$ et $v\in \Pi(\pi,0)^{u^+=0}_c$, alors pour tout $n$ assez grand
    $$L(v)=\sum_{j\in\mathbf{Z}} \{\alpha_{\rm dif}^{-1}(i_{j,n}(L)), \alpha_{\rm dif}^{-1}(\iota_{V,\alpha}(v))(p^{-j})\}_{\rm dif,V}.$$
    \end{lemme}
    
    \begin{proof}
      Ecrivons $L=\alpha_{\p1}(l)$ avec $l\in (\Pi(V)^{\rm an}/\Pi(V)^{\rm lisse})^*$. L'\'egalit\'e 
      $L(v)=l(\xi_{V,\alpha}(v))$ d\'ecoule de la d\'efinition de $\xi_{V,\alpha}$. 
      Ensuite, le vecteur $v_1=\xi_{V,\alpha}(v)$ est dans le sous-espace de 
      $ (\Pi(V)^{\rm an}/\Pi(V)^{\rm lisse})^{u^+=0}\subset \Pi(V)^{P-\rm fini}/\Pi(V)^{\rm lisse}$ engendr\'e par les $(1-n)y$ avec 
      $y\in \Pi(V)^{P-\rm fini}/\Pi(V)^{\rm lisse}$. On en d\'eduit que $v_1\in \Pi(V)^{P-\rm fini}_c/\Pi(V)^{\rm lisse}$, ce qui permet d'utiliser le th\'eor\`eme 
      \ref{dualKir} et obtenir ainsi 
      $$L(v)=l(v_1)=\sum_{j\in\mathbf{Z}} \{i_{j,n}(l), \Psi_V(v_1)(p^{-j})\}_{\rm dif,V}.$$
      On conclut en utilisant les \'egalit\'es $i_{j,n}(L)=\alpha_{\rm dif}(i_{j,n}(l))$ et 
      $ \Psi_V(v_1)=\alpha_{\rm dif}^{-1}(i_{V,\alpha}(v))$.
      
    \end{proof}
    
     Consid\'erons maintenant deux repr\'esentations $V_1,V_2\in \mathcal{V}(\pi)$ et deux isomorphismes 
     $\alpha_1: D_{\rm pst}(V_1)\simeq M(\pi)$, $\alpha_2: D_{\rm pst}(V_2)\simeq M(\pi)$. Soit 
     $u=\alpha_2^{-1}\circ \alpha_1: D_{\rm pst}(V_1)\simeq D_{\rm pst}(V_2)$. En appliquant le lemme pr\'ec\'edent avec
     $V=V_1$, puis avec $V=V_2$ et en comparant les r\'esultats on obtient pour tous $L\in \Pi(\pi,0)^*$, $v\in  \Pi(\pi,0)^{u^+=0}_c$ et $n$ assez grand
     $$\sum_{j\in\mathbf{Z}} \{\alpha_{1, \rm dif}^{-1}(i_{j,n}(L)), \alpha_{1, \rm dif}^{-1}(\iota_{V_1,\alpha_1}(v))(p^{-j})\}_{\rm dif,V_1}=$$ $$\sum_{j\in\mathbf{Z}}
     \{u_{\rm dif}(\alpha_{1, \rm dif}^{-1}(i_{j,n}(L))), u_{\rm dif}(\alpha_{1, \rm dif}^{-1}(\iota_{V_2,\alpha_2}(v))(p^{-j}))\}_{\rm dif,V_2}.$$
      Puisque les accouplements $\{\,\,\}_{\rm dif, V_j}$ sont induits par des isomorphismes 
      $\wedge^2(D_{\rm pst}(V_j))\simeq L$, $u_{\rm dif}$ est compatible avec ces accouplements \`a un scalaire $C$ pr\`es. Ainsi, l'\'egalit\'e pr\'ec\'edente s'\'ecrit
      $$\sum_{j\in\mathbf{Z}} \{\alpha_{1,\rm dif}^{-1}(i_{j,n}(L)), \alpha_{1,\rm dif}^{-1}( \iota_{V_1,\alpha_1}(v)-C\iota_{V_2,\alpha_2}(v))(p^{-j})\}_{\rm dif, V_1}=0.$$
 
       Nous avons maintenant besoin du 
       
       \begin{lemme}
       Soit $(x_j)_{j\in\mathbf{Z}}$ une suite d'\'el\'ements de $t^{-1}N_{\rm dif}^+(V_1)/N_{\rm dif}^+(V_1)$, \`a support fini et telle que pour tout
       $L\in \Pi(\pi,0)^*$ et tout $n$ assez grand 
       $$\sum_{j\in\mathbf{Z}} \{\alpha_{1,\rm dif}^{-1}(i_{j,n}(L)), x_j\}_{\rm dif, V_1}=0.$$
       Alors $x_j=0$ pour tout $j$.
       \end{lemme}
       
       \begin{proof} On a donc pour tout $l\in (\Pi(V_1)^{\rm an}/\Pi(V_1)^{\rm lisse})^*$ 
       $$\sum_{j\in\mathbf{Z}} \{i_{j,n}(l), x_j\}_{\rm dif, V_1}=0.$$
              La proposition \ref{inclusion Kir} permet de construire un vecteur $v\in \Pi(V_1)^{P-\rm fini}_c$ tel que 
       $\phi_v(p^{-j})=x_j$ pour tout $j$. Le th\'eor\`eme \ref{dualKir} combin\'e avec l'hypoth\`ese montre que 
       $l(v)=0$ pour tout $l\in (\Pi(V_1)^{\rm an}/\Pi(V_1)^{\rm lisse})^*$, autrement dit $v\in \Pi(V_1)^{\rm lisse}$. On conclut en utilisant la proposition 
       \ref{P-lisse}.
       \end{proof}
       
        On d\'eduit de ce qui pr\'ec\`ede que $\iota_{V_1,\alpha_1}-C\iota_{V_2,\alpha_2}$ s'annule sur $\Pi(\pi,0)^{u^+=0}_{c}$, autrement dit que 
       l'image de $\iota_{V_1,\alpha_1}-C\iota_{V_2,\alpha_2}$ est contenue dans le sous-espace des $ \left(\begin{smallmatrix} 1 & \qp \\ 0 & 1\end{smallmatrix}\right)$-invariants de 
              ${\rm LC}(\qpet, L_{\infty}(-1)\otimes_{L} M_{\rm dR}(\pi)^{\Gamma}$. Comme ce sous-espace est nul, cela permet de conclure. 
       
   \end{proof}

Pour pouvoir d\'ecrire plus compl\`etement $\Pi(\pi,2)^{u^+=0}$, il faut maintenant comprendre le lien entre $\Pi(\pi,2)$ et les $\Pi^{\rm an}$, pour $\Pi \in \mathcal{V}(\pi)$. 
  \begin{proposition}\label{uplus plongement} Soit $\Pi\in \mathcal{V}(\pi)$. 
Le choix d'un isomorphisme $\Pi(\pi,0)\simeq \Pi^{\mathrm{an}}/\Pi^{\mathrm{lisse}}$ d\'etermine un plongement $G$-\'equivariant de $(\Pi^{\mathrm{an}})^*$ dans $\Pi(\pi,2)^*$, qui fait de $(\Pi^{\mathrm{an}})^*$ un sous-espace $G$-stable de $\Pi(\pi,2)^* $ contenant $d(\Pi(\pi,0)^*)$.
\end{proposition}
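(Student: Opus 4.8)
The key observation is that we already understand, via the $(\varphi,\Gamma)$-module picture, how $(\Pi^{\mathrm{an}})^*$ sits inside $D_{\mathrm{rig}}(V)\boxtimes\mathbf{P}^1$ and how this interacts with the operator $\partial$. Recall from Proposition \ref{inclusion rayon} that $(\Pi^{\mathrm{an}}/\Pi^{\mathrm{lisse}})^*\subset tN_{\mathrm{rig}}(V)\boxtimes\mathbf{P}^1$, and from Proposition \ref{numer} b) that the choice of isomorphism $\Pi(\pi,0)\simeq\Pi^{\mathrm{an}}/\Pi^{\mathrm{lisse}}$ gives an inclusion $\mathfrak{g}(\Pi^{\mathrm{an}})^*\subset\Pi(\pi,0)^*$ with $a^+=\partial\circ u^+$, $u^-=-\partial^2 u^+$ there. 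The plan is to produce the embedding $(\Pi^{\mathrm{an}})^*\hookrightarrow\Pi(\pi,2)^*$ by using the operator $u^+$ itself: since $u^+=t$ on $D_{\mathrm{rig}}(V)$ is injective (Lemma \ref{injective}), multiplication by $t$ identifies $(\Pi^{\mathrm{an}})^*$ with $u^+(\Pi^{\mathrm{an}})^*\subset\Pi(\pi,0)^*$, and one checks this image, after the twist $(\ref{definition pi deux})$ defining $\Pi(\pi,2)^*$, becomes $G$-stable. Concretely, I would define the map $(\Pi^{\mathrm{an}})^*\to\Pi(\pi,2)^*$ by $l\mapsto u^+(l)\,dz$, i.e. essentially the map $-d$ of Proposition \ref{fleche d} precomposed with the inclusion $(\Pi^{\mathrm{an}})^*\hookrightarrow\Pi(\pi,0)^*$ — wait, that is not injective on all of $(\Pi^{\mathrm{an}})^*$. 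Rather: the inclusion $(\Pi^{\mathrm{an}})^*\subset D_{\mathrm{rig}}(V)\boxtimes\mathbf{P}^1$ already realizes $(\Pi^{\mathrm{an}})^*$; multiply by $t=u^+$ to land in $tD_{\mathrm{rig}}(V)\boxtimes\mathbf{P}^1\subset tN_{\mathrm{rig}}(V)\boxtimes\mathbf{P}^1$, which (via the chosen $D_{\mathrm{pst}}(V)\simeq M(\pi)$) sits inside $tN_{\mathrm{rig}}(\pi)\boxtimes\mathbf{P}^1$ alongside $\Pi(\pi,0)^*$; this is the underlying space of $\Pi(\pi,2)^*$ up to the formal $dz$.

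The verification then proceeds in three steps. First, injectivity: $u^+$ acts injectively on $D_{\mathrm{rig}}(V)\boxtimes\mathbf{P}^1$ because $u^+=t$ on $D_{\mathrm{rig}}(V)$ and $t$ is a non-zero-divisor, so $l\mapsto u^+(l)\,dz$ is injective on $(\Pi^{\mathrm{an}})^*$. Second, $G$-equivariance: one computes $g\cdot(u^+(l)\,dz)$ using the definition $(\ref{definition pi deux})$, namely $g(m\,dz)=(\det g\cdot(a-c\partial)^{-2}(g.m))\,dz$, and must match it with $u^+(g.l)\,dz$ where $g.l$ is the genuine $G$-action on $(\Pi^{\mathrm{an}})^*$. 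The identity needed is exactly the relation $gu^+g^{-1}=\frac{1}{\det g}(a-c\partial)^2 u^+$ of Theorem \ref{adjoint} (valid on $\Pi(\pi,0)^*$, hence on the sub-$D(G)$-module $u^+(\Pi^{\mathrm{an}})^*$); rearranging this gives $\det g\cdot(a-c\partial)^{-2}\big(g.(u^+l)\big)=u^+(g.l)$, which is precisely the compatibility of the twisted action on $\Pi(\pi,2)^*$ with the original action on $(\Pi^{\mathrm{an}})^*$. Third, the statement that the image contains $d(\Pi(\pi,0)^*)$: by Proposition \ref{fleche d}, $d(l)=-u^+(l)\,dz$ for $l\in\Pi(\pi,0)^*$, and since $\Pi(\pi,0)^*=(\Pi^{\mathrm{an}}/\Pi^{\mathrm{lisse}})^*\subset(\Pi^{\mathrm{an}})^*$ is realized as a subspace, $d(\Pi(\pi,0)^*)=u^+(\Pi(\pi,0)^*)\,dz\subset u^+(\Pi^{\mathrm{an}})^*\,dz$, which is the image of our embedding. (One also sees $G$-stability of the image directly: it is the image of a $G$-equivariant map.)

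The main technical point — and the place where care is required — is checking that the map $l\mapsto u^+(l)\,dz$ really lands in $\Pi(\pi,2)^*$ as defined, i.e. that $u^+(\Pi^{\mathrm{an}})^*$ is contained in $\Pi(\pi,0)^*\,dz$ viewed inside $tN_{\mathrm{rig}}(\pi)\boxtimes\mathbf{P}^1$, rather than merely in $D_{\mathrm{rig}}(V)\boxtimes\mathbf{P}^1$. This uses Proposition \ref{inclusion rayon} together with the fact (Proposition \ref{numer} b)) that $\mathfrak{g}(\Pi^{\mathrm{an}})^*\subset\Pi(\pi,0)^*$, so that in particular $u^+(\Pi^{\mathrm{an}})^*\subset\Pi(\pi,0)^*$; the twist by $(a-c\partial)^{-2}\det g$ is exactly the one already shown in Theorem \ref{adjoint} to preserve $\Pi(\pi,0)^*$, and the closedness of the image follows from Proposition \ref{imagefermee} (image of $u^+$ is closed on $(\Pi^{\mathrm{an}})^*$), which guarantees the embedding is a topological embedding onto a closed $G$-stable subspace. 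I would end by remarking that the embedding depends on the choice of isomorphism $\Pi(\pi,0)\simeq\Pi^{\mathrm{an}}/\Pi^{\mathrm{lisse}}$ only up to a scalar, since all the intermediate identifications (Theorem \ref{almost canonique} and the discussion following it) are canonical up to a scalar.
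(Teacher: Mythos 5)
Your overall strategy matches the paper's: both define the embedding as $l\mapsto u^+(l)\,dz$ (up to sign), deduce injectivity from the injectivity of $u^+$, and observe that $d(\Pi(\pi,0)^*)$ is contained in the image by construction. Those parts are fine.

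There is, however, a gap in your $G$-equivariance argument. You invoke the relation $gu^+g^{-1}=\tfrac{1}{\det g}(a-c\partial)^2u^+$ from Theorem \ref{adjoint}, which is an identity of operators \emph{on $\Pi(\pi,0)^*$}. Your parenthetical remark ``hence on the sub-$D(G)$-module $u^+(\Pi^{\mathrm{an}})^*$'' is not the right logic: applying the relation on the subspace $u^+(\Pi^{\mathrm{an}})^*\subset\Pi(\pi,0)^*$ only controls $g(u^+(g^{-1}u^+l))$, whereas what you must show is $u^+(gl)=\det g\,(a-c\partial)^{-2}g(u^+l)$ for $l$ in all of $(\Pi^{\mathrm{an}})^*$. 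Since $g^{-1}l\notin\Pi(\pi,0)^*$, you cannot ``rearrange'' the relation of Theorem \ref{adjoint} to produce this. To close the gap one genuinely needs the relation as operators on $(\Pi^{\mathrm{an}})^*$. This \emph{is} true, but requires an extra ingredient which you have not cited here: in $D(G)$ one has the formal identity $gu^+g^{-1}=\tfrac{1}{\det g}(-ach+a^2u^+-c^2u^-)$ (valid on the $D(G)$-module $(\Pi^{\mathrm{an}})^*$), and then Proposition \ref{numer} b) — not Theorem \ref{adjoint} — supplies the identities $h=2\partial u^+$ and $u^-=-\partial^2 u^+$ \emph{on $(\Pi^{\mathrm{an}})^*$}, which turn this into $\tfrac{1}{\det g}(a-c\partial)^2u^+$ there. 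The paper takes an equivalent but cleaner route: it checks equivariance for the Borel directly from the $D(G)$-module relation $u^+\cdot\left(\begin{smallmatrix} a & b \\ 0 & d\end{smallmatrix}\right)=\tfrac{d}{a}\left(\begin{smallmatrix} a & b \\ 0 & d\end{smallmatrix}\right)u^+$, then for $w$ by reducing to $u^-=-\partial^2u^+$ on $(\Pi^{\mathrm{an}})^*$ (Proposition \ref{numer} b)), which together suffice by the Bruhat decomposition. Either way, the decisive fact is that the needed Lie-algebra relations hold on $(\Pi^{\mathrm{an}})^*$ and not merely on $\Pi(\pi,0)^*$; your write-up should replace the appeal to Theorem \ref{adjoint} by an appeal to Proposition \ref{numer} b).
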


\begin{proof} L'application 
$$d: (\Pi^{\mathrm{an}})^*\to (\Pi^{\rm an}/\Pi^{\rm lisse})^*\simeq \Pi(\pi,0)^*\to \Pi(\pi,2)^*,$$
la premi\`ere fl\`eche \'etant $l\mapsto -u^+(l)$ et la derni\`ere \'etant simplement $l \mapsto l \,dz$ 
est bien d\'efinie et un plongement car $u^+$ est injectif d'image ferm\'ee sur $\Pi(\pi,0)^*$. 
Il reste \`a montrer que $d$ est $G$-\'equivariante, ce qui revient \`a montrer que 
$$u^+(gl)=\det g\cdot (a-c\partial)^{-2} g(u^+l)$$
pour tout $l\in (\Pi^{\mathrm{an}})^*$ et tout $g\in G$. Si 
$c=0$, d\'ecoule de l'identit\'e 
$$u^+\cdot \left(\begin{matrix} a & b \\ 0 & d\end{matrix}\right)=\frac{d}{a} \left(\begin{matrix} a & b \\ 0 & d\end{matrix}\right)\cdot u^+$$
dans $D(G)$ et du fait que $(\Pi^{\mathrm{an}})^*$ est un $D(G)$-module. Ainsi, $F$ est \'equivariante pour l'action du Borel $B$ de $G$ et il suffit de tester que 
$d(wl)=wd(l)$ pour $l\in (\Pi^{\mathrm{an}})^*$. Cela revient \`a $u^+wl=-\partial^{-2} w(u^+l)$, ou encore (en appliquant $w$ et en utilisant la relation 
$w\partial^{-2}w=\partial^2$ sur $\Pi(\pi,0)^*$)
$u^-l=-\partial^2 u^+l$. Cela d\'ecoule de la proposition \ref{numer}. 
\end{proof}

\begin{theoreme}\label{kirillov} Le sous-espace $\Pi(\pi,2)^{u^+=0}$ de $\Pi(\pi,2)$ est stable par $G$ et 
on a un isomorphisme, canonique \`a scalaire pr\`es, de repr\'esentations de $G$ :
\[ \Pi(\pi,2)^{u^+=0} \simeq \pi \otimes M_{\rm dR}(\pi). \]
\end{theoreme}
\begin{proof}
La stabilit\'e de $\Pi(\pi,2)^{u^+=0}$ par $G$ vient du fait que $\Pi(\pi,2)^{u^+=0}$ est dual du conoyau de la fl\`eche $G$-\'equivariante $d : \Pi(\pi,0)^* \to \Pi(\pi,2)^*$ (voir les propositions \ref{imagefermee} et \ref{fleche d}). Montrons tout d'abord que $\Pi(\pi,2)^{u^+=0}$ est une repr\'esentation lisse de $G$. Commençons par 

\begin{lemme}
Pour tout $v \in \Pi(\pi,2)$ l'application 
\[ f_v : (a,b,c,d) \mapsto \left(\begin{matrix} a & b \\ c & d\end{matrix}\right).v \]
est localement analytique en chacun des arguments $a, b, c, d$ au voisinage de $(1,0,0,1)$.
\end{lemme}

\begin{proof}
La repr\'esentation $\Pi(\pi,0)$ est localement analytique et comme repr\'esentations du Borel $B$, $\Pi(\pi,2)$ et $\Pi(\pi,0)$ ne diff\`erent que par torsion par le caract\`ere de $B$, $ \left(\begin{smallmatrix} a & b \\ 0 & d\end{smallmatrix}\right) \mapsto d/a$. Donc $f_v$ est localement analytique en $a, b$ et $d$. De plus,
\[ f_v(1,0,x,1) = \left(\begin{matrix} 1 & 0 \\ x & 1\end{matrix}\right).v= w\left(\begin{matrix} 1 & x \\ 0 & 1\end{matrix}\right)w.v = wf_{w.v}(1,x,0,1), \]
ce qui permet de conclure. 
\end{proof}

  En utilisant le lemme pr\'ec\'edent et un calcul imm\'ediat, on obtient 
  les formules suivantes pour l'action de 
$\mathfrak{g}$ sur $\Pi(\pi,2)$ 
$$a^+=\partial u^+, \quad a^-=-\partial u^+, \quad u^-=-\partial^2 u^+.$$
Ainsi, un vecteur tu\'e par $u^+$ est automatiquement tu\'e par $\mathfrak{g}$. Une nouvelle application du lemme pr\'ec\'edent permet de conclure que 
$\Pi(\pi,2)^{u^+=0}$ est une repr\'esentation lisse de $G$. 

Choisissons maintenant $V_1,V_2\in \mathcal{V}(\pi)$ non isomorphes (cela correspond au choix de deux filtrations diff\'erentes sur $M_{\rm dR}(\pi)$). 
Fixons 
des isomorphismes 
\[ \Pi(\pi,0) \simeq \Pi(V_1)^{\rm an}/\Pi(V_2)^ {\rm lisse} \quad \text{et} \quad \Pi(\pi,0) \simeq \Pi(V_2)^{\rm an}/\Pi(V_2)^{\rm lisse}. \]
D'apr\`es la proposition \ref{uplus plongement}, on peut donc voir $(\Pi(V_1)^{\rm an})^*$ et $(\Pi(V_2)^{\rm an})^*$ comme deux sous-espaces de $\Pi(\pi,2)^*$ contenant $d(\Pi(\pi,0)^*)$. Notons $X_1$ et $X_2$ les images de ces sous-espaces dans le quotient $\Pi(\pi,2)^*/d((\Pi(\pi,0)^*)$. Comme repr\'esentation de $G$
$$X_j\simeq (\Pi(V_j)^{\rm an})^*/\Pi(\pi,0)^*\simeq \pi^*.$$
 En particulier, $X_1$ et $X_2$ sont des repr\'esentations irr\'eductibles de $G$
et donc $X_1$ et $X_2$ sont \'egaux s'ils sont d'intersection non nulle. S'ils \'etaient \'egaux, on aurait $(\Pi(V_1)^{\rm an})^*=(\Pi(V_2)^{\rm an})^*$ (puisque ces deux sous-espaces contiennent $d(\Pi(\pi,0)^*)$), et un des r\'esultats principaux de \cite{CD} entra\^ine que $\Pi(V_1)\simeq \Pi(V_2)$, ce qui contredit le fait que $V_1$ et $V_2$ ne sont pas isomorphes. On en d\'eduit que $X_1$ et $X_2$ sont en somme directe, ce qui donne dualement une surjection $G$-\'equivariante
\[  \Pi(\pi,2)^{u^+=0}=(\Pi(\pi,2)^*/d(\Pi(\pi,0)^*))^*  \twoheadrightarrow \pi^{\oplus 2}. \]
Comme on vient de voir que $\Pi(\pi,2)^{u^+=0}$ est une $G$-repr\'esentation lisse et comme $\pi$ est supercuspidale, ce morphisme se scinde et on peut donc \'ecrire :
\[ \Pi(\pi,2)^{u^+=0} = \pi^{\oplus 2} \oplus \xi, \]
o\`u $\xi$ est une repr\'esentation lisse de $G$.  

 Le th\'eor\`eme \ref{plonger sans d\'ependance} permet de plonger $\Pi(\pi,2)^{u^+=0}$ dans  ${\rm LC}(\qpet,L_{\infty} \otimes M_{\rm dR}(\pi))^{\Gamma}$, de fa\c con $P$-\'equivariante. Si $X$ est une repr\'esentation de $P$, on note $X_c$ le sous-espace engendr\'e par les \'el\'ements de la forme $(1-n).v$, avec $n\in  \left(\begin{smallmatrix} 1 & \qp \\ 0 & 1\end{smallmatrix}\right) $ et $v \in X$. On a donc
\[ \pi_c^{\oplus 2} \oplus \xi_c \hookrightarrow (({\rm LC}(\qpet,L_{\infty})^{\Gamma})_c)^{\oplus 2} = ({\rm LC}_c(\qpet,L_{\infty})^{\Gamma})^{\oplus 2}. \]
Or $\pi_c$ et ${\rm LC}_c(\qpet,L_{\infty})^{\Gamma}$ sont isomorphes comme $P$-repr\'esentations et irr\'eductibles (c'est un r\'esultat standard de la th\'eorie du mod\`ele de Kirillov des repr\'esentations supercuspidales de $G$). On a donc n\'ecessairement $\xi_c=0$. En d'autres termes, tout vecteur de $\xi$ est fix\'e par l'action de l'unipotent sup\'erieur : mais il n'y a pas de tel vecteur non nul dans ${\rm LC}(\qpet,L_{\infty})^{\Gamma}$ ! On en d\'eduit que $\xi=0$, et $\Pi(\pi,2)^{u^+=0}$ et $\pi \otimes M_{\rm dR}(\pi)$ sont donc deux repr\'esentations isomorphes de $G$. Il reste donc pour conclure \`a exhiber un isomorphisme $G$-\'equivariant canonique, \`a scalaire pr\`es, entre ces repr\'esentations.

Ce qui pr\'ec\`ede donne en particulier que l'image de $\Pi(\pi,2)^{u^+=0}$ dans ${\rm LC}(\qpet, L_{\infty}\otimes_{L} M_{\rm dR}(\pi))^{\Gamma}$ est le sous-espace ${\rm LC}_c(\qpet, L_{\infty}\otimes_{L} M_{\rm dR}(\pi))^{\Gamma}$. La th\'eorie du mod\`ele de Kirillov usuelle pour les repr\'esentations lisses donne en outre un isomorphisme $P$-\'equivariant $\pi \otimes M_{\rm dR}(\pi) \simeq {\rm LC}_c(\qpet, L_{\infty}\otimes_{L} M_{\rm dR}(\pi))^{\Gamma}$. On obtient donc en composant l'inverse de cet isomorphisme avec la fl\`eche fournie par le th\'eor\`eme \ref{plonger sans d\'ependance} un isomorphisme $P$-\'equivariant, canonique \`a scalaire pr\`es
\[ \Pi(\pi,2)^{u^+=0} \simeq \pi \otimes M_{\rm dR}(\pi). \]
Cet isomorphisme est automatiquement $G$-\'equivariant (car les deux membres sont isomorphes comme $G$-repr\'esentations \`a $\pi \oplus \pi$, et $\pi$ est irr\'eductible comme $P$-repr\'esentation).
\end{proof}

En retour, le th\'eor\`eme pr\'ec\'edent permet de d\'ecrire les positions relatives des sous-repr\'esentations $(\Pi^{\rm an})^*$ \`a l'int\'erieur de $\Pi(\pi,2)^*$. Soit $\mathcal{L}$ une filtration sur $M_{\mathrm{dR}}(\pi)$. Il lui correspond une repr\'esentation $V_{\mathcal{L}} \in \mathcal{V}(\pi)$, qui vient avec un isomorphisme $D_{\rm pst}(V_{\mathcal{L}}) \simeq M(\pi)$. 
\begin{corollaire}\label{preimage}
La pr\'eimage de $\mathcal{L}^{\perp} \otimes \pi^* \subset (\Pi(\pi,2)^{u^+=0})^*$ dans $\Pi(\pi,2)^*$ est $(\Pi_{\mathcal{L}}^{\mathrm{an}})^*$ (vu dans $\Pi(\pi,2)^*$ par la proposition \ref{uplus plongement}).
\end{corollaire}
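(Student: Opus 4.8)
The plan is to pass to the quotient by $d(\Pi(\pi,0)^*)$ and dualize. Write $q\colon\Pi(\pi,2)^*\twoheadrightarrow\Pi(\pi,2)^*/d(\Pi(\pi,0)^*)$; by the corollaire \ref{closed image} (twisted by the character $\det g\,(a-c\partial)^{-2}$) and the proof du th\'eor\`eme \ref{kirillov}, the target is canonically, \`a scalaire pr\`es, $(\Pi(\pi,2)^{u^+=0})^*\simeq\pi^*\otimes_L M_{\rm dR}(\pi)^*$. Since $\ker q=d(\Pi(\pi,0)^*)$ is contained in $(\Pi_{\mathcal L}^{\rm an})^*$ by la proposition \ref{uplus plongement}, it suffices to show $q((\Pi_{\mathcal L}^{\rm an})^*)=\mathcal L^{\perp}\otimes\pi^*$: then $(\Pi_{\mathcal L}^{\rm an})^*$ and $q^{-1}(\mathcal L^{\perp}\otimes\pi^*)$ both contain $\ker q$ and have the same image, hence coincide. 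Dualizing the inclusion de la proposition \ref{uplus plongement} pour $\Pi=\Pi_{\mathcal L}$ gives a $G$-equivariant surjection $\Pi(\pi,2)\twoheadrightarrow\Pi_{\mathcal L}^{\rm an}$, and the claim is equivalent to: the composite $c_{\mathcal L}\colon\Pi(\pi,2)^{u^+=0}\hookrightarrow\Pi(\pi,2)\twoheadrightarrow\Pi_{\mathcal L}^{\rm an}$ has image in $\Pi_{\mathcal L}^{\rm lisse}$ with kernel identified, via le th\'eor\`eme \ref{kirillov}, with $\pi\otimes\mathcal L\subset\pi\otimes M_{\rm dR}(\pi)$.

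The factorization through $\Pi_{\mathcal L}^{\rm lisse}$ is easy: by construction the embedding de la proposition \ref{uplus plongement}, restricted to $(\Pi_{\mathcal L}^{\rm an}/\Pi_{\mathcal L}^{\rm lisse})^*\simeq\Pi(\pi,0)^*$, is exactly the map $d$, so the composite $\Pi(\pi,2)\twoheadrightarrow\Pi_{\mathcal L}^{\rm an}\twoheadrightarrow\Pi(\pi,0)$ is the transpose of $d$; on $v\in\Pi(\pi,2)^{u^+=0}$ and $l\in\Pi(\pi,0)^*$ it gives $\langle v,-u^+(l)\,dz\rangle=\langle u^+v,l\,dz\rangle=0$, so $c_{\mathcal L}$ factors through $\ker(\Pi_{\mathcal L}^{\rm an}\to\Pi(\pi,0))=\Pi_{\mathcal L}^{\rm lisse}\simeq\pi$. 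Dually $q((\Pi_{\mathcal L}^{\rm an})^*)$ is a copy of $\pi^*$ inside $\pi^*\otimes M_{\rm dR}(\pi)^*$, hence equals $\pi^*\otimes\ell$ for a unique line $\ell\subset M_{\rm dR}(\pi)^*$ (car $\pi$ is absolutely irreducible), and $c_{\mathcal L}\neq0$ (sinon $(\Pi_{\mathcal L}^{\rm an})^*=d(\Pi(\pi,0)^*)$, absurde puisque le quotient est $\pi^*\neq0$); so $c_{\mathcal L}$ corresponds, sous $\mathrm{Hom}_G(\pi\otimes M_{\rm dR}(\pi),\pi)=M_{\rm dR}(\pi)^*$, to a nonzero $\phi_{\mathcal L}$ with $\ell=L\phi_{\mathcal L}$, and everything reduces to $\phi_{\mathcal L}\in\mathcal L^{\perp}$, i.e. $\ker\phi_{\mathcal L}=\mathcal L$ dans $M_{\rm dR}(\pi)$.

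To identify $\ker\phi_{\mathcal L}$ I will compare Kirillov models. On one side, le th\'eor\`eme \ref{plonger sans d\'ependance} embeds $\Pi(\pi,2)^{u^+=0}$ $P$-equivariantly into $\mathrm{LC}(\qpet,L_{\infty}\otimes M_{\rm dR}(\pi))^{\Gamma}$ with image the compact-support part, i.e. the usual Kirillov model of $\pi\otimes M_{\rm dR}(\pi)$; on the other side, le th\'eor\`eme \ref{Ann} and the remark following it identify $\Pi_{\mathcal L}^{\rm lisse}$ with $\mathrm{LC}_c(\qpet,N_{\rm dif}^+(V_{\mathcal L})/D_{\rm dif}^+(V_{\mathcal L}))^{\Gamma}$, and via the chosen $\alpha\colon D_{\rm pst}(V_{\mathcal L})\simeq M(\pi)$ together with $D_{\rm dif,n}^+(V_{\mathcal L})=tN_{\rm dif,n}^+(V_{\mathcal L})+L_n[[t]]\otimes_L\mathrm{Fil}^0D_{\rm dR}(V_{\mathcal L})$ (exemple \ref{plongement et scalaire}), this becomes $\mathrm{LC}_c(\qpet,L_{\infty}\otimes(M_{\rm dR}(\pi)/\mathcal L))^{\Gamma}$. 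Since $c_{\mathcal L}$ is $P$-equivariant, a standard computation with Kirillov models (identifying $P$-maps between such spaces with $\Gamma$-equivariant $L_{\infty}$-linear maps on coefficients, hence with $L$-linear $\mu\colon M_{\rm dR}(\pi)\to M_{\rm dR}(\pi)/\mathcal L$) shows $c_{\mathcal L}$ is induced by such a $\mu$; chasing the commutative diagram du th\'eor\`eme \ref{plonger sans d\'ependance} pour $V=V_{\mathcal L}$ — whose left vertical is the restriction of $\xi_{V_{\mathcal L},\alpha}$ and whose bottom row uses the embedding $\Psi_{V_{\mathcal L}}$ de la proposition \ref{plonger avec dependance} into $\mathrm{LC}(\qpet,t^{-1}N_{\rm dif}^+(V_{\mathcal L})/N_{\rm dif}^+(V_{\mathcal L}))^{\Gamma}$ — combined with the above description of $D_{\rm dif}^+(V_{\mathcal L})$ shows that $\mu$ is the canonical projection $M_{\rm dR}(\pi)\to M_{\rm dR}(\pi)/\mathcal L$ \`a scalaire pr\`es. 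Hence $\ker c_{\mathcal L}=\pi\otimes\mathcal L$, $\ell=\mathcal L^{\perp}$, and the corollaire follows from the first paragraph.

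The main obstacle is this last compatibility: matching the abstract surjection $c_{\mathcal L}$ with the two Kirillov-model descriptions. It is not formal — it amounts to tracking the transpose of the embedding de la proposition \ref{uplus plongement} through the pairing formula du th\'eor\`eme \ref{dualKir} and the orthogonality statements de la proposition \ref{accouplement}, precisely the bookkeeping already performed in the proof du th\'eor\`eme \ref{plonger sans d\'ependance}. Once it is in place, the representation theory (Schur's lemma and the irreducibility of $\pi$, as in the proof du th\'eor\`eme \ref{kirillov}) makes the conclusion immediate, the only genuinely arithmetic input being the identity $D_{\rm dif,n}^+(V)=tN_{\rm dif,n}^+(V)+L_n[[t]]\otimes_L\mathrm{Fil}^0D_{\rm dR}(V)$, which is where the Hodge line $\mathcal L$ enters.
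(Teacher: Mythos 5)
Your proposal follows essentially the same route as the paper's proof. Both arguments reduce the statement to identifying which line of $M_{\rm dR}(\pi)^*$ the copy of $\pi^*$ given by $(\Pi_{\mathcal L}^{\rm an})^*/d(\Pi(\pi,0)^*)$ picks out in $(\Pi(\pi,2)^{u^+=0})^*\simeq\pi^*\otimes M_{\rm dR}(\pi)^*$, both use the same commutative diagram comparing the abstract map with the two Kirillov-model descriptions (the one constructed via $\iota_{V_{\mathcal L},\alpha}$ on the source and the one from the lissage th\'eor\`eme \ref{Ann} on the target), and both ultimately ground the commutativity in the $\left(\begin{smallmatrix} 1 & \qp \\ 0 & 1\end{smallmatrix}\right)$-equivariance of the Kirillov embedding $\Pi(V_{\mathcal L})^{P-\rm fini}\to{\rm LP}(\qpet,D_{\rm dif}^-(V_{\mathcal L}))^\Gamma$. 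Your packaging via the explicit map $c_{\mathcal L}$ and Schur's lemma ($\mathrm{Hom}_G(\pi\otimes M_{\rm dR}(\pi),\pi)=M_{\rm dR}(\pi)^*$) is a bit more visibly representation-theoretic than the paper's ``toutes les identifications effectu\'ees sont canoniques \`a scalaire pr\`es,'' but it is the same observation. One small thing you gloss over: the paper's final diagram has horizontal arrows $u^+$ (top) and multiplication by $t$ (bottom), and the real content is that these match under $\Psi_{V_{\mathcal L}}$; your phrase ``a standard computation with Kirillov models'' and ``chasing the commutative diagram du th\'eor\`eme \ref{plonger sans d\'ependance}'' covers this, but it would be cleaner to state explicitly, as the paper does, that the $P$-equivariance of the Kirillov embedding is the single fact making the diagram with $u^+$ and $t$ commute.
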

\begin{proof}
Il s'agit de v\'erifier que le quotient de $d((\Pi_{\mathcal{L}}^{\mathrm{an}})^*)$ par $d(\Pi(\pi,0)^*)$ est isomorphe \`a $\mathcal{L}^{\perp} \otimes \pi^*$, vu comme sous-espace du conoyau de $d : \Pi(\pi,0)^* \to \Pi(\pi,2)^*$ par le th\'eor\`eme \ref{kirillov}. L'isomorphisme $\alpha : D_{\rm pst}(V_{\mathcal{L}}) \simeq M(\pi)$ identifie ce quotient \`a $(\Pi_{\mathcal{L}}^{\mathrm{lisse}})^*$. Dans la construction de l'isomorphisme du th\'eor\`eme \ref{kirillov}, on peut choisir $V=V_{\mathcal{L}}$. Admettons qu'on a alors un diagramme commutatif
$$\begin{array}[c]{ccc}
\Pi(\pi,2)^{u^+=0} &\stackrel{}{\longrightarrow}& \Pi_{\mathcal{L}}^{\mathrm{lisse}}\\
\downarrow\scriptstyle{}&&\downarrow\scriptstyle{}\\
{\rm LC}_c(\qpet, L_{\infty} \otimes M_{\rm dR}(\pi))^{\Gamma} &\stackrel{}{\rightarrow}& {\rm LC}_c(\qpet, L_{\infty}[[t]] \otimes M_{\rm dR}(\pi)/\mathrm{Fil}^0(L_{\infty}((t)) \otimes M_{\rm dR}(\pi)))^{\Gamma}
\end{array}$$
o\`u les fl\`eches verticales sont des isomorphismes (celle de droite s'obtient en composant le mod\`ele de Kirillov usuel de $\Pi_{\mathcal{L}}$ avec $\alpha_{\rm dif}$). Comme de plus, toutes les identifications effectu\'ees sont canoniques \`a scalaire pr\`es, le r\'esultat s'ensuit.

Reste \`a justifier la commutativit\'e du diagramme pr\'ec\'edent. Comme on a choisi $V=V_{\mathcal{L}}$ dans \ref{kirillov}, il s'agit, par d\'efinition de $\iota_{V_{\mathcal{L}},\alpha}$, de justifier que le diagramme dont les fl\`eches sont $ \left(\begin{smallmatrix} 1 & \qp \\ 0 & 1\end{smallmatrix}\right)$-\'equivariantes
$$\begin{array}[c]{ccc}
(\Pi_{\mathcal{L}}^{\rm an}/\Pi_{\mathcal{L}}^{\rm lisse})^{u^+=0}=(\Pi_{\mathcal{L}}^{\rm an})^{a^+u^+=(u^+)^2=0}/\Pi_{\mathcal{L}}^{\rm lisse} &\stackrel{}{\overset{u^+}\longrightarrow}& \Pi_{\mathcal{L}}^{\mathrm{lisse}}\\
\downarrow\scriptstyle{\Psi_{V_{\mathcal{L}}}}&&\downarrow\scriptstyle{}\\
{\rm LC}_c(\qpet, L_{\infty} \otimes t^{-1}N_{\rm dif}^+(V_{\mathcal{L}})/N_{\rm dif}^+(V_{\mathcal{L}}))^{\Gamma} &\stackrel{}{\overset{t}\rightarrow}& {\rm LC}_c(\qpet, L_{\infty} \otimes N_{\rm dif}^+(V_{\mathcal{L}})/D_{\rm dif}^+(V_{\mathcal{L}}))^{\Gamma}
\end{array}$$
est commutatif, puisque, par d\'efinition (voir la proposition \ref{fleche d}), la fl\`eche $(\Pi_{\mathcal{L}}^{\rm an})^* \to \Pi(\pi,2)^*$ \'etait d\'eduite de $d=-u^+ : (\Pi_{\mathcal{L}}^{\rm an})^* \to (\Pi_{\mathcal{L}}^{\rm an}/\Pi_{\mathcal{L}}^{\rm lisse})^*$. Comme les deux fl\`eches verticales sont induites par le plongement $\Pi(V_{\mathcal{L}})^{P-\mathrm{fini}} \to \mathrm{LP}(\qpet, D_{\rm dif}^{-}(V_{\mathcal{L}}))^{\Gamma}$, la commutativit\'e de ce diagramme se r\'eduit simplement au fait que ce plongement est $ \left(\begin{smallmatrix} 1 & \qp \\ 0 & 1\end{smallmatrix}\right)$-\'equivariant.
\end{proof}

\subsection{D\'emonstrations des th\'eor\`emes \ref{main1} et \ref{main2}}

La proposition \ref{commute} montre que l'on peut aussi voir $\Phi$ comme un morphisme
\[ \Phi : \Pi(\pi,2)^* \to \Omega^1(\Sigma_n)^{\rho}, \]
ce que l'on fait dans la suite de cette section.

\begin{proposition}\label{injectif}
Le morphisme $\Phi$ est injectif.
\end{proposition}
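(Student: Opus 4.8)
The plan is to deduce the injectivity of $\Phi : \Pi(\pi,2)^* \to \Omega^1(\Sigma_n)^{\rho}$ from the surjectivity already established (Theorem \ref{surjectif tour}, which via Proposition \ref{commute} yields that $\Phi$ is a $G$-equivariant $\O(\Omega)$-linear surjection $\Pi(\pi,2)^* \to \Omega^1(\Sigma_n)^{\rho}$) together with the structural description of $\Pi(\pi,2)^{u^+=0}$ obtained in Theorem \ref{kirillov}. First I would pass to the cokernels of $u^+$ (equivalently $d$). Since $u^+$ has closed image on $\Pi(\pi,0)^*$ (Corollary \ref{closed image}) and on $\O(\Sigma_n)^{\rho}$ (it acts as $-\frac{d}{dz}$, and $\Omega^1(\Sigma_n)/d(\O(\Sigma_n))$ computes $H^1_{\mathrm{dR}}$, which is a Fréchet space by Proposition \ref{Sigma est Stein} and the Stein formalism), the surjection $\Phi$ induces a continuous $G$-equivariant surjection
$$\bar\Phi : \Pi(\pi,2)^*/d(\Pi(\pi,0)^*) \twoheadrightarrow \Omega^1(\Sigma_n)^{\rho}/d(\O(\Sigma_n)^{\rho}) = H^1_{\mathrm{dR},c}(\Sigma_n)^{\rho,*}\text{-dual side},$$
i.e. dually a $G$-equivariant injection $H^1_{\mathrm{dR}}(\Sigma_n)^{\rho} \hookrightarrow \Pi(\pi,2)^{u^+=0}$. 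By Theorem \ref{kirillov} the target is $\pi \otimes M_{\mathrm{dR}}(\pi) \simeq \pi^{\oplus 2}$, while by Theorem \ref{derham} the source has $\dim_L \mathrm{Hom}_G(\pi^*, H^1_{\mathrm{dR}}(\Sigma_n)^{\rho}) = 2$; since $\pi$ is irreducible this forces $\bar\Phi$ to be an isomorphism, hence $H^1_{\mathrm{dR}}(\Sigma_n)^{\rho} \simeq M_{\mathrm{dR}}^* \otimes \pi^*$ and the induced map on cokernels of $u^+$ is bijective.

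It then remains to upgrade this to injectivity of $\Phi$ itself. The kernel $K = \ker\Phi$ is a closed $G$-stable $\O(\Omega)$-submodule of $\Pi(\pi,2)^*$. From the commutative diagram relating $u^+$ on source and target and the bijectivity of $\Phi$ on $u^+$-cokernels, a diagram chase shows $K \subseteq u^+(\Pi(\pi,2)^*)$; iterating, since $u^+$ commutes with $\Phi$ (up to the explicit twist of Theorem \ref{adjoint}, which is harmless as $\Phi$ is $G$-equivariant) and $\Phi$ stays surjective on every cokernel, one gets $K \subseteq \bigcap_{k\geq 0} (u^+)^k(\Pi(\pi,2)^*)$. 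So the heart of the matter is to prove
$$\bigcap_{k\geq 0} (u^+)^k(\Pi(\pi,2)^*) = 0,$$
or more precisely that this intersection contains no nonzero $G$-stable subspace. Here I would use the realization $\Pi(\pi,0)^* \subset tN_{\mathrm{rig}}(\pi)\boxtimes \mathbf{P}^1$ (Definition following Theorem \ref{almost canonique}) and the fact that on $D_{\mathrm{rig}}(V)$ one has $u^+ = t$ (Theorem \ref{Casimirinf}); an element lying in $\bigcap_k t^k (tN_{\mathrm{rig}}(\pi))$ would have all its $\mathrm{Res}_{\zp}$ and $\mathrm{Res}_{\zp}w$ components infinitely divisible by $t$ in $N_{\mathrm{rig}}(\pi)$, forcing them to be $0$ since $\bigcap_k t^k N_{\mathrm{rig}}(\pi) = 0$ (an element of the Robba ring divisible by $t^k = (\log(1+T))^k$ for all $k$ vanishes). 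This uses crucially that $V$ is not trianguline, via Proposition \ref{bijective} and Lemma \ref{injective}, exactly as hinted in the introduction where it is noted this intersection is zero by \cite{Colmezpoids} or \cite{Dthese}.

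The main obstacle I expect is the passage "$K \subseteq u^+(\Pi(\pi,2)^*)$" and its iteration: this requires knowing not only that $\Phi$ is surjective but that it remains surjective after quotienting by $(u^+)^k$ on both sides, which in turn needs the strictness (closed image) of $u^+$ on both $\Pi(\pi,2)^*$ and $\Omega^1(\Sigma_n)^{\rho}$ at each stage — a statement about Fréchet spaces and the open mapping theorem that must be handled with some care, together with the compatibility of $\Phi$ with the operators $\partial$, $u^+$ coming from Proposition \ref{commute} and the identity $a^+ - 1 = u^+ \circ \partial$ holding on both sides (Lemma \ref{numerologie Lie} and the construction in \S\ref{construction partiel}). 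Once the two facts — bijectivity of $\Phi$ on $u^+$-cokernels and vanishing of $\bigcap_k (u^+)^k(\Pi(\pi,2)^*)$ — are in place, injectivity of $\Phi$ follows formally, and combined with Theorem \ref{surjectif tour} this yields that $\Phi$ is an isomorphism, completing the proof of Theorem \ref{main1}; chasing through the identifications $\Omega^1(\Sigma_n)^{\rho} \simeq \Pi(\pi,2)^*$ and the exact sequence $0 \to \Pi(\pi,0)^* \to \Pi(\pi,2)^* \to M_{\mathrm{dR}}^* \otimes_L \pi^* \to 0$ then gives Theorem \ref{main2}.
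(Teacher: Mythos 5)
Your argument is correct and follows essentially the same route as the paper: pass to $u^+$-cokernels, use Theorems \ref{kirillov} and \ref{derham} to force the induced map to be an isomorphism, iterate to place $\ker\Phi$ in $\bigcap_k(u^+)^k$, and conclude via infinite $t$-divisibility inside $tN_{\mathrm{rig}}(\pi)\boxtimes\mathbf{P}^1$. One small precision: membership in $\bigcap_k(u^+)^k$ directly gives infinite $t$-divisibility only of the $\mathrm{Res}_{\zp}$-component (since $u^+$ acts as $t$ there but as $-t\partial^2$ on the $w$-side), so as you anticipate one really needs the $G$-stability of the kernel — applying $w$ swaps the two components and puts the other one in the $\mathrm{Res}_{\zp}$ slot — which is exactly how the paper concludes.
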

\begin{proof}
Soit $v \in \Pi(\pi,2)^*$ tel que $\Phi(v)=0$. En particulier, l'image de $\Phi(v)$ dans le quotient $H_{\mathrm{dR}}^1(\Sigma_n)^{\rho}$ est nulle. La compos\'ee de $\Phi$ et de la surjection sur la cohomologie de de Rham est $G$-\'equivariante donc se factorise par le quotient de $\Pi(\pi,2)^*$ par l'image de $u^+$ : comme celle-ci est ferm\'ee par la proposition \ref{imagefermee}, ce quotient s'identifie, comme on l'a d\'ej\`a vu, \`a $((\Pi(\pi,2))^{u^+=0})^*$ et on a donc une fl\`eche $G$-\'equivariante \textit{surjective} $((\Pi(\pi,2))^{u^+=0})^* \to H_{\mathrm{dR}}^1(\Sigma_n)^{\rho}$. Or on a d\'ej\`a vu que, comme $G$-repr\'esentations, $((\Pi(\pi,2))^{u^+=0})^* \simeq (\pi^*)^{\oplus 2}$ et que $H_{\mathrm{dR}}^1(\Sigma_n)^{\rho}$ a pour quotient $(\pi^*)^{\oplus 2}$\footnote{Dualement, cela revient \`a dire que $H_{\mathrm{dR,c}}^1(\Sigma_n)^{\rho^{\vee}}$ contient $\pi^{\oplus 2}$ comme sous-objet. Le th\'eor\`eme \ref{derham} dit exactement cela, avec sous-objet remplac\'e par quotient ; mais cela suffit, puisque $\pi$ est supercuspidale, donc est un objet projectif de la cat\'egorie des repr\'esentations lisses de $G$ \`a caract\`ere central trivial.}. La fl\`eche consid\'er\'ee ne peut donc \^etre qu'un isomorphisme. Par cons\'equent, $v \in \mathrm{Im} u^+$ : il existe $v' \in \Pi(\pi,2)^*$ tel que $v'=u^+. v$. Or
\[  0 = \Phi(v) = \Phi(u^+. v') = u^+. \Phi(v'). \]
Comme $\mathrm{Ker} (u^+)=0$ sur $\Omega^1(\Sigma_n)^{\rho}$, on en d\'eduit que $\Phi(v')=0$. R\'ep\'etant l'argument, on voit qu'un vecteur $v$ d'image nulle par $\Phi$ est en fait dans $\mathrm{Im} (u^+)^j$ pour tout $j$. 

Voyons l'\'el\'ement $v'$ ci-dessus comme un \'el\'ement de $\Pi(\pi,0)^*$, ce qui est loisible puisque les espaces vectoriels topologiques sous-jacents \`a $\Pi(\pi,0)^*$ et $\Pi(\pi,2)^* $ sont les m\^emes. L'op\'erateur $u^+$ agit de la m\^eme mani\`ere sur $\Pi(\pi,0)^*$ et $\Pi(\pi,2)^*$ et est injectif, donc $v'$ est dans $\mathrm{Im} (u^+)^j$ pour tout $j$. De plus, comme $u^+ : \Pi(\pi,0)^* \to \Pi(\pi,2)^*$ est $G$-\'equivariant et comme $w.v$ est aussi dans le noyau de $\Phi$, on a aussi que $w.v'$ est dans $\mathrm{Im} (u^+)^j$ pour tout $j$. Voyons $v'$ dans $t N_{\mathrm{rig}}(\pi) \boxtimes \mathbf{P}^1$ : $v'=(z_1,z_2)$. Comme $v' \in \mathrm{Im} (u^+)^j$ pour tout $j >0$, $z_1$ est infiniment divisible par $t$. On voit imm\'ediatement que cela force $z_1=0$ en prenant une base de $N_{\rm rig}(\pi)$ sur $\mathcal{R}$. Mais $w.v'$ a la m\^eme propri\'et\'e et $w.v'=(z_2,z_1)$. On a donc aussi $z_2=0$ et donc $v'=0$. Donc $v$ est lui-m\^eme nul. 
\end{proof}

\begin{remarque}
On verra plus loin (d\'emonstration du th\'eor\`eme \ref{corollaireter}) que l'intersection des $\mathrm{Im}((u^+)^j)$, $j\geq 0$, sur $\Pi(\pi,0)^*$ (ou $\Pi(\pi,2)^*$) est en fait nulle, mais cela fait appel \`a un r\'esultat d\'elicat de \cite{Dthese}, que l'argument pr\'ec\'edent permet d'\'eviter.
\end{remarque}

Ceci termine la preuve du th\'eor\`eme \ref{main1}. 

\begin{proof}[D\'emonstration du th\'eor\`eme \ref{main2}]
L'isomorphisme $\Phi$ induit un isomorphisme
\[ (\Pi(\pi,2)^{u^+=0})^* \simeq H_{\rm dR}^1(\Sigma_n)^{\rho}. \]
De plus, un tel isomorphisme $\Phi$ est unique \`a scalaire pr\`es, d'apr\`es les propositions \ref{commute} et \ref{Phi scalaire}. En combinant ceci avec le th\'eor\`eme \ref{kirillov}, on en d\'eduit que l'on a un isomorphisme canonique \`a scalaire pr\`es
\[ H_{\rm dR}^1(\Sigma_n)^{\rho} \simeq \pi^* \otimes M_{\rm dR}(\pi)^*. \]
Une fois ceci acquis, la deuxi\`eme partie du th\'eor\`eme se d\'eduit trivialement du corollaire \ref{preimage}.
\end{proof}

\begin{remarque}\label{poids generaux}
Tous les r\'esultats de cet article s'\'etendent aux fibr\'es $\O(k)$, $k \in \z$. Dans l'\'enonc\'e du th\'eor\`eme \ref{main1}, il suffit de remplacer $\Pi(\pi,0)$ par la repr\'esentation de $G$ dont le dual est l'espace topologique $\Pi(\pi,0)^*$ avec action de $G$ tordue par $(a-c \partial)^{-k}$. 

Pour le th\'eor\`eme \ref{main2}, il faut cette fois-ci consid\'erer, pour $k\geq 0$, la suite exacte
\[ 0 \to \O(-k)(\Sigma_n)^{\rho} \overset{(u^+)^{k+1}}{\longrightarrow} \O(k+2)(\Sigma_n)^{\rho} \otimes \det{}^{k+1} \to H_{\rm dR}^1(\Sigma_n)^{\rho} \otimes \mathrm{Sym}^k \to 0, \]
afin de d\'ecrire les vecteurs localement analytiques des repr\'esentations de Banach attach\'ees aux repr\'esentations \`a poids $0, k+1$. L'existence de cette suite exacte se justifie comme dans \cite[p. 95-97]{SS}.
\end{remarque}

\begin{remarque}\label{difference}
La conjecture originale de Breuil-Strauch \cite{BS} \'etait formul\'ee de fa\c con l\'eg\`erement diff\'erente. Nous expliquons maintenant le lien avec les r\'esultats de cet article.

Les auteurs de \cite{BS} travaillent avec le rev\^etement de Drinfeld de niveau $1+\varpi_D \O_D$, et nous noterons $\Sigma_{1+\varpi_D \O_D}$ le mod\`ele sur $\qp$ de son quotient par l'action de $p^{\z}$. Soit $K_0=\mathbf{Q}_{p^2}$ et $K=\mathbf{Q}_{p^2}(\sqrt[p^2-1]{-p})$. Dans \cite{teitel}, Teitelbaum a construit un mod\`ele formel semi-stable minimal $\widehat{\Sigma}_{1+\varpi_D \O_D,K}$ de $\Sigma_{1+\varpi_D \O_D,K}$, qui donne par uniformisation $p$-adique un mod\`ele semi-stable $\widehat{\mathrm{Sh}}_{(1+\varpi_D\O_D)K^p,K}$ de la courbe de Shimura $\mathrm{Sh}_{(1+\varpi_D \O_D)K^p,K}$. Les r\'esultats de \cite{GKrig}, qui \'etendent la th\'eorie de Hyodo-Kato \`a des vari\'et\'es rigides non n\'ecessairement propres, permettent de d\'efinir la cohomologie log-rigide de la fibre sp\'eciale de $\widehat{\Sigma}_{1+\varpi_D \O_D}$, qui est un $(\varphi,N,\mathcal{G}_{\qp})$-$K_0$-module $H_{\rm HK}^1(\widehat{\Sigma}_{1+\varpi_D \O_D,K})$, avec un isomorphisme $H_{\rm HK}^1(\widehat{\Sigma}_{1+\varpi_D \O_D,K}) \otimes_{K_0} K \simeq H_{\rm dR}^1(\Sigma_{1+\varpi_D \O_D,K})$. On montre avec des arguments semblables\footnote{La seule chose \`a savoir est l'existence d'une suite spectrale pour la cohomologie log-rigide pour un quotient $\widehat{\Sigma}_{1+\varpi_D \O_D} \to \Gamma \backslash \widehat{\Sigma}_{1+\varpi_D \O_D}$, avec $\Gamma$ sous-groupe de $G$ comme dans le th\'eor\`eme de Cerednik-Drinfeld : pour cela voir \cite{GKrig}, chapitre $7$.} \`a ceux de la section \ref{calculdeRham} que, pour toute repr\'esentation irr\'eductible $\rho$ de $D^*/1+\varpi_D \O_D$ de correspondante de de Jacquet-Langlands $\pi$, l'on a un isomorphisme compatible aux $(\varphi,N,\mathcal{G}_{\qp})$-structures des deux membres
\[ \ho_G((H_{\rm HK}^1(\widehat{\Sigma}_{1+\varpi_D \O_D,K})^{\rho})^*,\pi) = \varinjlim_{K^p} H_{\rm HK}^1(\widehat{\mathrm{Sh}}_{(1+\varpi_D\O_D)K^p,K})[\pi(g)_f^p]^{\rho}, \]
o\`u $g$ est une forme quaternionique telle que $\pi(g)_p=\rho$, comme dans la partie \ref{construction}. Saito \cite{Saito} a montr\'e que la structure de $(\varphi,N,\mathcal{G}_{\qp})$-module donn\'ee par la th\'eorie de Hyodo-Kato classique sur l'espace vectoriel $\varinjlim_{K^p} H_{\mathrm{dR}}^1(S_{K_pK^p,K})[\pi(f)^p]^{\rho}$ \'etait celle de $M(\pi)$. On d\'eduit donc de l'isomorphisme pr\'ec\'edent une identification naturelle
\[ H_{\rm dR}^1(\Sigma_{1+\varpi_D \O_D})^{\rho} = \pi^* \otimes M_{\rm dR}(\pi)^*. \]

Soit $\mathcal{L}$ une droite de $M_{\rm dR}(\pi)$. Via cette identification, on peut voir $\mathcal{L}^{\perp} \otimes \pi^*$ comme un sous-espace de $H_{\mathrm{dR}}^1(\Sigma_{1+\varpi_D \O_D})^{\rho}$. Breuil et Strauch d\'efinissent la repr\'esentation $BS(\mathcal{L})$ de $G$ comme le dual de la pr\'eimage dans $\Omega^1(\Sigma_{1+\varpi_D \O_D})^{\rho}$ de $\mathcal{L}^{\perp} \otimes \pi^*$\footnote{Comme d'habitude, $\rho$ est fix\'ee et sous-entendue dans la notation $BS(\mathcal{L})$.}.

Soit $g$ une forme quaternionique telle que $\pi(g)_p=\rho$, comme dans la partie \ref{construction}. La forme $g$ a une repr\'esentation galoisienne associ\'ee $r$ et $r_p=r_{|_{\mathcal{G}_{\qp}}}$ est de de Rham \`a poids $0, 1$ avec $D_{\rm pst}(r_p)=M(\pi)$. Donc $r_p$ correspond au choix d'une droite, not\'ee $\mathcal{L}$, sur $M_{\rm dR}(\pi)$. On sait que $\Pi_{\mathcal{L}}^{\rm an}=\Pi(r_p)^{\rm an}$ est un quotient de $(\Omega^1(\Sigma_{1+\varpi_D \O_D})^{\rho})^*$, puisque ce dernier est isomorphe \`a $\Pi(\pi,2)$ comme $G$-repr\'esentation, et que ce quotient correspond au quotient $\pi \otimes M_{\rm dR}(\pi) \to \pi \otimes M_{\rm dR}(\pi)/\mathcal{L}'$, pour un certain $\mathcal{L'}$. Autrement dit, l'image de la fl\`eche naturelle
\[ \ho_G((\Omega^1(\Sigma_{1+\varpi_D \O_D})^{\rho})^*,\Pi(r_p)^{\rm an})  \to  \ho_G((H_{\rm dR}^1(\Sigma_{1+\varpi_D \O_D})^{\rho})^*,\pi)=M_{\rm dR}(\pi)^* \]
est la droite $(\mathcal{L}')^{\perp}$. Or, comme on l'a d\'ej\`a not\'e (remarque \ref{schneider}), cette fl\`eche correspond \`a la fl\`eche naturelle
\[ \underset{K_p} \varinjlim ~ \Omega^1(\mathrm{Sh}_{(1+\varpi_D\O_D)K^p})[\pi(g)_f^p]^{\rho} \to \underset{K_p} \varinjlim ~ H_{\rm dR}^1(\mathrm{Sh}_{(1+\varpi_D \O_D)K^p})[\pi(g)_f^p]^{\rho}. \]
Le th\'eor\`eme de comparaison \'etale-de Rham appliqu\'e \`a la cohomologie de la courbe de Shimura propre $\mathrm{Sh}_{(1+\varpi_D \O_D)K^p}$ implique que la filtration de Hodge sur la cohomologie de de Rham
\[ \underset{K_p} \varinjlim ~ H_{\rm dR}^1(\mathrm{Sh}_{(1+\varpi_D \O_D)K^p})[\pi(g)_f^p]^{\rho} \simeq M_{\rm dR}(\pi)^* \]
est donn\'ee par la droite $\mathcal{L}^{\perp}$. Autrement dit, $\mathcal{L}'=\mathcal{L}$ et pour ce $\mathcal{L}$, on a donc bien $\Pi_{\mathcal{L}}^{\rm an}=BS(\mathcal{L})$.

On recommence ensuite le m\^eme jeu avec une autre forme quaternionique correspondant \`a une filtration diff\'erente (il en existe : il suffit de faire varier la repr\'esentation r\'esiduelle). On a donc un autre $\mathcal{L}'$ pour lequel on sait que $\Pi_{\mathcal{L}'}^{\rm an}=BS(\mathcal{L})$. 

Identifions $\mathbf{P}(M_{\mathrm{dR}})$ \`a $\mathbf{P}^1(\qp)$ via le choix de la base $(\mathcal{L},\mathcal{L}')$ de $M_{\mathrm{dR}}$. Soit maintenant $\mathcal{L}''$ une filtration admissible quelconque, et $(a,b) \in \mathbf{P}^1(\qp)$ l'\'el\'ement correspondant. Alors on a 
\[ [BS(\mathcal{L}'')]= a [BS(\mathcal{L})] + b [BS(\mathcal{L}')] \]
et  
\[ [\Pi_{\mathcal{L}''}^{\rm an}]= a [\Pi_{\mathcal{L}}^{\rm an}] + b [\Pi_{\mathcal{L}'}^{\rm an}] \]
dans le groupe $\mathrm{Ext}_G^1((\O(\Sigma_n)^{\rho})^*,\pi)$. Donc $BS(\mathcal{L}'')=\Pi_{\mathcal{L}''}^{\rm an}$. 

La conjecture de Breuil-Strauch dans sa formulation originale est donc un corollaire des r\'esultats obtenus. 
\end{remarque}

\section{Compl\'ements : quelques corollaires et une question}\label{complements}

\subsection{Preuves des th\'eor\`emes \ref{corollairebis} et \ref{corollaireter}} Nous rassemblons dans cette section les preuves des corollaires annonc\'es dans l'introduction.

\begin{proof}[D\'emonstration du th\'eor\`eme \ref{corollairebis}] 
D'apr\`es \cite[p. 20 et 174]{Cbigone}, le membre de droite est l'image par $1-\varphi$ de $(tN_{\rm rig}(\pi))^{\psi=1}$. Or, on a d\'ej\`a vu que cette image est isomorphe au membre de gauche dans le th\'eor\`eme \ref{psi egal un}. 
\end{proof}

\begin{proof}[D\'emonstration du th\'eor\`eme \ref{corollaireter}] Soit $f \in \O(\Sigma_n)$ une fonction infiniment primitivable. Ecrivons
\[ f = \sum_{i=1}^r v_i \otimes f_i \]
avec $v_i \in \rho_i$, o\`u $\rho_i$ est une repr\'esentation lisse irr\'eductible de $D^*$, et $f_i \in \O(\Sigma_n)^{\rho_i}$, pour $i=1,\dots,r$. Fixons $i$ et supposons $\rho_i$ non triviale. L'hypoth\`ese sur $f$ implique que $f_i$ est dans l'image de l'op\'erateur $(u^+)^j$ sur $\O(\Sigma_n)^{\rho_i}=\Pi(\pi_i,0)^*$, o\`u $\pi_i=\mathrm{JL}(\rho_i)$ (puisque $\rho_i$ est non triviale, on peut appliquer le th\'eor\`eme \ref{main1}) pour tout $j$. En outre, si l'on choisit $\Pi\in \mathcal{V}(\pi_i)$, on peut voir $f_i$ comme un \'el\'ement de $(\Pi^{\rm an})^*$. Le th\'eor\`eme $8.4.3$ de \cite{Dthese} dit que le sous-espace de $\Pi^{\rm an}$ des vecteurs tu\'es par une puissance de $u^+$ est dense dans $\Pi^{\rm an}$ (cette propri\'et\'e \'equivaut au fait que la repr\'esentation galoisienne correspondante soit non trianguline). Cela implique en particulier que l'intersection des $\mathrm{Im}((u^+)^j)$ sur le dual est nulle. On en d\'eduit $f_i=0$. La seule possibilit\'e est donc que $\rho_i$ soit triviale, i.e. que $f \in \O(\Omega)$. 
\end{proof}
     
  \begin{remarque}
    Un \'el\'ement $z$ de l'intersection des $\mathrm{Im}((u^+)^j)$, vu comme \'el\'ement de $D_{\rm rig}\boxtimes\p1$, satisfait trivialement 
    ${\rm Res}_{\zp}(z)=0$. Ainsi, 
    au lieu d'utiliser \cite{Dthese} on aurait pu utiliser l'injectivit\'e \cite{Colmezpoids} de l'application ${\rm Res}_{\zp}: (\Pi^{\rm an})^*\to D_{\rm rig}$.
  \end{remarque}
     
\subsection{Le complexe de de Rham dans la cat\'egorie d\'eriv\'ee des $D(G)$-modules}\label{cat\'egorie d\'eriv\'ee}

Pour $n\geq 0$, notons $R\Gamma_{\mathrm{dR}}(\Sigma_n)$ le complexe de de Rham de $\Sigma_n$ :
\[ \O(\Sigma_n)  \overset{d} \longrightarrow \Omega^1(\Sigma_n). \]
D'apr\`es le th\'eor\`eme \ref{surjectif tour}, c'est un objet de la cat\'egorie d\'eriv\'ee $D^b(D(G))$ de la cat\'egorie ab\'elienne \cite{STInv} des $D(G)$-modules coadmissibles. On munit ce complexe de la filtration \og b\^ete\fg{}, de sorte que $F^i R\Gamma_{\mathrm{dR}}(\Sigma_n)=R\Gamma_{\mathrm{dR}}(\Sigma_{n})$ si $i < 0$, $F^i R\Gamma_{\mathrm{dR}}(\Sigma_n)=0$ si $i> 0$ et 
\[ F^0 R\Gamma_{\mathrm{dR}}(\Sigma_n) = [0 \to \Omega^1(\Sigma_n)]. \] 

\begin{proposition}
Pour tout $V\in \mathcal{V}(\pi)$ on a un isomorphisme \textit{d'espaces vectoriels filtr\'es} (canonique \`a scalaire pr\`es)
\[ \ho_{D^b(D(G))}((\Pi(V)^{\mathrm{\mathrm{an}}})^*,R\Gamma_{\mathrm{dR}}(\Sigma_n)^{\rho}[1]) = D_{\mathrm{dR}}(V), \]
pour tout $n$ suffisamment grand.
\end{proposition}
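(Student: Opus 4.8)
The plan is to compute the $\mathrm{RHom}$ in the derived category $D^b(D(G))$ by using the explicit two-term representative $R\Gamma_{\mathrm{dR}}(\Sigma_n)^\rho = [\O(\Sigma_n)^\rho \xrightarrow{d} \Omega^1(\Sigma_n)^\rho]$, together with the by-now established isomorphisms. Recall from Theorem \ref{surjectif tour} and Proposition \ref{injectif} that $\Phi$ gives isomorphisms of $D(G)$-modules $\Pi(\pi,0)^* \simeq \O(\Sigma_n)^\rho$ and $\Pi(\pi,2)^* \simeq \Omega^1(\Sigma_n)^\rho$, compatible with the differential $d = -u^+\,dz$. Thus $R\Gamma_{\mathrm{dR}}(\Sigma_n)^\rho$ is quasi-isomorphic, in degrees $[0,1]$, to the complex $[\Pi(\pi,0)^* \xrightarrow{-u^+} \Pi(\pi,2)^*]$. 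After the shift by $1$, this becomes a complex concentrated in degrees $[-1,0]$, and its $0$-th cohomology is $\Pi(\pi,2)^*/u^+(\Pi(\pi,0)^*) \simeq (\Pi(\pi,2)^{u^+=0})^* \simeq \pi^* \otimes M_{\mathrm{dR}}(\pi)^*$ by Theorem \ref{kirillov} and Corollary \ref{closed image}, while its $-1$-st cohomology is $(\Pi(\pi,0)^*)^{u^+=0} = 0$ since $u^+$ is injective on $\Pi(\pi,0)^*$ (Proposition \ref{imagefermee} or \ref{bijective}). So in fact $R\Gamma_{\mathrm{dR}}(\Sigma_n)^\rho[1]$ is quasi-isomorphic to the single module $\pi^* \otimes M_{\mathrm{dR}}(\pi)^*$ placed in degree $0$.

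First I would therefore reduce the $\mathrm{RHom}$ to an ordinary $\mathrm{Hom}$ in the abelian category of coadmissible $D(G)$-modules:
$$\mathrm{Hom}_{D^b(D(G))}\bigl((\Pi(V)^{\mathrm{an}})^*, R\Gamma_{\mathrm{dR}}(\Sigma_n)^\rho[1]\bigr) = \mathrm{Hom}_{D(G)}\bigl((\Pi(V)^{\mathrm{an}})^*, \pi^* \otimes M_{\mathrm{dR}}(\pi)^*\bigr),$$
using that $(\Pi(V)^{\mathrm{an}})^*$ is concentrated in degree $0$ and that there are no higher $\mathrm{Ext}$'s contributing (one-term source, one-term target, so only $\mathrm{Hom}$ survives). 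Now $\pi$ is smooth supercuspidal, hence $\pi^* \otimes M_{\mathrm{dR}}(\pi)^*$ is, as a $G$-representation, a finite direct sum of copies of $\pi^*$, and $\mathrm{Hom}_{G}((\Pi(V)^{\mathrm{an}})^*, \pi^*) = \mathrm{Hom}_G(\pi, \Pi(V)^{\mathrm{an}}) = \mathrm{Hom}_G(\pi, \Pi(V)^{\mathrm{lisse}})$ is one-dimensional since $\Pi(V)^{\mathrm{lisse}} \simeq \pi$ is irreducible. Tensoring, $\mathrm{Hom}_{D(G)}((\Pi(V)^{\mathrm{an}})^*, \pi^* \otimes M_{\mathrm{dR}}(\pi)^*) \simeq M_{\mathrm{dR}}(\pi)^{**} = M_{\mathrm{dR}}(\pi)$, canonically up to the scalar ambiguity already present in $\Phi$ and in the identification $M_{\mathrm{dR}}(V) \simeq M_{\mathrm{dR}}(\pi)$ coming from $D_{\mathrm{pst}}(V) \simeq M(\pi)$. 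Finally $D_{\mathrm{dR}}(V) \simeq M_{\mathrm{dR}}(V) \simeq M_{\mathrm{dR}}(\pi)$ as $L$-vector spaces, giving the underlying vector space statement.

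The real content is the \emph{filtered} refinement. Here I would use the b\^ete filtration: $F^0 R\Gamma_{\mathrm{dR}}(\Sigma_n)^\rho[1] = [0 \to \Omega^1(\Sigma_n)^\rho]$ sitting in degree $0$, and $F^{-1} = $ the whole complex. Applying $\mathrm{Hom}_{D^b(D(G))}((\Pi(V)^{\mathrm{an}})^*, -)$, the $F^0$-piece computes $\mathrm{Hom}_{D(G)}((\Pi(V)^{\mathrm{an}})^*, \Omega^1(\Sigma_n)^\rho) \simeq \mathrm{Hom}_{D(G)}((\Pi(V)^{\mathrm{an}})^*, \Pi(\pi,2)^*)$, which by Proposition \ref{uplus plongement} and Corollary \ref{preimage} identifies with those maps whose image lies in $(\Pi_{\mathcal{L}(V)}^{\mathrm{an}})^* \subset \Pi(\pi,2)^*$; dualizing the Hodge filtration description of Theorem \ref{main2}, this subspace corresponds exactly to $\mathrm{Fil}^0(D_{\mathrm{dR}}(V)) = \mathcal{L}(V) \subset M_{\mathrm{dR}}(\pi)$. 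Thus the image of $\mathrm{Hom}(\cdot, F^0\cdots) \to \mathrm{Hom}(\cdot, F^{-1}\cdots) = D_{\mathrm{dR}}(V)$ is precisely $\mathrm{Fil}^0 D_{\mathrm{dR}}(V)$, and the filtration jump is in the right spot (Hodge–Tate weights $0,1$). I expect the main obstacle to be the bookkeeping needed to check that the b\^ete-filtration piece really maps to $(\Pi_{\mathcal{L}(V)}^{\mathrm{an}})^*$ and not merely to something abstractly isomorphic to it — i.e.\ making the identification in Corollary \ref{preimage} compatible with the comparison isomorphism $D_{\mathrm{dR}}(V) \simeq M_{\mathrm{dR}}(\pi)$ used throughout, so that the resulting filtered isomorphism is canonical up to a single scalar. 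This is exactly the kind of "careful tracking of identifications" the authors have been emphasizing, so I would organize the argument so that each arrow is manifestly the dual of an arrow already appearing in Theorems \ref{main1} and \ref{main2}.
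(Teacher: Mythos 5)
Your proof takes essentially the same route as the paper's: you identify $R\Gamma_{\mathrm{dR}}(\Sigma_n)^\rho$ with its single nonvanishing cohomology $(\Pi(\pi,2)^{u^+=0})^* = \pi^*\otimes D_{\rm dR}(V)^*$ sitting in degree $1$ (so that $R\Gamma[1]$ is concentrated in degree $0$), reduce the derived $\mathrm{Hom}$ to an ordinary one, compute it using the one-dimensionality of $\mathrm{Hom}_G(\pi,\Pi(V)^{\rm lisse})$, and then read off the filtration from the b\^ete filtration via Theorem \ref{main2} and Corollaire \ref{preimage}. The paper phrases the reduction slightly more directly (quasi-isomorphism to $(\Pi(\pi,2)^{u^+=0})^*[-1]$ rather than passing through the complex $[\Pi(\pi,0)^*\to\Pi(\pi,2)^*]$) and dispatches the filtered refinement in one line as a ``corollaire direct'' of Theorem \ref{main2}; your version is more explicit but has the same soft spot: to get that the image of $\mathrm{Hom}((\Pi(V)^{\rm an})^*, F^0[1])$ in $D_{\rm dR}(V)$ is \emph{exactly} the line $\mathrm{Fil}^0$ (and not all of $D_{\rm dR}(V)$), you need that every $G$-equivariant continuous map $(\Pi(V)^{\rm an})^*\to\Omega^1(\Sigma_n)^\rho\simeq\Pi(\pi,2)^*$ is proportional to the canonical embedding $d$, and the claim ``identifies with those maps whose image lies in $(\Pi_{\mathcal{L}(V)}^{\rm an})^*$'' is asserted rather than proved. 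This ultimately rests on the topological irreducibility of $\Pi(\pi,0)$ (due to Colmez, cited in the paper) together with the uniqueness result of \cite{CD}, which pins down the possible targets $(\Pi_{\mathcal{L}}^{\rm an})^*$ of such a map; it would be worth spelling that out so the ``exactly'' is justified. Modulo that, your bookkeeping of the self-duality $M_{\rm dR}(\pi)^*\simeq M_{\rm dR}(\pi)$ (needed to pass from $\mathrm{Hom}(P,\pi^*)\otimes M_{\rm dR}^*$ to $D_{\rm dR}(V)$, and under which $\mathcal{L}^\perp$ corresponds to $\mathcal{L}$) is correct and is the point the paper leaves implicit.
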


\begin{remarque}
a) Un r\'esultat plus satisfaisant serait de remplacer le complexe de de Rham par un complexe convenable de cohomologie log-rigide et d'affirmer l'existence d'un \textit{isomorphisme canonique de $(\varphi,N,\mathcal{G}_{\qp})$-modules filtr\'es} entre le membre de gauche et $D_{\rm pst}(V)$. Cela permettrait de r\'ecup\'erer la repr\'esentation $V$ \`a partir de $\Pi(V)^{\mathrm{\mathrm{an}}}$. 

b) L'\'enonc\'e de la proposition donne une information nettement moins pr\'ecise sur $\Pi(V)^{\rm an}$ que la conjecture de Breuil-Strauch. Au moins pour les $V\in \mathcal{V}(\pi)$ d'origine globale, Peter Scholze nous a d'ailleurs indiqu\'e un argument qui devrait permettre de d\'eduire la proposition du th\'eor\`eme de compatibilit\'e local-global d'Emerton et du lemme de Poincar\'e $p$-adique (\cite[cor. 6.13]{Shodge}).

c) Pour $k \in \z$ et $n\geq 0$, notons $R\Gamma_{\mathrm{dR}}(\Sigma_{n},k)=R\Gamma_{\mathrm{dR}}(\Sigma_n) \otimes (\mathrm{Sym}^k)^*$ le complexe de de Rham de $\Sigma_n$ tordu par la repr\'esentation alg\'ebrique $(\mathrm{Sym}^k)^*$ :
\[ \O(\Sigma_n) \otimes (\mathrm{Sym}^k)^* \overset{d \otimes id} \longrightarrow \Omega^1(\Sigma_n) \otimes (\mathrm{Sym}^k)^*. \]
On pourrait aussi munir ce complexe d'une filtration, comme le font Schneider et Stuhler dans \cite[p. 95-97]{SS}, en prenant le produit tensoriel de la filtration \og b\^ete\fg{} par une certaine filtration sur $(\mathrm{Sym}^k)^*$. On obtiendrait ainsi une filtration telle que $F^i R\Gamma_{\mathrm{dR}}(\Sigma_{n},k)=R\Gamma_{\mathrm{dR}}(\Sigma_{n},k)$ si $i < -k$, $F^i R\Gamma_{\mathrm{dR}}(\Sigma_{n},k)=0$ si $i> 0$ et 
\[ F^i R\Gamma_{\mathrm{dR}}(\Sigma_{n},k) = [0 \to \O(k+2)(\Sigma_n) \otimes \det{}^{k+1}], ~ -k\leq i \leq 0, \]
et on pourrait prouver l'exact analogue de la proposition pr\'ec\'edente avec des poids de Hodge-Tate \'egaux \`a $0, k+1$. Toutefois cela alourdit consid\'erablement les notations.
\end{remarque}

\begin{proof}
Le cas $\rho=1$ est un r\'esultat de Schraen \cite{Schraen}. En fait, Schraen raffine l'\'enonc\'e en un isomorphisme de $(\varphi,N)$-modules filtr\'es, apr\`es avoir muni le membre de gauche d'un Frobenius et d'une monodromie en s'inspirant de la th\'eorie de Hyodo-Kato. Cela lui permet de d\'efinir un scindage canonique du complexe introduit dans la remarque pr\'ec\'edente : $R\Gamma_{\mathrm{dR}}(\Sigma_0,k)^{D^*} \simeq (H_{\mathrm{dR}}^{0}(\Omega) \oplus H_{\mathrm{dR}}^1(\Omega)[-1]) \otimes (\mathrm{Sym}^k)^*$. La monodromie $N$ est un \'el\'ement du $\ext^1$ entre ces deux composantes et le membre de gauche de la proposition se d\'ecompose en tant que $\varphi$-module comme une somme directe
\[ \ext_{D(G)}^1((\Pi(V)^{\mathrm{\mathrm{an}}})^*,H_{\mathrm{dR}}^0(\Omega) \otimes (\mathrm{Sym}^k)^*) \oplus \ho_{D(G)}((\Pi(V)^{\mathrm{\mathrm{an}}})^*, H_{\mathrm{dR}}^1(\Omega) \otimes (\mathrm{Sym}^k)^*)  \]
\[                = \ext_G^1(\mathrm{Sym}^k,\Pi(V)^{\mathrm{\mathrm{an}}}) \oplus \ho_G(\mathrm{St} \otimes \mathrm{Sym}^k,\Pi(V)^{\mathrm{\mathrm{an}}}), \]
ce qui explique pourquoi en niveau $0$, des vecteurs localement alg\'ebriques apparaissent \`a la fois en sous-objet en en quotient de $\Pi(V)^{\mathrm{\mathrm{an}}}$. 

Le cas $\rho \neq 1$ se d\'eduit du th\'eor\`eme \ref{main2}. En effet, le complexe $R\Gamma_{\mathrm{dR}}(\Sigma_n)^{\rho}$ (o\`u $n$ est choisi suffisamment grand) est scind\'e quasi-isomorphe \`a $(\Pi(\pi,2)^{u^+=0})^*[-1]$ : en effet, tous les groupes de cohomologie du complexe sont nuls sauf celui de degr\'e $1$ qui vaut $(\Pi(\pi,2)^{u^+=0})^*$, d'apr\`es le th\'eor\`eme \ref{main2}. On a donc trivialement un isomorphisme d'espaces vectoriels (sans filtration)
\[ \ho_{D^b(D(G))}((\Pi(V)^{\mathrm{\mathrm{an}}})^*,R\Gamma_{\mathrm{dR}}(\Sigma_{n})^{\rho}[1]) = D_{\mathrm{dR}}(V), \]
puisque $(\Pi(\pi,2)^{u^+=0})^*=\pi^* \otimes D_{\rm dR}(V)^*$. Le fait que les filtrations co\"incident est un corollaire direct de la conjecture.
\end{proof}

\subsection{Fonctions au bord de $\Sigma_n$ et faisceau $U\to tN_{\rm rig} \boxtimes U$} \label{bord}
La dualit\'e de Serre pour les vari\'et\'es rigides Stein \cite{Chiar} donne un isomorphisme de repr\'esentations de $G \times D^*$
\[ \Omega^1(\Sigma_n)^* \simeq H_c^1(\Sigma_n,\O). \]
Comme $\Sigma_n$ est Stein, on a en outre une suite exacte
\begin{equation}
0 \to \O(\Sigma_n) \to \underset{Z} \varinjlim ~ \O(\Sigma_n - Z) \to H_c^1(\Sigma_n,\O)  \to 0 \label{liminductive}
\end{equation}
o\`u $Z$ d\'ecrit l'ensemble des r\'eunions finies d'affino\"ides admissibles de $\Sigma_n$. Notons $\tau_n$ la compos\'ee de la surjection canonique de $\Sigma_n$ sur $\Sigma_0$ avec la r\'etraction du demi-plan sur l'arbre de Bruhat-Tits, dont on fixe l'origine standard. Soit $B_i$ la boule centr\'ee en l'origine dans l'arbre de rayon $i$. Si $U$ est un ouvert de $\mathbf{P}^1(\qp)$, il lui correspond un ouvert $V$ de l'arbre, constitu\'e de la r\'eunion de toutes les demi-droites partant de l'origine aboutissant en un point de $U$, et l'on pose
\[ \mathcal{F}_{\pi}(U)= \underset{i} \varinjlim ~ \O(\tau_n^{-1}(V - B_i))^{\rho}, \]
ce qui a un sens puisque $D^*$ agit sur $\tau_n^{-1}(V - B_i)$. Cela d\'efinit un faisceau $\mathcal{F}_{\pi}$ sur $\mathbf{P}^1(\qp)$. Ce faisceau est en quelque sorte le faisceau des sections du fibr\'e structural \og au bord\fg{} de $\Sigma_n$. Il est alors tentant de formuler la conjecture suivante, qui est un prolongement naturel de la conjecture de Breuil-Strauch.

\begin{conj}
Le faisceau $\mathcal{F}_{\pi}$ sur $\mathbf{P}^1(\qp)$ est le faisceau $U\mapsto t N_{\rm rig}(\pi) \boxtimes U$ de Colmez. La suite exacte de repr\'esentations de $G$ (voir \cite{Colmezpoids})
\[ 0 \to \Pi(\pi,0)^* \to t N_{\rm rig}(\pi) \boxtimes \mathbf{P}^1(\qp) \to \Pi(\pi,2) \to 0 \]
s'identifie \`a la suite exacte d\'eduite de \eqref{liminductive} :
\[ 0 \to \O(\Sigma_n)^{\rho} \to \mathcal{F}_{\pi}(\mathbf{P}^1(\qp)) \to (\Omega^1(\Sigma_n)^{\rho})^* \to 0. \]
\end{conj}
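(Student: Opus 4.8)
The plan is to derive the conjecture from Theorems \ref{main1} and \ref{main2}, a local analysis of $\Sigma_n$ near its boundary, and the twin-towers theory of Kohlhaase. To begin, the two exact sequences in the statement have the same outer terms: $\O(\Sigma_n)^{\rho}\simeq\Pi(\pi,0)^*$ by Theorem \ref{main1}, while Serre duality for Stein spaces \cite{Chiar} gives $H^1_c(\Sigma_n,\O)^{\rho}\simeq(\Omega^1(\Sigma_n)^{\rho})^*\simeq\Pi(\pi,2)$ by Theorem \ref{main2}. Since in both cases the left-hand arrow is the natural inclusion, it suffices to prove (i) a $G$-equivariant isomorphism of sheaves $\mathcal{F}_{\pi}\simeq\bigl(U\mapsto tN_{\rm rig}(\pi)\boxtimes U\bigr)$ restricting, on global sections, to the identification $\O(\Sigma_n)^{\rho}\simeq\Pi(\pi,0)^*$ inside both middle terms; once (i) holds, the matching of the two surjections, hence of the two extension classes, is automatic because each is the cokernel of the same inclusion.

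For (i) I would use the basis of $\mathbf{P}^1(\qp)$ consisting of the $g\zp$, $g\in G$; by $G$-equivariance one is reduced to comparing $\mathcal{F}_{\pi}(\zp)$ (and $\mathcal{F}_{\pi}(w\zp)$) with $tN_{\rm rig}(\pi)=tN_{\rm rig}(\pi)\boxtimes\zp$ (and $w(tN_{\rm rig}(\pi))$), compatibly with gluing along $\zp^{\times}$. Unwinding the definitions, $\mathcal{F}_{\pi}(\zp)=\varinjlim_i\O(\tau_n^{-1}(V_{\zp}-B_i))^{\rho}$, where $V_{\zp}$ is the subtree of the Bruhat--Tits tree attached to $\zp\subset\mathbf{P}^1(\qp)$; this is a Stein horoball-neighbourhood of the boundary branch of $\Sigma_n$ over $\zp$, carrying the actions of $P^+=\left(\begin{smallmatrix}\zp\setminus\{0\}&\zp\\0&1\end{smallmatrix}\right)$ and of $D^*$. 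One must identify this $P^+$-module with $tN_{\rm rig}(\pi)$ and its canonical $P^+$-action $\left(\begin{smallmatrix}p^ka&b\\0&1\end{smallmatrix}\right)z=(1+T)^b\sigma_a(\varphi^k(z))$, i.e. match $(1+T)^b$ with the horocyclic flow, $\sigma_a$ with the torus, and $\varphi,\psi$ with the contracting action of $\left(\begin{smallmatrix}p&0\\0&1\end{smallmatrix}\right)^{\pm1}$ along the tree. The natural construction of the comparison map is to extend the isomorphism $\Phi\colon\Pi(\pi,0)^*\xrightarrow{\sim}\O(\Sigma_n)^{\rho}$ of Theorem \ref{main1}: $\Phi$ already intertwines $a^+,u^+$, hence the operator $\partial$, and $\Pi(\pi,0)^*$ is by construction a $G$-stable submodule of $tN_{\rm rig}(\pi)\boxtimes\mathbf{P}^1$ out of which $tN_{\rm rig}(\pi)\boxtimes\zp$ is rebuilt by applying $P^+$; the $\O(\Omega)$-module structure of Theorem \ref{module}, through $\partial\leftrightarrow z$, is the mechanism that should control the extension of $\Phi$ to the boundary sections, the point being that approaching the branch over $\zp$ corresponds to $z$ specialising into $\zp$.

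The main obstacle, where genuinely new input is needed, is the explicit computation of the boundary sections $\O(\tau_n^{-1}(V_{\zp}-B_i))^{\rho}$ in terms of Fontaine's rings. Here I would invoke the forthcoming \cite{nulles}, which identifies the functions ``vanishing at the end $\infty$'', $\O(\Sigma_n)^{\rho}_{\infty}$, with $(\tilde{\mathbf{B}}_{\rm rig}^+\otimes_{\qp^{\rm nr}}M(\pi))^{\mathcal{H}_{\qp}}\otimes\delta$, i.e. with sections over a single end of the tree; propagating such a description uniformly over the ends of $V_{\zp}$ and combining it with Berger's $N_{\rm rig}(\pi)=\{z\in D_{\rm rig}(V)[1/t]\mid\varphi^{-n}(z)\in N_{\rm dif,n}^+(V)\ \forall n\gg0\}$ together with $N_{\rm dif,n}^+(\pi)\simeq L_n[[t]]\otimes_L M_{\rm dR}(\pi)$ \cite{Ber,BerAst} should exhibit $\mathcal{F}_{\pi}(\zp)$ as the Robba-style completion $tN_{\rm rig}(\pi)$: geometrically $\varphi^{-n}$ becomes restriction to the $n$-th horoball shell, a disc of level $p^n$ with local parameter $t$, with value in the de Rham (Hyodo--Kato) datum $M_{\rm dR}(\pi)$ of the corresponding boundary point. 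With the $P^+$-equivariant isomorphism $\mathcal{F}_{\pi}(\zp)\simeq tN_{\rm rig}(\pi)$ in hand, injectivity of the global comparison map follows from injectivity of $\Phi$ (Proposition \ref{injectif}), and surjectivity from a boundary version of the Kohlhaase argument of \S\ref{surjectivit\'e} (\cite{Kohl}): after extending scalars to $\breve{\mathbf{Q}}_p$ the relevant bundle is again the irreducible $D^*$-equivariant $\rho^*\otimes\O_{\breve{\mathbf{P}}^1}$, now read through its sections over the boundary of the Lubin--Tate tower. Gluing along $\zp^{\times}$ via $w\circ\sigma_a=\sigma_{a^{-1}}\circ w$ (as in Proposition \ref{stable par w}) then yields the sheaf isomorphism (i), and (ii) follows as explained.
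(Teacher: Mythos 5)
The statement you were asked to prove is a \textbf{Conjecture} in the paper (the ``\verb|\begin{conj}|'' environment at the very end of \S\ref{bord}), and the paper offers \emph{no} proof of it: the authors explicitly present it as an open question extending the Breuil--Strauch conjecture. So there is no ``paper's own proof'' to compare you against. What I can do is assess whether your sketch actually closes the gap, and it does not.

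Your reduction is sound in outline: the outer terms do match up to canonical-up-to-scalar isomorphisms by Theorems \ref{main1} and \ref{main2} (plus Serre duality), so a $G$-equivariant sheaf isomorphism $\mathcal{F}_{\pi}\simeq tN_{\rm rig}(\pi)\boxtimes(-)$ compatible with the inclusion of $\O(\Sigma_n)^{\rho}\simeq\Pi(\pi,0)^*$ would indeed identify the two extensions. But this sheaf isomorphism \emph{is} the content of the conjecture, and your proposal does not produce it. The step you correctly flag as ``the main obstacle'' --- the explicit $P^+$-equivariant identification $\mathcal{F}_{\pi}(\zp)\simeq tN_{\rm rig}(\pi)$ in Fontaine-ring terms --- is not carried out: you invoke the unpublished ``en pr\'eparation'' reference \cite{nulles}, which as quoted in the introduction only treats the subspace of functions vanishing at a \emph{single} end $\infty$ and relates it to $(\tilde{\mathbf{B}}_{\rm rig}^+\otimes_{\qp^{\rm nr}}M(\pi))^{\mathcal{H}_{\qp}}$, not to the Robba-ring object $tN_{\rm rig}(\pi)$. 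The passage from ``one end'' to ``the full branch over $\zp$, uniformly'' and the matching of $\varphi^{-n}$ with horoball shells is asserted by analogy, not argued. Moreover, the gluing along $\zp^{\times}$ is dismissed in one line via $w\sigma_a=\sigma_{a^{-1}}w$, whereas in Colmez's construction the entire difficulty of $D_{\rm rig}\boxtimes\p1$ lies in the action of $w$ (the paper itself remarks that the action of $\left(\begin{smallmatrix}1&0\\p\zp&1\end{smallmatrix}\right)$ is ``tr\`es compliqu\'ee''); identifying the geometric restriction map $\mathcal{F}_{\pi}(\p1)\to\mathcal{F}_{\pi}(\zpet)$ followed by $w$ with Colmez's involution on $(tN_{\rm rig}(\pi))^{\psi=0}$ is precisely where a genuinely new idea is needed. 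In short, your outline is a plausible research plan for attacking the conjecture, but it does not constitute a proof, and it cannot: the needed inputs are not available in the paper or in the cited literature.
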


\section{Appendice: compatibilit\'e local-global (d'apr\`es Emerton)} 
   
        Nous expliquons dans cet appendice la preuve du th\'eor\`eme \ref{locglobfort}, en suivant de mani\`ere pleinement fid\`ele Emerton
        \cite{Emcomp}. Nous reprenons les notations des sections 4.1 et 5.1 (en fixant $K^p$ et en posant $X=X(K^p)$).

\begin{lemme}\label{injective 1}  Le groupe 
  ${\rm GL}_2(\zp)$ agit librement sur $X$, avec un nombre fini d'orbites. En particulier, il existe $s>0$ tel que pour tout
  $\zp$-module topologique $M$ l'on ait 
un isomorphisme de ${\rm GL}_2(\zp)$-repr\'esentations 
$$\con(X,M)\simeq \con({\rm GL}_2(\zp), M)^{\oplus s}.$$
\end{lemme}

  Le r\'esultat suivant est un des ingr\'edients de base de la th\'eorie.

\begin{lemme}\label{dense 1} Si $\mathcal{B}$ est un facteur direct topologique (en tant que $G$-module) de $\con(X)$, alors 
$\mathcal{B}_{{\rm GL}_2(\zp)-\rm alg}$ est dense dans $\mathcal{B}$.

\end{lemme}

\begin{proof} Il suffit de le faire pour $\mathcal{B}=\con(X)$, dans quel cas cela d\'ecoule du lemme 
\ref{injective 1} et du th\'eor\`eme de Mahler (voir \cite[prop. A.3]{Pa} et \cite[prop. 2.12]{PCD} pour des r\'esultats g\'en\'eraux \`a ce sujet).
\end{proof}

  La version \og en famille\fg{} de la correspondance de Langlands locale $p$-adique pour $G$ 
  permet de construire un $A$-module orthonormalisable $\Pi^{\mathrm{univ}}$, avec action continue de $G$, tel que pour tout $\mathfrak{p}\in {\rm MaxSpec}(A[1/p])$ on ait un isomorphisme $$\Pi^{\mathrm{univ}} \otimes_A k(\mathfrak{p}) \simeq \Pi(\mathfrak{p}).$$
   L'existence de $\Pi^{\mathrm{univ}}$ est un r\'esultat profond mais standard de la th\'eorie, cf. par exemple \cite{BBourbaki, Cbigone, kisin} (rappelons que nous supposons que la repr\'esentation modulo $p$
   de $\mathcal{G}_{\qp}$ est absolument irr\'eductible). De plus, la compatibilit\'e avec la correspondance modulo $p$ montre que $$\overline{\pi}:=\Pi^{\rm univ}/\mathfrak{m}\Pi^{\rm univ}$$
   est une repr\'esentation lisse irr\'eductible de $G$ sur $k_L$. 

\begin{definition} On note 
\[ M= \ho_{A[G]}^{\rm cont} (\Pi^{\mathrm{univ}},\con(X,\O_L)_{\mathfrak{m}}),\]
 $\Pi^{\mathrm{univ}}$ \'etant muni de la topologie $\mathfrak{m}$-adique et $\con(X,\O_L)_{\mathfrak{m}}$ de la topologie induite par $\con(X, \O_L)$.  Alors $M$ est un $\O_L$-module plat, s\'epar\'e complet pour la topologie $p$-adique, et on
 note $$M^*=\ho_{\O_L}(M,\O_L)$$ le dual de Schikhof de $M$, que l'on munit de la topologie de la convergence simple. On a r\'eciproquement $M=\ho_{\O_L}^{\rm cont}(M^*,\O_L)$. 
\end{definition}

\begin{remarque}\label{stupide mais utile}
On voit imm\'ediatement qu'on a des isomorphismes canoniques
$$M[\mathfrak{p}][1/p]=k(\mathfrak{p}) \otimes_{A/\mathfrak{p}} M[\mathfrak{p}]\simeq {\rm Hom}_{G}^{\mathrm{cont}}(\Pi(\mathfrak{p}), \con(X)[\mathfrak{p}])\simeq {\rm Hom}_{A[G]}^{\rm cont}(\Pi(\mathfrak{p}), \con(X)_{\mathfrak{m}}) $$
pour tout $\mathfrak{p}\in {\rm MaxSpec}(A[1/p])$.
\end{remarque}

Dans un premier temps, nous allons commencer par prouver l'\'enonc\'e plus faible suivant.
\begin{proposition}\label{locglobfaible}
Pour tout id\'eal maximal $\mathfrak{p}$ de $A[1/p]$, 
\[ \ho_G^{\rm cont}(\Pi(\mathfrak{p}),\con(X)[\mathfrak{p}]) \neq 0. \]
\end{proposition}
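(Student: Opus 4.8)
Le plan est de ramener l'\'enonc\'e \`a une propri\'et\'e de fid\'elit\'e du $A$-module $M^*$, puis d'obtenir cette fid\'elit\'e par densit\'e des points classiques dans $\mathrm{Spec}(A[1/p])$, en suivant pas \`a pas Emerton \cite{Emcomp}. D'apr\`es la remarque \ref{stupide mais utile}, pour tout id\'eal maximal $\mathfrak p$ de $A[1/p]$ on a $M[\mathfrak p][1/p]\simeq \ho_G^{\rm cont}(\Pi(\mathfrak p),\con(X)[\mathfrak p])$, de sorte que la proposition \ref{locglobfaible} \'equivaut \`a l'assertion : $M[\mathfrak p]\neq 0$ pour tout id\'eal maximal $\mathfrak p$ de $A[1/p]$. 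Comme $M$ est le dual de Schikhof de $M^*$, et $A$ noeth\'erien, cela \'equivaut \`a $M^*/\mathfrak p M^*\neq 0$, c'est-\`a-dire \`a $\mathfrak p\in \mathrm{Supp}_A(M^*)$, pour tout tel $\mathfrak p$ ; puisque $A[1/p]$ est un anneau de Jacobson r\'eduit, son radical de Jacobson est nul, et ceci revient donc exactement \`a dire que $M^*$ est un $A$-module \emph{fid\`ele}. C'est cette assertion que l'on \'etablirait.

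La premi\`ere \'etape serait de montrer que $M^*$ est un $A$-module \emph{de type fini}. Le lemme \ref{injective 1} montre que le dual de Schikhof de $\con(X,\O_L)$ est libre de rang fini sur $\O_L[[{\rm GL}_2(\zp)]]$, donc son facteur direct $\con(X,\O_L)_{\mathfrak m}^*$ est projectif de type fini sur cette alg\`ebre ; d'autre part $\Pi^{\mathrm{univ}}$ est admissible (point standard mais profond de la th\'eorie, cf. \cite{BBourbaki, Cbigone, kisin}), de sorte que son dual est de type fini sur $A[[{\rm GL}_2(\zp)]]$. Un d\'evissage identifie alors $M^*$ \`a un produit tensoriel compl\'et\'e $\Pi^{\mathrm{univ}}\,\widehat\otimes_{\O_L[[{\rm GL}_2(\zp)]]}\,\con(X,\O_L)_{\mathfrak m}^*$, et montre qu'il est de type fini sur $A$. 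On en d\'eduit que $\mathrm{Supp}_{A[1/p]}(M^*[1/p])$ est une partie \emph{ferm\'ee} de $\mathrm{Spec}(A[1/p])$ ; il suffira donc de prouver qu'elle contient une partie Zariski-dense.

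La deuxi\`eme \'etape serait de produire une telle partie, constitu\'ee de points classiques. Le lemme \ref{dense 1} entra\^ine que $\con(X)_{{\rm GL}_2(\zp)-\rm alg}$ est dense dans $\con(X)_{\mathfrak m}$ ; en le d\'ecomposant suivant les poids alg\'ebriques et les niveaux (lemme \ref{formes automorphes}) et en utilisant la dimension finie des espaces de formes automorphes classiques, on obtient (encore une fois car $A$ est r\'eduit) que l'ensemble des id\'eaux maximaux $\mathfrak p$ de $A[1/p]$ associ\'es \`a une forme automorphe classique de poids r\'egulier est Zariski-dense dans $\mathrm{Spec}(A[1/p])$. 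Pour un tel $\mathfrak p$, la repr\'esentation $r(\mathfrak p)_{|_{\mathcal{G}_{\qp}}}$ est de de Rham, donc par la compatibilit\'e de la correspondance de Langlands $p$-adique avec la correspondance classique (\cite[ch. VI]{Cbigone}, \cite{Emcomp}, voir aussi \cite{PCD}), $\Pi(\mathfrak p)$ poss\`ede un sous-espace dense de vecteurs localement alg\'ebriques $\Pi(\mathfrak p)^{\rm alg}\simeq W\otimes \pi'$, o\`u $W$ est la composante alg\'ebrique d\'etermin\'ee par les poids de Hodge--Tate et $\pi'$ la composante lisse fournie par la correspondance classique. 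L'existence m\^eme de la forme classique fournit, via le lemme \ref{formes automorphes}, un morphisme $G$-\'equivariant non nul $W\otimes\pi'\to \con(X)[\mathfrak p]$, que l'on prolonge en un morphisme $G$-\'equivariant continu non nul $\Pi(\mathfrak p)\to \con(X)[\mathfrak p]$ en contr\^olant la norme gr\^ace au r\'eseau $G$-invariant $\con(X,\O_L)[\mathfrak p]$. Ainsi $M[\mathfrak p]\neq 0$, donc $\mathfrak p\in \mathrm{Supp}_A(M^*)$.

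Il ne resterait plus qu'\`a combiner : $\mathrm{Supp}_{A[1/p]}(M^*[1/p])$ est ferm\'ee et contient l'ensemble Zariski-dense des points classiques de poids r\'egulier, donc est \'egale \`a $\mathrm{Spec}(A[1/p])$ tout entier ; $M^*$ est donc fid\`ele, et la proposition \ref{locglobfaible} s'ensuit. L'obstacle principal est le prolongement du morphisme $W\otimes\pi'\to \con(X)[\mathfrak p]$ en un morphisme de repr\'esentations de Banach $\Pi(\mathfrak p)\to \con(X)[\mathfrak p]$ : c'est exactement le point o\`u intervient de fa\c{c}on essentielle la correspondance de Langlands locale $p$-adique, via le fait que $\Pi(\mathfrak p)$ est (une normalisation convenable de) la compl\'etion unitaire universelle de ses vecteurs localement alg\'ebriques ; le reste du prolongement est un soin topologique que l'on copierait sur \cite{Emcomp}. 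On notera aussi que la finitude de $M^*$ sur $A$ repose de mani\`ere cruciale sur l'admissibilit\'e de $\Pi^{\mathrm{univ}}$, elle-m\^eme cons\'equence de la correspondance de Langlands $p$-adique.
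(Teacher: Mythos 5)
Votre preuve suit essentiellement la m\^eme strat\'egie que celle du texte : r\'eduction \`a la fid\'elit\'e du $A$-module $M^*$, finitude de $M^*$ sur $A$, densit\'e de Zariski des points classiques cristallins, puis non-annulation en ces points via la propri\'et\'e de compl\'etion unitaire universelle de Berger--Breuil. Deux points appellent cependant une r\'eserve : la formule de produit tensoriel compl\'et\'e que vous annoncez pour $M^*$ n'est pas correcte telle quelle (il y faudrait au moins des duaux de Schikhof, et le texte \'evite cette route en calculant directement $M^*/\mathfrak{m}M^*$ gr\^ace \`a l'irr\'eductibilit\'e de $\overline{\pi}$ --- voir le lemme \ref{cofg} --- ce qui est plus \'el\'ementaire) ; et l\`a o\`u vous \'ecrivez \textit{de Rham}, il faut lire \textit{cristalline} pour pouvoir invoquer Berger--Breuil, ce que garantit effectivement la condition $\sigma_p^{{\rm GL}_2(\zp)}\neq 0$ implicite dans votre argument de densit\'e via le lemme \ref{formes automorphes}.
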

\begin{proof} D'apr\`es la remarque \ref{stupide mais utile} 
 il s'agit de justifier que $M[\mathfrak{p}][1/p] \neq 0$, pour tout id\'eal maximal $\mathfrak{p}$ de $A[1/p]$.
 L'id\'ee (due \`a Emerton) est de d\'emontrer 
  cet \'enonc\'e par interpolation $p$-adique, en le prouvant pour une famille dense d'id\'eaux maximaux (form\'ee de points correspondant \`a des repr\'esentations galoisiennes cristallines). 
Cela demande quelques pr\'eliminaires. 

\begin{lemme}\label{cofg}
Le $A$-module $M^*$ est de type fini. 
\end{lemme}

\begin{proof}
Comme $M^*$ est compact, il suffit de montrer que $M^*/\mathfrak{m}M^*$ est de dimension finie sur $k_L$. Or, l'isomorphisme 
$M=\ho_{\O_L}^{\rm cont}(M^*,\O_L)$ induit un isomorphisme 
$$ (M/\pi_LM)[\mathfrak{m}]\simeq {\rm Hom}_{\O_L}(M^*, k_L)[\mathfrak{m}]\simeq {\rm Hom}_{k_L}( M^*/\mathfrak{m}M^*, k_L).$$
Il suffit donc de d\'emontrer que $(M/\pi_LM)[\mathfrak{m}]$ est de dimension finie sur $k_L$. Mais, par d\'efinition de $M$, on dispose d'une injection de $k_L$-espaces vectoriels
$$(M/\pi_LM)[\mathfrak{m}]\subset {\rm Hom}_{k_L[G]}(\Pi^{\rm univ}/\mathfrak{m}\Pi^{\rm univ}, {\rm LC}(X, k_L)_{\mathfrak{m}}).$$ 
 Comme 
$\overline{\pi}=\Pi^{\mathrm{univ}}/\mathfrak{m}\Pi^{\rm univ}$ est irr\'eductible et lisse, le choix d'un vecteur non nul quelconque 
$v$ de $\overline{\pi}$ et d'un sous-groupe ouvert $K_p$ qui le fixe fournit un plongement 
$$\ho_{k_L[G]}(\overline{\pi}, {\rm LC}(X,k_L)_{\mathfrak{m}})\subset ({\rm LC}(X,k_L)_{\mathfrak{m}})^{K_p}\subset {\rm LC}(X(K_p), k_L)$$
et le dernier espace est de dimension finie sur $k_L$ car $X(K_p)$ est fini.
\end{proof}

\begin{lemme} \label{FINITO}
Si $\mathfrak{p}$ est un id\'eal maximal de $A[1/p]$, alors $M[\mathfrak{p}][1/p]$ est un $L$-espace vectoriel de dimension finie, dual de $M^*\otimes_{A} k(\mathfrak{p})$, et il est non nul si et seulement si 
 $\mathfrak{p} \in \mathrm{Supp} ~ M^*[1/p]$. 
\end{lemme}
\begin{proof} Nous avons $$M[\mathfrak{p}]={\rm Hom}_{\O_L}^{\rm cont}(M^*, \O_L)[\mathfrak{p}]={\rm Hom}_{\O_L}^{\rm cont} (M^*/\mathfrak{p}M^*, \O_L).$$
Le lemme \ref{cofg} montre que $M^*/\mathfrak{p}M^*$ est un $A/\mathfrak{p}$-module de type fini, donc un $\O_L$-module de type fini, ce qui fournit des isomorphismes 
$$M[\mathfrak{p}][1/p]={\rm Hom}_L(M^*[1/p]/\mathfrak{p}, L)={\rm Hom}_L(M^*\otimes_A k(\mathfrak{p}), L).$$
Le reste se d\'eduit du lemme de Nakayama.
\end{proof}

  Vu le lemme pr\'ec\'edent, il suffit de montrer que $\mathrm{Supp} ~ M^*[1/p]$ est Zariski dense dans $\mathrm{Spec} ~ A[1/p]$ (il est automatiquement ferm\'e 
  puisque $M^*[1/p]$ est de type fini sur $A[1/p]$). Nous allons exhiber une famille $\mathcal{C}$ d'id\'eaux maximaux de $A[1/p]$, qui est Zariski dense dans 
  $\mathrm{Spec} ~ A[1/p]$ et contenue dans $\mathrm{Supp} ~ M^*[1/p]$. 
  \\

Soit $\sigma$ une repr\'esentation automorphe de $\ob^*(\mathbf{A})$ telle que :

a) $\sigma$ soit non ramifi\'ee en dehors de $\Sigma$.

b) $\sigma_f^{K^p} \neq 0$ et $\sigma_p^{{\rm GL}_2(\zp)} \neq 0$.

c) La repr\'esentation galoisienne associ\'ee $r_{\sigma}$ v\'erifie $\overline{r}_{\sigma}=\overline{r}$. 

L'action de $\tilde{\mathbf{T}}_{\Sigma}$ sur $\sigma_f$ d\'efinit un morphisme $\tilde{\mathbf{T}}_{\Sigma} \to L$, qui s'\'etend par continuit\'e en un morphisme $A[1/p] \to L$ (puisque $\overline{r}_{\sigma}=\overline{r}$), dont le noyau est un id\'eal maximal $\mathfrak{p}_{\sigma}$ de $A[1/p]$. 
On note $$\mathcal{C}=\{ \mathfrak{p}_{\sigma}, \sigma ~ \mathrm{comme ~ avant} \}.$$

\begin{lemme}\label{dense}
L'ensemble $\mathcal{C}$ est Zariski dense dans $\mathrm{Spec} ~ A[1/p]$.
\end{lemme} 
\begin{proof}
Notons que par d\'efinition, $A[1/p]$ agit fid\`element sur $\con(X)_{\mathfrak{m}}$. Il suffit donc de montrer que tout $t \in \cap_{\mathfrak{p} \in \mathcal{C}} \mathfrak{p}$ agit par $0$ sur $\con(X)_m$. Comme l'action est continue, il suffit pour cela de prouver que  $\sum_{\mathfrak{p} \in \mathcal{C}} \con(X)_{\mathfrak{m}}[\mathfrak{p}]$ est dense dans $\con(X)_{\mathfrak{m}}$. Comme $\con(X)_{\mathfrak{m}}$ est un facteur direct topologique de $\con(X)$, l'ensemble des vecteurs ${\rm GL}_2(\zp)$-alg\'ebriques de $\con(X)_{\mathfrak{m}}$ est dense dans $\con(X)_{\mathfrak{m}}$, d'apr\`es le lemme \ref{dense 1}. Il suffit donc de justifier que $$(\con(X)_{\mathfrak{m}})_{{\rm GL}_2(\zp)-alg} \subset \sum_{\mathfrak{p} \in \mathcal{C}} \con(X)_{\mathfrak{m}}[\mathfrak{p}].$$ Mais ceci est un corollaire imm\'ediat du lemme \ref{formes automorphes}.
\end{proof}

  On note ${\rm LP}(X)$ l'espace des vecteurs localement alg\'ebriques de $\con(X)_{\mathfrak{m}}$. 
  
\begin{lemme}\label{cas cristallin}
Pour tout $\mathfrak{p} \in \mathcal{C}$, $M[\mathfrak{p}]\otimes_{A/\mathfrak{p}} k(\mathfrak{p})\neq 0$, autrement dit $${\rm Hom}_G^{\rm cont}(\Pi(\mathfrak{p}), \con (X)[\mathfrak{p}])\ne 0.$$
De plus, ${\rm LP}(X)[\mathfrak{p}]$ est inclus dans l'image de la fl\`eche naturelle
\[ \Pi(\mathfrak{p}) \otimes \ho_{G}^{\rm cont}(\Pi(\mathfrak{p}),\con(X)[\mathfrak{p}]) \to \con(X)[\mathfrak{p}]. \]

\end{lemme}
\begin{proof}
Soit $\mathfrak{p}\in \mathcal{C}$, $\sigma$ la repr\'esentation automorphe correspondante. Comme $\sigma_p^{{\rm GL}_2(\zp)}\neq 0$, la restriction de $r_{\sigma}=r(\mathfrak{p})$ au groupe de d\'ecomposition en $p$ est cristalline. En outre, $\overline{r(\mathfrak{p})}_{|_{\mathcal{G}_{\qp}}}=\overline{r}_{|_{\mathcal{G}_{\qp}}}$ est absolument irr\'eductible, et donc en particulier $r(\mathfrak{p})_{|_{\mathcal{G}_{\qp}}}$ l'est. Notons $a<b$ ses poids de Hodge-Tate, et posons $W=\mathrm{Sym}^{b-a-1}(L^2) \otimes \det{}^a$. 

    D'apr\`es Berger-Breuil \cite{BB} et la construction de la correspondance de Langlands locale $p$-adique pour $G$, le compl\'et\'e unitaire universel de $\sigma_p\otimes W$ est pr\'ecis\'ement 
    $\Pi(\mathfrak{p})$. Comme $\sigma_p\otimes W$ est localement alg\'ebrique, on obtient 
    $${\rm Hom}_G^{\rm cont}(\Pi(\mathfrak{p}), \con(X)[\mathfrak{p}])={\rm Hom}_G^{\rm cont}(\sigma_p\otimes W, \con(X)[\mathfrak{p}])={\rm Hom}_G(\sigma_p\otimes W, {\rm LP}(X)[\mathfrak{p}]).$$
    De plus, le lemme \ref{formes automorphes} montre que le morphisme d'\'evaluation 
    $$ (\sigma_p\otimes W)\otimes {\rm Hom}_G(\sigma_p\otimes W, {\rm LP}(X)[\mathfrak{p}])\to {\rm LP}(X)[\mathfrak{p}]$$
   est un isomorphisme, ce qui permet de conclure (en utilisant l'injection $\sigma_p\otimes W\subset \Pi(\mathfrak{p})$).    \end{proof}
    
    Ceci ach\`eve la preuve de la proposition \ref{locglobfaible}.
\end{proof}    

Nous avons maintenant en main tous les ingr\'edients pour prouver le th\'eor\`eme \ref{locglobfort}. 

\begin{proof}[D\'emonstration du th\'eor\`eme \ref{locglobfort}]
Montrons tout d'abord que la fl\`eche naturelle d'\'evaluation (le produit tensoriel compl\'et\'e est pour la topologie $\pi_L$-adique)
\begin{eqnarray} (\Pi^{\mathrm{univ}} \widehat{\otimes}_{A} M) [1/p] \to \con(X)_{\mathfrak{m}} \label{evaluation} \end{eqnarray}
est un isomorphisme. Cette fl\`eche s'obtient apr\`es inversion de $p$ \`a partir de la fl\`eche :
\[ \Pi^{\mathrm{univ}} \widehat{\otimes}_{A} M \to \con(X,\O_L)_{\mathfrak{m}}. \]
Comme $\Pi^{\mathrm{univ}} \widehat{\otimes}_{A} M$ est s\'epar\'e pour la topologie $\pi_L$-adique (utiliser le lemme 3.1.16 de \cite{Emcomp}), on peut tester l'injectivit\'e de \eqref{evaluation} apr\`es r\'eduction modulo $\pi_L$ de ce morphisme :
\[ \Pi^{\mathrm{univ}}/\pi_L \otimes_{A} M/\pi_L \to  {\rm LC}(X, k_L)_{\mathfrak{m}}. \]
Nous allons montrer d'abord que ce morphisme est injectif en restriction \`a la $\mathfrak{m}$-partie. Pour cela remarquons que le lemme B.6 de \cite{Emcomp} fournit un isomorphisme 
$$(\Pi^{\mathrm{univ}}/\pi_L \otimes_{A} M/\pi_L)[\mathfrak{m}]\simeq {\rm Hom}_A(A/\mathfrak{m}, \Pi^{\rm univ}\otimes_A 
M/\pi_L)$$ 
$$\simeq {\rm Hom}_A(A/\mathfrak{m}, M/\pi_L)\otimes_{A/\pi_L} \Pi^{\rm univ}/\pi_L=(M/\pi_L)[\mathfrak{m}]\otimes_{A/\pi_L} \Pi^{\rm univ}/\pi_L$$ $$=(M/\pi_L)[\mathfrak{m}]\otimes_{k_L}\Pi^{\rm univ}/\mathfrak{m}=(M/\pi_L)[\mathfrak{m}]\otimes_{k_L} \bar{\pi}.$$
D'autre part la preuve du lemme \ref{cofg} fournit un plongement 
$$(M/\pi_L)[\mathfrak{m}]\subset {\rm Hom}_G(\bar{\pi}, {\rm LC}(X, k_L)[\mathfrak{m}]).$$
Il suffit donc de montrer que la fl\`eche
\[ \overline{\pi} \otimes_{k_L} \ho_{k_L[G]}(\overline{\pi}, {\rm LC}(X, k_L)[\mathfrak{m}]) \to  {\rm LC}(X, k_L)[\mathfrak{m}] \]
 est injective, ce qui d\'ecoule du fait que $\overline{\pi}$ est lisse irr\'eductible et ${\rm LC}(X, k_L)[\mathfrak{m}]$ est lisse. 
Ensuite, on v\'erifie sans mal que pour tout $x\in \Pi^{\rm univ}\otimes_A M$ on a $ \mathfrak{m}^N x\subset \pi_L\cdot (\Pi^{\rm univ}\otimes_A M)$ pour tout $N$ assez grand (il suffit de montrer que si $u\in M$, alors 
$\mathfrak{m}^N u\subset \pi_L\cdot M$ pour $N$ assez grand, ce qui se fait en regardant $u$ comme une forme lin\'eaire continue sur $M^*$). 
Ainsi tout \'el\'ement de $\Pi^{\mathrm{univ}}/\pi_L \otimes_{A} M/\pi_L$ est tu\'e par une puissance de $\mathfrak{m}$, ce qui permet de d\'eduire l'injectivit\'e du morphisme 
\[ \Pi^{\mathrm{univ}}/\pi_L \otimes_{A} M/\pi_L \to  {\rm LC}(X, k_L)_{\mathfrak{m}} \]
de son injectivit\'e sur la $\mathfrak{m}$-partie.

Prouvons la surjectivit\'e. Comme, d'apr\`es le lemme \ref{cofg}, $M^*$ est de type fini sur $A$, et comme $\Pi^{\mathrm{univ}}$ est un $A[G]$-module orthonormalisable admissible, l'image du morphisme \eqref{evaluation} est automatiquement ferm\'ee (combiner Proposition 3.1.3 et Lemma 3.1.16 de \cite{Emcomp}). Il suffit du coup de montrer que l'image de \eqref{evaluation} est dense. On a vu dans la preuve du lemme \ref{cas cristallin} que l'image de  $\Pi(\mathfrak{p}) \otimes \ho_{G}^{\rm cont}(\Pi(\mathfrak{p}),\con(X)[\mathfrak{p}])$ contient ${\rm LP}(X)[\mathfrak{p}]$. Par cons\'equent, l'image de notre morphisme contient $(\con(X)_{\mathfrak{m}})^{{\rm GL}_2(\zp)-\mathrm{alg}}$, qui forme un sous-espace dense. Cela finit la preuve du fait que 
$$(\Pi^{\mathrm{univ}} \widehat{\otimes}_{A} M) [1/p] \to \con(X)_{\mathfrak{m}}$$
est un isomorphisme.

Soit maintenant $\mathfrak{p}$ un id\'eal maximal de $A[1/p]$. Pour achever la preuve du th\'eor\`eme \ref{locglobfort}, on prend la $\mathfrak{p}$-partie de l'isomorphisme \eqref{evaluation}. Pour calculer la $\mathfrak{p}$-partie du terme de gauche
on utilise le lemme 3.1.17 de \cite{Emcomp}, qui fournit des isomorphismes
$$(\Pi^{\mathrm{univ}} \widehat{\otimes}_{A} M) [1/p][\mathfrak{p}]={\rm Hom}_A(A/\mathfrak{p}, \Pi^{\mathrm{univ}} \widehat{\otimes}_{A} M)[1/p]=$$
$${\rm Hom}_A(A/\mathfrak{p}, M)\widehat{\otimes}_A \Pi^{\rm univ}[1/p]=M[\mathfrak{p}]\widehat{\otimes}_{ A/\mathfrak{p}}(\Pi^{\rm univ}/\mathfrak{p}) [1/p]=M[\mathfrak{p}][1/p]\otimes_{k(\mathfrak{p})} \Pi(\mathfrak{p}),$$
la dernière égalité utilisant le fait que $M[\mathfrak{p}][1/p]$ est de dimension finie sur $k(\mathfrak{p})$ (lemme \ref{FINITO}). 
Pour finir, il suffit d'utiliser la remarque \ref{stupide mais utile} et la proposition
\ref{locglobfaible}.
\end{proof}


\begin{thebibliography}{40}
\footnotesize

\bibitem{Ber} L.~Berger-Repr\'{e}sentations $p$-adiques
 et \'{e}quations diff\'{e}rentielles, Invent. Math. 148 (2002),
 p. 219-284.
 
 \bibitem{BerAst} L.~Berger-Equations diff\'erentielles $p$-adiques et $(\varphi,N)$-modules filtr\'es,
Ast\'erisque 319 (2008), p. 13-38. 

\bibitem{BB} L.~Berger, C.~Breuil-Sur quelques
repr\'{e}sentations potentiellement cristallines de
$G_{\qp}$, Ast\'{e}risque 330, p. 155-211.

\bibitem{Boutot-Carayol} J.-F.~Boutot, H.~Carayol-Uniformisation $p$-adique des courbes de Shimura : les th\'eor\`emes de Cerednik et Drinfeld, dans \og Courbes modulaires et courbes de Shimura\fg{}, 196-197, Ast\'erisque, p. 45-158, 1991.

\bibitem{BoutotZink} J.-F.~Boutot, Th.~Zink-The $p$-adic uniformization of Shimura curves, 2000, preprint disponible \`a \url{https://www.math.uni-bielefeld.de/?zink/p-adicuni.ps}.

\bibitem{Br} C.~Breuil-Invariant \textit{L}
 et s\'{e}rie sp\'{e}ciale $p$-adique, Ann. Sci. Ecole Norm. Sup
 37, p. 559-610, 2004.
 
\bibitem{BBourbaki} C.~Breuil-Correspondance de Langlands $p$-adique, compatibilit\'e local-global et applications, S\'eminaire Bourbaki 1031, Ast\'erisque 348, p. 119-147, 2012.

\bibitem{Breuil-Schneider} C.~Breuil, P.~Schneider-First steps towards $p$-adic Langlands functoriality, J. Reine Angew. Math. 610, 2007,149-180.

 
\bibitem{BS} C.~Breuil, M.~Strauch-Sur quelques repr\'esentations $p$-adiques supercuspidales de ${\rm GL}_2(\qp)$, non publi\'e.  



\bibitem{Carayol} H.~Carayol-Non-abelian Lubin-Tate theory, dans L. Clozel and J.S. Milne, editors, Automorphic forms, Shimura varieties and L-functions, volume II, p. 15–39. Academic Press, 1990.

\bibitem{Carayol2} H.~Carayol-Formes modulaires et repr\'esentations galoisiennes \`a valeurs dans un anneau local complet, Contemp. Math. 165 (1994), p. 213-235.

\bibitem{chenevier} G.~Chenevier-Familles $p$-adiques de formes automorphes pour ${\rm GL}(n)$, Journal für die reine und angewandte Mathematik 570, p. 143-217, 2004.



\bibitem{Chiar} B. Chiarellotto-Duality in rigid analysis, $p$-adic analysis, Lecture Notes in Math., 1454 (1990), p. 142-172.

\bibitem{CCsurconv} F.~Cherbonnier, P.~Colmez-Repr\'{e}sentations
$p$-adiques surconvergentes, Invent. Math. 133 (1998), p. 581-611.

\bibitem{Cannals} P.~Colmez-Th\'{e}orie d'Iwasawa des repr\'{e}sentations de de Rham
d'un corps local, Ann. of Math. 148 (1998), p. 485-571.

\bibitem{Cmirab} P.~Colmez-$(\varphi,\Gamma)$-modules et repr\'{e}sentations
du mirabolique, Ast\'{e}risque 330 (2010), p. 61-153.

\bibitem{Cbigone} P.~Colmez-Repr\'{e}sentations de ${\rm GL}_2(\qp)$ et
$(\varphi,\Gamma)$-modules, Ast\'{e}risque 330 (2010), p. 281-509.

\bibitem{CvectdR} P.~Colmez-Espaces Vectoriels de dimension finie et repr\'esentations de de Rham, Ast\'erisque 319 (2008), p. 117-186.

\bibitem{Ctriang}  P.~Colmez-Repr\'{e}sentations triangulines de dimension 2, Ast\'{e}risque
319 (2008), p. 213-258.


\bibitem{Cvectan} P.~Colmez-La s\'erie principale unitaire de
${\rm GL}_2(\qp)$: vecteurs localement analytiques, Automorphic Forms and Galois Representations (2011), Proceedings of the Durham LMS Symposium 2011. 

\bibitem{Colmezpoids} P.~Colmez-Correspondance de Langlands locale $p$-adique et changement de poids, \`a para\^itre dans J.E.M.S . 


\bibitem{CF} P.~Colmez, J.-M.~Fontaine-Construction des repr\'esentations p-adiques semi-stables, Invent. Math. 140 (2000), p. 1-43.


\bibitem{PCD} P.~Colmez, G.~Dospinescu, V.~Pa\v{s}k\={u}nas-The $p$-adic local Langlands correspondence for 
${\rm GL}_2(\qp)$, Cambridge Journal of Mathematics, Volume 2 (2014), Number 1.  

\bibitem{CD} P.~Colmez, G.~Dospinescu-Compl\'et\'es universels de repr\'esentations de ${\rm GL}_2(\qp)$, 
Algebra and Number Theory, 8 (2014), 1447-1519.


\bibitem{CDN} P.~Colmez, G.~Dospinescu, W.~Niziol-Cohomologie \'etale $p$-adique de la tour de Drinfeld: le cas de la dimension $1$, en pr\'eparation. 

\bibitem{Dat} J. F.~Dat-Espaces sym\'etriques de Drinfeld et correspondance de Langlands locale, Ann. Sci. \'ecole Norm. Sup. (4) 39 (2006), p. 1-74.


\bibitem{nulles} G.~Dospinescu, A.~C.~Le Bras-Fonctions sur la tour de Drinfeld et la courbe de Fargues-Fontaine, en pr\'eparation. 

\bibitem{Annalen} G.~Dospinescu-Actions infinit\'{e}simales dans la correspondance de Langlands
locale $p$-adique pour ${\rm GL}_2(\qp)$, Math. Annalen, 354 (2012), 627--657.

\bibitem{DComp} G.~Dospinescu-Extensions de repr\'esentations de Rham et vecteurs localement alg\'ebriques, 
Compos. Math. 151 (2015), no. 8, 1462-1498.

\bibitem{DJacquet}  G.~Dospinescu-Equations diff\'erentielles p-adiques et modules de Jacquet analytiques, Automorphic Forms and Galois Representations (2011), Proceedings of the LMS Durham Symposium 2011. 


\bibitem{Dthese} G.~Dospinescu-Actions infinit\'esimales dans la correspondance de Langlands locale $p$-adique, th\`ese de l'Ecole Polytechnique, juin 2012, https://tel.archives-ouvertes.fr/pastel-00725370/document. 



\bibitem{Drinfeld} V.G.~Drinfeld-Coverings of $p$-adic symmetric domains. Func. Anal. and Appl., 10, p. 107–115, 1976.

\bibitem{Emcomp} M.~Emerton-Local-global compatibility in the $p$-adic Langlands programme for ${\rm GL}_2/\mathbf{Q}$, preprint (!) disponible \`{a} \url{http://www.math.northwestern.edu/~emerton/preprints.html}.

\bibitem{EmCoates} M.~Emerton-A local-global compatibility conjecture in the $p$-adic Langlands programme for ${\rm GL}_2/\mathbf{Q}$,
Pure and Applied Math. Quarterly 2 (2006), no. 2, p. 279-393. 

\bibitem{Emlocan} M.~Emerton- Locally analytic vectors in representations of locally $p$-adic analytic groups,
to appear in Memoirs of the AMS, disponible \`{a} \url{http://www.math.northwestern.edu/~emerton/preprints.html}.


\bibitem{faltings} G.~Faltings- A relation between two moduli spaces studied by V. G. Drinfeld, Algebraic number theory and algebraic geometry, Contemp. Math., vol. 300, Amer. Math. Soc., Providence, RI, 2002, p. 115-129.


\bibitem{FGL} L.~Fargues, A.~Genestier, V.~Lafforgue-L'isomorphisme entre les tours
de Lubin-Tate et de Drinfeld, Progress in Mathematics, vol. 262, Birkh\"auser Verlag, Basel,
2008.


\bibitem{Fargues these} L.~Fargues-Cohomologie d'espaces de modules de groupes $p$-divisibles et correspondances de Langlands locale, dans Vari\'et\'es de Shimura, espaces de Rapoport-Zink, et correspondances de Langlands locales, Ast\'erisque 291, 2004.



\bibitem{FF} L.~Fargues, J.M.~Fontaine-Courbes et fibr\'es vectoriels en th\'eorie de Hodge $p$-adique, preprint, 2011.

\bibitem{FoAnnals} J.-M.~Fontaine-Sur certains types de repr\'{e}sentations $p$-Adiques du groupe de Galois d'un corps local;construction d'un anneau de Barsotti-Tate, Ann. of Math. 115, p.529-577, 1982.

\bibitem{FoGrot} J.-M.~Fontaine-Repr\'{e}sentations
$p$-adiques des corps locaux. I, The
Grothendieck Festschrift, Vol II, Progr. Math.,
vol 87, Birkhauser, 1990, p. 249-309.



\bibitem{per adiques} J.M.~Fontaine- Repr\'esentations $l$-adiques potentiellement semi-stables, Ast\'erisque 223 (1994), Soc. Math. de France, 321-347.

\bibitem{FonAst} J.-M.~Fontaine-Arithm\'etique des repr\'esentations $p$-adiques galoisiennes, Cohomologies $p$-adiques et applications arithm\'etiques III, 
 Ast\'erisque 295  (2004), 1-115.

\bibitem{Frommer} H.~Frommer-The locally analytic principal series of split reductive groups, preprint, 2003.

\bibitem{GH} M. J.~Hopkins, B.~Gross-Equivariant vector bundles on the Lubin-Tate moduli spaces, Topology and Representation theory, number 158 in Contemporary Math., p. 23–88, 1994.

\bibitem{GK1} E.~Grosse-Klönne-De Rham cohomology of rigid spaces, Mathematische Zeitschrift 247 (2004), p. 223-240. 

\bibitem{GK2} E.~Grosse-Klönne-Rigid analytic spaces with overconvergent structure sheaf, Journal für die reine und angewandte Mathematik 519 (2000), p. 73-95.

\bibitem{GKrig} E.~Grosse-Kl\"onne-Frobenius and monodromy operators in rigid analysis, and Drinfeld's symmetric space, J. Algebraic Geom. 14 (2005), no. 3, p. 391-437.

\bibitem{Harris Inv} M.~Harris-Supercuspidal representations in the cohomology of Drinfeld's upper half space; elaboration of Carayol's program. Invent. Math., 129, p. 75-119, 1997.

\bibitem{HT} M.~Harris, R.~Taylor-The geometry and cohomology of some simple Shimura varieties, Ann. of Math. studies 151, Princeton Univ. Press, 2001.

\bibitem{henniart} G.~Henniart-Sur l'unicit\'e des types pour ${\rm GL}_2$, Appendice \`a : C. Breuil et A. M\'ezard-Multiplicit\'es modulaires et repr\'esentations de ${\rm GL}_2(\zp)$ et de $\mathrm{Gal}(\overline{\qp}/\qp)$ en $\ell = p$, Duke Math. J. 115 (2002), p. 205-310.


\bibitem{KPL} K.~Kedlaya, J.~Potharst, L.~Xiao-Cohomology of arithmetic families of $(\varphi,\Gamma)$-modules, J. Amer. Math. Soc. 27 (2014), p. 1043-1115.

\bibitem{Kedbook} K.~Kedlaya-$p$-adic differential equations, Cambridge Studies in Advanced Mathematics 125, Cambridge Univ. Press, 2010. 

\bibitem{Kiehl} R.~Kiehl-Theorem A und Theorem B in der nichtarchimedischen Funktionentheorie, Invent. Math. 2, p. 256-273, 1967.


 \bibitem{kisin} M.~Kisin-Deformations of $G_{\qp}$ and ${\rm GL}_2(\qp)$-representations, Ast\'erisque 330 (2010), 511-528.

\bibitem{Kohl} J.~Kohlhaase-Lubin-Tate and Drinfeld bundles, Tohoku Math. J. 63, No. 2, 2011, p. 217-254.

\bibitem{LXZ} R.~Liu, B.~Xie, Y.~Zhang-
 Locally Analytic Vectors of Unitary Principal Series of ${\rm GL}_2(\qp)$, 
Ann. E.N.S.~{\bf 45} (2012), p. 167-190.

\bibitem{LT} J.~Lubin, J.~Tate- Formal moduli for one-parameter formal Lie groups, Bulletin de la S.M.F. 94, p. 49-59, 1966.

\bibitem{Marmora} A.~Marmora-Ir\'egularit\'e et conducteur de Swan $p$-adiques, Documenta Mathematica 9 (2004), p. 439-483. 

\bibitem{Morita 1}Y.~Morita-A $p$-adic theory of hyperfunctions, I, Publ. RIMS, Kyoto Univ., 17 (1981), 1-24.


\bibitem{luepan} Lue.~Pan-First covering of Drinfel'd upper half plane and Banach representations of $GL_2(\qp)$, \`a para\^itre dans Algebra and Number Theory. 

\bibitem{Orlik} S.~Orlik-The de Rham cohomology of Drinfeld's upper halfspace, Münster Journal of Mathematics 8 (2015), 169-179

\bibitem{Pa} V.~Pa\v{s}k\={u}nas-The image
of Colmez's Montr\'{e}al functor, Publ.Math. IHES 2013, Vol.118, Nr. 1, p.1-191.

\bibitem{PSS} D.~Patel, M.~Strauch, T.~Schmidt-Locally analytic representations of ${\rm GL}(2,L)$ via semistable models of $\mathbf{P}^1$, \`a para\^itre dans le Journal de l'Institut math\'ematique de Jussieu.

\bibitem{RZ} M.~Rapoport, T.~Zink- Period spaces for $p$-divisible groups, Annals Math. Studies 141,  Princeton University Press, 1996.

\bibitem{Saito} T.~Saito- Modular forms and $p$-adic Hodge theory, Invent. Math. 129 (1997), p. 607-620.

\bibitem{Sen} S.~Sen-Continuous cohomology and
$p$-adic Galois representations, Invent. Math 62 (1980/1981),
p. 89-116.

\bibitem{SS} P.~Schneider, U.~Stuhler-The cohomology of $p$-adic symmetric spaces, Invent. Math. 105, p. 47-122, 1991.

\bibitem{STMorita} P.~Schneider, J.~Teitelbaum- An integral transform for $p$-adic symmetric spaces, Duke Math. Journal 86, p. 391-433, 1997.

\bibitem{ST}  P.~Schneider, J.~Teitelbaum-Locally analytic distributions and $p$-adic representation theory, with applications to ${\rm GL}_2$, J.A.M.S. 15, p.443-468, 2002.

\bibitem{Iw} P.~Schneider, J.~Teitelbaum- Banach space representations and Iwasawa theory, Israel J. Math. 127, p. 359-380, 2002.

\bibitem{STInv} P.~Schneider, J.~Teitelbaum-Algebras of $p$-adic distributions and admissible
representations, Invent. Math. 153 (2003), p. 145-196.

\bibitem{boundary} P.~Schneider, J.~Teitelbaum-$p$-adic boundary values, Ast\'erisque 278, 2002, p. 51-125, et 295, p. 291-299, 2004.

\bibitem{Schraen} B.~Schraen-Repr\'esentations $p$-adiques de ${\rm GL}_2(L)$ et cat\'egories d\'eriv\'ees, Israel Journal of Math., vol. 176, p. 307-362.

\bibitem{SW} P.~Scholze, J.~Weinstein-Moduli of $p$-divisible groups, Cambridge Journal of Mathematics 1 (2013), p. 145-237.

\bibitem{Shodge} P.~Scholze-$p$-adic Hodge theory for rigid-analytic varieties. Forum of Mathematics, Pi, 1, e1 2013.

\bibitem{Scholze} P.~Scholze-On the $p$-adic cohomology of the Lubin-Tate tower, preprint. 

\bibitem{Shalit} E.~De Shalit-Residues on buildings and de Rham cohomology of $p$-adic symmetric domains, Duke Math. J.
Volume 106, Number 1 (2001), 123-191.

\bibitem{Strauch cc} M.~Strauch-Geometrically connected components of deformation spaces of one-dimensional formal modules 
Pure and Applied Mathematics Quarterly, vol. 4, Number 4 (Special issue in honor of Jean-Pierre Serre), pp. 1-18 (2008).

\bibitem{taylor} R.~Taylor-On the meromorphic continuation of degree two $L$-functions, Documenta Math. (2006), 729-779.


\bibitem{teitel} J.~Teitelbaum- Geometry of an \'etale covering of the $p$-adic upper half plane, Ann. Inst. Fourier 40, 68-78, 1990.



\end{thebibliography}
\end{document}